\newfont{\cyr}{wncyr10 scaled 1100}
\theoremstyle{plain}
\newtheorem*{hyp}{Hypothesis}
\newtheorem{theorem}{Theorem}[section]
\newtheorem{corollary}[theorem]{Corollary}
\newtheorem{lemma}[theorem]{Lemma}
\newtheorem{proposition}[theorem]{Proposition}
\newtheorem{propo}[theorem]{Proposition}
\newtheorem{conj}[theorem]{Conjecture}
\theoremstyle{definition}
\newtheorem{definition}[theorem]{Definition}
\newtheorem{defi}[theorem]{Definition}
\newtheorem{examplewr}[theorem]{Example}
\theoremstyle{remark}
\newtheorem{obswr}[theorem]{Observation}
\newtheorem{remarkwr}[theorem]{Remark}
\newtheorem*{remark-intro}{Remark}
\newtheorem{hypotheses}[theorem]{Hypotheses}
\newenvironment{remark}{\begin{remarkwr}\begin{upshape}}{\end{upshape}\end{remarkwr}}
\newcommand{\bb}{\mathbb}
\newcommand{\frk}{\mathfrak}
\newcommand{\cl}{\mathcal}
\newcommand{\univ}{\kappa}
\newcommand{\kap}{{\kappa_{\psi,g,h}}}
\newcommand{\kapone}{{\kappa_{\psi,g,h,1}}}
\newcommand{\kapinfty}{{\kappa_{\psi,g,h,\infty}}}
\newcommand{\kapinftyone}{{\kappa_{\psi,g,h,1,\infty}}}
\newcommand{\kapinftyn}{{\kappa_{\psi,g,h,n,\infty}}}
\newcommand{\kapinftynq}{{\kappa_{\psi,g,h,nq,\infty}}}
\newcommand{\kapn}{{\kappa_{\psi,g,h,n}}}
\newcommand{\kapnq}{{\kappa_{\psi,g,h,nq}}}
\newcommand{\tkapn}{{\tilde{\kappa}_{\psi,g,h,n}}}
\newcommand{\tkapnq}{{\tilde{\kappa}_{\psi,g,h,nq}}}
\newcommand{\Vrep}{V^\psi_{g,h}}
\newcommand{\Trep}{T^\psi_{g,h}}
\newcommand{\Arep}{A^\psi_{g,h}}
\DeclareMathOperator{\et}{et}
\DeclareMathOperator{\id}{Id}
\DeclareMathOperator{\Ind}{Ind}
\DeclareMathOperator{\lcm}{lcm}
\DeclareMathOperator{\bal}{bal}
\DeclareMathOperator{\Gr}{Gr}
\DeclareMathOperator{\im}{im}
\DeclareMathOperator{\Char}{Char}
\DeclareMathOperator{\tors}{tors}
\DeclareMathOperator{\ac}{ac}
\DeclareMathOperator{\cor}{cor}
\DeclareMathOperator{\Tsym}{Tsym}
\DeclareMathOperator{\Symm}{Symm}
\newcommand{\cF}{\mathcal F}
\newcommand{\cW}{\mathcal W}
\newcommand{\Q}{\mathbb{Q}}
\newcommand{\Z}{\mathbb{Z}}
\newcommand{\C}{\mathbb{C}}
\newcommand{\Sel}{\mathrm{Sel}}
\newcommand{\Gal}{\mathrm{Gal\,}}
\newcommand{\GL}{\mathrm{GL}}
\newcommand{\Tr}{\mathrm{Tr}}
\newcommand{\cond}{\mathrm{cond}}
\newcommand{\Fr}{\mathrm{Fr}}
\newcommand{\ord}{{\mathrm{ord}}}
\newfont{\gotip}{eufb10 at 12pt}
\newcommand{\cO}{{\mathcal O}}
\newcommand{\hf}{{\mathbf{f}}}
\newcommand{\hg}{{\mathbf{g}}}
\newcommand{\hh}{{\mathbf h}}
\DeclareMathOperator{\Hom}{Hom}
\newcommand{\res}{\mathrm{res}}
\numberwithin{equation}{section}
\begin{document}

%\title{Iwasawa theory for ${\rm GL}_2\times{\rm GL}_2$ and diagonal cycles}

%\title{Diagonal cycles and anticyclotomic Euler systems for ${\rm GL}_2\times{\rm GL}_2$}

\title{The diagonal cycle Euler system for ${\rm GL}_2\times{\rm GL}_2$}

\author{Ra\'ul Alonso, Francesc Castella, and \'Oscar Rivero}

\begin{abstract}
We construct an anticyclotomic Euler system for the Rankin--Selberg convolutions of two modular forms, using $p$-adic families of generalized Gross--Kudla--Schoen diagonal cycles. As applications of this construction, we prove new results on the Bloch--Kato conjecture in analytic ranks zero and one, and a divisibility towards an Iwasawa main conjecture.
\end{abstract}

%\begin{keyword}
\keywords{Euler systems, diagonal cycles, $p$-adic families of modular forms, Bloch--Kato conjecture, Iwasawa theory}
%\end{keyword}

\date{\today}

\address{R. A.: Department of Mathematics, Princeton University, Fine Hall, Princeton, NJ 08544-1000, USA}
\email{raular@math.princeton.edu}

\address{F. C.: Department of Mathematics, University of California, Santa Barbara, CA 93106, USA}
\email{castella@ucsb.edu}

\address{O. R.: Mathematics Institute, Zeeman Building, University of Warwick, Coventry CV4 7AL, UK}
\email{Oscar.Rivero-Salgado@warwick.ac.uk}

\subjclass[2010]{11R23; 11F85, 14G35}

\maketitle

\setcounter{tocdepth}{1}
\tableofcontents

\section{Introduction}
\renewcommand{\thetheorem}{\Alph{theorem}}

In this paper we study the anticyclotomic Iwasawa theory of Rankin--Selberg convolutions of two modular forms using a new Euler system arising from $p$-adic families of diagonal cycles. By an application of Kolyvagin's methods, our construction yields results towards the Bloch--Kato conjecture and the Iwasawa main conjecture in this setting.

\subsection{Statement of the main results}\label{subsec:statements}

Let $g\in S_l(N_g,\chi_g)$ and $h\in S_m(N_h,\chi_h)$ be newforms of weights $l\geq m\geq 2$ of the same parity and nebentypus $\chi_g$ and $\chi_h$. Let $K/\Q$ be an imaginary quadratic field of discriminant $-D<0$. Let $k>0$ be an even integer, and let $\psi$ be a Hecke character of $K$ of infinity type $(1-k,0)$, conductor $\frk{f}$, and central character 
\[
\varepsilon_\psi=\bar{\chi}_g\bar{\chi}_h.
\] 

Fix an odd prime $p\nmid N_gN_h$ such that $(\frk{f},p)=1$ and an embedding $\iota_p:\overline{\Q}\hookrightarrow\overline{\Q}_p$, and let $E=L_{\mathfrak{P}}$ be a finite extension of $\Q_p$ containing the image under $\iota_p$ of the values of $\psi$ and the Fourier coefficients of $g$ and $h$. We consider the $E$-valued $G_K$-representation
\[
\Vrep:=V_g\otimes V_h(\psi_{\frk{P}}^{-1})(1-c),
\]
where $c=(k+l+m-2)/2$, $V_g$ and $V_h$ are the (dual of Deligne's) $p$-adic Galois representations associated to $g$ and $h$, respectively, and $\psi_{\frk{P}}$ is a $p$-adic Galois character attached to $\psi$.

The cyclotomic Iwasawa theory of $V_g\otimes V_h$ has been extensively studied in a series of works of Lei--Loeffler--Zerbes \cite{LLZ0, LLZ} and Kings--Loeffler--Zerbes \cite{KLZ,KLZ0}, among others (\cite{BKLV}, \cite{BL-IMRN}, etc.). The key tool exploited in these works is the Euler system of Beilinson--Flach classes, a system of cohomology classes 
%obtained by varying in tame directions and in $p$-adic families the images under a $p$-adic \'{e}tale regulator map of
arising from certain special elements (introduced by Beilinson \cite{beilinson}, and further studied by Flach \cite{flach-sym} and  Bertolini--Darmon--Rotger \cite{BDR1,BDR2}) in the $K_1$ of products of two modular curves.

In contrast, the anticyclotomic Iwasawa theory of $V_{g}\otimes V_h$ (or rather of its conjugate self-dual twists, such as $\Vrep$) appears to not have been studied before. The principal contribution of this paper is the construction of an anticyclotomic Euler system for $V_{g,h}^\psi$. As stated in Theorem~\ref{thmA} below (which corresponds to Theorem~\ref{principal:wild} in the body of the paper), for general weights $(k,l,m)$ our construction requires the additional assumptions that $p$ splits in $K$ and $p\nmid h_K$ (the class number of $K$), and that both $g$ and $h$ are ordinary at $p$, but note that for $(k,l,m)=(2,l,l)$ Theorem~\ref{principal:tame} contains a version of our main result without these additional hypotheses.

\begin{theorem}
\label{thmA}
Suppose that $p$ splits in $K$ and $p\nmid h_K$, and that both $g$ and $h$ are ordinary at $p$. 
Let $\mathcal{S}$ be the set of all squarefree products of primes split $q$ in $K$ and coprime to $DN_gN_h\frk{f}$, and denote by $K[n]$ the maximal $p$-extension of $K$ inside the ring class field of conductor $n$. Then there exists a family of cohomology classes
\[
\kappa_{\psi,g,h,np^r}\in H^1(K[np^r],\Trep)
\]
for all $n\in\mathcal S$ and $r\geq 0$, where $\Trep$ is a fixed $G_K$-stable lattice inside $\Vrep$, such that for all $nq\in\mathcal{S}$ with $q$ prime, we have
\[
{\rm cor}_{K[nqp^r]/K[np^r]}(\kappa_{\psi,g,h,nqp^r})=\begin{cases}
P_{\frk{q}}({\rm Fr}_{\frk{q}}^{-1})\,\kappa_{\psi,g,h,np^r}&\textrm{if $q\neq p$,}\\
\kappa_{\psi,g,h,np^r}&\textrm{if $q=p$},\\
\end{cases}
\]
where $\frk{q}$ is any of the primes of $K$ above $q$, and $P_{\frk{q}}({\rm Fr}_{\frk{q}}^{-1})=\det(1-{\rm Fr}_{\frk{q}}^{-1}X\vert(V_{g,h}^\psi)^\vee(1))$.
\end{theorem}

The construction of this Euler system, which is taken up in the first part of the paper, is based on the diagonal classes studied by Darmon--Rotger \cite{DR1,DR2,DR3} and Bertolini--Seveso--Venerucci \cite{BSV}, extending earlier constructions due to Gross--Kudla \cite{gross-kudla} and Gross--Schoen \cite{gross-schoen}. Roughly speaking, our classes $\kappa_{\psi,g,h,np^r}$ are suitable modifications of diagonal classes for the triples $(\tilde{\theta}_{\psi,np^r},g,h)$, where $\tilde{\theta}_{\psi,np^r}$ is a certain deformation of the theta series associated to $\psi$, and the main difficulty in the proof of Theorem~\ref{thmA} is in establishing the Euler system norm relations.

The main results in the second part of the paper are the proof of new cases of the Bloch--Kato conjecture for $\Vrep$ in analytic rank zero and a divisibility towards the Iwasawa main conjecture for $\Vrep$. These are obtained by applying Kolyvagin's methods (in the form recently developed by Jetchev--Nekov{\'a}{\v{r}}--Skinner \cite{JNS} in the anticyclotomic setting) to our Euler system. In the results that follow, we use ``big image'' to refer to Hypothesis~(HS) in $\S\ref{sec:ES}$, for which sufficient conditions are given in $\S\ref{subsec:verify}$.

\begin{theorem}
\label{thmB}
Suppose that:
\begin{itemize}
\item[{\rm (a)}] $g$ and $h$ are ordinary at $p$, non-Eisenstein and $p$-distinguished,
\item[{\rm (b)}] $p$ splits in $K$, %and does not divide the class number of $K$,
\item[{\rm (c)}] $p$ does not divide the class number of $K$,
%\item[{\rm (c)}] $\chi_g\chi_h\neq 1$,
%\item[{\rm (d)}] $\psi$ is non-Eisenstein and $p$-distinguished,
\item[{\rm (d)}] $V_{g,h}^\psi$ has big image.
\end{itemize}
Let
\[
\kap:=\kapone.
\]
If $l-m<k<l+m$, then the following implication holds:
\[
\kap\neq 0\quad\Longrightarrow\quad{\rm dim}_E\,\Sel(K,\Vrep)=1,
\]
where $\Sel(K,\Vrep)\subset H^1(G_K,\Vrep)$ is the Bloch--Kato Selmer group.
\end{theorem}

\begin{remark-intro}
\begin{itemize}
\item[(1)]
For $k=l=m=2$, together with the Gross--Zagier formula for diagonal cycles by Yuan--Zhang--Zhang \cite{YZZ}, Theorem~\ref{thmB} supports the Bloch--Kato conjecture for $V^\psi_{g,h}$ in analytic rank one, reducing it to the expected injectivity of the $p$-adic \'{e}tale  Abel--Jacobi map. 
\item[(2)]
Still in the case $k=l=m=2$, combined with the $p$-adic Gross--Zagier formula for diagonal cycles in forthcoming work of Hsieh--Yamana \cite{HY}, Theorem~\ref{thmB} establishes some cases of Perrin-Riou's $p$-adic Beilinson conjecture in analytic rank one.
\item[(3)] In general, by the main result of \cite{DR1}, the nonvanishing of $\kappa_{\psi,g,h}$ also follows from the nonvanishing of a special value of the triple product $p$-adic $L$-function $\mathscr{L}_p(\hf,g,h)$ introduced below.
\end{itemize}
\end{remark-intro}

%\begin{remark}
%Similarly, in the $p$-adic setting, combined with a forthcoming $p$-adic Gross--Zagier formula for diagonal cycles by Hsieh--Yamana \cite{HY}, Theorem~A can be framed as establishing some cases of Perrin-Riou's $p$-adic Beilinson conjecture in analytic rank one.
%\end{remark}

In analytic rank zero, we get unconditional applications to the Bloch--Kato conjecture. Let $f=\theta_\psi\in S_k(N_\psi,\varepsilon_\psi)$ be the theta series associated to $\psi$, let $\varepsilon_\ell(\Vrep)$ be the epsilon factor of the Weil--Deligne representation associated to the restriction of $V_{f}\otimes V_g\otimes V_h(1-c)$ to $G_{\Q_\ell}$, and put $N=\lcm(N_\psi,N_g,N_h)$.

\begin{theorem}
\label{thmC}
Let the hypotheses be as in Theorem~\ref{thmB}, and assume in addition that
\begin{itemize}
\item $\varepsilon_\ell(\Vrep)=+1$ for all primes $\ell\mid N$,
\item ${\rm gcd}(N_\psi,N_g,N_h)$ is squarefree.
%Suppose that:
%\begin{itemize}
%\item[{\rm (a)}] $g$ and $h$ are ordinary at $p$, non-Eisenstein and $p$-distinguished,
%\item[{\rm (b)}] $p$ splits in $K$,
%\item[{\rm (c)}] $p$ does not divide the class number of $K$,
%\item[{\rm (d)}] $V_{g,h}^\psi$ has big image,
%\item[{\rm (e)}] $\varepsilon_\ell(\Vrep)=+1$ for all primes $\ell%\mid N$,
%\item[{\rm (f)}] ${\rm gcd}(N_\psi,N_g,N_h)$ is squarefree.
\end{itemize}
If $k\geq l+m$, then 
\[
L(\Vrep,0)\neq 0\quad\Longrightarrow\quad\Sel(K,\Vrep)=0,
\]
and hence the Bloch--Kato conjecture for $\Vrep$ holds in analytic rank zero.
\end{theorem}

\begin{remark-intro}
Here $L(\Vrep,s)$ is the triple product $L$-function introduced by Garrett, Piatetski--Shapiro and Rallis, which satisfies a functional equation relating its values at $s$ and $-s$. When $k\geq l+m$, the local root number condition in Theorem~\ref{thmC} implies that the sign in this functional equation is $+1$, and so the central $L$-values $L(\Vrep,0)$ are expected to be generically nonzero.
\end{remark-intro}

%\begin{remark}
%Refining Theorem~B, we can also bound the size of the Bloch--Kato Selmer group for the discrete module $A_{g,h}^\psi=V_{g,h}^\psi/T_{g,h}^\psi$ in terms of the algebraic part of $L(\Vrep,0)$, as predicted by the ``Tamagawa number conjecture'' of \cite{BK} (see Theorem~\ref{thm:size-BK}).
%\end{remark}

A third application is to the anticyclotomic Iwasawa main conjectures for Rankin--Selberg convolutions. Let $(\hf,\hg,\hh)$ be a triple of $p$-adic Hida families. In \cite{Hs}, Hsieh has constructed a square-root triple product $p$-adic $L$-function $\mathscr{L}_p(\hf,\hg,\hh)$ whose square interpolates the central values of the triple product $L$-function attached to the classical specializations of $(\hf,\hg,\hh)$ to weights $(k_1,k_2,k_3)$ with $k_1\geq k_2+k_3$. 
%(More precisely, as recalled in $\S\ref{subsec:Lp}$, this $p$-adic $L$-function is attached to triples of \emph{test vectors} for the Hida families $(\hf,\hg,\hh)$ and a generator $\xi$ of the congruence ideal for $\hf$.)  
Letting $\hg$ and $\hh$ be the Hida families passing through the ordinary $p$-stabilizations of $g$ and $h$, respectively, we obtain an element
\[
\mathscr{L}_p(\hf,g,h)\in\Lambda_{\hf}
\]
interpolating a square-root of the above central $L$-values for the specializations of $\hf$ to weights $k\geq l+m$, where $\Lambda_{\hf}$ is the finite flat extension of $\Lambda=\mathbb Z_p[[1+p\mathbb Z_p]]$ generated by the coefficients of $\hf$. Greenberg's generalization of the Iwasawa main conjectures \cite{Gr94} predicts that $\mathscr{L}_p(\hf,g,h)^2$ generates the $\Lambda_{\hf}$-characteristic ideal of a certain torsion Selmer group $X_{\mathcal F}(\mathbb A_{\hf gh}^\dag)$. 
%whose specializations to weights $k\geq l+m$ recover the Bloch--Kato Selmer groups for the corresponding  specializations of $\mathbb A_{\hf gh}^\dag$. 
We also show that our classes $\kapn$ are universal norms in the $p$-direction, therefore giving rise in particular to an Iwasawa cohomology class
\[
\kapinfty\in H^1_{\rm Iw}(K_\infty,T_{g,h}^\psi)
%:=\varprojlim_mH^1(K_m,T_{g,h}^\psi)
\]
for the anticyclotomic $\mathbb{Z}_p$-extension $K_\infty/K$. 

The class $\kapinfty$ is associated with the triple $(\hf,g,h)$, where $\hf=\hf_\psi$ is a CM Hida family attached to $\psi$
for which %(under our hypotheses on $p$) 
$\Lambda_{\hf}\cong\Lambda_{\ac}$, the anticyclotomic Iwasawa algebra.
Assuming the non-triviality of $\kapinfty$, we can prove the following result towards the Iwasawa main conjecture for $\mathscr{L}_p(\hf,g,h)^2$. %(see Conjecture~\ref{with}).

\begin{theorem}
\label{thmD}
Let $\hf=\hf_\psi$, and suppose that:
\begin{itemize}
\item[{\rm (a)}] $g$ and $h$ are ordinary at $p$, non-Eisenstein and $p$-distinguished,
\item[{\rm (b)}] $p$ splits in $K$,
\item[{\rm (c)}] $p$ does not divide the class number of $K$,
%\item[{\rm (d)}] $\psi$ is non-Eisenstein and $p$-distinguished,
\item[{\rm (d)}] $V_{g,h}^\psi$ has big image,
\item[{\rm (e)}] $\varepsilon_\ell(\Vrep)=+1$ for all primes $\ell\mid N$,
\item[{\rm (f)}] ${\rm gcd}(N_\psi,N_g,N_h)$ is squarefree.
\end{itemize}
If $\kapinfty$ is not $\Lambda_{\ac}$-torsion, then the module $X_{\mathcal F}(\mathbb A_{\hf gh}^\dag)$ is $\Lambda_{\ac}$-torsion, and
\[
\Char_{\Lambda_{\ac}}(X_{\mathcal F}(\mathbb A_{\hf gh}^{\dag})) \supset (\mathscr{L}_p(\hf,g,h)^2)
\]
in $\Lambda_{\ac}\otimes_{\mathbb Z_p}\mathbb Q_p$.
\end{theorem}

%Thus Theorem~C provides one of the divisibilities towards the Iwasawa main conjecture for $\mathscr{L}_p(\hf,g,h)^2$ (see Conjecture~\ref{with}).

\begin{remark-intro}
The classes $\kapn$ may be viewed as a counterpart in the study of the arithmetic of $V_{g,h}^\psi$ to systems of Heegner points and Heegner cycles for individual modular forms. It would be interesting to see whether the methods of Cornut--Vatsal can be extended to establish the non-triviality of $\kapinfty$.
\end{remark-intro}

\begin{remark-intro} 
The ``big image'' hypothesis on $V_{g,h}^\psi$ excludes some cases of arithmetic interest; notably, the case in which $h=g^*$ is the dual of $g$ (assuming $\psi$ has trivial central character) %; see Hypotheses~\ref{hyp:h1-h9}) 
is excluded from our applications in this paper. 
%to the Bloch--Kato conjecture and the Iwasawa main conjecture. 
We study this case in \cite{ACR2}, where building on (a suitable projection of) the classes $\kappa_{\psi,g,g^*,n}$ constructed in this paper, we obtain a new anticyclotomic Euler systems for twists of the three-dimensional $G_K$-representation ${\rm ad}^0(V_g)$, with applications to the Bloch--Kato conjecture in rank zero and the Iwasawa main conjecture in this setting. 
% Notwithstanding, in future work we plan to use (a suitable projection of) the classes $\kapn$ for $g=h$ to study the anticyclotomic Iwasawa theory of ${\rm Sym}^2g$. The case in which either $g$ or $h$ has CM will appear in the forthcoming PhD thesis of Kim Tuan Do.
\end{remark-intro}

\begin{remark-intro}
As already noted, the anticyclotomic Euler system classes constructed in this paper arise from diagonal classes attached to triples $(f,g,h)$ of modular forms in which $f$ varies over certain CM form by $K$. A modification of this construction with $g$ and $h$ varying among certain CM forms for the same imaginary quadratic field $K$ gives rise to a new anticyclotomic Euler system for twists of $V_f\vert_{G_K}$. This construction, and its arithmetic applications, is studied in \cite{Do-PhD,CaDo}.
%. This construction, and its applications to new cases of the Bloch--Kato conjecture and the Iwasawa main conjecture, are explored in \cite{Do-PhD,CaDo}.
% a setting studied by Kim Tuan Do in his PhD thesis \cite{Do-PhD} and further explored in \cite{CaDo}. 
\end{remark-intro}

\subsection{Acknowledgements}
It is a pleasure to thank Chris Skinner, as well as K\^azim B\"u\-y\"uk\-bo\-duk, Daniel Disegni, Ming-Lun Hsieh, Antonio Lei, Victor Rotger and Shou-Wu Zhang, for several helpful conversations related to this work. This project has received funding from the ERC under the European Union's Horizon 2020 research and innovation programme (grant agreement No 682152). During the preparation of this paper, F.C. was partially supported by the NSF grants DMS-1946136 and DMS-2101458; O.R. was supported by a Royal Society Newton International Fellowship and by ``la Caixa'' Foundation (grant LCF/BQ/ES17/11600010).

\part{The diagonal cycle Euler system}

%\section{Tame norm relations}\label{record}

%In this section, we revisit the construction of diagonal classes of Bertolini--Seveso--Venerucci for triple products of modular forms, and use it construct %(in the terminology of $\S\ref{sec:ES}$) an anticyclotomic Euler system (in the terminology of $\S\ref{sec:ES}$) for twisted Rankin--Selberg convolutions of two modular forms. The main result of this section is Theorem~\ref{principal:tame}, establishing the tame norm relations.

\renewcommand{\thetheorem}{\arabic{section}.\arabic{theorem}}
\section{Preliminaries}\label{subsec:prelim}

In this section be begin by discussing our conventions regarding modular curves and Hecke operators, for which we shall largely follow \cite[\S2]{Kato} and \cite[\S2]{BSV}.

\subsection{Modular curves}

Given integers  $M\geq 1$, $N\geq 1$, $m\geq 1$ and $n\geq 1$ with
$M+N\geq 5$, we denote by $Y(M(m),N(n))$ the affine modular curve over $\bb{Z}[1/MNmn]$ representing the functor taking a $\bb{Z}[1/MNmn]$-scheme $S$ to the set of isomorphism classes of 5-tuples $(E,P,Q,C,D)$, where:
\begin{itemize}
\item $E$ is an elliptic curve over $S$,
\item $P$ is an $S$-point of $E$ of order $M$,
\item $Q$ is an $S$-point of $E$ of order $N$,
\item $C$ is a cyclic order-$Mm$ subgroup of $E$ defined over $S$ and containing $P$,
\item $D$ is a cyclic order-$Nn$ subgroup of $E$ defined over $S$ and containing $Q$,
\end{itemize}
and such that $C$ and $D$ have trivial intersection. If either $m=1$ or $n=1$ we omit it from the notation, and we will often write $Y_1(N)$ for $Y(1,N)$.

We will denote by
$$
E(M(m),N(n))\rightarrow Y(M(m),N(n))
$$
the universal elliptic curve over $Y(M(m),N(n))$.

Define the modular group
$$
\Gamma(M(m),N(n))=\left\{\begin{pmatrix} a & b \\ c & d\end{pmatrix}\in \text{SL}_2(\bb{Z}): a\equiv 1\,(M), b\equiv 0\,(Mm), c\equiv 0\,(Nn),d\equiv 1\,(N)\right\}.
$$
Then, letting $\mathcal{H}$ be the Poincar\'e upper half-plane, we have the complex uniformization
\begin{equation}\label{eq:unif}
Y(M(m),N(n))(\bb{C})\cong (\bb{Z}/M\bb{Z})^\times\times\Gamma(M(m),N(n))\backslash\mathcal{H},
\end{equation}
with a pair $(a,\tau)$ on the right-hand side corresponding
to the isomorphism class of the 5-tuple $(\bb{C}/\bb{Z}+\bb{Z}\tau, a\tau/M,1/N,\langle \tau/Mm\rangle, \langle 1/Nn\rangle)$.

If $r\geq 1$ is an integer, there is an isomorphism of $\bb{Z}[1/MNmnr]$-schemes
$$
\varphi_r: Y(M(m),N(nr))\xrightarrow{\simeq} Y(M(mr),N(n))
$$
defined in terms of moduli by
$$
(E,P,Q,C,D)\mapsto (E', P', Q', C',D'),
$$
where $E'=E/NnD$, $P'$ is the image of $P$ in $E'$, $Q'$ is the image of $r^{-1}(Q)\cap D$ in $E'$, $C'$ is the image of $r^{-1}(C)$ in $E'$, and $D'$ is the image of $D$ in $E'$. Under the complex uniformizations $(\ref{eq:unif})$, the isomorphism $\varphi_r$ sends $(a,\tau)\mapsto (a,r\cdot\tau)$. If
$$
\varphi_r^\ast(E(M(mr),N(n)))\rightarrow Y(M(m),N(nr))
$$
denotes the base change of $E(M(mr),N(n))\rightarrow Y(M(mr),N(n))$ under $\varphi_r$, there is a natural degree-$r$ isogeny
$$
\lambda_r:E(M(m),N(nr))\rightarrow \varphi_r^\ast(E(M(mr),N(n))).
$$

\subsection{Degeneracy maps}

With the same notations as above, we have natural degeneracy maps
\begin{align*}
& Y(M(m),Nr(n))\xrightarrow{\mu_r} Y(M(m),N(nr))\xrightarrow{\nu_r} Y(M(m),N(n)), \\
& Y(Mr(m),N(n))\xrightarrow{\check{\mu}_r} Y(M(mr),N(n))\xrightarrow{\check{\nu}_r} Y(M(m),N(n)),
\end{align*}
forgetting the extra level structure, e.g.
\begin{align*}
& \mu_r(E,P,Q,C,D)=(E,P,r\cdot Q, C,D), \\
& \nu_r(E,P,Q,C,D)=(E,P,Q,C,rD).
\end{align*}
We also define degeneracy maps
\begin{equation}\label{eq:pi12}
\begin{aligned}
& \pi_1: Y(M(m),Nrs(nt))\rightarrow Y(M(m),N(ns)), \\
& \pi_2: Y(M(m),Nrs(nt))\rightarrow Y(M(m),N(ns)),
\end{aligned}
\end{equation}
acting on the moduli space by
\begin{align*}
& \pi_1(E,P,Q,C,D)=(E,P,rs\cdot Q,C, rtD), \\
& \pi_2(E,P,Q,C,D)=(E',P',Q',C',D'),
\end{align*}
where $E'=E/NnsD$, $P'$ is the image of $P$ in $E'$, $Q'$ is the image of $t^{-1}(s\cdot Q)\cap D$ in $E'$, $C'$ is the image of $C$ in $E'$ and $D'$ is the image of $D$ in $E'$. Under the complex uniformizations (\ref{eq:unif}), the maps $\pi_1$ and $\pi_2$ correspond to the identity and to multiplication by $rt$, respectively, on $\mathcal{H}$. It is straightforward to check that the maps $\pi_1$ and $\pi_2$ are given by the compositions
\begin{align*}
& Y(M(m),Nrs(nt))\xrightarrow{\mu_{rs}} Y(M(m),N(nrst))\xrightarrow{\nu_{rt}} Y(M(m),N(ns)), \\
& Y(M(m),Nrs(nt))\xrightarrow{\mu_{rs}} Y(M(m),N(nrst))\xrightarrow{\varphi_{rt}} Y(M(mrt),N(ns))\xrightarrow{\check{\nu}_{rt}} Y(M(m),N(ns)),
\end{align*}
respectively.

\subsection{Relative Tate modules}\label{subsubsec:relT}

Fix a prime $p$. Let $S$ be a $\bb{Z}[1/MNmnp]$-scheme and let
$$
v:E(M(m),N(n))_S\rightarrow Y(M(m),N(n))_S
$$
be the structural morphism.
For every $\mathbb Z[1/MNmnp]$-scheme $X$, denote by $A=A_X$ either the locally constant constructible sheaf $\mathbb Z/p^t(j)$ or the locally constant $p$-adic sheaf $\mathbb Z_p(j)$ on $X_{\et}$, for fixed $t\geq 1$ and $j\in\Z$. Set
$$
\mathscr{T}_{M(m),N(n)}(A)=R^1v_\ast \bb{Z}_p(1)\otimes_{\bb{Z}_p} A\quad\text{and}\quad \mathscr{T}_{M(m),N(n)}^\ast(A)=\Hom(\mathscr{T}_{M(m),N(n)}(A),A).
$$
In particular, in the case $A=\bb{Z}_p$ this gives the relative Tate module of the universal elliptic curve and its dual, respectively; in this case, we will often drop $A$ from the notation.

From the proper base change theorem, both $\mathscr T_{M(m),N(n)}(A)$ and $\mathscr T_{M(m),N(n)}^*(A)$ are locally constant $p$-adic sheaves on $Y(M(m),N(n))_S$ of formation compatible with base changes along morphisms of $\bb{Z}[1/MNmnp]$-schemes $S'\rightarrow S$.

For every integer $r\geq 0$, define
\[
\mathscr L_{M(m),N(n),r}(A) = \Tsym_A^r \mathscr T_{M(m),N(n)}(A), \quad \mathscr S_{M(m),N(n),r}(A) = \Symm_A^r \mathscr T_{M(m),N(n)}^*(A),
\]
where, for any finite free module $M$ over a profinite $\mathbb Z_p$-algebra $R$, one denotes by $\Tsym_R^r M$ the $R$-submodule of symmetric tensors in $M^{\otimes r}$ and by $\Symm_R^r M$ the maximal symmetric quotient of $M^{\otimes r}$.

When the level of the modular curve $Y(M(m),N(n))_S$ is clear, we may use the simplified notations \[ \mathscr L_r(A) = \mathscr L_{M(m),N(n),r}(A), \quad \mathscr L_r = \mathscr L_r(\mathbb Z_p), \quad \mathscr S_r(A) = \mathscr S_{M(m),N(n),r}(A), \quad \mathscr S_r = \mathscr S_r(\mathbb Z_p). \]

\subsection{Hecke operators}\label{subsec:hecke}

Let $\mathscr{F}_{M(m),N(n)}^r$ denote either $\mathscr{L}_{M(m),N(n),r}(A)$ or $\mathscr{S}_{M(m),N(n),r}(A)$ and let $q$ be a rational prime. Then there are natural isomorphisms of sheaves
\begin{equation}\label{eq:iso-q}
\nu_q^\ast(\mathscr{F}_{M(m),N(n)}^r)\cong \mathscr{F}_{M(m),N(nq)}^r\quad\text{and}\quad \check{\nu}_q^\ast(\mathscr{F}_{M(m),N(n)}^r)\cong \mathscr{F}_{M(mq),N(n)}^r,
\end{equation}
and therefore pullback morphisms
\begin{align*}
& H^i_{\et}(Y(M(m),N(n))_S,\mathscr{F}_{M(m),N(n)}^r)\xrightarrow{\nu_q^\ast} H^i_{\et}(Y(M(m),N(nq))_S,\mathscr{F}_{M(m),N(nq)}^r), \\
& H^i_{\et}(Y(M(m),N(n))_S,\mathscr{F}_{M(m),N(n)}^r)\xrightarrow{\check{\nu}_q^\ast} H^i_{\et}(Y(M(mq),N(n))_S,\mathscr{F}_{M(mq),N(n)}^r),
\end{align*}
and traces
\begin{equation}\label{eq:trace}
\begin{aligned}
& H^i_{\et}(Y(M(m),N(nq))_S,\mathscr{F}_{M(m),N(nq)}^r)\xrightarrow{\nu_{q\ast}} H^i_{\et}(Y(M(m),N(n))_S,\mathscr{F}_{M(m),N(n)}^r), \\
& H^i_{\et}(Y(M(mq),N(n))_S,\mathscr{F}_{M(mq),N(n)}^r)\xrightarrow{\check{\nu}_{q\ast}} H^i_{\et}(Y(M(m),N(n))_S,\mathscr{F}_{M(m),N(n)}^r).
\end{aligned}
\end{equation}

Also, the isogeny $\lambda_q$ induces morphisms of sheaves
$$
\lambda_{q\ast} : \mathscr{F}_{M(m),N(nq)}^r\rightarrow \varphi_q^\ast(\mathscr{F}_{M(mq),N(n)}^r) \quad\text{and}\quad
\lambda_{q}^\ast : \varphi_q^\ast(\mathscr{F}_{M(mq),N(n)}^r)\rightarrow \mathscr{F}_{M(m),N(nq)}^r.
$$
These morphisms allow us to define
\begin{align*}
& \Phi_{q\ast}:H^i_{\et}(Y(M(m),N(nq))_S,\mathscr{F}_{M(m),N(nq)}^r)\rightarrow H^i_{\et}(Y(M(mq),N(n))_S,\mathscr{F}_{M(mq),N(n)}^r), \\
& \Phi_{q}^\ast:H^i_{\et}(Y(M(mq),N(n))_S,\mathscr{F}_{M(mq),N(n)}^r)\rightarrow H^i_{\et}(Y(M(m),N(nq))_S,\mathscr{F}_{M(m),N(nq)}^r),
\end{align*}
as the compositions
$$
\Phi_{q\ast}=\varphi_{q\ast}\circ\lambda_{q\ast}\quad\text{and}\quad \Phi_q^\ast=\lambda_q^\ast\circ\varphi_q^\ast.
$$

We define the Hecke operators $T_q$ and the adjoint Hecke operators $T_q'$ acting on the \'{e}tale cohomology groups
$$
H^i_{\et}(Y(M(m),N(nq))_S,\mathscr{F}_{M(m),N(nq)}^r)
$$
as the compositions
$$
T_q=\check{\nu}_{q\ast}\circ\Phi_{q\ast}\circ\nu_q^\ast\quad\text{and}\quad T_q'=\nu_{q\ast}\circ\Phi_q^\ast\circ\check{\nu}_q^\ast.
$$

If we define pullbacks
\begin{align*}
& H^i_{\et}(Y(M(m),N(n))_S,\mathscr{F}_{M(m),N(n)}^r)\xrightarrow{\pi_1^\ast} H^i_{\et}(Y(M(m),N(nq))_S,\mathscr{F}_{M(m),N(nq)}^r), \\
& H^i_{\et}(Y(M(m),N(n))_S,\mathscr{F}_{M(m),N(n)}^r)\xrightarrow{\pi_2^\ast} H^i_{\et}(Y(M(mq),N(n))_S,\mathscr{F}_{M(mq),N(n)}^r),
\end{align*}
and pushforwards
\begin{align*}
& H^i_{\et}(Y(M(m),N(nq))_S,\mathscr{F}_{M(m),N(nq)}^r)\xrightarrow{\pi_{1\ast}} H^i_{\et}(Y(M(m),N(n))_S,\mathscr{F}_{M(m),N(n)}^r), \\
& H^i_{\et}(Y(M(mq),N(n))_S,\mathscr{F}_{M(mq),N(n)}^r)\xrightarrow{\pi_{2\ast}} H^i_{\et}(Y(M(m),N(n))_S,\mathscr{F}_{M(m),N(n)}^r),
\end{align*}
as
$$
\pi_1^\ast=\nu_q^\ast,\quad \pi_2^\ast=\Phi_q^\ast\circ\check{\nu}_q^\ast,\quad \pi_{1\ast}=\nu_{q\ast}\quad\text{and}\quad \pi_{2\ast}=\check{\nu}_{q\ast}\circ\Phi_{q\ast},
$$
then we can write
$$
T_q= \pi_{2\ast}\circ\pi_1^\ast\quad\text{and}\quad T_q'=\pi_{1\ast}\circ\pi_2^\ast.
$$

Now we introduce diamond operators. For $d\in (\bb{Z}/MN\bb{Z})^\times$, these are defined on the curves
$Y(M(m),N(n))$ as the automorphisms $\langle d\rangle$ acting on the moduli space by
$$
(E,P,Q,C,D)\mapsto (E,d^{-1}\cdot P,d\cdot Q, C, D).
$$
We can also define the diamond operator $\langle d \rangle$ on the corresponding universal elliptic curve as the unique automorphism making the diagram
\begin{center}
\begin{tikzpicture}
\matrix(m) [matrix of math nodes, row sep=2.6em, column sep=2.8em, text height=1.5ex, text depth=0.25ex]
{E(M(m),N(n))_S & E(M(m),N(n))_S \\ Y(M(m),N(n))_S & Y(M(m),N(n))_S \\ };
\path[->,font=\scriptsize,>=angle 90]
(m-1-1) edge node [auto] {$\langle d\rangle$} (m-1-2)
(m-2-1) edge node [auto] {$\langle d\rangle$} (m-2-2)
(m-1-1) edge node [auto] {$v$} (m-2-1)
(m-1-2) edge node [auto] {$v$} (m-2-2);
\end{tikzpicture}
\end{center}
cartesian. This in turn induces automorphisms $\langle d\rangle = \langle d\rangle^\ast$ and $\langle d\rangle'=\langle d\rangle_\ast$ on the group $H^i_{\et}(Y(M(m),N(n))_S,\mathscr{F}_{M(m),N(n)}^r)$ which are inverses of each other.

In general, we will be interested in modular curves of the form $Y(1(m),N(n))$. In this case, the natural pairing $\mathscr{L}_r\otimes_{\bb{Z}_p}\mathscr{S}_r\rightarrow \bb{Z}_p$ together with cup-product yields a pairing
$$
H^1_{\et}(Y(1(m),N(n))_S,\mathscr{L}_r(1))\otimes_{\bb{Z}_p} H^1_{\et,c}(Y(1(m),N(n))_S,\mathscr{S}_r)\rightarrow \bb{Z}_p
$$
which becomes perfect after inverting $p$. The operators $T_q$, $T_q'$, $\langle d\rangle$, $\langle d\rangle'$ induce endomorphisms on compactly supported cohomology and
$$
(T_q,T_q'),\quad (T_q',T_q),\quad (\langle d\rangle,\langle d\rangle') \quad\text{and}\quad (\langle d\rangle',\langle d\rangle)
$$
are adjoint pairs under this pairing.

\subsection{Galois representations}

Let $f\in S_k(N_f,\chi_f)$ be a newform of weight $k=r+2\geq 2$, level $N_f$ and character $\chi_f$. Let $p$ be a prime and let $E$ be a finite extension of $\bb{Q}_p$ with ring of integers $\cl{O}$ containing the Fourier coefficients of $f$. By work of Eichler--Shimura and Deligne, there is a two-dimensional representation
$$
\rho_f\,:\,G_\bb{Q}\longrightarrow \GL_2(E)
$$
unramified outside $pN_f$ and characterized by the property that
$$
{\rm trace}\,\rho_f(\Fr_{q})=a_{q}(f)
$$
for all primes $q\nmid pN_f$, where $\Fr_{q}$ denotes an arithmetic Frobenius element at $q$. (In fact, this is the dual of the $p$-adic representation constructed by Deligne.)

It will be convenient for our purposes to work with the following geometric realization of $\rho_f$. Let
\[
H^1_{\et}(Y_{1}(N_f)_{\overline{\bb{Q}}},\mathscr{L}_{r}(1))\otimes_{\Z_p}E\twoheadrightarrow V_f
\]
be the maximal quotient on which $T_q'$ and $\langle d\rangle'$ act as multiplication by $a_q(f)$ and $\chi_f(d)$ for all primes $q\nmid N_f$ and all $d\in(\Z/N_f\Z)^\times$. Then $V_f$ is a two-dimensional $E$-vector space affording the $p$-adic representation $\rho_f$, and we let $T_f\subset V_f$ be the lattice defined by the image of
\begin{align*}
H^1_{\et}(Y_{1}(N_f)_{\overline{\bb{Q}}},\mathscr{L}_{r}(1))\otimes_{\Z_p}\cl{O}
\end{align*}
under the above quotient map.

%More generally, replacing $N_f$ by any multiple $N$ thereof in the above definitions, we obtain the $G_{\Q}$-modules $V_f(N)$ and $T_f(N)$ which are (non-canonically) isomorphic to finitely many copies of $V_f$ and $T_f$, respectively.

\section{Hecke algebras and ring class fields}\label{subsec:llz}

In this section we extend the results of \cite[\S{5.2}]{LLZ}, including ring class field extensions of an imaginary quadratic field $K$. The resulting Corollary~\ref{cor:LLZ} will allow us to obtain classes over ring class field extensions of $K$ from diagonal cycles over $\Q$ on triple products of modular curves of varying levels.

%will be used in Section~\ref{subsec:tameeulersystem} to obtain, from diagonal cycle classes on different modular curves, a family of classes in the cohomology of a fixed Galois representation over varying ring class extensions of an imaginary quadratic field.

Thus let $K$ be an imaginary quadratic field of discriminant $-D<0$, and let $\varepsilon_K$ be the corresponding quadratic character. Let $\psi$ be a Gr\"ossencharacter of $K$ of infinity type $(-1,0)$ and conductor $\mathfrak f$, taking values in a finite extension $L/K$, and let $\chi$ be the unique Dirichlet character modulo $N_{K/\mathbb Q}(\mathfrak f)$ such that $\psi((n))=n \chi(n)$ for  integers $n$ coprime to $N_{K/\mathbb Q}(\mathfrak f)$. Put $N_\psi=N_{K/\mathbb Q}(\mathfrak f)D$, and let $\theta_{\psi} \in S_2(N_{\psi}, \chi\varepsilon_K)$ be the newform attached to $\psi$, i.e.,
\[ \theta_{\psi} = \sum_{(\mathfrak a, \mathfrak f)=1} \psi(\mathfrak a) q^{ N_{K/\mathbb Q}(\mathfrak a)}.
\]

Fix a prime $p\geq 5$ unramified in $K$, a prime $\frk{p}$ of $K$ above $p$ and a prime $\mathfrak{P}$ of $L$ above $\frk{p}$. Let $E=L_{\mathfrak P}$ %be the completion of $L$ at $\mathfrak{P}$,
and let $\cl{O}\subset E$ be the ring of integers. Let $\psi_{\mathfrak P}$ be the continuous $E$-valued character of $K^{\times} \backslash \mathbb A_{K,\text{f}}^{\times}$ defined by \[ \psi_{\mathfrak P}(x)=x_{\mathfrak p}^{-1} \psi(x), \] where $x_{\mathfrak p}$ is the projection of the id\`ele $x$ to the component at $\mathfrak p$. We will also denote by $\psi_\frk{P}$ the corresponding character of $G_K$ obtained via the geometric Artin map. Then $\Ind_K^\bb{Q} E(\psi_\frk{P}^{-1})$ is the $p$-adic representation attached to $\theta_\psi$.

\begin{defi}
For an integral ideal $\frk{n}$ of $K$, we denote by $H_\frk{n}$ the maximal $p$-quotient of the corresponding ray class group, and by $K(\frk{n})$ the maximal $p$-extension in the corresponding ray class field. We similarly define $R_n$ and $K[n]$, for each integer $n>0$, as the maximal $p$-quotient in the corresponding ring class group and the maximal $p$-extension in the corresponding ring class field.
\end{defi}

Let $\mathfrak n$ be an integral ideal of $K$ divisible by $\mathfrak f$, and let $N=N_{K/\mathbb Q}(\mathfrak n)D$, which is of course a multiple of $N_{\psi}$. Let $\bb{T}_1'(N)$ be the algebra generated by all the Hecke operators $T_q'$, $\langle d\rangle'$ acting on $H^1(Y_1(N)(\C),\Z)$.

\begin{propo}\label{prop:LLZ}
With the previous definitions and notations, there exists a homomorphism $\phi_{\frk{n}}:\bb{T}_1'(N) \rightarrow\cl{O}[H_\frk{n}]$ defined on generators by
$$
\phi_\frk{n}(T_{q}') = \sum_{\frk{q}}\psi(\frk{q})[\frk{q}]
$$
for every rational prime $q$, where the sum runs over ideals coprime to $\frk{n}$ of norm $q$; and
$$
\phi_\frk{n}(\langle d\rangle')=\chi(d)\varepsilon_K(d)[(d)].
$$
\end{propo}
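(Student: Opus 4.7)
The plan is to construct $\phi_\mathfrak n$ by assembling the Hecke eigensystems of the family of CM theta series obtained by twisting $\psi$ by the characters of $H_\mathfrak n$. For each character $\eta \colon H_\mathfrak n \to \overline{\Q}_p^\times$, the product $\psi_\eta := \psi \cdot \eta$ is a Hecke character of infinity type $(-1,0)$ whose conductor divides $\mathfrak n$, since $\mathfrak f \mid \mathfrak n$ and $\eta$ factors through the ray class group modulo $\mathfrak n$. By the classical theory of CM forms, the theta series
\[
\theta_{\psi_\eta} \;=\; \sum_{(\mathfrak a,\,\mathfrak n)=1} \psi_\eta(\mathfrak a)\,q^{N_{K/\Q}(\mathfrak a)}
\]
is a cuspidal Hecke eigenform in $S_2(\Gamma_1(N))$, and a direct computation from its Euler product shows that its eigenvalue for $T_q'$ is $\sum_{\mathfrak q}\psi(\mathfrak q)\eta([\mathfrak q])$, where the sum runs over primes $\mathfrak q$ of $K$ coprime to $\mathfrak n$ and of norm $q$, while its eigenvalue for $\langle d\rangle'$ is $\chi(d)\varepsilon_K(d)\eta([(d)])$.

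Next, after enlarging $E$ so that it contains all $|H_\mathfrak n|$-th roots of unity, one has the canonical character-wise decomposition
\[
\cl O[H_\mathfrak n] \otimes_{\cl O} E \;\xrightarrow{\;\sim\;}\; \prod_{\eta}E, \qquad \xi \longmapsto \bigl(\eta(\xi)\bigr)_\eta,
\]
indexed by characters $\eta$ of $H_\mathfrak n$. Combining the eigensystems of the forms $\theta_{\psi_\eta}$ defines a ring homomorphism $\tilde\phi_\mathfrak n \colon \bb T_1'(N) \to \prod_\eta E$, since each projection to an $\eta$-component is a ring homomorphism. The eigenvalue formulas above say precisely that $\tilde\phi_\mathfrak n$ sends the generators $T_q'$ and $\langle d\rangle'$ to the images under the displayed decomposition of the elements $\sum_\mathfrak q \psi(\mathfrak q)[\mathfrak q]$ and $\chi(d)\varepsilon_K(d)[(d)]$ of $\cl O[H_\mathfrak n]$. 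Because $\bb T_1'(N)$ is generated over $\cl O$ by these operators, $\tilde \phi_\mathfrak n$ factors through the embedding $\cl O[H_\mathfrak n] \hookrightarrow \prod_\eta E$, giving the desired $\phi_\mathfrak n$. No enlargement of $\cl O$ is needed in the final statement, as integrality of the images on generators is manifest from the defining formulas.

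The main technical point is to verify that $\theta_{\psi_\eta}$ really is a $T_q'$-eigenform with the prescribed eigenvalue when viewed at level $N$ \emph{for every} rational prime $q$, including those dividing $N$ or the conductor of $\psi_\eta$. This is the standard CM-form calculation based on the Euler product
\[
L(\theta_{\psi_\eta},s) \;=\; \prod_{\mathfrak q \,\nmid\, \mathfrak n}\bigl(1 - \psi_\eta(\mathfrak q)\,N_{K/\Q}(\mathfrak q)^{-s}\bigr)^{-1},
\]
combined with a careful identification of the action of $T_q'$ and $\langle d\rangle'$ on the appropriate oldform copy of $\theta_{\psi_\eta}$ inside $S_2(\Gamma_1(N))$; the resulting eigenvalues are exactly the naive sums $\sum_\mathfrak q \psi(\mathfrak q)\eta([\mathfrak q])$ and $\chi(d)\varepsilon_K(d)\eta([(d)])$, matching the prescribed formulas. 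A minor bookkeeping point is that $H_\mathfrak n$ is the maximal $p$-quotient of the ray class group, so the characters $\eta$ appearing are of $p$-power order; this plays no role in the argument beyond ensuring that the values of $\eta$ lie in a finite extension of $E$.
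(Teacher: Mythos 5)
The paper's proof of this proposition is a one-line citation to Proposition 3.2.1 of Lei--Loeffler--Zerbes, so there is no self-contained argument in the paper against which to directly compare yours; what you have written is essentially an attempted reconstruction of the cited result. Your overall strategy is sound and is in the spirit of what LLZ do: exhibit, for each character $\eta$ of $H_{\mathfrak n}$, an eigenform with the desired $T_q'$- and $\langle d\rangle'$-eigensystem, and assemble these into a group-ring-valued ring homomorphism which then necessarily lands in $\mathcal O[H_{\mathfrak n}]$ because the generators visibly do. The observation that $\tilde\phi_{\mathfrak n}$ factors because $\mathbb T_1'(N)$ is generated by the $T_q'$ and $\langle d\rangle'$ is correct.

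However, the heart of the matter is exactly the step you flag as "the main technical point" and then dispose of as "the standard CM-form calculation." It is not standard in the sense of following from the Euler product alone. The series you write down, $\sum_{(\mathfrak a,\mathfrak n)=1}\psi_\eta(\mathfrak a)q^{N(\mathfrak a)}$, is not the newform $\theta_{\psi_\eta}$ (whose coefficients range over $\mathfrak a$ coprime to $\mathfrak f_\eta$, a possibly smaller condition); it is a very particular element of the oldform span at level $N$, and it is an eigenvector for the \emph{adjoint} operators $T_q'$, not for $T_q$, at primes $q\mid N$. Pinning down that this specific element is a simultaneous $T_q'$-eigenvector at level $N$, with eigenvalue $\sum_{\mathfrak q\nmid\mathfrak n,\,N\mathfrak q=q}\psi_\eta(\mathfrak q)$ even when only some primes above $q$ divide $\mathfrak n$ (e.g.\ $q$ split with $\mathfrak q\mid\mathfrak n$ but $\overline{\mathfrak q}\nmid\mathfrak n$), requires an explicit computation with the degeneracy maps and the adjoint Hecke action that is precisely the substance of LLZ Prop.~3.2.1. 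As written, your proof asserts the conclusion of the cited lemma rather than proving it; the paper circumvents this by the citation. A secondary, stylistic point: LLZ work directly with the $\mathcal O[H_{\mathfrak n}]$-valued $q$-expansion $\sum\psi(\mathfrak a)[\mathfrak a]q^{N(\mathfrak a)}$, avoiding the base change to split characters of $H_{\mathfrak n}$; your character-by-character decomposition is an equivalent but slightly heavier route.
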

\begin{proof}
This follows immediately from \cite[Prop. 3.2.1]{LLZ}.
\end{proof}

Now let $\frk{n'}=\frk{nq}$ for some prime ideal $\frk{q}$ above a rational prime $q$. Assume that $\frk{n'}$ is coprime to $p$,
%(if $p$ splits in $K$, we can just require that $\frk{n'}$ be coprime to $\overline{\frk{p}}$),
and let $N'=N_{K/\bb{Q}}(\frk{n'})D$. Following \cite[\S3.3]{LLZ}, we define norm maps
$$
\cl{N}_\frk{n}^\frk{n'}:\cl{O}[H_\frk{n'}]\otimes_{(\bb{T}_1'(N')\otimes\Z_p,\phi_\frk{n'})} H^1_{\et}(Y_1(N')_{\overline{\bb{Q}}},\bb{Z}_p(1))\longrightarrow \cl{O}[H_\frk{n}]\otimes_{(\bb{T}_1'(N)\otimes\Z_p,\phi_\frk{n})} H^1_{\et}(Y_1(N)_{\overline{\bb{Q}}},\bb{Z}_p(1))
$$
by the formulae:
\begin{itemize}

\item if $\frk{q}\mid\frk{n}$,
$$
\cl{N}_\frk{n}^\frk{n'}=1\otimes \pi_{1\ast};
$$
\item if $\frk{q}\nmid\frk{n}$ and $\frk{q}$ is ramified or split,
$$
\cl{N}_\frk{n}^\frk{n'}=1\otimes \pi_{1\ast}-\frac{\psi(\frk{q})[\frk{q}]}{q}\otimes \pi_{2\ast};
$$
\item if $\frk{q}\nmid\frk{n}$ and $\frk{q}$ is inert,
$$
\cl{N}_\frk{n}^\frk{n'}=1\otimes \pi_{1\ast}-\frac{\psi(\frk{q})[\frk{q}]}{q^2}\otimes \pi_{2\ast}.
$$

\end{itemize}

%If $p$ splits in $K$, the maps above are also well-defined if $\frk{p}\mid\frk{n'}$ (see \cite[Prop. 4.3.6]{LLZ} and \cite[Prop. 5.2.5]{LLZ}).

More generally, for $\frk{n'}=\frk{n}\frk{r}$ with $\frk{r}$ a product of (not necessarily distinct) prime ideals, we define the map $\cl{N}_\frk{n}^\frk{n'}$ by composing in the natural way the previously defined norm maps.

From now on, we assume that in the case where $(p)=\frk{p}\overline{\frk{p}}$ splits in $K$ the following holds: If $\frk{p}\mid\frk{f}$  then $\overline{\frk{p}}\nmid\frk{f}$ and $\psi\vert_{\cl{O}_{K,\frk{p}}^\times}$ is not congruent to the Teichm\"uller character modulo $\frk{P}$.

\begin{theorem}\label{thm:LLZ}
Let $A$ be the set of prime ideals of $K$ coprime to $\overline{\frk{p}}$ (resp. $p$) if $p$ splits (resp. is inert) in $K$ and divisible by $\frk{f}$. Then there is a family of $G_\bb{Q}$-equivariant isomorphisms of $\cl{O}[H_\frk{n}]$-modules
\begin{center}
\begin{tikzpicture}
\matrix(m) [matrix of math nodes, row sep=2.6em, column sep=2.8em, text height=1.5ex, text depth=0.25ex]
{\nu_\frk{n}:\cl{O}[H_\frk{n}]\otimes_{(\bb{T}_1'(N)\otimes\Z_p,\phi_\frk{n})} H^1_{\et}(Y_1(N)_{\overline{\bb{Q}}},\bb{Z}_p(1)) & \Ind_{K(\frk{n})}^\bb{Q}\cl{O}(\psi_\frk{P}^{-1}), \\};
\path[->,font=\scriptsize,>=angle 90]
(m-1-1) edge node [above] {$\simeq$} (m-1-2);
\end{tikzpicture}
\end{center}
for all $\frk{n}\in A$, such that for $\frk{n}\mid\frk{n'}$ the diagram
\begin{center}
\begin{tikzpicture}
\matrix(m) [matrix of math nodes, row sep=2.6em, column sep=2.8em, text height=1.5ex, text depth=0.25ex]
{\cl{O}[H_\frk{n'}]\otimes_{(\bb{T}_1'(N')\otimes\Z_p,\phi_\frk{n'})} H^1_{\et}(Y_1(N')_{\overline{\bb{Q}}},\bb{Z}_p(1)) & \Ind_{K(\frk{n'})}^\bb{Q}\cl{O}(\psi_\frk{P}^{-1}) \\
\cl{O}[H_\frk{n}]\otimes_{(\bb{T}_1'(N)\otimes\Z_p,\phi_\frk{n})} H^1_{\et}(Y_1(N)_{\overline{\bb{Q}}},\bb{Z}_p(1)) & \Ind_{K(\frk{n})}^\bb{Q}\cl{O}(\psi_\frk{P}^{-1}) \\};
\path[->,font=\scriptsize,>=angle 90]
(m-1-1) edge node [above] {$\nu_\frk{n'}$}
             node [below] {$\simeq$} (m-1-2)
(m-1-1) edge node [left] {$\cl{N}_\frk{n}^\frk{n'}$} (m-2-1)
(m-2-1) edge node [above] {$\nu_\frk{n}$}
             node [below] {$\simeq$} (m-2-2)
(m-1-2) edge node [auto] {} (m-2-2);
\end{tikzpicture}
\end{center}
commutes, where the right vertical arrow is the natural norm map.
\end{theorem}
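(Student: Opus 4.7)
The plan is to follow the scheme of Lei--Loeffler--Zerbes \cite[\S 5.2]{LLZ}: I would decompose the left-hand side into character eigenspaces indexed by $H_\frk{n}$, identify each eigenspace via the CM theta correspondence, and reassemble to obtain the induced representation on the right.

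More precisely, after inverting $p$ and enlarging scalars to $\overline{\bb{Q}}_p$, the group algebra decomposes as $\cl{O}[H_\frk{n}]\otimes\overline{\bb{Q}}_p\cong\bigoplus_\eta\overline{\bb{Q}}_p$, the sum ranging over characters $\eta$ of $H_\frk{n}$. Under this decomposition, the composition $\eta\circ\phi_\frk{n}:\bb{T}_1'(N)\to\overline{\bb{Q}}_p$ sends $T_q'$ to $\sum_{\frk{q}'\mid q}(\eta\psi)(\frk{q}')$ and $\langle d\rangle'$ to $\chi(d)\varepsilon_K(d)\eta((d))$, which is precisely the Hecke eigensystem of the theta series $\theta_{\eta\psi}$ attached to the twisted Gr\"ossencharacter $\eta\psi$. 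Since $\theta_{\eta\psi}$ has CM by $K$, its associated Galois representation is $\Ind_K^\bb{Q}\cl{O}((\eta\psi)_\frk{P}^{-1})$, and this representation is cut out inside $H^1_{\et}(Y_1(N)_{\overline{\bb{Q}}},\bb{Z}_p(1))$ by the Eichler--Shimura construction. Summing over $\eta$ and using the standard decomposition
\[\Ind_{K(\frk{n})}^\bb{Q}\cl{O}(\psi_\frk{P}^{-1})\cong\bigoplus_\eta\Ind_K^\bb{Q}\cl{O}((\eta\psi)_\frk{P}^{-1})\]
produces the candidate $\nu_\frk{n}$ over $\overline{\bb{Q}}_p$. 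Galois equivariance and $\cl{O}[H_\frk{n}]$-linearity are built into the construction, and integrality over $\cl{O}$ follows from the usual lattice considerations once a distinguished generator is pinned down on each CM piece.

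The main effort will go into checking compatibility with the norm maps, which I would approach by case analysis on the prime $\frk{q}\mid q$ with $\frk{n}'=\frk{n}\frk{q}$. When $\frk{q}\mid\frk{n}$, only $\pi_{1\ast}$ appears in $\cl{N}_\frk{n}^{\frk{n}'}$ and a direct check on eigenspaces matches the natural norm on induced representations. When $\frk{q}\nmid\frk{n}$, the $\pi_{2\ast}$ correction combines with the identities $T_q=\pi_{2\ast}\circ\pi_1^\ast$ and $\phi_\frk{n}(T_q')=\sum_{\frk{q}'\mid q}\psi(\frk{q}')[\frk{q}']$ to project onto the correct component, the $q^{-1}$ versus $q^{-2}$ scaling factor reflecting whether $\frk{q}$ has norm $q$ or $q^2$. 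The hard part will be the case $\frk{p}\mid\frk{n}'$ with $p$ split in $K$: here the naive Hecke relations at $q=p$ must be replaced by their $U_p$-ordinary analogues, and the non-Teichm\"uller hypothesis on $\psi|_{\cl{O}_{K,\frk{p}}^\times}$ modulo $\frk{P}$ becomes essential to guarantee that $U_p$ acts invertibly on the relevant ordinary component, so that the norm map formulas retain their form. This case is handled in \cite[Prop.\,4.3.6, Prop.\,5.2.5]{LLZ}; the remaining cases reduce almost directly to computations already in \cite[\S\S 3--5]{LLZ}.
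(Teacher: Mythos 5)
The paper does not give an independent proof: it simply cites \cite[Cor.\,5.2.6]{LLZ}. So the relevant comparison is with the proof in that reference, and there your sketch has a genuine gap at the point you describe as ``integrality over $\cl{O}$ follows from the usual lattice considerations once a distinguished generator is pinned down on each CM piece.'' Decomposing over $\overline{\bb{Q}}_p$ into $\eta$-eigenspaces and matching each eigensystem with $\theta_{\eta\psi}$ indeed produces a Galois-equivariant isomorphism after inverting $p$, but the theorem asserts an isomorphism of $\cl{O}[H_\frk{n}]$-modules, and the left-hand side is, a priori, just some base change of an \'etale cohomology lattice along $\phi_\frk{n}$ with no evident freeness. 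Choosing a generator ``on each CM piece'' amounts to choosing a scalar on each $\eta$-component, and nothing in your sketch guarantees these scalars can be chosen to come from a single $\cl{O}[H_\frk{n}]$-module map, let alone an isomorphism; generically you would only get a comparison up to isogeny.

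What actually makes the integral statement work in \cite{LLZ} is a structural input that your sketch never invokes: because $\frk{n}$ is divisible by $\frk{f}$ and $\psi$ satisfies the residual irreducibility and $p$-distinguished hypotheses, the composition $\bb{T}_1'(N)\otimes\Z_p\xrightarrow{\phi_\frk{n}}\cl{O}[H_\frk{n}]\to\cl{O}/\frk{P}$ defines a non-Eisenstein, $p$-distinguished maximal ideal $\frk{m}$, and one first proves (\cite[\S\S4.1--4.2]{LLZ}) that $H^1_{\et}(Y_1(N)_{\overline{\bb{Q}}},\Z_p(1))_\frk{m}$ is free of rank two over the localized Hecke algebra $\bb{T}_\frk{m}$. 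Base change along $\phi_\frk{n}$ is then exact and yields a free rank-two $\cl{O}[H_\frk{n}]$-module carrying the correct Galois action, and the identification with $\Ind_{K(\frk{n})}^\bb{Q}\cl{O}(\psi_\frk{P}^{-1})$ is forced from there. This is the step your proposal needs and does not supply; the $U_p$-ordinary discussion at the end is correctly flagged but secondary. With the freeness result in hand, the eigenspace decomposition becomes unnecessary and the norm compatibility reduces, as you say, to the computations in \cite[\S\S3--5]{LLZ}, so the rest of your outline is essentially fine.
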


\begin{proof}
This is \cite[Cor. 5.2.6]{LLZ}.
\end{proof}

\begin{defi}
For any positive integer $n$ with $(n,p\frk{f})=1$, we let $K(\frk{f})[n]$ be the compositum of $K(\frk{f})$ and $K[n]$, and put $R_{\frk{f},n}={\rm Gal}(K(\frk{f})[n]/K)$.
\end{defi}

Let $\bb{T}'(1,N_\psi(n^2))\subset{\rm End}_{\Z}(H^1(Y(1,N_\psi(n^2))(\C),\Z))$ be the subalgebra generated by all Hecke operators $T_q'$ and $\langle d\rangle'$.

\begin{lemma}\label{lemma:hecketorcg}
There exists a homomorphism
\[
\phi_n:\bb{T}'(1,N_\psi(n^2))\longrightarrow\cl{O}[R_{\frk{f},n}]
\]
defined on generators by the same formula as in Proposition~\ref{prop:LLZ}.
\end{lemma}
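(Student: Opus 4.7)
The plan is to reduce to Proposition~\ref{prop:LLZ} via three ingredients: a choice of auxiliary conductor, a natural surjection of class groups, and a transfer of Hecke actions along a degeneracy map between two modular curves. First, set $\frk{n}_0 := \frk{f}\cdot n\cO_K$, an integral ideal of $K$ divisible by $\frk{f}$ and with $N_{K/\bb{Q}}(\frk{n}_0)\cdot D = N_\psi n^2$. Applying Proposition~\ref{prop:LLZ} to $\frk{n}_0$ yields a homomorphism
\[
\phi_{\frk{n}_0}\colon\bb{T}_1'(N_\psi n^2)\longrightarrow\cl{O}[H_{\frk{n}_0}].
\]
Since $K(\frk{f})[n]$ sits inside the ray class field $K(\frk{f}\cdot n\cO_K)$ (as $K(\frk{f})\subset K(\frk{f}n\cO_K)$ and $K[n]\subset K(n\cO_K)\subset K(\frk{f}n\cO_K)$), restriction of Galois automorphisms induces a surjection $H_{\frk{n}_0}\twoheadrightarrow R_{\frk{f},n}$, and hence a surjection $\cl{O}[H_{\frk{n}_0}]\twoheadrightarrow\cl{O}[R_{\frk{f},n}]$ under which every generator $[\frk{q}]$ (resp.\ $[(d)]$) is sent to the class of the same name in $R_{\frk{f},n}$. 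Composing $\phi_{\frk{n}_0}$ with this surjection produces a homomorphism $\bb{T}_1'(N_\psi n^2)\to\cl{O}[R_{\frk{f},n}]$ whose values on Hecke generators are exactly those in the statement.

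Next I would transfer this homomorphism from $\bb{T}_1'(N_\psi n^2)=\bb{T}'(1,N_\psi n^2)$ to $\bb{T}'(1,N_\psi(n^2))$ using the natural degeneracy map $\pi\colon Y_1(N_\psi n^2)\to Y(1,N_\psi(n^2))$ arising from the inclusion of level groups $\Gamma_1(N_\psi n^2)\subset\Gamma_1(N_\psi)\cap\Gamma_0(N_\psi n^2)$. After inverting the (finite) degree of $\pi$, the pullback $\pi^*$ on cohomology is injective and intertwines the operators $T_q'$ for primes $q\nmid N_\psi n^2$ together with $\langle d\rangle'$ for $d\in(\bb{Z}/N_\psi\bb{Z})^\times$. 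Hence every polynomial relation in $\bb{T}'(1,N_\psi(n^2))$ among these generators already holds in $\bb{T}_1'(N_\psi n^2)$, so the composition above descends to give $\phi_n$ on the subalgebra of $\bb{T}'(1,N_\psi(n^2))$ generated by these operators.

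The main obstacle I foresee is extending the definition to the remaining generators $T_q'$ for primes $q\mid n$ with $q\nmid N_\psi$, where $T_q'$ behaves as a $U_q'$-type operator and $\pi^*$ does not directly intertwine the Hecke actions. For any such $q$, every prime of $K$ of norm $q$ divides $\frk{n}_0$, so the right-hand side of the formula evaluates to $0$ in $\cl{O}[R_{\frk{f},n}]$. To verify this is consistent with the Hecke relations, I would compute the $U_q'$-action on the $\theta_\psi$-isotypic component of $H^1(Y(1,N_\psi(n^2))_{\overline{\bb{Q}}},\bb{Z}_p(1))$ via the double coset description, together with the decomposition of the $\theta_\psi$-oldform subspace coming from the degeneracy maps $Y(1,N_\psi(n^2))\to Y_1(N_\psi)$ and the fact that $\theta_\psi$ is $q$-old at every such prime $q$.
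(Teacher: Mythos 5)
Your reduction to Proposition~\ref{prop:LLZ} via the modulus $\frk{n}_0=\frk{f}\cdot n\cO_K=\frk{f}(n)$ and the surjection $\cl{O}[H_{\frk{n}_0}]\twoheadrightarrow\cl{O}[R_{\frk{f},n}]$ matches the paper's opening step. The gap is in the transfer from $\bb{T}_1'(N_\psi n^2)$ to $\bb{T}'(1,N_\psi(n^2))$, where the implication you invoke runs in the wrong direction.

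Injectivity of $\pi^*$ (after inverting $\deg\pi$), together with Hecke equivariance, gives precisely the opposite of what you assert. If a polynomial $P$ in the $T_q'$ and $\langle d\rangle'$ acts as zero on $H^1(Y_1(N_\psi n^2)(\bb{C}),\bb{Q})$, then $P\circ\pi^*=\pi^*\circ P=0$, so by injectivity of $\pi^*$ it acts as zero on $H^1(Y(1,N_\psi(n^2))(\bb{C}),\bb{Q})$; that is, relations in $\bb{T}_1'(N_\psi n^2)$ descend to $\bb{T}'(1,N_\psi(n^2))$. But what you actually need in order to factor the composite $\bb{T}_1'(N_\psi n^2)\to\cl{O}[H_{\frk{n}_0}]\to\cl{O}[R_{\frk{f},n}]$ through $\bb{T}'(1,N_\psi(n^2))$ is the reverse containment of kernels, and that does not follow from your argument: a relation holding on $H^1(Y(1,N_\psi(n^2)))$ is only known to hold on the subspace $\pi^*H^1(Y(1,N_\psi(n^2)))\subset H^1(Y_1(N_\psi n^2))$, not on the full module, so there is no reason its image under $\phi_{\frk{n}_0}$ vanishes.

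The paper's proof gets around exactly this by invoking the non-Eisenstein condition (this is where hypothesis (h4) and the standing assumption on $\psi$ enter). The kernel $\cl{I}$ of the composite $\bb{T}_1'(N_\psi n^2)\xrightarrow{\phi_{\frk{n}}}\cl{O}[H_{\frk{n}}]\to\cl{O}\to\cl{O}/\frk{P}$ is a \emph{non-Eisenstein} maximal ideal (by Prop.~5.1.2 and Rmk.~5.1.3 of \cite{LLZ}), which forces an isomorphism $H^1(Y_1(N_\psi n^2)(\bb{C}),\bb{Z})_\cl{I}\cong H^1_c(Y_1(N_\psi n^2)(\bb{C}),\bb{Z})_\cl{I}$; one then uses the isomorphism $H^1_c(Y(1,N_\psi(n^2))(\bb{C}),\bb{Z})\cong H^1_c(Y_1(N_\psi n^2)(\bb{C}),\bb{Z})^\Delta$ from the proof of \cite[Lem.~4.2.4]{LLZ} and the fact that $\Delta$ maps to $1$ under the composition to $\cl{O}[R_{\frk{f},n}]$. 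Your proposal never invokes residual irreducibility, and without it the Eisenstein part of the cohomology is uncontrolled and the factorization fails; a computation on the $\theta_\psi$-isotypic subspace alone, as in your final paragraph, cannot substitute for this since the Hecke algebra also sees contributions from other eigenforms at the relevant level.
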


\begin{proof}
Take the modulus $\frk{n}=\frk{f}(n)$. By Proposition~5.1.2 and Remark~5.1.3 in \cite{LLZ}, the kernel $\cl{I}$ of the composition
\[
\bb{T}_{1}'(N_\psi n^2)\overset{\phi_{\frk{n}}}\longrightarrow\cl{O}[H_{\frk{n}}]\longrightarrow\cl{O}\longrightarrow\cl{O}/\frk{P},
\]
where $\phi_{\frk{n}}$ is as in Proposition~\ref{prop:LLZ}, is a non-Eisenstein maximal ideal of $\bb{T}_{1}'(N_\psi n^2)$ in the sense of [\emph{op.cit.}, Def.~4.1.2]. Therefore, denoting $\cl{I}$-adic completions with the subscript $\cl{I}$,
%as in \cite[Prop.~4.1.3]{LLZ}
we have an isomorphism of $\bb{T}_1'(N_\psi n^2)_\cl{I}$-modules
$$
H^1(Y_1(N_\psi n^2)(\bb{C}),\bb{Z})_\cl{I}\cong H^1_c(Y_1(N_\psi n^2)(\bb{C}),\bb{Z})_\cl{I}.
$$

On the other hand, as in the proof of \cite[Lem.~4.2.4]{LLZ}, the natural pullback map yields an isomorphism
$$
H^1_c(Y(1,N_\psi (n^2))(\bb{C}),\bb{Z})\cong H^1_c(Y_1(N_\psi n^2)(\bb{C}),\bb{Z})^\Delta,
$$
where $\Delta$ is the set of diamond operators $\langle d\rangle'$ with $d\equiv 1\pmod{N_\psi}$. Since $\Delta$ maps to 1 under the composition
\[
\bb{T}_{1}'(N_\psi n^2)\overset{\phi_{\frk{n}}}\longrightarrow\cl{O}[H_{\frk{n}}]\longrightarrow\cl{O}[R_{\frk{f},n}],
\]
the result follows.
\end{proof}

%We can now state and prove the following corollary of Theorem~\ref{thm:LLZ}.

\begin{corollary}\label{cor:LLZ}
Let $B$ be the set of positive integers $n$ coprime to $p\frk{f}$. Then there is a family of $G_\bb{Q}$-equivariant isomorphisms of $\cl{O}[R_{\frk{f},n}]$-modules
\begin{center}
\begin{tikzpicture}
\matrix(m) [matrix of math nodes, row sep=2.6em, column sep=2.8em, text height=1.5ex, text depth=0.25ex]
{\nu_{n}:\cl{O}[R_{\frk{f},n}]\otimes_{(\bb{T}'(1,N_\psi(n^2))\otimes\Z_p,\phi_{n})} H^1_{\et}(Y(1,N_\psi(n^2))_{\overline{\bb{Q}}},\bb{Z}_p(1)) & \Ind_{K(\frk{f})[n]}^\bb{Q}\cl{O}(\psi_\frk{P}^{-1}) \\};
\path[->,font=\scriptsize,>=angle 90]
(m-1-1) edge node [above] {$\simeq$} (m-1-2);
\end{tikzpicture}
\end{center}
for all $n\in B$, such that for $n\mid n'$ the diagram
\begin{center}
\begin{tikzpicture}
\matrix(m) [matrix of math nodes, row sep=2.6em, column sep=2.8em, text height=1.5ex, text depth=0.25ex]
{\cl{O}[R_{\frk{f},n'}]\otimes_{(\bb{T}'(1,N_\psi(n'^2))\otimes\Z_p,\phi_{n'})} H^1_{\et}(Y(1,N_\psi(n'^2))_{\overline{\bb{Q}}},\bb{Z}_p(1)) & \Ind_{K(\frk{f})[n']}^\bb{Q}\cl{O}(\psi_\frk{P}^{-1}) \\
\cl{O}[R_{\frk{f},n}]\otimes_{(\bb{T}'(1,N_\psi(n^2))\otimes\Z_p,\phi_{n})} H^1_{\et}(Y(1,N_\psi(n^2))_{\overline{\bb{Q}}},\bb{Z}_p(1)) & \Ind_{K(\frk{f})[n]}^\bb{Q}\cl{O}(\psi_\frk{P}^{-1}) \\};
\path[->,font=\scriptsize,>=angle 90]
(m-1-1) edge node [above] {$\nu_{n'}$}
             node [below] {$\simeq$} (m-1-2)
(m-1-1) edge node [left] {$\cl{N}_{\frk{f},n}^{\frk{f},n'}$} (m-2-1)
(m-2-1) edge node [above] {$\nu_{n}$}
             node [below] {$\simeq$} (m-2-2)
(m-1-2) edge node [auto] {} (m-2-2);
\end{tikzpicture}
\end{center}
commutes, where $\cl{N}_{\frk{f},n}^{\frk{f},n'}$ is induced by  $\cl{N}_{\frk{f}(n')}^{\frk{f}(n)}$ and
the right vertical arrow is the natural norm map. %of induced representations.
\end{corollary}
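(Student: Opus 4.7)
The plan is to deduce the corollary from Theorem~\ref{thm:LLZ} applied to the modulus $\frk{n}=\frk{f}(n)$, by matching up the passage from the ray class field $K(\frk{f}(n))$ to the compositum $K(\frk{f},n)=K(\frk{f})K[n]$ on the Galois side with the passage from $Y_1(N_\psi n^2)$ to the mixed modular curve $Y(1,N_\psi(n^2))$ on the cohomology side.

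First I would invoke Theorem~\ref{thm:LLZ} for $\frk{n}=\frk{f}(n)$ and $\frk{n'}=\frk{f}(n')$, which gives a commuting square of $G_\bb{Q}$-equivariant isomorphisms between $\cl{O}[H_{\frk{f}(n)}]\otimes_{(\bb{T}_1'(N_\psi n^2)\otimes\Z_p,\phi_{\frk{f}(n)})} H^1_{\et}(Y_1(N_\psi n^2)_{\overline{\bb{Q}}},\Z_p(1))$ and $\Ind_{K(\frk{f}(n))}^{\bb{Q}}\cl{O}(\psi_\frk{P}^{-1})$, compatible with $\cl{N}_{\frk{f}(n)}^{\frk{f}(n')}$ and the ray-class norm map. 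The next step is to tensor the source over $\cl{O}[H_{\frk{f}(n)}]$ with $\cl{O}[R_{\frk{f},n}]$, using the natural surjection $H_{\frk{f}(n)}\twoheadrightarrow R_{\frk{f},n}$ coming from class field theory; on the Galois side this corresponds, by the very definition of $K(\frk{f},n)$, to replacing $\Ind_{K(\frk{f}(n))}^\bb{Q}\cl{O}(\psi_\frk{P}^{-1})$ by $\Ind_{K(\frk{f},n)}^\bb{Q}\cl{O}(\psi_\frk{P}^{-1})$, the change of induction being realised by the natural norm map.

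On the cohomology side, I would identify the corresponding Hecke-module manipulation with the transition from $Y_1(N_\psi n^2)$ to $Y(1,N_\psi(n^2))$. The kernel of $H_{\frk{f}(n)}\twoheadrightarrow R_{\frk{f},n}$ is generated by the images of $[(d)]$ for $d\equiv 1\pmod{N_\psi}$, i.e.\ exactly by those elements on which $\phi_{\frk{f}(n)}$ recovers the diamond operators $\langle d\rangle'$ belonging to the subgroup $\Delta$ introduced in the proof of Lemma~\ref{lemma:hecketorcg}. Hence taking this quotient on the Hecke side corresponds to passing to $\Delta$-invariants on the cohomology, and by the isomorphism $H^1_c(Y(1,N_\psi(n^2))(\bb{C}),\Z)\cong H^1_c(Y_1(N_\psi n^2)(\bb{C}),\Z)^\Delta$ recalled in the proof of Lemma~\ref{lemma:hecketorcg} (together with the $\cl{I}$-adic comparison of $H^1$ and $H^1_c$ used there to discard the cuspidal discrepancy), this produces the cohomology group $H^1_{\et}(Y(1,N_\psi(n^2))_{\overline{\bb{Q}}},\Z_p(1))$ appearing in the statement, with Hecke action factoring through $\phi_n$ from Lemma~\ref{lemma:hecketorcg}. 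The isomorphism $\nu_n$ is then the one induced by $\nu_{\frk{f}(n)}$.

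Finally I would check compatibility with the norm maps. By construction, $\cl{N}_{\frk{f},n}^{\frk{f},n'}$ is induced from $\cl{N}_{\frk{f}(n)}^{\frk{f}(n')}$ via the quotient $\cl{O}[H_{\frk{f}(n')}]\to \cl{O}[R_{\frk{f},n'}]$ and the pushforwards $\pi_{1*},\pi_{2*}$ descend from $Y_1(N_\psi n'^2)$ to $Y(1,N_\psi(n'^2))$ by naturality of the pullback from the mixed curve; the desired commutative square then follows from that of Theorem~\ref{thm:LLZ}. The main obstacle is Step~2, namely verifying cleanly that the ideal cutting out the quotient $H_{\frk{f}(n)}\twoheadrightarrow R_{\frk{f},n}$ corresponds, under $\phi_{\frk{f}(n)}$, to precisely the Hecke-ideal generated by $\langle d\rangle'-1$ for $d\equiv 1\pmod{N_\psi}$, so that the two quotient operations match; this uses the explicit description of $K[n]$ in terms of ideals of $\cl{O}_K$ prime to $n$ and the compatibility $\phi_{\frk{f}(n)}(\langle d\rangle')=\chi(d)\varepsilon_K(d)[(d)]$ from Proposition~\ref{prop:LLZ}.
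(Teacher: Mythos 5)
Your proposal is correct and follows essentially the same route as the paper's proof: both deduce the result from Theorem~\ref{thm:LLZ} applied to the modulus $\frk{f}(n)$, base-change along $\cl{O}[H_{\frk{f}(n)}]\twoheadrightarrow\cl{O}[R_{\frk{f},n}]$, and use the $\cl{I}$-adic comparison at the non-Eisenstein maximal ideal to identify the $\Delta$-(co)invariants of $H^1_{\et}(Y_1(N_\psi n^2)_{\overline{\bb{Q}}},\Z_p(1))$ with $H^1_{\et}(Y(1,N_\psi(n^2))_{\overline{\bb{Q}}},\Z_p(1))$. The "main obstacle" you flag at the end — matching the kernel of $H_{\frk{f}(n)}\twoheadrightarrow R_{\frk{f},n}$ with the Hecke ideal generated by $\langle d\rangle'-1$ for $d\equiv 1\pmod{N_\psi}$ — is precisely what the paper disposes of via the observation (already established in the proof of Lemma~\ref{lemma:hecketorcg}) that $\phi_n$ is induced by $\phi_{\frk{f}(n)}$, so it is not an obstacle but the intended content of that lemma.
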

\begin{proof}
Let $\frk{n}=\frk{f}(n)$, $\cl{I}$ and $\Delta$ be as in the proof of Lemma~\ref{lemma:hecketorcg}.
%Then we have the isomorphism
%\begin{center}
%\begin{tikzpicture}
%\matrix(m) [matrix of math nodes, row sep=2.6em, column sep=2.8em, text height=1.5ex, text depth=0.25ex]
%{H^1_{\et}(Y_1(Np^s)_{\overline{\bb{Q}}},\bb{Z}_p(1))_{\cl{I}} & H^1_{\et}(X_1(Np^s)_{\overline{\bb{Q}}},\bb{Z}_p(1))_{\cl{I}} \\};
%\path[->,font=\scriptsize,>=angle 90]
%(m-1-1) edge node [above] {$\cong$} (m-1-2);
%\end{tikzpicture}\,.
%\end{center}
Since $\cl{I}$ is non-Eisenstein, the natural trace map
\[
H^1_{\et}(Y_1(N_\psi n^2)_{\overline{\bb{Q}}},\bb{Z}_p(1))_{\Delta}\longrightarrow H^1_{\et}(Y(1,N_\psi(n^2))_{\overline{\bb{Q}}},\bb{Z}_p(1))
\]
%\begin{center}
%\begin{tikzpicture}
%\matrix(m) [matrix of math nodes, row sep=2.6em, column sep=2.8em, text height=1.5ex, text depth=0.25ex]
%{H^1_{\et}(Y_1(N_\psi n^2)_{\overline{\bb{Q}}},\bb{Z}_p(1))_{\Delta} & H^1_{\et}(Y(1,N_\psi(n^2))_{\overline{\bb{Q}}},\bb{Z}_p(1)) \\};
%\path[->,font=\scriptsize,>=angle 90]
%(m-1-1) edge node [above] {} (m-1-2);
%\end{tikzpicture}
%\end{center}
becomes an isomorphism after taking $\cl{I}$-adic completions. Since the map $\phi_{n}$ of Lemma~\ref{lemma:hecketorcg} is induced by $\phi_{\frk{n}}$ (as shown in the proof of that result), it follows that the $\cl{O}[R_{\frk{f},n}]$-module
$$
\cl{O}[R_{\frk{f},n}]\otimes_{(\bb{T}'(1,N_\psi(n^2))\otimes\Z_p,\phi_{n})} H^1_{\et}(Y(1,N_\psi(n^2))_{\overline{\bb{Q}}},\bb{Z}_p(1))
$$
is naturally isomorphic to
$$
\cl{O}[R_{\frk{f},n}]\otimes_{\cl{O}[H_{\frk{n}}]}\left(\cl{O}[H_{\frk{n}}]\otimes_{(\bb{T}_1'(N_\psi n^2)\otimes\Z_p,\phi_{\frk{n}})} H^1_{\et}(Y_1(N_\psi n^2)_{\overline{\bb{Q}}},\bb{Z}_p(1))\right).
$$
The result now follows from Theorem~\ref{thm:LLZ}.
\end{proof}

\section{Proof of the tame norm relations}\label{subsec:tameeulersystem}

We keep the notations introduced in $\S\ref{subsec:llz}$. Fix two newforms $(g,h)$ of weights $(l,m)$ of the same parity, levels $(N_g,N_h)$, and characters $(\chi_g,\chi_h)$ such that $\chi\varepsilon_K\chi_g\chi_h=1$. Enlarging $L$ if necessary, assume that it contains the Fourier coefficients of $g$ and $h$.

Let $N={\rm lcm}(N_\psi,N_g,N_h)$, and (since $N$ will be fixed throughout) put $Y(m)= Y(1,N(m))$ for every positive integer $m$.

\begin{defi}\label{def:1}
Let $\mathbf{r}=(r_1, r_2, r_3)$ be a triple of non-negative integers such that
\[
r_1+r_2+r_3=2r
\]
with $r\in\Z_{\geq 0}$, and $r_i+r_j\geq r_k$ for every permutation $(i,j,k)$ of $(1,2,3)$. Put
\[
\mathscr{L}_{[\mathbf{r}]}=\mathscr{L}_{1,N(m),r_1}(\Z_p)\otimes_{\Z_p}\mathscr{L}_{1,N(m),r_2}(\Z_p)\otimes_{\Z_p}\mathscr{L}_{1,N(m),r_3}(\Z_p),
\]
and define
\begin{equation*}
\kappa_{m,\mathbf{r}}^{(1)}
\in H^1\left(\Q,H^3_{\et}(Y(m)^3_{\overline{\Q}},\mathscr{L}_{[\mathbf{r}]})\otimes_{\Z_p} \Q_p(2-r)\right)
\end{equation*}
to be the class $\kappa_{N(m),\mathbf{r}}=\mathtt{s}_{\mathbf{r}\ast}\circ \mathtt{HS}\circ d_\ast(\mathtt{Det}_{N(m)}^\mathbf{r})$ constructed as in \cite[\S3]{BSV} for the modular curve $Y(m)$.  \end{defi}

\begin{lemma}\label{lemma:normrelations1}
Let $m$ be a positive integer and let $q$ be a prime number. Assume that both $m$ and $q$ are coprime to $p$ and $N$. Then
\begin{align*}
&(\pi_2,\pi_1,\pi_1)_\ast \kappa_{mq,\mathbf{r}}^{(1)} = (T_q,1,1)\kappa_{m,\mathbf{r}}^{(1)};\quad (\pi_1,\pi_2,\pi_2)_\ast \kappa_{mq,\mathbf{r}}^{(1)} = q^{r-r_1}(T_q',1,1)\kappa_{m,\mathbf{r}}^{(1)}; \\
&(\pi_1,\pi_2,\pi_1)_\ast \kappa_{mq,\mathbf{r}}^{(1)} = (1,T_q,1)\kappa_{m,\mathbf{r}}^{(1)};\quad (\pi_2,\pi_1,\pi_2)_\ast \kappa_{mq,\mathbf{r}}^{(1)} = q^{r-r_2}(1,T_q',1)\kappa_{m,\mathbf{r}}^{(1)}; \\
&(\pi_1,\pi_1,\pi_2)_\ast \kappa_{mq,\mathbf{r}}^{(1)} = (1,1,T_q)\kappa_{m,\mathbf{r}}^{(1)};\quad (\pi_2,\pi_2,\pi_1)_\ast \kappa_{mq,\mathbf{r}}^{(1)} = q^{r-r_3}(1,1,T_q')\kappa_{m,\mathbf{r}}^{(1)}.
\end{align*}
If $q$ is coprime to $m$ we also have
\[
(\pi_1,\pi_1,\pi_1)_\ast \kappa_{mq,\mathbf{r}}^{(1)} = (q+1)\kappa_{m,\mathbf{r}}^{(1)};\quad (\pi_2,\pi_2,\pi_2)_\ast \kappa_{mq,\mathbf{r}}^{(1)} = (q+1)q^{r}\kappa_{m,\mathbf{r}}^{(1)}.
\]
\end{lemma}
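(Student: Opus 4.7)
The strategy is to reduce each of the eight identities to a base-change/naturality statement involving the determinant class $\mathtt{Det}_{N(m)}^{\mathbf{r}}$ and the degeneracy maps $\pi_1,\pi_2\colon Y(mq)\to Y(m)$. Since the symmetrization map $\mathtt{s}_{\mathbf{r},*}$ and the Hochschild--Serre edge map $\mathtt{HS}$ are functorial in morphisms of modular curves, everything reduces to comparing the analogous pushforwards of $d_{mq,*}(\mathtt{Det}_{N(mq)}^{\mathbf{r}})$ in $H^4_{\et}(Y(m)^3,\mathscr L_{[\mathbf{r}]}(2))$ with the Hecke-type expressions on the right-hand sides.

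For the symmetric identities $(\pi_i,\pi_i,\pi_i)$ with $\gcd(q,m)=1$, one has $(\pi_i,\pi_i,\pi_i)\circ d_{mq} = d_m\circ\pi_i$, and the formula follows immediately from the projection formula: $\deg\pi_1 = q+1$ gives the factor $(q+1)$ in the first case, while $\deg\pi_2 = q+1$ together with the rescaling of $\mathtt{Det}_{N(m)}^{\mathbf{r}}$ by $q^r$ under $\pi_2^*$ (see below) gives the factor $(q+1)q^r$ in the second case. For a typical mixed identity, say $(\pi_2,\pi_1,\pi_1)$, I would write $T_q = \pi_{2,*}\pi_1^*$ and decompose $(T_q,1,1) = (\pi_2\times\id\times\id)_*\circ (\pi_1\times\id\times\id)^*$. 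Proper base change applied to the cartesian square whose fibre product of $d_m\colon Y(m)\to Y(m)^3$ with $\pi_1\times\id\times\id\colon Y(mq)\times Y(m)^2\to Y(m)^3$ is the graph of $\pi_1$ (identified with $Y(mq)$) then yields
\[
(T_q,1,1)\circ d_{m,*} = \tilde{d}_*\circ \pi_1^*,
\]
where $\tilde{d}\colon Y(mq)\to Y(m)^3$, $y\mapsto (\pi_2(y),\pi_1(y),\pi_1(y))$. On the other hand, $(\pi_2,\pi_1,\pi_1)\circ d_{mq} = \tilde{d}$ directly, so $(\pi_2,\pi_1,\pi_1)_*d_{mq,*} = \tilde{d}_*$. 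The first identity in the statement thus reduces to $\pi_1^*(\mathtt{Det}_{N(m)}^{\mathbf{r}}) = \mathtt{Det}_{N(mq)}^{\mathbf{r}}$, which is immediate from the fact that $\pi_1$ merely forgets level structure and therefore pulls back both the universal elliptic curve and the Det class tautologically. Cyclically permuting the three factors handles the two remaining $T_q$-type identities, and swapping the roles of $\pi_1$ and $\pi_2$ in the base-change computation handles the three $T_q'$-type identities.

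The main technical difficulty is verifying the numerical factors $q^{r-r_i}$ appearing in the $T_q'$-type identities (and the $q^r$ in the $(\pi_2,\pi_2,\pi_2)$ relation). These arise because $\pi_2^* = \lambda_q^*\circ\varphi_q^*\circ\check\nu_q^*$ incorporates the degree-$q$ isogeny $\lambda_q$, whose action on the relative Tate module distorts the Clebsch--Gordan/determinant element. Explicitly, using the description of $\mathtt{Det}^{\mathbf{r}}_{N(m)}$ from \cite[\S3]{BSV} in terms of $\det\mathscr T$ and Clebsch--Gordan projections from $\Tsym^{r_i}\mathscr T$, one must verify an identity of the shape $\pi_2^*(\mathtt{Det}_{N(m)}^{\mathbf{r}}) = q^{r_1-r}\mathtt{Det}_{N(mq)}^{\mathbf{r}}$ (and its cyclic permutations). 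The book-keeping of these $q$-powers, and in particular checking that the asymmetric form of $\mathtt{Det}$ in the three factors is precisely what is needed to produce $q^{r-r_i}$ rather than some other combination of $r_j$'s, is where I expect the most care to be needed.
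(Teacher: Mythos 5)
Your strategy is the intended one: the paper's own proof of this lemma is a one-sentence citation to the argument for eqs.~(174) and~(176) in \cite{BSV}, and that argument has exactly the shape you describe --- reduce via functoriality of $\mathtt{s}_{\mathbf{r}\ast}$ and $\mathtt{HS}$ to a compatibility of the Clebsch--Gordan element $\mathtt{Det}_{N(\cdot)}^{\mathbf{r}}$ under the degeneracy maps, treat the symmetric relations by the projection formula, and treat the mixed relations by proper base change along the cartesian squares you set up. The identity $\pi_1^\ast\mathtt{Det}_{N(m)}^{\mathbf{r}}=\mathtt{Det}_{N(mq)}^{\mathbf{r}}$, and hence the three $T_q$-type relations, is tautological, since $\pi_1$ only forgets $\Gamma_0$-level structure and the relative Tate module pulls back to itself.

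The one genuine imprecision is the proposed formula $\pi_2^\ast(\mathtt{Det}_{N(m)}^{\mathbf{r}})=q^{r_1-r}\mathtt{Det}_{N(mq)}^{\mathbf{r}}$. As written, the exponent $r_1-r\leq 0$ is troubling and, more importantly, the two sides do not live in the same sheaf: in the $(T_q',1,1)$-relation the cohomological operator $\pi_2^\ast=\lambda_q^\ast\circ\varphi_q^\ast\circ\check\nu_q^\ast$ is applied to \emph{one} tensor factor only, whereas $\mathtt{Det}_{N(mq)}^{\mathbf{r}}$ is a section of $\mathscr L_{r_1}\otimes\mathscr L_{r_2}\otimes\mathscr L_{r_3}(-r)$ on $Y(mq)$. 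Writing $\pi_2^\ast$ temporarily for the honest scheme-theoretic pullback, your base-change square shows that $(\pi_1,\pi_2,\pi_2)_\ast d_{mq,\ast}(\mathtt{Det}_{N(mq)}^{\mathbf{r}})$ and $(T_q',1,1)d_{m,\ast}(\mathtt{Det}_{N(m)}^{\mathbf{r}})$ are both pushforwards along $(\pi_1,\pi_2,\pi_2)\circ d_{mq}$ of the sections
\[
(\mathrm{id}\otimes\lambda_{q\ast}\otimes\lambda_{q\ast})\,\mathtt{Det}_{N(mq)}^{\mathbf{r}}
\qquad\text{resp.}\qquad
(\lambda_q^\ast\otimes\mathrm{id}\otimes\mathrm{id})\,\pi_2^\ast\mathtt{Det}_{N(m)}^{\mathbf{r}}
\]
of $\mathscr L_{r_1}\otimes\pi_2^\ast\mathscr L_{r_2}\otimes\pi_2^\ast\mathscr L_{r_3}(-r)$ on $Y(mq)$. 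Applying $\lambda_{q\ast}$ in the first slot, and using $\lambda_{q\ast}\lambda_q^\ast=q$ on $\mathscr T$ (hence $q^{r_1}$ on $\mathscr L_{r_1}$) on the second section while using $(\lambda_{q\ast})^{\otimes3}\mathtt{Det}_{N(mq)}^{\mathbf{r}}=q^r\,\pi_2^\ast\mathtt{Det}_{N(m)}^{\mathbf{r}}$ (the Clebsch--Gordan element lies in the $\det^r$-isotypic component and $\lambda_{q\ast}$ has determinant $q$) on the first, shows the two sections differ exactly by $q^{r-r_1}$; the remaining $T_q'$-relations follow by cyclic permutation, and the $(\pi_2,\pi_2,\pi_2)$-case from the diagonal isogeny scaling $q^r$ together with the projection formula. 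So your intuition about where the $q$-powers come from is right; the book-keeping just has to be phrased at the level of the isogeny maps $\lambda_{q\ast},\lambda_q^\ast$ acting on individual tensor factors of $\mathscr L_{[\mathbf{r}]}$, not as a single scalar rescaling of $\mathtt{Det}$ by $\pi_2^\ast$.
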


\begin{proof}
The same argument proving equations (174) and (176) in \cite{BSV} yields these identities, adding the prime $q$ to the level rather than the prime $p$.
\end{proof}

We next consider the following `asymmetric' diagonal classes.

\begin{defi}
For each squarefree positive integer $n$ coprime to $p$ and $N$, let
\begin{equation*}
\kappa_{n,\mathbf{r}}^{(2)} = n^{r_2}(1,1,\langle n\rangle')(1,\pi_1,\pi_2)_* \kappa_{n^2,\mathbf{r}}^{(1)}\in H^1\left(\Q,H^3_{\et}(Y(n^2)_{\overline{\Q}}\times Y(1)^2_{\overline{\Q}},\mathscr{L}_{[\mathbf{r}]})\otimes_{\Z_p} \Q_p(2-r)\right),
\end{equation*}
where $\pi_1,\pi_2:Y(n^2)\rightarrow Y(1)$ are the degeneracy maps in (\ref{eq:pi12}).
\end{defi}

\begin{lemma}\label{lemma:normrelations2}
Let $n$ be as above and let $q$ be a rational prime coprime to $p$, $N$ and $n$. Then
\begin{align*}
&(\pi_{11},1,1)_\ast \kappa_{nq,\mathbf{r}}^{(2)}=\left\{q^{r_2}(1,1,T_q T_q')-(q+1)q^{r_2+r_3}(1,1,1)\right\}\kappa_{n,\mathbf{r}}^{(2)}, \\
&(\pi_{21},1,1)_\ast \kappa_{nq,\mathbf{r}}^{(2)}=\left\{q^{r}(1,T_q', T_q')-q^{r_2+r_3}(T_q',\langle q\rangle',\langle q\rangle')\right\}\kappa_{n,\mathbf{r}}^{(2)}, \\
&(\pi_{22},1,1)_\ast \kappa_{nq,\mathbf{r}}^{(2)}=\left\{q^{r_1+r_3}(1,T_q'^2, \langle q\rangle')-(q+1)q^{2r}(1,\langle q\rangle',\langle q\rangle')\right\}\kappa_{n,\mathbf{r}}^{(2)},
\end{align*}
where $\pi_{ij}:Y(n^2q^2)\rightarrow Y(n^2)$ denotes the composite map
%the maps
\begin{equation*}
Y(n^2 q^2)\overset{\pi_i}\longrightarrow Y(n^2q)\overset{\pi_j}\longrightarrow Y(n^2).
\end{equation*}
\end{lemma}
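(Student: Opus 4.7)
The strategy is to unfold the definition of $\kappa_{nq,\mathbf{r}}^{(2)}$, reduce the computation of $(\pi_{ij},1,1)_*\kappa_{nq,\mathbf{r}}^{(2)}$ to iterated applications of Lemma~\ref{lemma:normrelations1}, and then repackage the result via the definition of $\kappa_{n,\mathbf{r}}^{(2)}$.

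First, since $(\pi_{ij},1,1)_*$ acts only on the first factor, it commutes with $(1,1,\langle nq\rangle')$ and with the pushforward $(1,\pi_1,\pi_2)_*$, giving
\[
(\pi_{ij},1,1)_*\kappa_{nq,\mathbf{r}}^{(2)}
=(nq)^{r_2}(1,1,\langle nq\rangle')(\pi_{ij},\pi_1,\pi_2)_*\kappa_{n^2q^2,\mathbf{r}}^{(1)},
\]
where $\pi_{ij}\colon Y(n^2q^2)\to Y(n^2)$ and $\pi_1,\pi_2\colon Y(n^2q^2)\to Y(1)$ denote the (composite) degeneracy maps on the indicated factors. Factor this triple pushforward through intermediate moduli spaces---using that the direct degeneracy $\pi_1\colon Y(n^2q^2)\to Y(1)$ coincides with the iterated composition $\pi_1\circ\pi_1\circ\pi_1$ (and similarly for $\pi_2$)---as
\[
Y(n^2q^2)^3\xrightarrow{(\pi_i,\pi_1,\pi_2)}Y(n^2q)^3\xrightarrow{(\pi_j,\pi_1,\pi_2)}Y(n^2)^3\xrightarrow{(1,\pi_1,\pi_2)}Y(n^2)\times Y(1)^2.
\]

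Now apply Lemma~\ref{lemma:normrelations1} twice, first with $m=n^2q$ and then with $m=n^2$, evaluating the two inner stages in terms of Hecke and diamond operators acting on $\kappa_{n^2,\mathbf{r}}^{(1)}$. For $(i,j)=(1,1)$ both stages invoke the fifth identity $(\pi_1,\pi_1,\pi_2)_*=(1,1,T_q)$; for $(i,j)=(2,1)$ one invokes the fourth identity $(\pi_2,\pi_1,\pi_2)_*=q^{r-r_2}(1,T_q',1)$ followed by the fifth; and for $(i,j)=(2,2)$ one invokes the fourth identity twice. After applying the final pushforward $(1,\pi_1,\pi_2)_*$ and using $\langle nq\rangle'=\langle n\rangle'\langle q\rangle'$ to split the prefactor, one recognises $n^{r_2}(1,1,\langle n\rangle')(1,\pi_1,\pi_2)_*\kappa_{n^2,\mathbf{r}}^{(1)}=\kappa_{n,\mathbf{r}}^{(2)}$, thereby obtaining the right-hand side of the three identities up to the bookkeeping of operators and $q$-power coefficients.

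The principal obstacle lies in the intermediate step, where the Hecke operator emerging from the first application of Lemma~\ref{lemma:normrelations1} acts on a class at level $Y(n^2q)$ and must be commuted past the second-stage pushforward $(\pi_j,\pi_1,\pi_2)_*$. At level $n^2q$ the operators $T_q$ and $T_q'$ coincide with $U_q$ and $U_q'$ (since $q\mid n^2q$), and the required commutations are the standard relations expressing $\pi_{1*}U_q$ and $\pi_{2*}U_q$ (and their $U_q'$-analogues) in terms of $T_q$, $T_q'$ and $\langle q\rangle'$ at level $n^2$. These commutations produce the ``correction'' contributions $-(q+1)q^{r_2+r_3}(1,1,1)$, $-q^{r_2+r_3}(T_q',\langle q\rangle',\langle q\rangle')$ and $-(q+1)q^{2r}(1,\langle q\rangle',\langle q\rangle')$ appearing respectively in the three formulas; once they are in hand, the remaining algebra is straightforward if tedious.
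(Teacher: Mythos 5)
Your proposal follows essentially the same route as the paper's proof: unfold the definition of $\kappa_{nq,\mathbf{r}}^{(2)}$, factor the pushforward through the intermediate levels $Y(n^2q)$ and $Y(n^2)$, invoke Lemma~\ref{lemma:normrelations1} at each stage, and handle the noncommutativity of the intermediate Hecke operator with the degeneracy maps. That is exactly what the paper does, and you correctly isolate the commutation step as the crux of the argument.

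One bookkeeping inaccuracy worth flagging: the correction terms are not produced by the commutation relations alone. The commutation of $T_q$ (resp. $T_q'$) at level $n^2q$ past $\varpi_{2\ast}$ (resp. $\varpi_{1\ast}$) yields an extra term involving $\varpi_{1\ast}$ (resp. $\varpi_{2\ast}$), and evaluating the resulting pushforward still requires a further application of Lemma~\ref{lemma:normrelations1} — specifically the last two identities, $(\pi_1,\pi_1,\pi_1)_\ast\kappa^{(1)}_{mq}=(q+1)\kappa^{(1)}_m$ and $(\pi_2,\pi_2,\pi_2)_\ast\kappa^{(1)}_{mq}=(q+1)q^r\kappa^{(1)}_m$ (which is why the coprimality $q\nmid n^2$ matters), as well as the $(\pi_1,\pi_2,\pi_2)$ identity in the $(2,1)$ case. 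Consequently your tally of which identities get invoked is slightly off (e.g.\ for $(2,2)$ the last identity is needed in addition to the fourth, and for $(2,1)$ the second identity appears). These are details the ``straightforward if tedious'' algebra would surface; the overall strategy is right and matches the paper.
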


\begin{proof}
To better distinguish between the degeneracy maps $\pi_i$ for different levels, in this proof we use $\varpi_i$ to denote the map $\pi_i$ descending the level by $q$, so that $\varpi_j\circ\varpi_i$ is the degeneracy map $\pi_{ij}$ in the statement of the lemma. Thus we find
\begin{align*}
(\varpi_1,1,1)_\ast \kappa_{nq,\mathbf{r}}^{(2)}
%&=n^{r_2}q^{r_2}(1,1,\langle nq\rangle')(\varpi_1,1,1)_*(1,\pi_1,%\pi_2)_*\kappa_{n^2q^2,\mathbf{r}}^{(1)} \\
&=n^{r_2}q^{r_2}(1,1,\langle nq\rangle')(1,\pi_1,\pi_2)_\ast (\varpi_1,\varpi_1,\varpi_2)_\ast \kappa_{n^2 q^2,\mathbf{r}}^{(1)} \\
& =n^{r_2}q^{r_2}(1,1,\langle nq\rangle')(1,\pi_1,\pi_2)_\ast (1,1,T_q) \kappa_{n^2 q,\mathbf{r}}^{(1)},
\end{align*}
using Lemma~\ref{lemma:normrelations1} for the second equality; and similarly,
\begin{align*}
(\varpi_2,1,1)_\ast \kappa_{nq,\mathbf{r}}^{(2)}
%&=n^{r_2}q^{r_2}(1,1,\langle nq\rangle')(\varpi_2,1,1)_*(1,\pi_1,%\pi_2)_*\kappa_{n^2q^2,\mathbf{r}}^{(1)}\\
&=n^{r_2}q^{r_2}(1,1,\langle nq\rangle')(1,\pi_1,\pi_2)_\ast (\varpi_2,\varpi_1,\varpi_2)_\ast \kappa_{n^2 q^2,\mathbf{r}}^{(1)} \\
& =n^{r_2}q^{r}(1,1,\langle nq\rangle')(1,\pi_1,\pi_2)_\ast (1,T_q',1) \kappa_{n^2q,\mathbf{r}}^{(1)}.
\end{align*}
Descending the level again by $q$ this gives
\begin{align*}
(\pi_{11},1,1)_\ast \kappa_{nq,\mathbf{r}}^{(2)}
%&=n^{r_2}q^{r_2}(1,1,\langle nq\rangle')(\varpi_1,1,1)_*(1,\pi_1,\pi_2)_*(1,1,T_q)\kappa_{n^2q,\mathbf{r}}^{(1)}\\
&=n^{r_2}q^{r_2}(1,1,\langle nq\rangle')(1,\pi_1,\pi_2)_\ast (\varpi_1,\varpi_1,{\varpi_2})_\ast(1,1,{T_q})\kappa_{n^2q,\mathbf{r}}^{(1)} \\
& =n^{r_2}q^{r_2}(1,1,\langle nq\rangle')(1,\pi_1,\pi_2)_\ast (\varpi_{1\ast},\varpi_{1\ast},{T_q\varpi_{2\ast}-q^{r_3}\langle q\rangle\varpi_{1\ast}})\kappa_{n^2q,\mathbf{r}}^{(1)} \\
& =n^{r_2}q^{r_2} (1,1,\langle nq\rangle')(1,\pi_1,\pi_2)_\ast\left\{(1,1,T_q^2)-(q+1)q^{r_3}(1,1,\langle q\rangle)\right\}\kappa_{n^2,\mathbf{r}}^{(1)} \\
& = q^{r_2}\left\{(1,1,T_q T_q')-(q+1)q^{r_3}(1,1,1)\right\}n^{r_2}(1,1,\langle n\rangle')(1,\pi_1,\pi_2)_\ast\kappa_{n^2,\mathbf{r}}^{(1)} \\
& = \left\{q^{r_2}(1,1,T_q T_q')-(q+1)q^{r_2+r_3}(1,1,1)\right\} \kappa_{n,\mathbf{r}}^{(2)},
\end{align*}
and similarly
\begin{align*}
(\pi_{21},1,1)_\ast \kappa_{nq,\mathbf{r}}^{(2)}
%&=n^{r_2}q^{r}(1,1,\langle nq\rangle')(\varpi_1,1,1)_\ast (1,\pi_1,\pi_2)_\ast (1,T_q',1)\kappa_{n^2q,\mathbf{r}}^{(1)} \\
&=n^{r_2}q^{r}(1,1,\langle nq\rangle')(1,\pi_1,\pi_2)_\ast (\varpi_1,\varpi_1,\varpi_2)_\ast (1,T_q',1)\kappa_{n^2q,\mathbf{r}}^{(1)} \\
&=n^{r_2}q^{r}(1,1,\langle nq\rangle')(1,\pi_1,\pi_2)_\ast (\varpi_{1\ast},T_q'\varpi_{1\ast}-\langle q\rangle'\varpi_{2\ast},\varpi_{2\ast})\kappa_{n^2q,\mathbf{r}}^{(1)} \\
& = n^{r_2}q^{r}(1,1,\langle nq\rangle')(1,\pi_1,\pi_2)_\ast\left\{(1,T_q',T_q)-q^{r-r_1}(T_q',\langle q\rangle',1)\right\}\kappa_{n^2,\mathbf{r}}^{(1)} \\
& = q^{r}\left\{(1, T_q',T_q')-q^{r-r_1}(T_q',\langle q\rangle',\langle q\rangle')\right\}n^{r_2}(1,1,\langle n\rangle')(1,\pi_1,\pi_2)_\ast\kappa_{n^2,\mathbf{r}}^{(1)} \\
& = \left\{q^r(1,T_q',T_q')-q^{r_2+r_3}(T_q',\langle q\rangle',\langle q\rangle')\right\} \kappa_{n,\mathbf{r}}^{(2)},
\end{align*}
and
\begin{align*}
(\pi_{22},1,1)_\ast \kappa_{nq,\mathbf{r}}^{(2)}
%&= n^{r_2}q^{r}(1,1,\langle nq\rangle')(\varpi_1,1,1)_\ast(1,\pi_1,\pi_2)_\ast(1,T_q',1)\kappa_{n^2q,\mathbf{r}}^{(1)} \\
&= n^{r_2}q^{r}(1,1,\langle nq\rangle')(1,\pi_1,\pi_2)_\ast (\varpi_2,\varpi_1,\varpi_2)_\ast(1,T_q',1)\kappa_{n^2q,\mathbf{r}}^{(1)} \\
& =n^{r_2}q^{r}(1,1,\langle nq\rangle')(1,\pi_1,\pi_2)_\ast (\varpi_{2\ast},T_q'\varpi_{1\ast}-\langle q\rangle'\varpi_{2\ast},\varpi_{2\ast})\kappa_{n^2q,\mathbf{r}}^{(1)} \\
& = n^{r_2}q^{r}(1,1,\langle nq\rangle')(1,\pi_1,\pi_2)_\ast\left\{q^{r-r_2}(1,T_q'^2,1)-(q+1)q^{r}(1,\langle q\rangle',1)\right\}\kappa_{n^2,\mathbf{r}}^{(1)} \\
& = q^{r}\left\{q^{r-r_2}(1,T_q'^2,\langle q\rangle')-(q+1)q^{r}(1,\langle q\rangle',\langle q\rangle')\right\}n^{r_2}(1,1,\langle n\rangle')(1,\pi_1,\pi_2)_\ast\kappa_{n^2,\mathbf{r}}^{(1)} \\
& = \left\{q^{r_1+r_3}(1,T_q'^2,\langle q\rangle')-(q+1)q^{2r}(1,\langle q\rangle',\langle q\rangle')\right\} \kappa_{n,\mathbf{r}}^{(2)},
\end{align*}
hence the result.
\end{proof}

Projection of the classes $\kappa_{n,\mathbf{r}}^{(2)}$ to the $(1,1,1)$-component in the K\"unneth decomposition yields classes $\kappa_{n,\mathbf{r}}^{(3)}$ in
\begin{equation*}
H^1\left(\Q,H^1_{\et}(Y(n^2)_{\overline{\Q}},\mathscr{L}_{r_1}(1))\otimes H^1_{\et}(Y(1)_{\overline{\Q}},\mathscr{L}_{r_2}(1))\otimes H^1_{\et}(Y(1)_{\overline{\Q}},\mathscr{L}_{r_3}(1))\otimes_{\Z_p} \Q_p(-1-r)\right).
\end{equation*}

Now set $(r_1, r_2, r_3)=(0,l-2,m-2)$. Fix test vectors
\[
\breve{f}\in S_k(N,\chi\varepsilon_K)[\theta_\psi],\quad  \breve{g}\in S_l(N,\chi_g)[g],\quad \breve{h}\in S_m(N,\chi_h)[h].
\]
These test vectors determine maps
\begin{align*}
H^1_{\et}(Y(n^2)_{\overline{\Q}},\bb{Z}_p(1)) &\rightarrow H^1_{\et}(Y(1,N_\psi(n^2))_{\overline{\Q}},\bb{Z}_p(1)) \\
H^1_{\et}(Y(1)_{\overline{\Q}},\mathscr{L}_{r_2}(1)) &\rightarrow H^1_{\et}(Y_1(N_g)_{\overline{\Q}},\mathscr{L}_{r_2}(1)) \\
H^1_{\et}(Y(1)_{\overline{\Q}},\mathscr{L}_{r_3}(1)) &\rightarrow H^1_{\et}(Y_1(N_h)_{\overline{\Q}},\mathscr{L}_{r_3}(1))
\end{align*}
which we use to project the classes $\kappa_{n,\mathbf{r}}^{(3)}$ to classes $\kappa_{n,\psi gh}^{(3)}$ in
\begin{equation*}
H^1(\Q,\cl{O}[R_{\frk{f},n}]\otimes_{(\bb{T}'(1,N_\psi(n^2))\otimes\bb{Z}_p,\phi_{n})}H^1_{\et}(Y(1,N_\psi(n^2))_{\overline{\Q}},\Z_p(1))\otimes_\cl{O} T_g\otimes_\cl{O} T_h\otimes_{\bb{Z}_p}\bb{Q}_p(-1-r)).
\end{equation*}

Let
\[
T_{g,h}^\psi= T_g\otimes_{\cl{O}}T_h(\psi_\frk{P}^{-1})(-1-r),\quad V_{g,h}^\psi=T_{g,h}^\psi\otimes_{\bb{Z}_p}\bb{Q}_p,
\]
%and define $T_{g,h}^\psi(N)$ and $V_{g,h}^\psi(N)$ in the same way replacing $T_g$ and $T_h$ by $T_g(N)$ and $T_h(N)$, respectively.
Using the isomorphisms
\begin{center}
\begin{tikzpicture}
\matrix(m) [matrix of math nodes, row sep=2.6em, column sep=2.8em, text height=1.5ex, text depth=0.25ex]
{\nu_{n}:\cl{O}[R_{\frk{f},n}]\otimes_{(\bb{T}'(1,N_\psi(n^2))\otimes\bb{Z}_p,\phi_{n})} H^1_{\et}(Y(1,N_\psi(n^2))_{\overline{\Q}},\bb{Z}_p(1)) & \Ind_{K(\frk{f})[n]}^\bb{Q}\cl{O}(\psi_\frk{P}^{-1}) \\};
\path[->,font=\scriptsize,>=angle 90]
(m-1-1) edge node [above] {$\simeq$} (m-1-2);
\end{tikzpicture}
\end{center}
of Corollary~\ref{cor:LLZ}, and taking the projection of both sides via the quotient map $\cl{O}[R_{\frk{f},n}]\rightarrow \cl{O}[R_n]$, we obtain new isomorphisms
\begin{center}
\begin{tikzpicture}
\matrix(m) [matrix of math nodes, row sep=2.6em, column sep=2.8em, text height=1.5ex, text depth=0.25ex]
{\tilde{\nu}_{n}:\cl{O}[R_{n}]\otimes_{(\bb{T}'(1,N_\psi(n^2))\otimes\bb{Z}_p,\phi_{n})} H^1_{\et}(Y(1,N_\psi(n^2))_{\overline{\Q}},\bb{Z}_p(1)) & \Ind_{K[n]}^\bb{Q}\cl{O}(\psi_\frk{P}^{-1}) ,\\};
\path[->,font=\scriptsize,>=angle 90]
(m-1-1) edge node [above] {$\simeq$} (m-1-2);
\end{tikzpicture}
\end{center}
so that applying the corresponding projection map to the classes $\kappa_{n,\psi gh}^{(3)}$ and using Shapiro's lemma
%and corestriction from $K(\frk{f},n)$ to $K[n]$, the classes $\kappa_{n,\psi gh}^{(3)}$
we obtain classes
\[
\tkapn\in H^1(K[n],V_{g,h}^\psi).
\]

\begin{proposition}\label{prop:normrelations3}
Let $n$ be as above, and let $q$ be a rational prime coprime to $p$, $N$ and $n$.
\begin{enumerate}
\item[{\rm (i)}] If $q$ splits in $K$ as $(q) = \frk{q}\overline{\frk{q}}$, then
\begin{align*}
\cor_{K[nq]/K[n]}(\tkapnq) &=q^{l+m-4}\bigg\{\chi_g(q)\chi_h(q)q\left(\frac{\psi(\frk{q})}{q}\Fr_\frk{q}^{-1}\right)^2-\frac{a_q(g)a_q(h)}{q^{(l+m-4)/2}}\left(\frac{\psi(\frk{q})}{q}\Fr_\frk{q}^{-1}\right) \\
& +\frac{\chi_g(q)^{-1}a_q(g)^2}{q^{l-1}}+\frac{\chi_h(q)^{-1}a_q(h)^2}{q^{m-2}}-\frac{q^2+1}{q} \\
& -\frac{a_q(g)a_q(h)}{q^{(l+m-4)/2}}\left(\frac{\psi(\overline{\frk{q}})}{q}\Fr_{\overline{\frk{q}}}^{-1}\right)+\chi_g(q)\chi_h(q)q\left(\frac{\psi(\overline{\frk{q}})}{q}\Fr_{\overline{\frk{q}}}^{-1}\right)^2\bigg\}\,\tkapn.
\end{align*}
\item[{\rm (ii)}] If $q$ is inert in $K$, then
\begin{align*}
\cor_{K[nq]/K[n]}(\tkapnq) &=q^{l+m-4}\left\{\frac{\chi_g(q)^{-1}a_q(g)^2}{q^{l-1}}+\frac{\chi_h(q)^{-1}a_q(h)^2}{q^{m-2}}-\frac{(q+1)^2}{q}\right\}\tkapn.
\end{align*}
\end{enumerate}
\end{proposition}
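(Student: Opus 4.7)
\bigskip
\noindent\textbf{Proof proposal.}
The plan is to translate the corestriction $\cor_{K[nq]/K[n]}$, via Corollary~\ref{cor:LLZ} projected along $\cl O[R_{\frk f,n}]\twoheadrightarrow\cl O[R_n]$ and Shapiro's lemma, into the Lei--Loeffler--Zerbes norm map $\mathcal{N}=\mathcal{N}_{\frk f,n}^{\frk f,nq}$ on the Hecke-cohomology side, and then to evaluate $\mathcal{N}(\kappa_{nq,\psi gh}^{(3)})$ by lifting to the ambient class $\kappa_{nq,\mathbf r}^{(2)}$ (with $(r_1,r_2,r_3)=(0,l-2,m-2)$, so $r_2+r_3=2r=l+m-4$) and applying Lemma~\ref{lemma:normrelations2}. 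For $q$ split as $(q)=\frk q\bar{\frk q}$, composing the two one-prime norms (removing $\bar{\frk q}$, then $\frk q$) gives
\[
\mathcal{N} \;=\; \pi_{11\ast}\,-\,\tfrac{\psi(\bar{\frk q})[\bar{\frk q}]}{q}\,\pi_{12\ast}\,-\,\tfrac{\psi(\frk q)[\frk q]}{q}\,\pi_{21\ast}\,+\,\tfrac{\psi((q))[(q)]}{q^2}\,\pi_{22\ast},
\]
while for $q$ inert the single-step norm gives $\mathcal{N}=\pi_{11\ast}-\tfrac{\psi((q))[(q)]}{q^2}\,\pi_{22\ast}$, where in the inert case $\pi_{11\ast},\pi_{22\ast}\colon H^1_{\et}(Y(1,N_\psi(n^2q^2)))\to H^1_{\et}(Y(1,N_\psi(n^2)))$ are the two natural direct degeneracies (forgetting level, respectively quotienting by the $q^2$-subgroup), which agree with the corresponding compositions $\pi_{11}, \pi_{22}$ appearing in Lemma~\ref{lemma:normrelations2}.

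Lemma~\ref{lemma:normrelations2} directly evaluates the $\pi_{11\ast},\pi_{21\ast}$ and $\pi_{22\ast}$ pieces of $\mathcal N(\kappa_{nq,\mathbf r}^{(2)})$. The missing identity
\[
(\pi_{12},1,1)_\ast\kappa_{nq,\mathbf r}^{(2)}\;=\;\bigl\{q^{r}(1,T_q',T_q')-q^{r_2+r_3}(T_q,1,1)\bigr\}\kappa_{n,\mathbf r}^{(2)}
\]
should follow by the same method as the $\pi_{21},\pi_{22}$ cases: one computes $(\varpi_1,1,1)_\ast\kappa_{nq,\mathbf r}^{(2)}$ as in the proof of that lemma, applies $(\varpi_2,1,1)_\ast$ at the lower step, factors $\pi_i^{n^2q\to 1}=\pi_i^{n^2\to 1}\circ\varpi_i$, uses the commutation $\varpi_{2\ast}U_q=T_q\varpi_{2\ast}-q^{r_3}\langle q\rangle\varpi_{1\ast}$ together with Lemma~\ref{lemma:normrelations1} to land in $\kappa_{n^2,\mathbf r}^{(1)}$, and finally absorbs a surplus $\langle q\rangle'$ via the identity $T_q\langle q\rangle'=T_q'$ on the third factor (the same identity on $H^1$ of $Y(1,N(m))$ already implicit in the $\pi_{11}$ case of Lemma~\ref{lemma:normrelations2}). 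A residual $T_q$ on the first factor then survives, which is handled via $T_q=T_q'\langle q\rangle$, so that $\phi_n(T_q)=\phi_n(T_q')\,\phi_n(\langle q\rangle)$.

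Next, I would project onto the $(g,h)$-isotypic component of the second and third factors---on which $T_q'$ acts as $a_q(g),a_q(h)$, $\langle q\rangle'$ as $\chi_g(q),\chi_h(q)$, and hence $T_qT_q'|_{V_h}=a_q(h)^2\chi_h(q)^{-1}$---push forward along $\pi_1\colon Y(n^2)\to Y(1,N_\psi(n^2))$, and apply $\phi_n$ and $\tilde{\nu}_n$. On the first factor, $\phi_n(T_q')=\psi(\frk q)[\frk q]+\psi(\bar{\frk q})[\bar{\frk q}]$ for $q$ split or $q\chi(q)[(q)]$ for $q$ inert, and in both cases $\phi_n(\langle q\rangle)=\chi_g(q)\chi_h(q)$ after using $\chi\chi_g\chi_h\varepsilon_K=1$, $\varepsilon_K(q)=\pm 1$, and the fact that $[(q)]=1$ in $R_n$ (since $q\cl O_K$ is principal). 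Under $\tilde{\nu}_n$ and Shapiro's lemma, $[\frk q]$ corresponds to $\Fr_{\frk q}^{-1}$ acting on $\Ind_{K[n]}^{\Q}\cl O(\psi_{\frk P}^{-1})$, and for split $q$ the relation $[\bar{\frk q}]=[\frk q]^{-1}$ identifies it with $\Fr_{\bar{\frk q}}^{-1}$.

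The main obstacle is the final combinatorial step. In the split case, the four $\pi_{ij}$-contributions produce a moderate number of monomials of the form $\psi(\frk q)^a\psi(\bar{\frk q})^b[\frk q]^{a-b}$ with $0\le a+b\le 2$; the identities $\psi(\frk q)\psi(\bar{\frk q})=q\chi(q)$ and $\chi\chi_g\chi_h=1$ force the ``mixed'' cross-terms (those with $ab\ne 0$) to collapse into constant contributions together with the $\Fr^{\pm 1}$-coefficient $-a_q(g)a_q(h)/q^{(l+m-4)/2}$, while the ``pure'' $a=0$ or $b=0$ monomials produce the $\chi_g(q)\chi_h(q)q\bigl(\psi(\frk q)/q\cdot\Fr_{\frk q}^{-1}\bigr)^2$-type terms and the $\chi_g(q)^{-1}a_q(g)^2/q^{l-1}$ and $\chi_h(q)^{-1}a_q(h)^2/q^{m-2}$ contributions coming from $\pi_{22}$ and $\pi_{11}$ respectively. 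One then checks that the four constant contributions (two cross-term pieces $+q^{l+m-4}$ from $\pi_{12}$ and $\pi_{21}$, together with $-(q+1)q^{l+m-4}$ from $\pi_{11}$ and $-(q+1)q^{l+m-5}$ from $\pi_{22}$) combine to $-(q^2+1)q^{l+m-5}$, yielding the uniform $q^{l+m-4}$ prefactor with trailing $-(q^2+1)/q$. The inert case is the same calculation restricted to $\pi_{11}$ and $\pi_{22}$, with all Frobenius-type terms vanishing (since $[(q)]=1$) and the two constant contributions $-(q+1)q^{l+m-4}$ and $-(q+1)q^{l+m-5}$ combining to $-(q+1)^2\cdot q^{l+m-5}$.
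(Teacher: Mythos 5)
Your proposal follows the same strategy as the paper: translate the corestriction through Corollary~\ref{cor:LLZ}, Shapiro's lemma, and the quotient $\cl O[R_{\frk f,n}]\twoheadrightarrow\cl O[R_n]$ into the Lei--Loeffler--Zerbes norm map $\cl{N}_{\frk{f},n}^{\frk{f},nq}$, then feed in the pushforward identities of Lemma~\ref{lemma:normrelations2} and simplify after projection. The calculation you describe, including the use of $\psi((q))=q\chi(q)$, $\chi\varepsilon_K\chi_g\chi_h=1$, $[(q)]=1$ in $R_n$, and the translation $[\frk q]\mapsto\Fr_{\frk q}^{-1}$ under $\tilde\nu_n$, is exactly what the paper does, and your combinatorial reorganization at the end would arrive at the stated formula.

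The one detour is your four-term expansion of $\cl{N}$ with a separate $\pi_{12\ast}$, together with a proposed extra lemma for $(\pi_{12},1,1)_\ast\kappa_{nq,\mathbf r}^{(2)}$. This is unnecessary: the composite degeneracy maps $\pi_{12}=\pi_2\circ\pi_1$ and $\pi_{21}=\pi_1\circ\pi_2$ from $Y(n^2q^2)$ to $Y(n^2)$ are the \emph{same} morphism of modular curves (on moduli, both send $(E,Q,D)$ to $E/(Nn^2qD)$ with the evident image data; under the complex uniformization both are multiplication by $q$ on $\cH$). The paper uses this identification when it writes
\[
\cl{N}_{\frk{f},n}^{\frk{f},nq}=\pi_{11\ast}-\Bigl(\tfrac{\psi(\frk{q})[\frk{q}]}{q}+\tfrac{\psi(\overline{\frk{q}})[\overline{\frk{q}}]}{q}\Bigr)\pi_{21\ast}+\tfrac{\chi(q)}{q}\pi_{22\ast},
\]
collecting both middle coefficients on a single $\pi_{21\ast}$, so no fourth identity is needed beyond Lemma~\ref{lemma:normrelations2}. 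Your claimed formula $\bigl\{q^{r}(1,T_q',T_q')-q^{r_2+r_3}(T_q,1,1)\bigr\}\kappa_{n,\mathbf r}^{(2)}$ is in fact the same as the $\pi_{21}$ formula of Lemma~\ref{lemma:normrelations2} once one uses the invariance of $\kappa^{(2)}_{n,\mathbf{r}}$ under the diagonal diamond action, since $(T_q,1,1)$ and $(T_q',\langle q\rangle',\langle q\rangle')$ differ by $(\langle q\rangle,\langle q\rangle,\langle q\rangle)$; so your route would still close, just with redundant effort. Aside from that, your proposal is a faithful reconstruction of the paper's argument.
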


\begin{proof}
We have the commutative diagram
\begin{center}
\begin{tikzpicture}
\matrix(m) [matrix of math nodes, row sep=2.6em, column sep=1.5em, text height=1.5ex, text depth=0.25ex]
{H^1(K[nq],V_{g,h}^\psi) & H^1(\Q,\Ind_{K[nq]}^\bb{Q}\cl{O}(\psi_{\frk{P}}^{-1})\otimes_\cl{O} T_g\otimes_{\cl{O}} T_h\otimes_{\bb{Z}_p}\bb{Q}_p(-1-r)) \\
H^1(K[n],V_{g,h}^\psi) & H^1(\Q,\Ind_{K[n]}^\bb{Q}\cl{O}(\psi_{\frk{P}}^{-1})\otimes_\cl{O} T_g\otimes_{\cl{O}} T_h\otimes_{\bb{Z}_p}\bb{Q}_p(-1-r)), \\};
\path[->,font=\scriptsize,>=angle 90]
(m-1-1) edge node [auto] {$\cong$} (m-1-2)
(m-2-1) edge node [auto] {$\cong$} (m-2-2)
(m-1-1) edge node [auto] {$\cor_{K[nq]/K[n]}$} (m-2-1)
(m-1-2) edge node [auto] {} (m-2-2);
\end{tikzpicture}
\end{center}
where the horizontal isomorphisms are given by Shapiro's lemma and the right vertical arrow comes from the natural norm map between induced representations. Using the isomorphisms $\tilde{\nu}_{n}$ above, the vertical arrows in the previous diagram correspond to the map
\begin{center}
\begin{tikzpicture}
\matrix(m) [matrix of math nodes, row sep=2.6em, column sep=2.8em, text height=1.5ex, text depth=0.25ex]
{H^1(\Q,\cl{O}[R_{nq}]\otimes_{\phi_{nq}} H^1_{\et}(Y(1,N_\psi(n^2q^2))_{\overline{\Q}},\bb{Z}_p(1))\otimes_\cl{O} T_g\otimes_{\cl{O}} T_h\otimes_{\bb{Z}_p}\bb{Q}_p(-1-r)) \\
H^1(\Q,\cl{O}[R_{n}]\otimes_{\phi_{n}} H^1_{\et}(Y(1,N_\psi(n^2))_{\overline{\Q}},\bb{Z}_p(1))\otimes_\cl{O} T_g\otimes_{\cl{O}} T_h\otimes_{\bb{Z}_p}\bb{Q}_p(-1-r)). \\};
\path[->,font=\scriptsize,>=angle 90]
(m-1-1) edge node [auto] {$\cl{N}_{\frk{f},n}^{\frk{f},nq}\otimes\id\otimes\id$} (m-2-1);
\end{tikzpicture}
\end{center}
If $q$ splits in $K$, the map $\cl{N}_{\frk{f},n}^{\frk{f},nq}$ is given by
\[
\cl{N}_{\frk{f},n}^{\frk{f},nq}=\pi_{11\ast}-\left(\frac{\psi(\frk{q})[\frk{q}]}{q}+\frac{\psi(\overline{\frk{q}})[\overline{\frk{q}}]}{q}\right)\pi_{21\ast}+\frac{\chi(q)}{q}\pi_{22\ast},
\]
using the notations introduced in Lemma~\ref{lemma:normrelations2} for the degeneracy maps, and from the relations in that lemma we find
\begin{align*}
\cl{N}_{\frk{f},n}^{\frk{f},nq}&(\tkapnq)=\biggl[1\otimes\left\lbrace q^{r_2}(1,1,T_qT_q')-(q+1)q^{r_2+r_3}(1,1,1)\right\rbrace\biggr.\\
&-\left(\frac{\psi(\frk{q})[\frk{q}]}{q}+\frac{\psi(\overline{\frk{q}})[\overline{\frk{q}}]}{q}\right)\otimes\left\{q^r(1,T_q',T_q')-q^{r_2+r_3}(T_q',\langle q\rangle',\langle g\rangle')\right\}\\
&+\frac{\chi(q)}{q}\otimes\left\lbrace q^{r_1+r_3}(1,{T_q'}^{2},\langle q\rangle')-(q+1)q^{2r}(1,\langle q\rangle',\langle q\rangle')\right\rbrace\biggr]\,\tkapn\\
&=\biggl[\chi_h(q)^{-1}a_q(h)^2q^{r_2}+(q+1)q^{r_2+r_3}\biggr.\\
&-\left(\frac{\psi(\frk{q})[\frk{q}]}{q}+\frac{\psi(\overline{\frk{q}})[\overline{\frk{q}}]}{q}\right)\left\lbrace a_q(g)a_q(h)q^r-\chi_g(q)\chi_h(q)q^{r_2+r_3}(\psi(\frk{q})[\frk{q}]+\psi(\overline{\frk{q}})[\overline{\frk{q}}])\right\rbrace\\
&+\biggl.\frac{\chi(q)}{q}\left\lbrace \chi_h(q)a_q(g)^2q^{r_1+r_3}-\chi_g(q)\chi_h(q)(q+1)q^{2r}\right\rbrace\biggr]\,\tkapn.
\end{align*}
%where, with a slight abuse of notation, we have denoted by $\tkapn$ the classes obtained before applying corestriction from $K(\frk{f},n)$ to $K[n]$.
This implies the result in this case. When $q$ is inert in $K$, we have
$$
\cl{N}_{\frk{f},n}^{\frk{f},nq}=\pi_{11\ast}-\frac{\chi(q)}{q}\pi_{22\ast},
$$
and the result in this case follows by a very similar computation that we leave to the reader.
%The result now follows from a straightforward computation using the relations in that lemma.
\end{proof}

In particular, restricting to positive integers $n$ as above that are divisible only by primes $q$ which split in $K$, Proposition~\ref{prop:normrelations3} yields  the following result. (Note that since in this section we assume $\psi$ has infinity type $(-1,0)$, the balanced condition forces $l=m$.)

\begin{theorem}\label{principal:tame}
Suppose the weights of $g, h$ are $l=m$.
Let $\mathcal{S}$ be the set of squarefree products of primes $q$ which split in $K$ and are coprime to $p$ and $N$. Assume that $H^1(K[n],T_{g,h}^{\psi})$ is torsion-free for every $n\in\mathcal{S}$. There exists a collection of classes
\[
\left\lbrace\kapn\in H^1(K[n],T_{g,h}^{\psi})\;\colon\; n\in\mathcal{S}\right\rbrace
\]
%where $T_{g,h}^\psi(N)$ is a fixed $\cO$-lattice in $V_{g,h}^\psi(N)$, independent of $n$,
such that whenever $n, nq\in\mathcal{S}$ with $q$ a prime, we have
\begin{equation}\label{eq:norm-split}
{\rm cor}_{K[nq]/K[n]}(\kapnq)=P_{\frk{q}}({\rm Fr}_{\frk{q}}^{-1})\,\kapn,\nonumber
\end{equation}
where $\mathfrak{q}$ is any of the primes of $K$ above $q$, and $P_{\frk{q}}(X)=\det(1-{\rm Fr}_{\frk{q}}^{-1}X\vert (V_{g,h}^\psi)^\vee(1))$.
\end{theorem}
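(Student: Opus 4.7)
The plan is to deduce Theorem~\ref{principal:tame} from Proposition~\ref{prop:normrelations3}. Since the theorem restricts to $n\in\mathcal{S}$ (squarefree products of split primes), only the split case of the proposition is needed. First, I would fix $(r_1,r_2,r_3)=(0,l-2,l-2)$---the weight vector attached to our classes, since $\psi$ has infinity type $(-1,0)$ (forcing $\theta_\psi$ to have weight~$2$) and the balancedness condition then forces $l=m$---and extract from Proposition~\ref{prop:normrelations3} an explicit formula expressing $\cor_{K[nq]/K[n]}(\tkapnq)$ as an operator polynomial in $\Fr_\frk{q}^{-1}$ and $\Fr_{\overline{\frk{q}}}^{-1}$ applied to $\tkapn$, with $\cl{O}$-coefficients involving the Hecke data of $g,h$, the character values $\chi_g(q),\chi_h(q),\chi(q)$, the Gr\"ossencharacter values $\psi(\frk{q}),\psi(\overline{\frk{q}})$, and powers of $q$.

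The key step is to identify this formula with $P_\frk{q}(\Fr_\frk{q}^{-1})\tkapn$ up to an $\cl{O}$-unit. The main inputs are: (i)~the Frobenius eigenvalues of $\Fr_\frk{q}^{-1}$ on $(V_{g,h}^\psi)^\vee(1)|_{G_{K_\frk{q}}}$ are $\alpha_{g,i}\alpha_{h,j}\psi(\frk{q})q^{-l}$ for $i,j\in\{1,2\}$, so that $P_\frk{q}(X)=\prod_{i,j}(1-\alpha_{g,i}\alpha_{h,j}\psi(\frk{q})q^{-l}X)$; (ii)~the identity $\psi(\frk{q})\psi(\overline{\frk{q}})=q\chi(q)$ together with the central character relation $\chi\chi_g\chi_h\varepsilon_K=1$ (which at split $q$ reduces to $\chi(q)\chi_g(q)\chi_h(q)=1$); and (iii)~the observation that $[\frk{q}]\cdot[\overline{\frk{q}}]=[(q)]=1$ in the ring class group $R_n$ (as $(q)$ is principal and coprime to $n$), which under the geometric Artin identification $[\frk{q}]\leftrightarrow\Fr_\frk{q}^{-1}$ forces $\Fr_{\overline{\frk{q}}}^{-1}$ to equal $\Fr_\frk{q}$ as operators on the $\cl{O}[R_n]$-factor of the Shapiro representation. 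Combining these, the apparently asymmetric expressions in $\psi(\frk{q})\Fr_\frk{q}^{-1}/q$ and $\psi(\overline{\frk{q}})\Fr_{\overline{\frk{q}}}^{-1}/q$ coming from Proposition~\ref{prop:normrelations3} can be rewritten as polynomials in the single operator $\Fr_\frk{q}^{-1}$ and, after absorbing the leading factor $q^{l+m-4}$, reassembled into $P_\frk{q}(\Fr_\frk{q}^{-1})$ up to an explicit unit. This coefficient matching is the hardest part of the argument, since it requires careful bookkeeping with the ring-class-group relation together with the local description of $V_{g,h}^\psi|_{G_{K_\frk{q}}}$ in terms of the Satake parameters of $g,h$ and the value $\psi(\frk{q})$.

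Finally, the passage from $\tkapn\in H^1(K[n],V_{g,h}^\psi(N))$ to an integral class $\kapn\in H^1(K[n],T_{g,h}^\psi(N))$ relies on the torsion-freeness of $H^1(K[n],T_{g,h}^\psi)$: this makes the natural map $H^1(K[n],T_{g,h}^\psi(N))\hookrightarrow H^1(K[n],V_{g,h}^\psi(N))$ injective, so $\kapn$ is uniquely determined as the integral preimage of the suitable $\cl{O}$-rescaling of $\tkapn$ (chosen to absorb the unit from the identification above and to land in the integral lattice). The desired norm relation then follows from its $V$-coefficient counterpart by injectivity.
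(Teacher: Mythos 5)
Your plan correctly identifies the inputs — Proposition~\ref{prop:normrelations3}(i), the relations $\psi(\frk{q})\psi(\overline{\frk{q}})=\chi(q)q$ and $\chi(q)\chi_g(q)\chi_h(q)=1$ at split $q$, and the fact that $\Fr_{\frk{q}}^{-1}\Fr_{\overline{\frk{q}}}^{-1}=1$ on the $\cl{O}[R_n]$-part because $(q)$ is principal. But the central claim, that the factor appearing in Proposition~\ref{prop:normrelations3}(i), after stripping off $q^{l+m-4}$, equals $P_\frk{q}(\Fr_\frk{q}^{-1})$ \emph{up to an $\cl{O}$-unit}, is false, and the argument cannot be repaired along those lines. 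Write $F=\Fr_\frk{q}^{-1}$, $\bar F=F^{-1}$, $\lambda=\psi(\frk{q})$, $\bar\lambda=\psi(\overline{\frk{q}})$. After multiplying the bracketed factor in Prop.~\ref{prop:normrelations3}(i) by $F^2$ to obtain a genuine degree-$4$ polynomial in $F$, the constant term is $\chi_g\chi_h\bar\lambda^2/q$ and the leading ($F^4$-) coefficient is $\chi_g\chi_h\lambda^2/q$, while in $P_\frk{q}(F)$ these coefficients are $1$ and $\chi_g^2\chi_h^2\lambda^4/q^4$. Equating the two ratios forces $\chi_g^2\chi_h^2\lambda^2\bar\lambda^2=q^4$, but in fact $\lambda\bar\lambda=q/(\chi_g\chi_h)$ gives $\chi_g^2\chi_h^2\lambda^2\bar\lambda^2=q^2$; the two sides differ by a factor of $q^2$. (The term $-(q^2+1)/q$ in Prop.~\ref{prop:normrelations3}(i) versus the $-2\chi_g\chi_h\lambda^2/q^2$ contribution inside $P_\frk{q}$ is another symptom of the same discrepancy.)

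What is actually true — and what the paper proves — is that the two expressions agree only \emph{modulo $q-1$}: multiplying $P_\frk{q}(F)$ by the unit $\chi_g\chi_h\bar\lambda^2\bar F^2$ and reducing all powers of $q$ to $1$ recovers the $\bmod\ (q-1)$ reduction of the Prop.~\ref{prop:normrelations3}(i) factor. To convert a system satisfying norm relations with Euler factors that are only \emph{congruent} to the correct $P_\frk{q}(\Fr_\frk{q}^{-1})$ modulo $q-1$ into a genuine Euler system, one must invoke \cite[Lem.~9.6.1]{Rub}. This lemma is the crucial missing ingredient in your proof: without it the congruence gives you no way to produce the classes $\kapn$ with the exact relation $\cor_{K[nq]/K[n]}(\kapnq)=P_\frk{q}(\Fr_\frk{q}^{-1})\kapn$. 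Relatedly, your last paragraph tries to build $\kapn$ by choosing an $\cl{O}$-rescaling of $\tkapn$ that ``absorbs the unit from the identification,'' but since there is no such unit, this step has no content. The paper's integrality argument is instead elementary: the denominators of $\tkapn$ divide $(l-2)!(m-2)!$ by \cite[Rmk.~3.3]{BSV}, so one multiplies by a fixed power of $p$, and then the torsion-freeness hypothesis combined with the construction in Rubin's lemma produces the integral classes with the exact norm relations.
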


\begin{proof}
We begin by noting that the only possible denominators of the classes $\tkapn$ are divisors of $(l-2)!(m-2)!$ (as follows from \cite[Rmk.~3.3]{BSV}), so after multiplying them by a suitable power of $p$
%(or taking $p\geq\max{\ell-1,m-1}$) so that
they all have coefficients in $T_{g,h}^\psi$.

Now given a prime $q\in\mathcal{S}$, we note that for any prime $v$ of $K$ above $q$ we have
\begin{align*}
P_v(X)%&=\det(1-\Fr_v^{-1}X\vert (T_{g,h}^\psi)^\vee(1))\\
&= 1-\frac{a_q(g)a_q(h)}{q^{(l+m-2)/2}}\frac{\psi(v)}{q}X \\
&\quad+ \left(\frac{\chi_g(q)a_q(h)^2}{q^{m-1}}+\frac{\chi_h(q)a_q(g)^2}{q^{l-1}}-2\chi_g(q)\chi_h(q)\right)\frac{\psi(v)^2}{q^2} X^2 \\
&\quad-\frac{\chi_g(q)\chi_h(q)a_q(g)a_q(h)}{q^{(l+m-2)/2}}\frac{\psi(v)^3}{q^3} X^3+\chi_g(q)^2\chi_h(q)^2 \frac{\psi(v)^4}{q^4} X^4.
%P_{\overline{\frk{q}}}(X) & =\det(1-\Fr_{\overline{\frk{q}}}^{-1}X%\vert (T_{g,h}^\psi)^\vee(1)) \\
%&= 1-\frac{a_q(g)a_q(h)}{q^{(l+m-2)/2}}%\frac{\psi(\overline{\frk{q}})}{q}X \\
%&+ \left(\frac{\chi_g(q)a_q(h)^2}{q^{m-1}}+%\frac{\chi_h(q)a_q(g)^2}{q^{l-1}}-2\chi_g(q)\chi_h(q)\right)%\frac{\psi(\overline{\frk{q}})^2}{q^2} X^2 \\
%& -\frac{\chi_g(q)\chi_h(q)a_q(g)a_q(h)}{q^{(l+m-2)/2}}%\frac{\psi(\overline{\frk{q}})^3}{q^3} X^3+\chi_g(q)^2\chi_h(q)^2 %\frac{\psi(\overline{\frk{q}})^4}{q^4} X^4.
\end{align*}
Writing $(q)=\frk{q}\overline{\frk{q}}$ and using that $\psi(\frk{q})\psi(\overline{\frk{q}})=\chi(q)q$ and $\chi_g(q)\chi_h(q)\chi(q)=1$, we therefore find the congruences
%cohomology groups over ring class fields of $K$ in which $q$ does not ramify,
\begin{align*}
P_\frk{q}(\Fr_\frk{q}^{-1})\chi_g(q)\chi_h(q)\psi(\overline{\frk{q}})^2\Fr_{\overline{\frk{q}}}^{-2}
&\equiv  P_{\overline{\frk{q}}}(\Fr_{\overline{\frk{q}}}^{-1})\chi_g(q)\chi_h(q)\psi(\frk{q})^2\Fr_{\frk{q}}^{-2}\pmod{q-1}\\
&\equiv \chi_g(q)\chi_h(q)\psi(\frk{q})^2\Fr_\frk{q}^{-2}-a_q(g)a_q(h)\psi(\frk{q})\Fr_\frk{q}^{-1}\\
&\quad+\chi_g(q)^{-1}a_q(g)^2+\chi_h(q)^{-1}a_q(h)^2-2\\
&\quad-a_q(g)a_q(h)\psi(\overline{\frk{q}})\Fr_{\overline{\frk{q}}}^{-1}+\chi_g(q)\chi_h(q)\psi(\overline{\frk{q}})^2\Fr_{\overline{\frk{q}}}^{-2}\pmod{q-1}
\end{align*}
as endomorphisms of $H^1(K[n],T_{g,h}^\psi)$. Since these expressions agree modulo $q-1$ with the factor appearing in the norm relation of Proposition~\ref{prop:normrelations3}(i), together with %Lemma~9.6.1 and Lemma~9.6.3 i
\cite[Lem.~9.6.1 and Lem.~9.6.3]{Rub} the result follows.
\end{proof}

\begin{remark}\label{rem:torsionfree}
The condition that $H^1(K[n],T_{g,h}^{\psi})$ is torsion-free for every $n\in\mathcal{S}$ holds, for example, under the assumptions in Lemma~\ref{lemma:big-image} below.
%Indeed, if $g$ and $h$ are both ordinary at the prime $p$, an easy argument shows that $H^0(K[n],V_{g,h}^\psi/T_{g,h}^\psi)$ is trivial for all $n\in\mathcal{S}$, which implies the condition.
Indeed, since $\mathrm{SL}_2(\bb{Z}_p)\times \mathrm{SL}_2(\bb{Z}_p)$ has no proper normal subgroups of finite $p$-power index, it follows from this lemma that the residual $G_{K[n]}$-representation attached to $T_{g,h}^{\psi}$ is absolutely irreducible for every $n\in\cl{S}$, so that $H^0(K[n],V_{g,h}^\psi/T_{g,h}^\psi)$ is trivial for every $n\in\mathcal{S}$ and the condition follows.
\end{remark}

\begin{remark}
In the inert case, writing $\frk{q}=(q)$ we have
\begin{align*}
& P_\frk{q}(X)=\det(1-\Fr_\frk{q}^{-1}X\vert (T_{g,h}^\psi)^\vee(1)) \\
&= 1-\left(\frac{a_q(g)^2}{q^{l-1}}-2\chi_g(q)\right)\left(\frac{a_h(q)^2}{q^{m-1}}-2\chi_h(q)\right)\frac{\psi(\frk{q})}{q^2}X \\
& +\left(\chi_h(q)^2\left(\frac{a_q(g)^2}{q^{l-1}}-2\chi_g(q)\right)^2 + \chi_g(q)^2\left(\frac{a_q(h)^2}{q^{m-1}}-2\chi_h(q)\right)^2 - 2\chi_g(q)^2 \chi_h(q)^2 \right)\frac{\psi(\frk{q})^2}{q^4} X^2 \\
& -\chi_g(q)^2
\chi_h(q)^2\left(\frac{a_q(g)^2}{q^{l-1}}-2\chi_g(q)\right)\left(\frac{a_q(h)^2}{q^{m-1}}-2\chi_h(q)\right)\frac{\psi(\frk{q})^3}{q^6} X^3+\chi_g(q)^4\chi_h(q)^4\frac{\psi(\frk{q})^4}{q^8} X^4,
\end{align*}
and similarly as in the proof of Theorem~\ref{principal:tame} we find  the congruence
\begin{align*}
P_\frk{q}(\Fr_\frk{q}^{-1}) &\equiv \chi_g(q)^{-2} a_q(g)^4+\chi_h(q)^{-2}a_q(h)^4+2\chi_g(q)^{-1}\chi_h(q)^{-1}a_q(g)^2 a_q(h)^2 q \\
&-4\frac{\chi_g(q)^{-1}a_q(g)^2(q+1)}{q^{l-1}}-4\frac{\chi_h(q)^{-1}a_q(h)^2 (q+1)}{q^{m-1}}+8(q+1)\pmod{q^2-1}
\end{align*}
as endomorphisms of $H^1(K[n],T_{g,h}^\psi)$. Similarly as above, this expression agrees modulo $q^2-1$ with the square of the Euler factor appearing in the norm relation of Proposition~\ref{prop:normrelations3}(ii).
\end{remark}

Now assume that $(p)=\frk{p}\overline{\frk{p}}$ splits in $K$, with $\frk{p}$ the prime of $K$ above $p$ induced by our fixed embedding $\iota_p:\overline{\Q}\hookrightarrow\overline{\Q}_p$, and let $f=\theta_\psi$ be the theta series associated to $\psi$. Assume also that both $g$ and $h$ are ordinary at $p$. Then, for $\phi\in\{f,g,h\}$, the $G_{\bb{Q}_p}$-representation $V_\phi$ admits a filtration
\begin{equation*}
0 \longrightarrow V_{\phi}^+ \longrightarrow V_{\phi} \longrightarrow V_{\phi}^- \longrightarrow 0
\end{equation*}
where $V_\phi^\pm$ is one-dimensional and $V_\phi^-$ is unramified with $\Fr_p$ acting as multiplication by $\alpha_\phi$, the unit root of the Hecke polynomial of $\phi$ at $p$. Letting $V_{fgh}=V_f\otimes V_g\otimes V_h(-1-r)$, we can therefore consider the $G_{\Q_p}$-subrepresentation
\[
\mathscr{F}^2V_{fgh}= (V_f\otimes V_g^+\otimes V_h^+ +V_f^+\otimes V_{g}\otimes V_{h}^+ +V_{f}^+\otimes V_{g}^+\otimes V_h)(-1-r)
\]
and define the \emph{balanced} local condition $H^1_{\bal}(\Q_p,V_{fgh})\subset H^1(\Q_p,V_{fgh})$ to be the image of the natural map $H^1(\Q_p,\mathscr F^2V_{fgh})\rightarrow H^1(\Q_p,V_{fgh})$. 

Setting
\begin{equation}\label{eq:bal-Sh}
\cl{F}_\frk{p}^+(V_{g,h}^\psi)=(V_g^+\otimes V_h+V_g\otimes V_h^+)(\psi_{\frk{P}}^{-1})(-1-r),\quad
\cl{F}_{\overline{\frk{p}}}^+(V_{g,h}^\psi)=(V_g^+\otimes V_h^+)(\psi_{\frk{P}}^{-1})(-1-r),
\end{equation}
one readily checks that under the isomorphism
$H^1(\Q,V_{fgh})\cong H^1(K,\Vrep)$ of Shapiro's lemma, the balanced local condition $H^1_{\bal}(\Q_p,V_{fgh})$ corresponds to the natural image of
\[
\bigoplus_{v\mid p}H^1(K_v,\cl{F}_v^+(V_{g,h}^\psi))\longrightarrow\bigoplus_{v\mid p}H^1(K_v,V_{g,h}^\psi).
\]
This motivates the following definition. Let $L/K$ be a finite extension, and for every finite prime $v$ of $L$ put
\[
H^1_{\rm bal}(L_v,V_{g,h}^\psi)=
\begin{cases}
{\rm im}\bigl(H^1(L_v,\mathcal{F}_v^+(V_{g,h}^\psi))\rightarrow H^1(L_v,V_{g,h}^\psi)\bigr)&\textrm{if $v\mid p$},\\[0.3em]
{\rm ker}\bigl(H^1(L_v,V_{g,h}^\psi)\rightarrow H^1(L_v^{\rm nr},V_{g,h}^\psi)\bigr)&\textrm{if $v\nmid p$,}
\end{cases}
\]
where $L_v^{\rm nr}$ is the maximal unramified extension of $L_v$. 
We then let $H^1_{\rm bal}(L_v,T_{g,h}^\psi)$ be the inverse image of $H^1_{\rm bal}(L_v,V_{g,h}^\psi)$ under the natural map $H^1(L_v,T_{g,h}^\psi)\rightarrow H^1(L_v,V_{g,h}^\psi)$, and let ${\rm Sel}_{\rm bal}(L,T_{g,h}^\psi)\subset H^1(L,T_{g,h}^\psi)$ be the Greenberg Selmer group cut out by these local conditions. (Note that this is a special case of the more general construction discussed in \S\ref{sec:ES}.)

%Put $T_\phi^+=V_\phi^+\cap T_\phi$ and use it to similarly define $\mathscr{F}^2T_{fgh}\subset T_{fgh}=T_f\otimes T_g\otimes T_h(-1-r)$ and $H^1_{\bal}(\Q_p,T_{fgh})\subset H^1(\Q_p,T_{fgh})$. Setting
%\begin{equation}\label{eq:bal-Sh}
%\cl{F}_\frk{p}^+(T_{g,h}^\psi)=(T_g^+\otimes T_h+T_g\otimes T_h^+)(\psi_{\frk{P}}^{-1})(-1-r),\quad
%\cl{F}_{\overline{\frk{p}}}^+(T_{g,h}^\psi)=(T_g^+\otimes T_h^+)(\psi_{\frk{P}}^{-1})(-1-r),
%\end{equation}
%then one readily checks that under the Shapiro isomorphism
%$H^1(\Q,V_{fgh})\cong H^1(K,\Vrep)$ the balanced local condition $H^1_{\bal}(\Q_,T_{fgh})$ corresponds to the natural image of
%\[
%\bigoplus_{v\mid p}H^1(K_v,\cl{F}_v^+(T_{g,h}^\psi))\longrightarrow\bigoplus_{v\mid p}H^1(K_v,T_{g,h}^\psi).
%\]

%For each finite extension $L$ of $K$, this choice of local conditions at $p$, together with the unramified condition at the primes outside $p$, defines the Greenberg Selmer group $\Sel_{\bal}(L,T_{g,h}^\psi)$ (see \S\ref{sec:ES} for the precise definition).

%Clearly, the same discussion works, \textit{mutatis mutandis}, for $T_{g,h}^\psi(N)$ instead of $T_{g,h}^\psi$.

\begin{proposition}\label{prop:in-Sel}
For every $n\in\mathcal{S}$, the class $\kapn$ lies in the group $\Sel_{\bal}(K[n],T_{g,h}^\psi)$.
\end{proposition}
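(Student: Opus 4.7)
The plan is to verify the defining local conditions of $\Sel_{\bal}(K[n], T_{g,h}^\psi(N))$ place by place, reducing everything to known properties of the three-variable diagonal cycle class for the triple $(\theta_\psi, g, h)$. At places $w$ of $K[n]$ not lying above $p$, where the prescribed local condition is the unramified one, the class $\kapn$ is assembled from the \'{e}tale Abel--Jacobi image of an algebraic diagonal cycle on a triple product of smooth modular curves, followed by pushforward along a degeneracy map, projection to a Hecke-isotypic quotient, and the descent isomorphism $\tilde\nu_n$ of Corollary~\ref{cor:LLZ}. At places not dividing $pNn$, all the modular curves involved have good reduction, so the diagonal cycle class extends to the smooth integral model and its \'{e}tale cohomology image is unramified; this property is preserved under each subsequent Galois-equivariant operation. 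At the finitely many remaining primes of bad reduction, the unramified condition follows by standard arguments involving the local Weil--Deligne representations attached to $g$, $h$, and $\theta_\psi$, together with the effect of the Hecke-isotypic projection on inertia-invariants.

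The heart of the proof is the balanced condition at primes above $p$. Since $p$ splits in $K$ as $\frk{p}\overline{\frk{p}}$ and $K[n]/K$ is unramified at $p$ (as $(n,p)=1$), Shapiro's lemma combined with $\tilde\nu_n$ identifies the semilocal restriction of $\kapn$ at primes above $p$ with the $p$-localization of the corresponding class in $H^1(\Q_p, V_{fgh}(N))$, where $f=\theta_\psi$. As spelled out in the paragraph preceding the proposition, this identification sends $H^1_{\bal}(\Q_p, V_{fgh}(N))$ to the image of $\bigoplus_{v \mid p} H^1(K_v, \mathcal{F}^+_v(V_{g,h}^\psi(N)))$. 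One then invokes the fundamental result of Bertolini--Seveso--Venerucci (cf. \cite[Prop.~5.9]{BSV}, and also \cite{DR3}) asserting that, in the balanced weight range, the $p$-localization of the diagonal cycle class for $(f,g,h)$ lands in $H^1_{\bal}(\Q_p, V_{fgh}(N))$---a Hodge-theoretic statement reflecting the position of the Gross--Kudla--Schoen cycle in the de Rham filtration of the triple product motive.

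The main obstacle is precisely this last step at $p$: invoking the BSV result and tracing its compatibility through the intervening constructions---the Hecke projection carving out the $(f,g,h)$-isotypic component, the pushforward along $\pi_1:Y(n^2)\to Y(1,N_\psi(n^2))$, and the descent to $K[n]$ via $\tilde\nu_n$---to conclude the balanced condition for $\kapn$ itself. Once this compatibility is verified, which is essentially formal since each of these operations preserves Greenberg-type local conditions, the balanced property for the base class propagates automatically to $\kapn$, completing the proof.
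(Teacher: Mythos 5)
Your proof is essentially correct and the key step at $p$ matches the paper: both invoke the Bertolini--Seveso--Venerucci result that the diagonal cycle class is crystalline/geometric at $p$, and then identify the resulting Bloch--Kato finite subspace with the balanced local condition (the paper routes this through \cite[Prop.~3.2]{BSV} to get $H^1_{\rm geo}$, then uses $H^1_{\rm geo}=H^1_{\rm fin}=H^1_{\bal}$ via Lemma~\ref{lem:BK}). Where you diverge is at the primes $v\nmid p$, and here the paper's route is strictly cleaner: since $V_{g,h}^\psi$ is pure of weight $-1$ (by Carayol, Deligne, Ramanujan), one has $H^0(K[n]_v,V_{g,h}^\psi)=H^2(K[n]_v,V_{g,h}^\psi)=0$, and the local Euler characteristic formula then forces $H^1(K[n]_v,V_{g,h}^\psi)=0$ outright. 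Because the Greenberg local condition on $T$ is defined by propagation from $V$, the vanishing of the whole group $H^1(K[n]_v,V_{g,h}^\psi)$ makes the local condition at such $v$ automatic, with nothing to check. Your alternative (extending the cycle class to smooth integral models at good-reduction primes, then a separate argument at the finitely many bad primes via Weil--Deligne representations) would likely work but does substantially more work, and the bad-reduction case is left as a gesture; in particular you never actually observe that $H^1(K[n]_v,V_{g,h}^\psi)$ vanishes, which is what makes the paper's argument uniform over all $v\nmid p$ and eliminates the case division entirely. If you want to salvage your approach rigorously, the cleanest fix is simply to replace it by the purity argument.
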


\begin{proof}
Fix $n\in\mathcal{S}$ and $v$ a finite prime of $K[n]$. If $v\nmid p$, then %it follows from the results \cite{carayol} (see e.g. the remarks in \cite[\S{8.3}]{nekovar-ht} and the references therein) 
it follows from the Weil conjectures that $V_{g,h}^\psi$ is pure of weight $-1$, and hence
\begin{equation}\label{eq:ur=0}
H^1_{\rm ur}(K[n]_v,V_{g,h}^\psi):={\rm ker}\bigl(H^1(K[n]_v,V_{g,h}^\psi)\rightarrow H^1(K[n]_v^{\rm nr},V_{g,h}^\psi)\bigr)=0.
\end{equation}
By \cite[Cor.~1.3.3(i)]{Rub} and local Tate duality (using the fact that the $G_K$-representation $V_{g,h}^\psi$ is conjugate self-dual), it follows that
\[
H^0(K[n]_v,V_{g,h}^\psi)=H^2(K[n]_{\overline{v}},V_{g,h}^\psi)=0.
\]
Repeating the argument with the roles of $v$ and $\overline{v}$ reversed, from (\ref{eq:ur=0}) and \cite[Cor.~1.3.3(ii)]{Rub} we conclude that 
\[
H^1(K[n]_v,V_{g,h}^\psi)=H^1_{\rm ur}(K[n]_v,V_{g,h}^\psi)=0, 
\]
and so the inclusion ${\rm res}_v(\kapn)\in H^1_{\rm bal}(K[n]_v,T_{g,h}^\psi)$ is automatic.

Now suppose $v\mid p$. As noted in \cite[Prop.~3.2]{BSV}, it follows from the results of \cite{NN16} that the classes $\kappa_{m,\mathbf{r}}^{(1)}$ are geometric at $p$, and therefore the class ${\rm res}_v(\kapn)\in H^1(K[n]_v,T_{g,h}^\psi)$ lands in the inverse image of
\[
H^1_{\text{geo}}(K[n]_v,V_{g,h}^\psi)=\ker\bigl(H^1(K[n]_v,V_{g,h}^\psi)\rightarrow H^1(K[n]_v,V_{g,h}^\psi\otimes_{\bb{Q}_p}B_{\text{dR}})\bigr)
\]
under the natural map $H^1(K[n]_v,T_{g,h}^\psi)\rightarrow H^1(K[n]_v,V_{g,h}^\psi)$.
Since %in our setting
$H^1_{\text{geo}}(K[n]_v,V_{g,h}^\psi)$ agrees with the Bloch--Kato finite subspace $H^1_{\text{fin}}(K[n]_v,V_{g,h}^\psi)$ (see \cite[Prop.~1.24(2)]{nekovar-ht}), and the latter agrees with $H^1_{\rm bal}(K[n]_v,V_{g,h}^\psi)$ (see Lemma~\ref{lem:BK} below), the result follows.
\end{proof}

%{\bf ************** previous: Section: wild norm relations}
%\section{Wild norm relations}\label{sec:wnr}

%In this section we prove that the classes $\kapn$ of Theorem~\ref{principal:tame} extend along the anticyclotomic $\mathbb Z_p$-extension of $K$, i.e., they are anticyclotomic universal norms. This will be achieved by combining the methods of the preceding section with the construction by Bertolini--Seveso--Venerucci of diagonal classes varying in Hida families. In addition, this also allows us to extend the construction of $\kapn$ to more general weights.

\section{Hida families and Galois representations}

In the next section we will prove that the classes $\kapn$ of Theorem~\ref{principal:tame} extend along the anticyclotomic $\mathbb Z_p$-extension of $K$, i.e., they are anticyclotomic universal norms, and explain the construction of $\kapn$ for more general weights. In this section we collect the background results we shall need, closely following the treatment in \cite{BSV}.  %include \cite[\S\S4-5]{BSV}, \cite{GS}

\subsection{Hida families}\label{subsubsec:Hida}

Let $\Lambda =  \mathbb Z_p[[1+p\mathbb Z_p]]$ and let
\[
\cW=\rm{Spf} (\Lambda)
\]
be the weight space. Then, for any extension $E$ of $\bb{Q}_p$, we have $\cW(E)=\Hom_{\text{cont}}(1+p\bb{Z}_p,E^\times)$. Points of the form $\nu_{r,\epsilon}(n)= \epsilon(n)n^r$, where $r$ is a non-negative integer and $\epsilon$ is a finite order character, will be called \emph{arithmetic}. We refer to $k=r+2$ as the \emph{weight} of $\nu_{r,\epsilon}$. Arithmetic points of the form $\nu_r=\nu_{r,1}$ will be called \emph{classical}.

More generally, let $\cl{R}$ be a normal domain finite flat over $\Lambda$ and let $\cW_{\cl{R}}=\rm{Spf}(\cl{R})$. Then, a point $x\in\cW_{\cl{R}}(\overline{\bb{Q}}_p)$ will be called \emph{arithmetic} if it lies above an arithmetic point $\nu_{r,\epsilon}$ of $\cW(\overline{\bb{Q}}_p)$ and \emph{classical} if it lies above a classical point $\nu_r$ of $\cW(\overline{\bb{Q}}_p)$. Again, we refer to $k=r+2$ as the \emph{weight} of $x$.

Let $M$ be a positive integer coprime to $p$. A \emph{Hida family} of tame level $M$ and character $\chi:(\bb{Z}/Mp\bb{Z})^\times\rightarrow \overline{\bb{Q}}_p^\times$ is a formal $q$-expansion
\[
\hf = \sum_{n\geq 1} a_n(\hf) q^n \in \Lambda_{\hf}[[q]],
\]
where $\Lambda_\hf$ is a normal domain finite flat over $\Lambda$, such that, for any arithmetic point $x\in \cW_{\Lambda_\hf}(\overline{\bb{Q}}_p)$ lying over some $\nu_{r,\epsilon}$, the corresponding specialization is a $p$-ordinary eigenform $f_x\in S_k(Mp^s,\chi\epsilon\omega^{-r})$. As above, we have denoted by $k$ the weight of $x$ and we can take $s=\max\lbrace 1, \ord_p(\cond(\epsilon))\rbrace$. We say that a Hida family $\hf$ is \emph{primitive} if the specializations $f_x$ at arithmetic points $x$ are $p$-stabilized newforms. We say that it is \emph{normalized} if $a_1(\hf)=1$.

Let $\hf$ be a normalized primitive Hida family of tame level $M$. For each arithmetic point $x\in \cW_{\Lambda_\hf}(\overline{\bb{Q}}_p)$, let $\hf_x$ denote the specialization of $\hf$ at $x$ and let $f_x$ be the corresponding newform. There exists a locally-free rank-two $\Lambda_\hf$-module $\mathbb{V}_\hf$ equipped with a continuous action of $G_\bb{Q}$ such that, for any arithmetic point $x\in \cW_{\Lambda_\hf}(\overline{\bb{Q}}_p)$, the corresponding specialization $\bb{V}_\hf\otimes_{\Lambda_\hf,x}\overline{\bb{Q}}_p$ recovers the $G_\bb{Q}$-representation $V_{f_x}$ attached to $f_x$. In particular, the representation $\bb{V}_\hf$ is unramified at any prime $q\nmid Mp$ and $\Tr(\Fr_q)=a_q(\hf)$. We refer to $\bb{V}_\hf$ as the \emph{big Galois representation attached to $\hf$}. If for some (equivalently all) arithmetic point $x_0\in \cW_{\Lambda_\hf}(\overline{\bb{Q}}_p)$ the $G_\bb{Q}$-representation $T_{f_{x_0}}$ attached to $f_{x_0}$ is residually irreducible, then $\bb{V}_\hf$ is a free $\Lambda_\hf$-module.

\subsection{Continuous functions and distributions}\label{subsubsec:dist}

%For a connected wide open $U\subset\cl{W}$, define $\Lambda_U$ as before and let $\kappa_U: \bb{Z}_p^\times \rightarrow \Lambda_U^\times$ be defined by $\kappa_U(z)(t)=z^{t-2}$ for any $t\in U$.

%Let $(B,\kappa)$ denote either the pair $(\Lambda_U,\kappa_U)$ or the pair $(\cl{O},r)$ for some weight $r\in \cl{W}(E)$, and let $\frk{m}_B$ denote the maximal ideal of $B$. For any non-negative integer $u$, denote by $LA_u(\bb{Z}_p,B)$ the space of $B$-valued functions on $\bb{Z}_p$ which are analytic on balls of width $u$. Shrinking $U$ if necessary (see \cite[Lem.~4.1.5]{LZ-coleman}), we may and will assume that $U$ is contained in a connected affinoid domain in $\cl{W}$ such that $z\mapsto\kappa_U(1+pz)$ defines a function in $LA_u(\bb{Z}_p,\Lambda_U)$.

Define the semigroups
\[
\Sigma_0(p)=\begin{pmatrix} \bb{Z}_p^\times & \bb{Z}_p \\ p\bb{Z}_p & \bb{Z}_p \end{pmatrix} \quad\text{and}\quad \Sigma_0'(p)=\begin{pmatrix} \bb{Z}_p & \bb{Z}_p \\ p\bb{Z}_p & \bb{Z}_p^\times \end{pmatrix}.
\]
The sets $\mathsf{T}=\bb{Z}_p^\times\times\bb{Z}_p$ and $\mathsf{T}'=p\bb{Z}_p\times\bb{Z}_p^\times$ bear a right action of $\Sigma_0(p)$ and $\Sigma_0'(p)$, respectively.

Let $\nu$ be a character of $\bb{Z}_p^\times$ taking values in a finite extension $E$ of $\bb{Q}_p$. Let $\cl{O}$ be the ring of integers of $E$ and denote by $\frk{m}$ its maximal ideal. Let $\text{Cont}(\bb{Z}_p,\cl{O})$ denote the module of continuous functions on $\bb{Z}_p$ with values in $\cl{O}$. Define $\cl{O}$-modules
\[
\cl{A}_{\nu}=\left\{ f:\mathsf{T}\rightarrow \cl{O}\; \vert\; f(1,z)\in \text{Cont}(\bb{Z}_p,\cl{O}) \text{ and } f(a\cdot t)=\nu(a)\cdot f(t) \text{ for all } a\in\bb{Z}_p^\times,\, t\in \mathsf{T}\right\},
\]
\[
\cl{A}_{\nu}'=\left\{ f:\mathsf{T}'\rightarrow \cl{O}\; \vert\; f(pz,1)\in \text{Cont}(\bb{Z}_p,\cl{O}) \text{ and } f(a\cdot t)=\nu(a)\cdot f(t) \text{ for all } a\in\bb{Z}_p^\times,\, t\in \mathsf{T}'\right\}
\]
equipped with the $\frk{m}$-adic topology, and $\cl{O}$-modules
\[
\cl{D}_{\nu}=\Hom_{\text{cont},\cl{O}}(\cl{A}_{\nu},\cl{O}),\quad\cl{D}_{\nu}'=\Hom_{\text{cont},\cl{O}}(\cl{A}_{\nu}',\cl{O})
\]
equipped with the weak-$\ast$ topology. The right $\Sigma_0^\cdot(p)$-action on $\mathsf{T}^\cdot$ yields naturally a left $\Sigma_0^\cdot(p)$-action on $\cl{A}_{\nu}^\cdot$ and a right $\Sigma_0^\cdot(p)$-action on $\cl{D}_{\nu}^\cdot$.

%Define also
%\[
%A_{\nu}^\cdot=\cl{A}_{\nu}^\cdot\otimes_{\cl{O}}E,\quad D_{\nu}^\cdot=\cl{D}_{\nu}^\cdot\otimes_{\cl{O}}E.
%\]

\subsection{Group cohomology and \'etale cohomology}\label{subsubsec:group-et}

Let $N$ and $m$ be coprime positive integers which are also coprime to $p$, let $Y=Y(1,N(pm))$ and let $\Gamma$ be the corresponding modular group. Let $\mathcal{E}\rightarrow Y$ be the universal elliptic curve over $Y$, and denote by $C_p$ the canonical cyclic $p$-subgroup. Let $\mathscr{T}$ be the relative $p$-adic Tate module of $\mathcal{E}$ over $Y$. Fix a geometric point $\eta:\text{Spec}(\overline{\bb{Q}})\rightarrow Y$, and choose an isomorphism $\mathscr{T}_{\eta}\cong\bb{Z}_p\oplus\bb{Z}_p$ such that the Weil pairing on $\mathscr{T}_{\eta}$ corresponds to the natural determinant map on the right and the reduction modulo $p$ of the element (0,1) generates $C_{p,\eta}$.

Let $\mathcal{G}=\pi_1^{\et}(Y,\eta)$. The action of $\mathcal{G}$ on $\mathscr{T}$ yields an action of $\mathcal{G}$ on $\bb{Z}_p\oplus \bb{Z}_p$, and hence a continuous representation $\rho:\mathcal{G}\rightarrow \text{GL}_2(\bb{Z}_p)$. More precisely, for any $g\in\cl{G}$,
$$
g\cdot(a,b)=(a,b)\rho(g)^{-1}.
$$
In fact, since the action of $\mathcal{G}$ preserves the canonical subgroup, we have a continuous representation $\rho:\mathcal{G}\rightarrow \Gamma_0(p\bb{Z}_p)$, where
$$
\Gamma_0(p\bb{Z}_p)=\left\{\begin{pmatrix} a & b \\ c & d\end{pmatrix}\in \text{GL}_2(\bb{Z}_p): p\mid c\right\}.
$$

The anti-involution of $\text{GL}_2(\bb{Z}_p)$ given by $\gamma\mapsto\gamma^{\iota}=\det(\gamma)\gamma^{-1}$ restricts to $\Gamma_0(p\bb{Z}_p)$ and allows us to think of this group as acting on the right or left as convenient.

Taking the stalk at $\eta$ gives an equivalence of categories between the category $\mathbf{S}_f(Y_{\et})$ of locally constant constructible sheaves with finite stalk of $p$-power order at $\eta$ and the category $\mathbf{M}_f(\mathcal{G})$ of finite $\mathcal{G}$-sets of $p$-power order. For any topological group $G$, define $\mathbf{M}_f(G)$ as we did for $\cl{G}$. Let $\mathbf{M}_{\text{cont}}(G)$ be the category of $G$-modules which are filtered unions $\cup_{i\in I} M_i$ with $M_i\in \mathbf{M}_f(G)$ and let $\mathbf{M}(G)\subset \mathbf{M}_{\text{cont}}(G)^{\bb{N}}$ be the category of inverse systems of objects in $\mathbf{M}_{\text{cont}}(G)$. Define $\mathbf{S}(Y_{\et})$ similarly. Then, there is an equivalence of categories between $\mathbf{M}(\cl{G})$ and $\mathbf{S}(Y_{\et})$. Moreover, the representation $\rho$ defined above yields a functor $\mathbf{M}(\Gamma_0(p\bb{Z}_p))\rightarrow \mathbf{M}(\mathcal{G})$. Regarding this functor, we adopt the following criterion: if an object $\cl{F}\in \mathbf{M}(\Gamma_0(p\bb{Z}_p))$ is given as a left $\Gamma_0(p\bb{Z}_p)$-module, we define the left $\cl{G}$-action via the map $\rho:\cl{G}\rightarrow \Gamma_0(p\bb{Z}_p)$; if it is given as a right $\Gamma_0(p\bb{Z}_p)$-module, we define the left $\cl{G}$-action via the map $g\mapsto \rho(g)^{-1}$.

Given an inverse system of sheaves $\bm{\mathcal{F}}=(\bm{\mathcal{F}}_i)_{i\in \bb{N}}\in \mathbf{S}(Y_{\et})$, we use the notation $H^j_{\et}(Y,\bm{\mathcal{F}})$ for continuous \'etale cohomology as defined by Jannsen, and write $\mathtt{H}^j_{\et}(Y,\bm{\mathcal{F}})=\varprojlim_i H^j_{\et}(Y,\bm{\mathcal{F}}_i)$. There is a natural surjective morphism $H^j_{\et}(Y,\bm{\mathcal{F}})\rightarrow \mathtt{H}^j_{\et}(Y,\bm{\mathcal{F}})$. The compactly supported cohomology groups $H^j_{\et,c}(Y,\bm{\mathcal{F}})$ and $\mathtt{H}^j_{\et,c}(Y,\bm{\mathcal{F}})$ are defined similarly.

There is an isomorphism $\pi_1^{\et}(Y_{\overline{\bb{Q}}},\eta)\cong \hat{\Gamma}$. Thus, if $\cl{F}\in\mathbf{M}_f(\cl{G})$ is a discrete $\cl{G}$-module and $\bm{\cl{F}}$ is the corresponding object in $\mathbf{S}_f(Y_{\et})$, there are natural isomorphisms
\begin{equation}\label{eq:ES}
H^1_{\et}(Y_{\overline{\bb{Q}}},\bm{\mathcal{F}})\cong H^1(\hat{\Gamma},\cl{F})\cong H^1(\Gamma,\cl{F}).
\end{equation}

Let $\cl{F}\in \mathbf{M}_f(\Gamma_0(p\bb{Z}_p))$ be a left $\Gamma_0(p\bb{Z}_p)$-module, and assume that the $\Gamma_0(p\bb{Z}_p)$-action on $\cl{F}$ extends to a left action of $\Sigma_0^\cdot(p)$. Let $S=\Sigma_0^\cdot(p)\cap \text{GL}_2(\bb{Q})$. The pair $(\Gamma,S)$ is then a Hecke pair in the sense of \cite[\S1.1]{AS} and there is a covariant (left) action of the Hecke algebra $D(\Gamma, S)$ on $H^1(\Gamma,\cl{F})$. For each $g\in S$, let $T(g)=\Gamma g\Gamma$. Following \cite[\S1]{GrSt}, we define, for each positive integer $n$, the Hecke operators
$$
T_n=T\left(\begin{pmatrix} 1 &  \\  & n \end{pmatrix}\right), \quad T_n'=T\left(\begin{pmatrix} n &  \\  & 1 \end{pmatrix}\right).
$$
Also, for each positive integer $a$ coprime to $p$, let
$$
[a]_p = T\left(\begin{pmatrix} a &  \\  & a \end{pmatrix}\right),\quad [a]_p'=T\left(\begin{pmatrix} a &  \\  & a \end{pmatrix}\right).
$$
Finally, for each positive integer $a$ coprime to $N$, choose $\beta_a$ (respectively $\beta_a'$) in $\Gamma_0(Npm)$ whose lower right entry is congruent to $a$ (respectively $a^{-1}$) modulo $N$ and let
$$
[a]_N=T(\beta_a),\quad [a]_N'=T(\beta_a').
$$

%By the results of \S\S{3.2-3} in \cite{AIS},
The isomorphism (\ref{eq:ES}) is compatible with Hecke actions in the following sense. To distinguish between different levels, we shall now write $\tilde{Y}(m)$ and $\tilde{\Gamma}(m)$ for the above $Y$ and $\Gamma$, respectively. Let $s$ be a positive integer. Choose as above a geometric point $\eta:\text{Spec}(\overline{\bb{Q}})\rightarrow \tilde{Y}(m)$ and let $\eta_s:\text{Spec}(\overline{\bb{Q}})\rightarrow \tilde{Y}(ms)$ be a geometric point lying above $\eta$. Let $r=1+\ord_p(s)$ and choose an isomorphism $\mathscr{T}_{\eta_s}\cong\bb{Z}_p\oplus\bb{Z}_p$ such that the Weil pairing on $\mathscr{T}_{\eta_s}$ corresponds to the natural determinant map on the right, and the reduction modulo $p^r$ of the element (0,1) generates the canonical subgroup $C_{p^r,\eta_s}$. Using these choices to define the corresponding isomorphisms between group cohomology and \'etale cohomology, there are commutative diagrams
\begin{center}
\begin{tikzpicture}
\matrix(m) [matrix of math nodes, row sep=2.6em, column sep=2.8em, text height=1.5ex, text depth=0.25ex]
{H^1_{\et}(\tilde{Y}(ms)_{\overline{\bb{Q}}},\bm{\mathcal{F}}) &  H^1_{\et}(\tilde{Y}(m)_{\overline{\bb{Q}}},\bm{\mathcal{F}}) & H^1_{\et}(\tilde{Y}(m)_{\overline{\bb{Q}}},\bm{\mathcal{F}})  &  H^1_{\et}(\tilde{Y}(ms)_{\overline{\bb{Q}}},\bm{\mathcal{F}})\\
 H^1(\tilde{\Gamma}(ms),\cl{F}) & H^1(\tilde{\Gamma}(m),\cl{F}) & H^1(\tilde{\Gamma}(m),\cl{F}) &  H^1(\tilde{\Gamma}(ms),\cl{F}).\\};
\path[->,font=\scriptsize,>=angle 90]
(m-1-1) edge node [auto] {$\pi_{1\ast}$} (m-1-2)
(m-1-3) edge node [auto] {$\pi_1^\ast$} (m-1-4)
(m-2-1) edge node [auto] {$\cor$} (m-2-2)
(m-2-3) edge node [auto] {$\res$} (m-2-4)
(m-1-1) edge node [auto] {$\cong$} (m-2-1)
(m-1-2) edge node [auto] {$\cong$} (m-2-2)
(m-1-3) edge node [auto] {$\cong$} (m-2-3)
(m-1-4) edge node [auto] {$\cong$} (m-2-4);
\end{tikzpicture}
\end{center}
Also, if $\left(\begin{smallmatrix}s &  \\  & 1 \end{smallmatrix}\right)\in \Sigma_0^\cdot(p)$, we have the commutative diagram
\begin{center}
\begin{tikzpicture}
\matrix(m) [matrix of math nodes, row sep=2.6em, column sep=1.0em, text height=1.5ex, text depth=0.25ex]
{H^1_{\et}(\tilde{Y}(ms)_{\overline{\bb{Q}}},\bm{\mathcal{F}}) &  H^1_{\et}(\tilde{Y}(ms)_{\overline{\bb{Q}}},\varphi_s^\ast(\bm{\mathcal{F}})) & H^1_{\et}(Y(1(s),N(pm)_{\overline{\bb{Q}}},\bm{\mathcal{F}})  &  H^1_{\et}(\tilde{Y}(m)_{\overline{\bb{Q}}},\bm{\mathcal{F}})\\
H^1(\tilde{\Gamma}(ms),\cl{F}) & H^1(\tilde{\Gamma}(ms),\varphi_s^\ast(\cl{F})) & H^1(\Gamma(1(s),N(pm)),\cl{F}) &  H^1(\tilde{\Gamma}(m),\cl{F}),\\};
\path[->,font=\scriptsize,>=angle 90]
(m-1-1) edge node [auto] {$\lambda_{s\ast}$} (m-1-2)
(m-1-2) edge node [auto] {$\varphi_{s\ast}$} (m-1-3)
(m-1-3) edge node [auto] {$\check{\nu}_{s\ast}$} (m-1-4)
(m-2-1) edge node [auto] {$\lambda_{s\ast}$} (m-2-2)
(m-2-2) edge node [auto] {$\varphi_{s\ast}$} (m-2-3)
(m-2-3) edge node [auto] {$\cor$} (m-2-4)
(m-1-1) edge node [auto] {$\cong$} (m-2-1)
(m-1-2) edge node [auto] {$\cong$} (m-2-2)
(m-1-3) edge node [auto] {$\cong$} (m-2-3)
(m-1-4) edge node [auto] {$\cong$} (m-2-4);
\end{tikzpicture}
\end{center}
and, if $\left(\begin{smallmatrix} 1 &  \\  & s \end{smallmatrix}\right)\in \Sigma_0^\cdot(p)$, the commutative diagram
\begin{center}
\begin{tikzpicture}
\matrix(m) [matrix of math nodes, row sep=2.6em, column sep=1.2em, text height=1.5ex, text depth=0.25ex]
{H^1_{\et}(\tilde{Y}(m)_{\overline{\bb{Q}}},\bm{\mathcal{F}}) &  H^1_{\et}(Y(1(s),N(pm))_{\overline{\bb{Q}}},\bm{\mathcal{F}}) & H^1_{\et}(\tilde{Y}(ms)_{\overline{\bb{Q}}},\varphi_s^\ast(\bm{\mathcal{F}}))  &  H^1_{\et}(\tilde{Y}(m)_{\overline{\bb{Q}}},\bm{\mathcal{F}})\\
H^1(\tilde{\Gamma}(m),\cl{F}) & H^1(\Gamma(1(s),N(pm)),\cl{F}) & H^1(\tilde{\Gamma}(ms),\varphi_s^\ast(\cl{F})) &  H^1(\tilde{\Gamma}(m),\cl{F}).\\};
\path[->,font=\scriptsize,>=angle 90]
(m-1-1) edge node [auto] {$\check{\nu}_s^\ast$} (m-1-2)
(m-1-2) edge node [auto] {$\varphi_{s}^\ast$} (m-1-3)
(m-1-3) edge node [auto] {$\lambda_s^\ast$} (m-1-4)
(m-2-1) edge node [auto] {$\res$} (m-2-2)
(m-2-2) edge node [auto] {$\varphi_{s}^\ast$} (m-2-3)
(m-2-3) edge node [auto] {$\lambda_s^\ast$} (m-2-4)
(m-1-1) edge node [auto] {$\cong$} (m-2-1)
(m-1-2) edge node [auto] {$\cong$} (m-2-2)
(m-1-3) edge node [auto] {$\cong$} (m-2-3)
(m-1-4) edge node [auto] {$\cong$} (m-2-4);
\end{tikzpicture}
\end{center}
In the bottom lines of the previous two diagrams, $\varphi_s^\ast(\cl{F})$ is $\cl{F}$ with the action of $\Gamma_0(p^r\bb{Z}_p)$ conjugated by $\left(\begin{smallmatrix} s &  \\  & 1\end{smallmatrix}\right)$; the map $\lambda_{s\ast}$ is induced by the map $\cl{F}\rightarrow \varphi_s^\ast(\cl{F})$ defined by $c\mapsto \left(\begin{smallmatrix} s &  \\  & 1\end{smallmatrix}\right)c$; $\varphi_{s\ast}$ is induced by the pair of compatible maps $\Gamma(1(s),N(pm))\rightarrow \tilde{\Gamma}(ms)$ and $\varphi_s^\ast(\cl{F})\rightarrow \cl{F}$ defined by $\gamma\mapsto \left(\begin{smallmatrix} s^{-1} &  \\  & 1\end{smallmatrix}\right) \gamma \left(\begin{smallmatrix} s &  \\  & 1\end{smallmatrix}\right)$ and $c\mapsto c$, respectively; $\lambda_{s}^\ast$ is induced by the map $\varphi_s(\cl{F})\rightarrow \cl{F}$ defined by $c\mapsto\left(\begin{smallmatrix} 1 &  \\  & s\end{smallmatrix}\right)c$, and $\varphi_{s}^\ast$ is induced by the pair of compatible maps $\tilde{\Gamma}(m)\rightarrow \Gamma(1(s),N(pm))$ and $\cl{F}\rightarrow \varphi_s^\ast(\cl{F})$ defined by $\gamma\mapsto \left(\begin{smallmatrix} 1 &  \\  & s^{-1}\end{smallmatrix}\right) \gamma \left(\begin{smallmatrix} 1 &  \\  & s\end{smallmatrix}\right)$ and $c\mapsto c$, respectively.

We shall denote by $\pi_{2\ast}$ and $\pi_2^\ast$, respectively, the composition of the maps in the rows of the previous two diagrams, both in \'etale cohomology and in group cohomology. Similarly, we shall also use $\pi_{1\ast}$ and $\pi_1^\ast$ to denote the corresponding corestriction and restriction maps.

For any rational prime $q$, a simple calculation shows that the following identities hold in group cohomology whenever the maps involved are defined:
$$
T_q=\pi_{1\ast}\circ\pi_2^\ast,\quad T_q'=\pi_{2\ast}\circ\pi_1^\ast.
$$
Therefore, under the isomorphism %$H^1_{\et}(Y_{\overline{\bb{Q}}},\bm{\cl{F}})\cong H^1(\Gamma,\cl{F})$ in
(\ref{eq:ES}), the covariant action of the operators $T_q$, $T_q'$ on \'etale cohomology corresponds to the covariant action of the operators $T_q$, $T_q'$ on group cohomology, whenever defined. Similarly, the covariant action of the operators $\langle d\rangle$, $\langle d\rangle'$ on \'etale cohomology corresponds to the covariant action of the operators $[d]_N$, $[d]_N'$ on group cohomology.

The anti-involution $\iota$ extends to $\text{Mat}_{2\times 2}(\bb{Z}_p)$ in the obvious way and turns a left (respectively right) action of $\Sigma_0(p)$ into a right (respectively left) action of $\Sigma_0'(p)$. Thus, given an object $\cl{F}\in\mathbf{M}(\Gamma_0(p\bb{Z}_p))$ whose right $\Gamma_0(p\bb{Z}_p)$-action extends to a right $\Sigma_0^\cdot(p)$-action, there is an isomorphism $H^1_{\et}(Y_{\overline{\bb{Q}}},\bm{\cl{F}})\cong H^1(\Gamma,\cl{F})$ under which the contravariant action of the operators $T_q$, $T_q'$, $\langle d\rangle$, $\langle d\rangle'$ on \'etale cohomology corresponds to the contravariant action of the operators $T_q$, $T_q'$, $[d]_N$, $[d]_N'$ on group cohomology, whenever defined.

Consider the modules $\cl{A}_{\nu}^\cdot$ and $\cl{D}_{\nu}^\cdot$ defined earlier in this section. The action of $\Gamma_0(p\bb{Z}_p)$ on $\mathsf{T}'$ is transitive and the stabilizer of the element $(0,1)\in\mathsf{T}'$ is the subgroup
\[
P(\bb{Z}_p)=\left\{\begin{pmatrix} a & b \\ 0 & 1 \end{pmatrix}\in \text{GL}_2(\bb{Z}_p)\right\},
\]
so we can identify $\mathsf{T}'$ with $P(\bb{Z}_p)\backslash  \Gamma_0(p\bb{Z}_p)$. Similarly, the action of $\Gamma_0(p\bb{Z}_p)$ on $\mathsf{T}$ is transitive and the stabilizer of the element $(1,0)\in\mathsf{T}$ is the subgroup
\[
P(\bb{Z}_p)^w=\left\{\begin{pmatrix} 1 & 0 \\ pc & d \end{pmatrix}\in \text{GL}_2(\bb{Z}_p)\right\},
\]
so we can identify $\mathsf{T}$ with $P(\bb{Z}_p)^w\backslash  \Gamma_0(p\bb{Z}_p)$. For any positive integer $j$, let
\[
\Gamma_1(p^j\bb{Z}_p)=\left\{\begin{pmatrix}a & b \\ c & d\end{pmatrix}\in\text{GL}_2(\bb{Z}_p): c\equiv 0\text{ (mod }p^j),\, d\equiv 1\text{ (mod }p^j)\right\},
\]
\[
\Gamma_1(p^j\bb{Z}_p)^w=\left\{\begin{pmatrix}a & b \\ pc & d\end{pmatrix}\in\text{GL}_2(\bb{Z}_p): a\equiv 1\text{ (mod }p^j),\, b\equiv 0\text{ (mod }p^{j-1})\right\}.
\]
Then, for any positive integers $i,j$, we can define
\begin{align*}
\cl{A}_{\nu,i,j}'=\big\{ f:\Gamma_1(p^j\bb{Z}_p)\backslash  \Gamma_0(p\bb{Z}_p)\rightarrow \cl{O}/\frk{m}^i\; \vert\;  f(a\cdot \gamma)=\nu(a)\cdot f(\gamma) \\
\text{ for all } a\in\bb{Z}_p^\times,\, \gamma\in \Gamma_1(p^j\bb{Z}_p)\backslash  \Gamma_0(p\bb{Z}_p)\big\},
\end{align*}
\begin{align*}
\cl{A}_{\nu,i,j}=\big\{ f:\Gamma_1(p^j\bb{Z}_p)^w\backslash  \Gamma_0(p\bb{Z}_p)\rightarrow \cl{O}/\frk{m}^i\; \vert\;  f(a\cdot \gamma)=\nu(a)\cdot f(\gamma) \\
\text{ for all } a\in\bb{Z}_p^\times,\, \gamma\in \Gamma_1(p^j\bb{Z}_p)^w\backslash  \Gamma_0(p\bb{Z}_p)\big\}.
\end{align*}
The objects $\cl{A}_{\nu,i,j}^\cdot$ can be regarded as left $\cl{O}[\Sigma^\cdot_0(p)]$-modules. Let $\cl{A}_{\nu,i}^\cdot=\varinjlim_j \cl{A}_{\nu,i,j}^\cdot$. Then $\cl{A}_\nu^\cdot\cong\varprojlim_i \cl{A}_{\nu,i}^\cdot$. We denote by $\bm{\cl{A}}_\nu^\cdot$ the object in $\mathbf{S}(Y_{\et})$ corresponding to $\lbrace \cl{A}_{\nu,i}^\cdot\rbrace_i\in\mathbf{M}(\Gamma_0(p\bb{Z}_p))$. We also define $\cl{D}_{\nu,i}^\cdot=\Hom_{\cl{O}}(\cl{A}_{\nu,i,i}^\cdot,\cl{O}/\frk{m}^i)$. These objects can be regarded as right $\cl{O}[\Sigma^\cdot_0(p)]$-modules and we have $\cl{D}_\nu^\cdot\cong\varprojlim_i \cl{D}_{\nu,i}^\cdot$. We denote by $\bm{\cl{D}}_\nu^\cdot$ the object in $\mathbf{S}(Y_{\et})$ corresponding to $\lbrace \cl{D}_{\nu,i}^\cdot\rbrace_i\in\mathbf{M}(\Gamma_0(p\bb{Z}_p))$. There are natural morphisms of $\cl{O}$-modules
\begin{equation}\label{eq:A}
H^1_{\et}(Y_{\overline{\bb{Q}}},\bm{\cl{A}}_{\nu}^\cdot)\rightarrow \mathtt{H}^1_{\et}(Y_{\overline{\bb{Q}}},\bm{\cl{A}}_{\nu}^\cdot)\cong H^1(\Gamma,\cl{A}_{\nu}^\cdot)\nonumber
\end{equation}
and
\begin{equation}\label{eq:D}
H^1_{\et}(Y_{\overline{\bb{Q}}},\bm{\cl{D}}_{\nu}^\cdot)\cong \mathtt{H}^1_{\et}(Y_{\overline{\bb{Q}}},\bm{\cl{D}}_{\nu}^\cdot)\cong H^1(\Gamma,\cl{D}_{\nu}^\cdot)\nonumber
\end{equation}
compatible with the action of Hecke operators. We also have Hecke-equivariant isomorphisms
\begin{equation}\label{eq:D-c}
H^1_{\et,c}(Y_{\overline{\bb{Q}}},\bm{\cl{D}}_{\nu}^\cdot)\cong \mathtt{H}^1_{\et,c}(Y_{\overline{\bb{Q}}},\bm{\cl{D}}_{\nu}^\cdot)\cong H^1_c(\Gamma,\cl{D}_{\nu}^\cdot),\nonumber
\end{equation}
where $H^j_c(\Gamma,-)=H^{j-1}(\Gamma,\Hom_{\bb{Z}}(\text{Div}^0(\bb{P}^1(\bb{Q})),-))$. These isomorphisms
%(\ref{eq:A}), (\ref{eq:D}) and (\ref{eq:D-c})
allow us to define continuous $G_{\bb{Q}}$-actions on the groups $H^1(\Gamma,\cl{A}_{\nu}^\cdot)$, $H^1(\Gamma,\cl{D}_{\nu}^\cdot)$ and $H^1_c(\Gamma,\cl{D}_{\nu}^\cdot)$.

%We can regard the inverse system $\lbrace \cl{O}/\frk{m}^i\rbrace_i$ as an object in $\mathbf{M}(\Gamma_0(p\bb{Z}_p))$ equipped with a trivial left $\Gamma_0(p\bb{Z}_p)$-action.
Given a character $\chi:\bb{Z}_p^\times\rightarrow \cl{O}^\times$, let $\cl{O}(\chi)$ be the module $\cl{O}$ with $\Gamma_0(p\bb{Z}_p)$ acting via $\chi\circ\det$, where $\det:\Gamma_0(p\bb{Z}_p)\rightarrow \bb{Z}_p^\times$ is the determinant map.

The natural $\cl{G}$-equivariant evaluation map $\cl{A}_{\nu}^\cdot\otimes_{\cl{O}}\cl{D}_{\nu}^\cdot\rightarrow \cl{O}$ yields a $G_\bb{Q}$-equivariant cup-product pairing
\begin{equation}
H^1(\Gamma,\cl{A}_{\nu}^\cdot)\otimes_\cl{O} H^1_c(\Gamma,\cl{D}_{\nu}^\cdot)\longrightarrow \cl{O}(-1)
\label{eq:AD-pairing}
\end{equation}
under which the Hecke operators $T_q$, $T_q'$, $[d]_N$, $[d]_N'$ acting covariantly on the left, whenever defined, are adjoint to these same operators acting contravariantly on the right.

Let $\det:\mathsf{T}'\times\mathsf{T}\rightarrow \bb{Z}_p^\times$ be the function defined by $\det((x_1,x_2),(y_1,y_2))=x_1y_2-x_2y_1$ and let $\det_\nu$ be the composition of this function with $\nu:\bb{Z}_p^\times\rightarrow \cl{O}$. Evaluation at this function defines a $\cl{G}$-equivariant map $\cl{D}_{\nu}'\otimes_\cl{O} \cl{D}_{\nu}\rightarrow \cl{O}(-\nu)$ which yields a $G_\bb{Q}$-equivariant cup-product pairing
\begin{equation}
H^1(\Gamma,\cl{D}_{\nu}')\otimes_{\cl{O}} H^1_c(\Gamma,\cl{D}_{\nu})\longrightarrow \cl{O}(\bm{\nu})(-1),
\label{eq:DD-pairing}
\end{equation}
where $\bm{\nu}=\nu\circ\epsilon_{\text{cyc}}:G_\bb{Q}\rightarrow \cl{O}^\times$. Under this pairing, the Hecke operators $T_q$, $T_q'$, $[d]_N$, $[d]_N'$ acting contravariantly on the left, whenever defined, are adjoint to the Hecke operators $T_q'$, $T_q$, $[d]_N'$, $[d]_N$ acting contravariantly on the right. We obtain a similar pairing interchanging the roles of $\cl{D}_{\nu}$ and $\cl{D}_{\nu}'$.

\subsection{Ordinary cohomology}\label{subsec:ord-cohomology}

For any $\bb{Z}_p$-algebra $B$, let $S_r(B)$ be the set of two-variable homogeneous polynomials of degree $r$ in $B[x_1,x_2]$. It is a left $B[\Sigma_0^\cdot(p)]$-module with the action of $\Sigma_0^\cdot(p)$ defined by
\[
gP(x_1,x_2)=P((x_1,x_2)\cdot g)
\]
for all $g\in \Sigma_0^\cdot(p)$ and $P(x_1,x_2)\in S_r(B)$. To the $p$-adic $\Gamma_0(p\bb{Z}_p)$-representation $S_r=S_r(\bb{Z}_p)$ there corresponds the locally contant $p$-adic sheaf $\mathscr{S}_{r}$ on $Y_{\et}$ defined in $\S\ref{subsubsec:relT}$. Therefore we have an isomorphism
\[
H^1_{\et}(Y_{\overline{\bb{Q}}},\mathscr{S}_r)\cong H^1(\Gamma,S_r)
\]
which is Hecke-equivariant when we consider the covariant action of Hecke operators on both sides, and we use this isomorphism to define an action of $G_{\bb{Q}}$ on $H^1(\Gamma,S_r)$.

We also define $L_r(B)=\Hom_{B}(S_r(B),B)$, which we regard as a right $B[\Sigma_0^\cdot(p)]$-module defining the $\Sigma_0^\cdot(p)$-action by
\[
(\mu\cdot g)(P(x_1,x_2))=\mu(g P(x_1,x_2))
\]
for all $g\in \Sigma_0^\cdot(p)$, $\mu\in L_r(B)$ and $P(x_1,x_2)\in S_r(B)$. To the $p$-adic $\Gamma_0(p\bb{Z}_p)$-representation $L_r=L_r(\bb{Z}_p)$ there corresponds the locally constant $p$-adic sheaf $\mathscr{L}_{r}$ on $Y_{\et}$. Therefore we have an isomorphism
\[
H^1_{\et}(Y_{\overline{\bb{Q}}},\mathscr{L}_r)\cong H^1(\Gamma,L_r)
\]
which is Hecke-equivariant when we consider the contravariant action of Hecke operators on both sides, and we use this isomorphism to define an action of $G_{\bb{Q}}$ on $H^1(\Gamma,L_r)$.

The natural $\Gamma_0(p\bb{Z}_p)$-equivariant evaluation map $S_r\otimes_{\bb{Z}_p} L_r\rightarrow \bb{Z}_p$ yields a $G_\bb{Q}$-equivariant cup-product pairing
\begin{equation}
H^1(\Gamma,S_r)\otimes_{\bb{Z}_p} H^1_c(\Gamma,L_r)\longrightarrow \bb{Z}_p(-1)
\label{eq:SL-pairing}
\end{equation}
under which the Hecke operators $T_q$, $T_q'$, $[d]_N$, $[d]_N'$ acting covariantly on the left, whenever defined, are adjoint to these same operators acting contravariantly on the right. This pairing becomes perfect after inverting $p$.

Let $\nu_r: \bb{Z}_p^\times \rightarrow \bb{Z}_p^\times$ be the character defined by $\nu_r(z)=z^r$. Evaluation at the polynomial $(x_1y_2-x_2y_1)^r\in S_r\otimes_{\bb{Z}_p}S_r$ defines a $\Gamma_0(p\bb{Z}_p)$-equivariant map $L_r\otimes_{\bb{Z}_p}L_r\rightarrow \bb{Z}_p(-\nu_r)$ and thus yields a $G_\bb{Q}$-equivariant cup-product pairing
\begin{equation}
H^1(\Gamma,L_r)\otimes_{\bb{Z}_p} H^1_c(\Gamma,L_r)\longrightarrow \bb{Z}_p(r-1)
\label{eq:LL-pairing}
\end{equation}
under which the Hecke operators $T_q$, $T_q'$, $[d]_N$, $[d]_N'$ acting contravariantly on the left, whenever defined, are adjoint to the Hecke operators $T_q'$, $T_q$, $[d]_N'$, $[d]_N$ acting contravariantly on the right. This pairing becomes perfect after inverting $p$.

Combining these two pairings we can define a morphism
\[
\mathtt{s}_{r\ast}: H^1(\Gamma,S_r(\bb{Q}_p))\longrightarrow H^1(\Gamma,L_r(\bb{Q}_p))(-r).
\]
This map is $G_\bb{Q}$-equivariant and intertwines the covariant action of the operators $T_q$, $[d]_N$, $[a]_p$ on the source with the contravariant action of the operators $T_q'$, $[d]_N'$, $[a]_p'$ on the target.
We can also define $\mathtt{s}_{r\ast}$ directly via the isomorphism $S_r(\bb{Q}_p)\cong L_r(\bb{Q}_p)(\nu_r)$ arising from the perfect pairing $L_r(\bb{Q}_p)\otimes_{\bb{Q}_p}L_r(\bb{Q}_p)\rightarrow \bb{Q}_p(-\nu_r)$ defined by evaluation at $(x_1y_2-x_2y_1)^r$. Therefore, the denominators introduced by this map are bounded by $r!$, i.e., an element in
\[
\im\left(H^1(\Gamma,S_r)\rightarrow H^1(\Gamma,S_r(\bb{Q}_p))\right)
\]
is mapped to an element in
\[
\frac{1}{r!}\im\left(H^1(\Gamma,L_r)\rightarrow H^1(\Gamma,L_r(\bb{Q}_p))\right),
\]
as follows from \cite[Rmk. 3.3]{BSV}.

To slightly simplify the notation, we will write $\cl{A}_r^\cdot$ and $\cl{D}_r^\cdot$ for $\cl{A}_{\nu_r}^\cdot$ and $\cl{D}_{\nu_r}^\cdot$, respectively. Regarding two-variable polynomials as functions on $\mathsf{T}^\cdot$, we obtain a natural morphism of left $\bb{Z}_p[\Sigma_0^\cdot(p)]$-modules $S_r\rightarrow \cl{A}_{r}^\cdot$. Also, dualizing this map, we obtain a morphism of right $\bb{Z}_p[\Sigma_0^\cdot(p)]$-modules $\cl{D}_{r}^\cdot\rightarrow L_r$. Thus, we have $G_\bb{Q}$-equivariant and Hecke-equivariant morphisms
\[
H^1(\Gamma,S_r)\rightarrow H^1(\Gamma,\cl{A}_{r}^\cdot) \quad\text{and}\quad H^1(\Gamma,\cl{D}_r^\cdot)\rightarrow H^1(\Gamma,L_r).
\]

Applying Hida's (anti-)ordinary projector $e_{\text{ord}}^\cdot:= \lim_{n\to\infty} (T_p^\cdot)^{n!}$, the previous morphisms become isomorphisms
\[
e_{\text{ord}}^\cdot  H^1(\Gamma,S_r)\cong e_{\text{ord}}^\cdot H^1(\Gamma,\cl{A}_r^\cdot),\quad e_{\text{ord}}^\cdot H^1(\Gamma,\cl{D}_r^\cdot)\cong e_{\text{ord}}^\cdot  H^1(\Gamma,L_r).
\]
Under these isomorphisms, the pairings (\ref{eq:SL-pairing}) and (\ref{eq:LL-pairing}) correspond to the pairings (\ref{eq:AD-pairing}) and (\ref{eq:DD-pairing}), respectively, after applying the corresponding (anti-)ordinary projector to every term involved.

\subsection{$\Lambda$-adic Poincar\'e pairing}\label{subsubsec:Lambda-pairing}

%The pairing in this section, following \cite[\S1.2]{DR2},  will be used to pass from group cohomology for $\mathcal{A}_{\kappa,u}$ to \'etale cohomology for $p$-towers modular curves.
It will be convenient to write $\langle a; b\rangle$, with $a\in(\bb{Z}/N\bb{Z})^\times$ and $b\in(\bb{Z}/p^r\bb{Z})^\times$, for the diamond operator $\langle d\rangle$, where $d\in(\bb{Z}/Np^r)^\times$ is congruent to $a$ modulo $N$ and to $b$ modulo $p^r$. We also write $\epsilon_N:G_\bb{Q}\rightarrow (\bb{Z}/N\bb{Z})^\times$ for the mod $N$ cyclotomic character.

For any positive integer $r$, let
$$
G_r=1+p(\bb{Z}/p^r\bb{Z}), \quad \tilde{G}_r=(\bb{Z}/p^r\bb{Z})^\times,
$$
and define
$$
\Lambda_r=\bb{Z}_p[G_r],\quad \tilde{\Lambda}_r=\bb{Z}_p[\tilde{G}_r], \quad \Lambda=\varprojlim_r\Lambda_r=\bb{Z}_p[[1+p\bb{Z}_p]],\quad \tilde{\Lambda}=\varprojlim_r \tilde{\Lambda}_r=\bb{Z}_p[[\bb{Z}_p^\times]].
$$
We have natural factorizations $(\bb{Z}/p^r\bb{Z})^\times=\mu_{p-1}\times(1+p\bb{Z}/p^r\bb{Z})$ and $\bb{Z}_p^\times=\mu_{p-1}\times(1+p\bb{Z}_p)$ which give natural embeddings $\Lambda_r\xhookrightarrow{} \tilde{\Lambda}_r$ and $\Lambda\xhookrightarrow{} \tilde{\Lambda}$. We define idempotents
$$
e_i=\frac{1}{p-1}\sum_{\zeta\in\mu_{p-1}}\zeta^{-i}[\zeta]
$$
for any integer $i$ modulo $p-1$. Let $\kappa_i:\bb{Z}_p^\times\rightarrow \Lambda^\times$ be the character defined by $z\mapsto \omega^i(z)[\langle z\rangle]$ and let $\bm{\kappa}_i=\kappa_i\circ\epsilon_{\text{cyc}}:G_\bb{Q}\rightarrow \Lambda^\times$.

We will shorten notation by writing
\begin{equation}\label{eq:Xinfty}
X_r(m)=X(1,Np^r(m)), \quad H^1_{\et}(X_{\infty}(m)_{\overline{\bb{Q}}},\bb{Z}_p)=\varprojlim_r H^1_{\et}(X_r(m)_{\overline{\bb{Q}}},\bb{Z}_p).
\end{equation}
We have a natural action of $\tilde{\Lambda}_r$ and $\tilde{\Lambda}$ on the previous groups defined by letting group-like elements $[u]$ act like the diamond operators $\langle 1; u\rangle'$.

%Let $\underline{\epsilon}_{\text{cyc}}:G_\bb{Q}\rightarrow \Lambda$ be the character mapping $\sigma\in G_\bb{Q}$ to the group-like element $\langle \epsilon_{\text{cyc}}(\sigma)\rangle$.

Fix compatible primitive $p$-power roots of unity $\zeta_{p^r}$ and a primitive $N$-th root of unity $\zeta_N$. Then one can define Atkin--Lehner automorphisms $w_r$ and $w$ for the curve  $X_r(m)$ similarly as in \cite[\S{1.2}]{DR2}. More precisely, $X_r(m)$ parameterizes quadruples $(E,P,Q,C)$, where $E$ is an elliptic curve, $P$ is a point of order $N$, $Q$ is a point of order $p^r$ and $C$ is a cyclic subgroup of $E$ of order $Nm$ containing $P$. Then, we define
$$
w_r(E,P,Q,C)=(E/C_Q,P+C_Q,Q'+C_Q,C+C_Q/C_Q),
$$
where $C_Q\subseteq E$ is the subgroup generated by $Q$ and $Q'\in E[p^r]$ satisfies $\langle Q,Q'\rangle = \zeta_{p^r}$. Similarly, we define
$$
w(E,P,Q,C)=(E/C,P'+C,Q+C,E[Nm]/C),
$$
where $P'\in E[N]$ satisfies $\langle P,P'\rangle = \zeta_N$. These Atkin--Lehner automorphisms satisfy, for any $\sigma\in G_\bb{Q}$,
$$
w_r^\sigma =\langle 1;\epsilon_{\text{cyc}}(\sigma)\rangle w_r,\quad w^\sigma =\langle \epsilon_{N}(\sigma); 1 \rangle w.
$$
We let $w$ and $w_r$ act on cohomology via pullback.

Define $G_\bb{Q}$-equivariant pairings
$$
\langle\,,\rangle_{G_r}: e_i H^1_{\et}(X_{r}(m)_{\overline{\bb{Q}}},\bb{Z}_p)\times e_{-i} H^1_{\et}(X_{r}(m)_{\overline{\bb{Q}}},\bb{Z}_p)\longrightarrow \Lambda_r(-1)
$$
by the formula
$$
\langle a,b\rangle_{G_r}=\sum_{\sigma\in G_r}\langle a^\sigma,b\rangle_r\cdot \sigma^{-1},
$$
where $\langle\,,\rangle_r$ stands for the natural Poincar\'e pairing. These pairings are $\Lambda_r$-linear and anti-linear in the first and second argument, respectively. Then we get $G_\bb{Q}$-equivariant $\Lambda_r$-pairings
$$
[\,,]_{G_r}: e_i H^1_{\et}(X_{r}(m)_{\overline{\bb{Q}}},\bb{Z}_p)\times e_i H^1_{\et}(X_{r}(m)_{\overline{\bb{Q}}},\bb{Z}_p)(\langle\epsilon_N^{-1};1\rangle')\longrightarrow \Lambda_r(\bm{\kappa}_i)(-1)
$$
via the following modification of the previous pairing:
$$
[a,b]_{G_r}=\langle a,ww_r\cdot (T_p')^r\cdot b\rangle_{G_r}.
$$
These pairings are compatible in the sense that the diagram
\begin{center}
\begin{tikzpicture}
\matrix(m) [matrix of math nodes, row sep=2.6em, column sep=2.8em, text height=1.5ex, text depth=0.25ex]
{e_i H^1_{\et}(X_{r+1}(m)_{\overline{\bb{Q}}},\bb{Z}_p)\times e_i H^1_{\et}(X_{r+1}(m)_{\overline{\bb{Q}}},\bb{Z}_p)(\langle\epsilon_N^{-1};1\rangle') & \Lambda_{r+1}(\bm{\kappa}_i)(-1) \\
e_i H^1_{\et}(X_{r}(m)_{\overline{\bb{Q}}},\bb{Z}_p)\times e_i H^1_{\et}(X_{r}(m)_{\overline{\bb{Q}}},\bb{Z}_p)(\langle\epsilon_N^{-1};1\rangle') & \Lambda_r(\bm{\kappa}_i)(-1) \\};
\path[->,font=\scriptsize,>=angle 90]
(m-1-1) edge node [auto] {$[\,,]_{G_{r+1}}$} (m-1-2)
(m-1-1) edge node [auto] {$\pi_{1\ast}\times\pi_{1\ast}$} (m-2-1)
(m-2-1) edge node [auto] {$[\,,]_{G_r}$} (m-2-2)
(m-1-2) edge node [auto] {} (m-2-2);
\end{tikzpicture}
\end{center}
commutes, which can be proved as in \cite[Lem. 1.1]{DR2}. This yields a $\Lambda$-adic perfect $G_\bb{Q}$-equivariant pairing
\begin{equation}
e_i H^1_{\et}(X_{\infty}(m)_{\overline{\bb{Q}}},\bb{Z}_p)^{\text{ord}}\times e_i H^1_{\et}(X_{\infty}(m)_{\overline{\bb{Q}}},\bb{Z}_p)^{\text{ord}}(\langle\epsilon_N^{-1};1\rangle') \longrightarrow  \Lambda(\bm{\kappa}_i)(-1),
\label{eq:lambda-pairing}
\end{equation}
where $H^1_{\et}(X_{\infty}(m)_{\overline{\bb{Q}}},\bb{Z}_p)^{\text{ord}}=e_{\text{ord}}' H^1_{\et}(X_{\infty}(m)_{\overline{\bb{Q}}},\bb{Z}_p)$.  All Hecke operators are self-adjoint for this pairing.

\subsection{Big Galois representations}\label{subsubsec:BGR}

%We write
%\[
%\Lambda_{r,\cl{O}}=\Lambda_r\otimes_{\bb{Z}_p}\cl{O},\quad \Lambda_{\cl{O}}=\Lambda\otimes_{\bb{Z}_p}\cl{O}
%\]
%and we denote by $\frk{m}$ the maximal ideal of $\Lambda_\cl{O}$.

Let $\frk{m}_\Lambda$ be the maximal ideal of $\Lambda$, let $\text{Cont}(\bb{Z}_p,\Lambda)$ be the $\Lambda$-module of continuous functions on $\bb{Z}_p$ with values in $\Lambda$, and let $\univ$ be any of the $\kappa_i$ above. Define the $\Lambda$-module
\begin{align*}
\cl{A}_{\univ}'=\big\{ f:\mathsf{T}'\rightarrow \Lambda \; \vert\; f(pz,1)\in \text{Cont}(\bb{Z}_p,\Lambda) \text{ and } f(a\cdot \gamma)=\univ(a)\cdot f(\gamma) 
\text{ for all } a\in\bb{Z}_p^\times,\, \gamma\in \mathsf{T}'\big\},
\end{align*}
equipped with the $\frk{m}_\Lambda$-adic topology, and the $\Lambda$-module
\[
\cl{D}_{\univ}'=\Hom_{\text{cont},\Lambda}(\cl{A}_{\univ}',\Lambda)
\]
equipped with the weak-$\ast$ topology. As in $\S\ref{subsubsec:dist}$, we can regard $\cl{A}_{\kappa}'$ (respectively $\cl{D}_{\kappa}'$) as a left (respectively right) $\Lambda[\Sigma'_0(p)]$-module.
%Observe that nothing changes if in the previous definitions we replace the module of locally constant functions $LC(\bb{Z}_p,\Lambda_{\cl{O}})$ by that of continuous functions $\text{Cont}(\bb{Z}_p,\Lambda_\cl{O})$.

%For any positive integer $r$, let
%\[
%\Gamma_1(p^r\bb{Z}_p)=\left\{\begin{pmatrix}a & b \\ c & d\end{pmatrix}\in\text{GL}_2(\bb{Z}_p): c\equiv 0\text{ (mod }p^r),\, d\equiv 1\text{ (mod }p^r)\right\},
%\]
Similarly to what we did in $\S\ref{subsubsec:group-et}$, define, for any positive integers $j,r$,
\begin{align*}
\cl{A}_{\univ,j,r}'=\big\{ f:\Gamma_1(p^r\bb{Z}_p)\backslash  \Gamma_0(p\bb{Z}_p)\rightarrow \Lambda/\frk{m}_\Lambda^j\; \vert\;  f(a\cdot \gamma)=\univ(a)\cdot f(\gamma) \\
\text{ for all } a\in\bb{Z}_p^\times,\, \gamma\in \Gamma_1(p^r\bb{Z}_p)\backslash  \Gamma_0(p\bb{Z}_p)\big\}
\end{align*}
and $\cl{A}_{\univ,j}'=\varinjlim_r \cl{A}_{\univ,j,r}'$. Then $\cl{A}_\univ'=\varprojlim_j \cl{A}_{\univ,j}'$. We denote by $\bm{\cl{A}}_{\univ}'$ the object in $\mathbf{S}(Y_{\et})$ corresponding to $\lbrace \cl{A}_{\univ,j}'\rbrace_j\in\mathbf{M}(\Gamma_0(p\bb{Z}_p))$. We also define $\cl{D}_{\univ,j}'=\Hom_{\Lambda}(\cl{A}_{\kappa,j,j}',\Lambda/\frk{m}_\Lambda^j)$, so that $\cl{D}_\univ'=\varprojlim_j \cl{D}_{\univ,j}'$, and denote by $\bm{\cl{D}}_\kappa'$ the object in $\mathbf{S}(Y_{\et})$ corresponding to $\lbrace \cl{D}_{\univ,j}'\rbrace_j\in\mathbf{M}(\Gamma_0(p\bb{Z}_p))$. There are natural Hecke-equivariant morphisms of $\Lambda$-modules
\begin{equation*}
H^1_{\et}(Y_{\overline{\bb{Q}}},\bm{\cl{A}}_{\univ}')\rightarrow \mathtt{H}^1_{\et}(Y_{\overline{\bb{Q}}},\bm{\cl{A}}_{\univ}')\cong H^1(\Gamma,\cl{A}_{\univ}'),
\end{equation*}
\begin{equation*}
H^1_{\et}(Y_{\overline{\bb{Q}}},\bm{\cl{D}}_{\univ}')\cong \mathtt{H}^1_{\et}(Y_{\overline{\bb{Q}}},\bm{\cl{D}}_{\univ}')\cong H^1(\Gamma,\cl{D}_{\univ}'),
\end{equation*}
\begin{equation*}
H^1_{\et,c}(Y_{\overline{\bb{Q}}},\bm{\cl{D}}_{\univ}')\cong \mathtt{H}^1_{\et,c}(Y_{\overline{\bb{Q}}},\bm{\cl{D}}_{\univ}')\cong H^1_c(\Gamma,\cl{D}_{\univ}').
\end{equation*}
which allow us to define continuous $G_{\bb{Q}}$-actions on the groups $H^1(\Gamma,\cl{A}_{\univ}')$, $H^1(\Gamma,\cl{D}_{\univ}')$ and $H^1_c(\Gamma,\cl{D}_{\univ}')$.

The evaluation map $\cl{A}_{\univ}'\otimes_{\Lambda}\cl{D}_{\univ}'\rightarrow \Lambda$ yields a $G_\bb{Q}$-equivariant cup-product pairing
\begin{equation}
H^1(\Gamma,\cl{A}_{\univ}')\otimes_{\Lambda} H^1_c(\Gamma,\cl{D}_{\univ}')\longrightarrow \Lambda(-1)
\label{eq:AD-cont-pairing}
\end{equation}
under which the Hecke operators $T_q$, $T_q'$, $[d]_N$, $[d]_N'$ acting covariantly on the left, whenever defined, are adjoint to these same operators acting contravariantly on the right.

Recall that in this section we have set $\Gamma=\Gamma(1,N(pm))$ and let $S=\Sigma_0'(p)\cap \text{GL}_2(\bb{Q})$. For any positive integer $r$, define
\[
\Sigma_1'(p^r)=\begin{pmatrix} \bb{Z}_p & \bb{Z}_p \\ p^r\bb{Z}_p & 1+p^r\bb{Z}_p \end{pmatrix},\quad S_r=\Sigma_1'(p^r)\cap\text{GL}_2(\bb{Q}),\quad \Gamma_r=\Gamma(1,Np^r(m)).
\]

We define compatibility of Hecke pairs as in \cite[Def. 1.1.2]{AS}, but changing left-right conventions. More precisely, we say that the Hecke pair $(\Gamma_\alpha,S_\alpha)$ is compatible to the Hecke pair $(\Gamma_\beta,S_\beta)$ if $(\Gamma_\alpha,S_\alpha)\subseteq (\Gamma_\beta,S_\beta)$, $S_\alpha\Gamma_\beta=S_\beta$ and $\Gamma_\beta\cap S_\alpha^{-1}S_\alpha=\Gamma_\alpha$. With this definition, the Hecke pair $(\Gamma_r, S_r)$ is compatible to the Hecke pair $(\Gamma_t,S_t)$, if $r\geq t$, and to the Hecke pair $(\Gamma,S)$.

Suppose that the Hecke pair $(\Gamma_\alpha,S_\alpha)$ is compatible to $(\Gamma_\beta, S_\beta)$ and that $\Gamma_\alpha$ has finite index in $\Gamma_\beta$. For any right $S_\alpha$-module $E$, we define
\[
\Ind_{\Gamma_{\alpha}}^{\Gamma_\beta} E = \big\{ \varphi :\Gamma_\beta\rightarrow E \; \vert \; \varphi (xy)=\varphi (y)x^{-1} \text{ for all } x\in \Gamma_\alpha,\, y\in \Gamma_\beta\big\}
\]
This module is equipped with a right action of $S_\beta$: given $\varphi\in\Ind_{\Gamma_{\alpha}}^{\Gamma_\beta} E$ and $g\in S_\beta$
\[
(\varphi g)(x)=\sum \varphi (\gamma)\gamma g x^{-1},
\]
where the sum is over representatives $\gamma$ for the cosets in $\Gamma_{\alpha}\backslash \Gamma\cap S_\alpha x g^{-1}$.

Now define
\begin{align*}
A_{\univ,r}'=\big\{ f:\Gamma_1(p^r\bb{Z}_p)\backslash  \Gamma_0(p\bb{Z}_p)\rightarrow \Lambda_{r}\; \vert\;  f(a\cdot \gamma)=\univ(a)\cdot f(\gamma) \\
\text{ for all } a\in\bb{Z}_p^\times,\, \gamma\in \Gamma_1(p^r\bb{Z}_p)\backslash  \Gamma_0(p\bb{Z}_p)\big\}
\end{align*}
and let $D_{\univ,r}'=\Hom_{\Lambda_{r}}(A_{\univ,r}',\Lambda_r)$. With these definitions $\cl{D}_\univ'=\varprojlim_r D_{\kappa, r}'$. Let $S_r$ act trivially on $\bb{Z}_p$ and consider the right $\bb{Z}_p[S_1]$-module $\text{Ind}_{\Gamma_r}^{\Gamma_1}\,\bb{Z}_p$. Let $R$ be a set of representatives for the cosets in $\Gamma_r\backslash \Gamma_1$. The map $\Ind_{\Gamma_r}^{\Gamma_1}\bb{Z}_p\rightarrow D_{\univ,r}'$ defined by
\[
\varphi \mapsto \big[ f\mapsto \sum_{r\in R} \varphi(r) f(r)\big]
\]
is an isomorphism of right $\bb{Z}_p[S_1]$-modules. Therefore, there are natural isomorphisms
\[
H^1(\Gamma_1,\cl{D}_\univ')\cong \varprojlim_r H^1(\Gamma_1,D_{\univ,r}')\cong \varprojlim_r H^1(\Gamma_r,\bb{Z}_p).
\]
According to \cite[Lem. 1.1.3]{AS} and \cite[Lem. 1.1.4]{AS}, both corestriction and the Shapiro isomorphism commute with the action of $D(\Gamma,S)$ via restriction of Hecke algebras, so the previous isomorphisms are Hecke-equivariant.

Similarly to (\ref{eq:Xinfty}), but omitting $m$ from the notation, we let $Y_r=Y(1,Np^r(m))$ and put
\[
H^1_{\et}(Y_{\infty,{\overline{\bb{Q}}}},\bb{Z}_p):=\varprojlim_r H^1_{\et}(Y_{r,{\overline{\bb{Q}}}},\bb{Z}_p),
\]
where the inverse limit is with respect to the maps $\pi_{1\ast}$. Then
\[
H^1(\Gamma_1,\cl{D}_\univ')\cong \varprojlim_r H^1(\Gamma_r,\bb{Z}_p)
%\cong\varprojlim_r H^1_{\et}(Y_{r,{\overline{\bb{Q}}}},\bb{Z}_p)=:
\cong H^1_{\et}(Y_{\infty,{\overline{\bb{Q}}}},\bb{Z}_p),
\]
where the last isomorphism is defined by choosing a compatible system of geometric points for the curves $Y_r$ and suitable compatible bases for the corresponding Tate modules. Under the isomorphisms above, the contravariant operators $T_q'$, $[d]_N'$, $[a]_p'$ on the first term correspond to the contravariant operators $T_q'$, $\langle d; 1\rangle'$, $\langle 1; a\rangle'$ defined on the last term via the compatibility of these operators with the pushforward maps $\pi_{1\ast}$.

Also, according to \cite[Lem. 1.1.5]{AS}, the restriction map yields a Hecke-equivariant isomorphism
\[
H^1(\Gamma, \cl{D}_\univ')\cong e_i H^1(\Gamma_1,\cl{D}_\univ')
\]
(recall that we have set $\univ=\univ_i$). Combining this isomorphism with the previous ones, we obtain a Hecke-equivariant isomorphism
\[
H^1(\Gamma, \cl{D}_\univ')\cong e_i H^1_{\et}(Y_{\infty,{\overline{\bb{Q}}}},\bb{Z}_p).
\]

%Since $\kappa(1+pz)$ defines an analytic function of $z\in\Z_p$, %(i.e., locally analytic on balls of width $0$),
%we have a natural map $H^1(\Gamma_1,\cl{D}')\rightarrow H^1(\Gamma_1,\cl{D}_{\univ,0}')$.
%Applying Hida's anti-ordinary projector $e^\ast:= \lim_{n\to\infty} (T_p')^{n!}$, this map becomes an isomorphism between the slope $\leq 0$ submodules for $T_p'$, i.e., we have
%$$
%H^1(\Gamma_1,\cl{D}')^{\rm ord}\cong H^1(\Gamma_1,\cl{D}_{\univ,0}')^{\leq 0}.
%$$
%This can be seen by observing that, for any integer $j>u$ and any $f\in \cl{A}_{\kappa,j}$, we have $f\vert\begin{pmatrix} p & 0 \\ 0 & 1 \end{pmatrix}\in \cl{A}_{\kappa,j-1}$.

%Applying Hida's anti-ordinary projector $e^\ast:= \lim_{n\to\infty} (T_p')^{n!}$, we get a $G_\bb{Q}$-equivariant and Hecke equivariant isomorphism
%\begin{equation}
%H^1_{\et}(Y_{\infty,{\overline{\bb{Q}}}},\cl{O})^{\text{ord}}\cong H^1(\Gamma_1,\cl{D}')^{\text{ord}}.
%\label{eq:et-group}
%\end{equation}

Similarly, using \cite[Prop. 4.2]{AS2}, one proves that there is a Hecke-equivariant isomorphism
\begin{equation}
H^1_c(\Gamma,\cl{D}_\univ') \cong e_i H^1_{\et,c}(Y_{\infty,{\overline{\bb{Q}}}},\bb{Z}_p).
\label{eq:etc-group}
\end{equation}

\section{Proof of the wild norm relations}\label{subsec:Iw}

Assume that $p$ splits in $K$ as $(p)=\frk{p}\overline{\frk{p}}$ and that it does not divide the class number $h_K$.

We keep most of the notations from $\S\ref{subsec:tameeulersystem}$. In particular, $(g,h)$ is a pair of newforms of weights $(l, m)$ of the same parity, levels $(N_g,N_h)$ and characters $(\chi_g,\chi_h)$, and we assume that the ring of integers $\cO\subset E=L_{\frk{P}}$ contains the Fourier coefficients of $g$ and $h$. In addition, we assume that $p$ does not divide $N_g$ nor $N_h$ and that both $g$ and $h$ are ordinary at $p$.

We now allow the Gr\"ossencharacter $\psi$ to have infinity type $(1-k,0)$ for any even integer $k\geq 2$, and let $\frk{f}$ be the conductor of $\psi$, which we assume to be coprime to $p$.  Let $\chi$ be the unique Dirichlet character modulo $N_{K/\mathbb Q}(\mathfrak f)$ such that $\psi((n))=n^{k-1} \chi(n)$ for integers $n$ coprime to $N_{K/\mathbb Q}(\mathfrak f)$.
%We assume that $\overline{\frk{p}}$ does not divide $\frk{f}$ and that $\frk{p}$ exactly divides $\frk{f}$, and write $\frk{f}=\frk{f}'\frk{p}$, where $(\frk{f}',\frk{p})=1$.

As in \cite[\S3.2.1]{BL1}, we denote by $\psi_0$ the unique Gr\"ossencharacter of infinity type $(-1,0)$, conductor $\frk{p}$ and whose associated $p$-adic Galois character factors through $\Gamma_\frk{p}$, the Galois group of the unique $\Z_p$-extension of $K$ unramified outside $\frk{p}$. Then we can uniquely write $\psi=\alpha \psi_0^{k-1}$, where $\alpha$ is a %Gr\"ossencharacter
ray class character of conductor dividing $\frk{f}\frk{p}$.
%and infinity type (0,0).
Since $(\frk{f},p)=1$ and $k$ is even, it easily follows that $\psi$ is \emph{non-Eisenstein} and \emph{$p$-distinguished}, meaning that
\begin{equation}\label{eq:513}
\alpha\psi_0\vert_{\cl{O}_{K,\frk{p}}^\times}\not\equiv\omega\;({\rm mod}\;{\frk{P}}),
\end{equation}
where $\omega$ is the Teichm\"uller character.

Let $\psi_{\mathfrak P}$ be the continuous $E$-valued character of $K^{\times} \backslash \mathbb A_{K,\text{f}}^{\times}$ defined by \[ \psi_{\mathfrak P}(x)=x_{\mathfrak p}^{1-k} \psi(x), \] where $x_{\mathfrak p}$ is the projection of the id\`ele $x$ to the component at $\mathfrak p$. We will also denote by $\psi_\frk{P}$ the corresponding character of $G_K$ obtained via the geometric Artin map. Then $\Ind_K^\bb{Q} E(\psi_\frk{P}^{-1})$ is the $p$-adic representation attached to $\theta_\psi$, and we note that by (\ref{eq:513}) the associated residual representation is absolutely irreducible and $p$-distinguished (see \cite[Rmk.~5.1.4]{LLZ}).

Consider the $q$-expansion
$$
\Theta=\sum_{(\frk{a},\frk{fp})=1}[\frk{a}]q^{N_{K/\bb{Q}}(\frk{a})}\in \cl{O}[[H_{\frk{fp}^\infty}]][[q]],
$$
where $H_{\frk{fp}^\infty}$ denotes the maximal pro-$p$ quotient of the ray class group of $K$ of conductor $\frk{fp}^\infty$, and $[\frk{a}]$ is the image of $\frk{a}$ in $H_{\frk{fp}^\infty}$ under the geometric Artin map. Since we assume that $p$ does not divide $h_K$, we can factor $H_{\frk{fp}^\infty}\cong H_\frk{f}\times \Gamma_\frk{p}$. Hence, we have
%
%Let $\Gamma_\frk{p}$ be the Galois group of the $\bb{Z}_p$-extension unramified at $\overline{\frk{p}}$, which is also the maximal $p$-extension inside the ray class field of conductor $\frk{p}^\infty$.
%
$\Theta\in \cl{O}[H_\frk{f}]\otimes_{\cl{O}}\cl{O}[[\Gamma_\frk{p}]][[q]]$, and we can specialize this to
\begin{equation}\label{eq:CM-F}
\hf = \sum_{(\frk{a},\frk{fp})=1}\alpha([\frk{a}])\psi_0([\frk{a}])[\frk{a}]q^{N_{K/\bb{Q}}(\frk{a})}\in \Lambda_{\hf}[[q]],
\end{equation}
where $\Lambda_{\hf}=\cl{O}[[\Gamma_{\frk{p}}]]$. We identify $\Gamma_{\frk{p}}$ with $\Gamma=1+p\bb{Z}_p$ via the isomorphism $\Gamma\cong\cl{O}_{K,\frk{p}}^{(1)}\rightarrow \Gamma_{\frk{p}}$ defined by $u\mapsto \text{art}_{\frk{p}}(u)^{-1}$, where $\text{art}_{\frk{p}}$ stands for the geometric local Artin map, and in this way we identify $\Lambda_{\hf}$ with $\Lambda_{\cl{O}}=\Lambda\otimes_{\bb{Z}_p}\cl{O}$.
%We define a character $\kappa_{\hf}:\bb{Z}_p^\times\rightarrow \Lambda_{\hf}^\times\cong\Lambda_{\cl{O}}$ by $\kappa_{\hf}(u)=\omega(u)^{k-2}\langle u\rangle^{\mathbf{k}-2}$ and choose a square root of this character defined by $\kappa_{\hf}^{1/2}(u)=\omega(u)^{(k-2)/2}\langle u\rangle^{(\mathbf{k}-2)/2}$.
We can therefore regard $\hf$ as a primitive Hida family specializing to
$$
\hf_{k'} = \sum_{(\frk{a},\frk{fp})=1}\alpha([\frk{a}])\psi_0([\frk{a}])^{k'-1}q^{N_{K/\bb{Q}}(\frk{a})}\in S_{k'}^{\text{ord}}(N_\psi p,\chi_\alpha\varepsilon_K\omega^{1-k'})
$$
at the arithmetic point $\nu_{k'-2}$, where $N_\psi=DN_{K/\bb{Q}}(\frk{f})$ and $\chi_\alpha(n)=\alpha((n))$. Note that $\hf$ has character $\chi=\chi_\alpha \omega^{1-k}$ and $\hf_k=\theta_\psi^{(p)}$ is the ordinary $p$-stabilization of $\theta_\psi$.

Let $\chi_{\bb{Q}}$ be the adelic character attached to $\chi$, let $\chi_K=\chi_\bb{Q}\circ N_{K/\bb{Q}}$ and let $\psi^\ast = \chi_K^{-1}\psi$. We can define a primitive Hida family $\hf^\ast$ attached to the Gr\"ossencharacter $\psi^\ast$ in the same way that we defined the Hida family $\hf$ attached to $\psi$. This is just the Hida family $\hf\otimes \chi^{-1}$.

%Our aim now is to construct an Euler system of $\Lambda$-adic classes. To that end, we rely on the construction of \cite[Section 7]{BSV}.

We assume that $\chi\varepsilon_K\chi_g\chi_h=1$, i.e., the product of the characters of $\theta_\psi$, $g$ and $h$ is trivial. Similarly to what we did in \S\ref{subsec:tameeulersystem}, set
$(r_1,r_2,r_3)=(k-2,l-2,m-2)$. For every positive integer $m$, let
\[
\tilde{Y}(m)=Y(1,N(pm)),\quad\textrm{where $N={\rm lcm}(N_\psi, N_g, N_h)$,}
\]
and denote by $\tilde{\Gamma}(m)$ the corresponding modular group.
%We write $\cl{A}'$ and $\cl{D}'$ for the modules defined in $\S\ref{subsubsec:BGR}$ corresponding to the character $\kappa=\kappa_{r_1}$, regarded as $\Lambda_\hf$-modules via the identification $\Lambda_\hf\cong \Lambda_{\cl{O}}$. For any $s\in\bb{Z}$, we define the character $s:\bb{Z}_p^\times \rightarrow \cl{O}^\times$ by $z\mapsto z^{s-2}$ and write $\cl{A}_{s}^\cdot$ and $\cl{D}_{s}^\cdot$ for the modules $\cl{A}_{s,\iota}^\cdot$ and $\cl{D}_{s,\iota}^\cdot$, where $\iota$ is a fixed positive integer.
Let $\kappa=\kappa_{r_1}:\bb{Z}_p^\times\rightarrow \Lambda^\times$ and choose a square root of this character defined by $\kappa^{1/2}(u)=\omega(u)^{(k-2)/2}[\langle u\rangle^{1/2}]$.

We can define classes
\[
\mathbf{Det}_m^{\hf g h}\in H^0_{\et}(\tilde{Y}(m),\bm{\cl{A}}_{\univ}'\otimes\bm{\cl{A}}_{r_2}\otimes\bm{\cl{A}}_{r_3}(-\kappa^{1/2}-\nu_{(r_2+r_3)/2})).
\]
as in \cite[\S8.1]{BSV}, but replacing the Hida families $\hg, \hh$ in their construction by our $g,h$ and working with modules of continuous functions instead of modules of locally analytic functions.
%In our case we are only interested in varying the first variable, so the Hida families $\hg$ and $\hh$ in loc.\,cit. are replaced here by the modular forms $g$ and $h$.
Similarly to what is done in \emph{loc.\,cit.}, and adopting some of the notations there, we define the cohomology classes
\[
\kappa_{m,\hf g h}^{(1)} = (e_{\text{ord}}'\otimes e_{\text{ord}}\otimes e_{\text{ord}})\circ\mathtt{K}\circ\mathtt{HS}\circ d_\ast(\mathbf{Det}_m^{\hf g h}),
\]
inside the group
\[
H^1\left(\mathbb{Q},H^1(\tilde{\Gamma}(m),\cl{A}_\univ')^{\text{ord}}\hat{\otimes} H^1(\tilde{\Gamma}(m),\cl{A}_{r_2})^{\text{ord}}\hat{\otimes} H^1(\tilde{\Gamma}(m),\cl{A}_{r_3})^{\text{ord}}(\bm{\kappa}^{1/2}+2+(r_2+r_3)/2)\right),
\]
where $\bm{\kappa}^{1/2}=\kappa^{1/2}\circ\epsilon_{\text{cyc}}$;
and, for each squarefree positive integer $n$ coprime to $p$ and $N$, we define
\begin{equation*}
\kappa_{n,\hf gh}^{(2)} = \chi\varepsilon_K(n)\kappa(n)^{-1}n^{r_2}(\id\otimes\id\otimes [n]_N)(\id\otimes\pi_{1\ast}\otimes\pi_{2\ast}) \kappa_{n^2,\hf gh}^{(1)}
\end{equation*}
lying in the group
\begin{equation*}
H^1\left(\bb{Q},H^1(\tilde{\Gamma}(n^2),\cl{A}_\univ')^{\text{ord}}\hat{\otimes} H^1(\tilde{\Gamma}(1),\cl{A}_{r_2})^{\text{ord}}\hat{\otimes} H^1(\tilde{\Gamma}(1),\cl{A}_{r_3})^{\text{ord}}(\bm{\kappa}^{1/2}+2+(r_2+r_3)/2)\right).
\end{equation*}

Now we can prove norm relations for $\Lambda$-adic classes as we did for the classes in the previous section.

\begin{lemma}\label{lemma:lambdanormrelations1}
Let $m$ be a positive integer and let $q$ be a prime number. Assume that both $m$ and $q$ are coprime to $p$ and $N$. Then
\begin{align*}
&(\pi_{2\ast}\otimes\pi_{1\ast}\otimes\pi_{1\ast}) \kappa_{mq,\hf gh}^{(1)} = (T_q'\otimes\id\otimes\id)\kappa_{m,\hf gh}^{(1)};\\
&(\pi_{1\ast}\otimes\pi_{2\ast}\otimes\pi_{2\ast}) \kappa_{mq,\hf gh}^{(1)} = \kappa^{-1/2}(q)q^{(r_2+r_3)/2}(T_q\otimes\id\otimes\id)\kappa_{m,\hf gh}^{(1)}; \\
&(\pi_{1\ast}\otimes\pi_{2\ast}\otimes\pi_{1\ast}) \kappa_{mq,\hf gh}^{(1)} = (\id\otimes T_q'\otimes \id)\kappa_{m,\hf gh}^{(1)};\\
&(\pi_{2\ast}\otimes\pi_{1\ast}\otimes\pi_{2\ast}) \kappa_{mq,\hf gh}^{(1)} = \kappa^{1/2}(q)q^{(r_3-r_2)/2}(\id\otimes T_q\otimes\id)\kappa_{m,\hf gh}^{(1)}; \\
&(\pi_{1\ast}\otimes\pi_{1\ast}\otimes\pi_{2\ast}) \kappa_{mq,\hf gh}^{(1)} = (\id\otimes\id\otimes T_q')\kappa_{m,\hf gh}^{(1)};\\
&(\pi_{2\ast}\otimes\pi_{2\ast}\otimes\pi_{1\ast}) \kappa_{mq,\hf gh}^{(1)} = \kappa^{1/2}(q)q^{(r_2-r_3)/2}(\id\otimes\id\otimes T_q)\kappa_{m,\hf gh}^{(1)}.
\end{align*}
If $q$ is coprime to $m$ we also have
\begin{align*}
&(\pi_{1\ast}\otimes\pi_{1\ast}\otimes\pi_{1\ast}) \kappa_{mq,\hf gh}^{(1)} = (q+1)\kappa_{m,\hf gh}^{(1)}; \\
& (\pi_{2\ast}\otimes\pi_{2\ast}\otimes\pi_{2\ast}) \kappa_{mq,\hf gh}^{(1)} = (q+1)\kappa^{1/2}(q)q^{(r_2+r_3)/2}\kappa_{m,\hf gh}^{(1)}.
\end{align*}
\end{lemma}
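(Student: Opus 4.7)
The plan is to mimic the argument used to prove Lemma~\ref{lemma:normrelations1}, which in turn was an adaptation of equations (174) and (176) of \cite{BSV}, now at the level of the $\Lambda$-adic classes $\kappa_{m,\hf gh}^{(1)}$. The construction passes through the stages
\[
\mathbf{Det}_m^{\hf g h}\;\overset{d_\ast}{\longrightarrow}\;\cdot\;\overset{\mathtt{HS}}{\longrightarrow}\;\cdot\;\overset{\mathtt{K}}{\longrightarrow}\;\cdot\;\overset{p_0\otimes p_0\otimes p_0}{\longrightarrow}\;\kappa_{m,\hf gh}^{(1)},
\]
and each of these maps is compatible with pushforward along $\pi_1,\pi_2$ in every factor (the Hochschild--Serre spectral sequence and K\"unneth are functorial, the ordinary/slope projector $p_0$ commutes with $\pi_{i\ast}$, and $d_\ast$ intertwines the triple of degeneracies on the diagonal with the triple on $\tilde Y(m)^3$). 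Hence it is enough to compute the effect of $(\pi_{i\ast}\otimes\pi_{j\ast}\otimes\pi_{k\ast})$ on $\mathbf{Det}_{mq}^{\hf g h}$ and express it in terms of $\mathbf{Det}_{m}^{\hf g h}$ with Hecke operators inserted on each factor.

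Next I would split the six identities into two types, exactly as in Lemma~\ref{lemma:normrelations1}. For the triples with one $\pi_2$ and two $\pi_1$'s, no isogeny $\lambda_q$ is involved: the composition $\pi_{1\ast}\circ\pi_2^\ast$ is precisely the adjoint Hecke operator $T_q'$ (acting on group cohomology with coefficients in $\cl{A}^\cdot_{\hf}$ or $\cl{A}^\cdot_{r_i}$ in view of the dictionary in $\S\ref{subsubsec:Hida}$), and this yields the three identities with $T_q'$ acting asymmetrically on the three factors. For the triples with two $\pi_2$'s and one $\pi_1$, the composition $\Phi_{q\ast} = \varphi_{q\ast}\circ\lambda_{q\ast}$ contributes, via the isogeny $\lambda_q$, a scalar action of $q$ on the relative Tate module; on a sheaf of weight $\kappa$ this scales by $\kappa(q)$. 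Since the first factor is twisted by $-\kappa_{\hf}^{1/2}$ and the $i$-th factor ($i=2,3$) by $-r_i/2$ in the definition of $\mathbf{Det}_m^{\hf g h}$, the contribution of the two $\lambda_q$'s on the respective factors is precisely $\kappa_{\hf}^{1/2}(q)$ combined with the appropriate half-integer power of $q$ -- which recovers $\kappa_{\hf}^{1/2}(q)q^{(r_2+r_3)/2}$, $\kappa_{\hf}^{1/2}(q)q^{(r_3-r_2)/2}$ and $\kappa_{\hf}^{1/2}(q)q^{(r_2-r_3)/2}$ in the three cases. The remaining two identities, for $q$ coprime to $m$, follow from the same analysis together with the standard formulae $\pi_{1\ast}\circ\pi_1^\ast=q+1$ and $\pi_{2\ast}\circ\pi_2^\ast=(q+1)\lambda_{q\ast}\lambda_q^\ast$.

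The main obstacle is bookkeeping: one has to keep track of the way in which the half-integral twist by $\kappa_{\hf}^{1/2}$ (built into the definition of $\mathbf{Det}_m^{\hf g h}$ to obtain a self-dual normalization) interacts with the isogeny action of $\lambda_q$ on $\bm{\cl{A}}'_{\hf}$, and of the parallel half-integral twists by $r_2/2$ and $r_3/2$ on the second and third factors. Once this is done carefully in one of the six cases, the remaining five are obtained by permuting the roles of the three factors, and in particular the formal structure of the identities matches Lemma~\ref{lemma:normrelations1} under the substitution $T_q\leftrightarrow T_q'$ combined with the insertion of the appropriate Hida-family correction factor.
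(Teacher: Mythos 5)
Your overall strategy is the same as the paper's: the paper's proof is a one-sentence pointer to the arguments for equations~(174) and~(176) of \cite{BSV}, transported from the $p$-level to the $q$-level as in Lemma~\ref{lemma:normrelations1}, and your sketch (functoriality of $d_\ast$, $\mathtt{HS}$, $\mathtt{K}$, $p_0$, then a case analysis according to the number of $\pi_2$'s) is the right skeleton of that argument.

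There is, however, a concrete error in your bookkeeping of the scalar factors. You assert that all three ``two-$\pi_2$'' cases produce a factor of the form $\kappa_{\hf}^{1/2}(q)\cdot q^{(\cdot)/2}$, and list $\kappa_{\hf}^{1/2}(q)q^{(r_2+r_3)/2}$ for the case $(\pi_{1\ast},\pi_{2\ast},\pi_{2\ast})$. But the lemma (and the translation of Lemma~\ref{lemma:normrelations1}) gives $\kappa_{\hf}^{-1/2}(q)q^{(r_2+r_3)/2}$ here. The sign of the $\kappa_{\hf}$-exponent is \emph{not} uniform: it flips according to whether the $\pi_2$ (equivalently, the isogeny $\lambda_q$) sits on the $\hf$-factor or not. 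Concretely, translating $q^{r-r_i}$ from Lemma~\ref{lemma:normrelations1} via $q^{r_1}\rightsquigarrow\kappa_{\hf}(q)$ gives $q^{r-r_1}\rightsquigarrow\kappa_{\hf}^{-1/2}(q)q^{(r_2+r_3)/2}$, while $q^{r-r_2}\rightsquigarrow\kappa_{\hf}^{1/2}(q)q^{(r_3-r_2)/2}$ and $q^{r-r_3}\rightsquigarrow\kappa_{\hf}^{1/2}(q)q^{(r_2-r_3)/2}$. The asymmetry comes from the fact that in the two latter cases $\lambda_q$ acts on $\bm{\cl{A}}_{\hf}'$ (contributing $\kappa_{\hf}(q)$ to the numerator) whereas in the first case it does not, and the global balancing normalization $\kappa_{\hf}^{1/2}(q)q^{(r_2+r_3)/2}$ is what sits in the denominator in all three cases. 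Your phrasing ``the contribution of the two $\lambda_q$'s on the respective factors is precisely $\kappa_{\hf}^{1/2}(q)$'' misattributes the normalization as a per-factor twist and misses this flip. Relatedly, the paper's twist $(-\kappa_{\hf}^{1/2}-(r_2+r_3)/2)$ on $\mathbf{Det}_m^{\hf g h}$ is a single global Tate twist of the tensor product, not a per-factor twist by $-\kappa_{\hf}^{1/2}$ and $-r_i/2$ as you describe; the per-factor heuristic happens to work for the $q$-power but gives the wrong $\kappa_{\hf}$-exponent in the case where $\lambda_q$ avoids the $\hf$-factor. Once these two points are corrected, the remainder of your outline (in particular the swap $T_q\leftrightarrow T_q'$ relative to Lemma~\ref{lemma:normrelations1}, reflecting the passage to distribution-valued sheaves and the anti-ordinary projector, and the degree $q+1$ computation for the all-$\pi_1$ and all-$\pi_2$ cases) is consistent with the statement.
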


\begin{proof}
As in Lemma~\ref{lemma:normrelations1}, the same arguments proving equations (174) and (176) in  \cite{BSV} apply \emph{mutatis mutandis} to yield the proof of these identities.
\end{proof}

\begin{lemma}\label{lemma:lambdanormrelations2}
Let $n$ be a squarefree positive integer coprime to $p$ and $N$ and let $q$ be a rational prime coprime to $p$, $N$ and $n$. Then
\begin{align*}
(\pi_{11\ast}\otimes\id\otimes\id) \kappa_{nq,\hf gh}^{(2)} &=\big\{\chi(q)\kappa(q)^{-1}q^{r_2}(\id\otimes \id\otimes [q]_N^{-1}T_q^2) \\
&\quad-\chi(q)\kappa(q)^{-1}(q+1)q^{r_2+r_3}(\id\otimes\id\otimes\id)\big\}\kappa_{n,\hf gh}^{(2)}, \\
(\pi_{21\ast}\otimes\id\otimes\id) \kappa_{nq,\hf gh}^{(2)} &=\big\{\chi(q)\kappa^{-1/2}(q)q^{(r_2+r_3)/2}(\id\otimes T_q\otimes T_q) \\
&\quad -\chi(q)\kappa(q)^{-1}q^{r_2+r_3}(([q]_N')^{-1}T_q'\otimes [q]_N \otimes [q]_N)\big\}\kappa_{n,\hf gh}^{(2)}, \\
(\pi_{22\ast}\otimes\id\otimes\id) \kappa_{nq,\hf gh}^{(2)} &=\big\{\chi(q)q^{r_3}(\id\otimes T_q^2\otimes [q]_N) \\
&\quad -\chi(q)(q+1)q^{r_2+r_3}(\id\otimes [q]_N \otimes [q]_N)\big\}\kappa_{n,\hf gh}^{(2)},
\end{align*}
where $\pi_{ij*}$ denotes the composition
\[
H^1(\tilde{\Gamma}(n^2 q^2),\cl{F}) \overset{\pi_{i*}}\longrightarrow H^1(\tilde{\Gamma}(n^2 q),\cl{F})\overset{\pi_{j*}}
\longrightarrow H^1(\tilde{\Gamma}(n^2),\cl{F}).
\]
%\begin{equation*}
%\pi_{11\ast},\pi_{22\ast}: H^1(\tilde{\Gamma}(n^2 q^2),\cl{F})\longrightarrow H^1(\tilde{\Gamma}(n^2),\cl{F})
%\end{equation*}
%are the maps $\pi_{1\ast}$ and $\pi_{2\ast}$ and $\pi_{21\ast}$ %stands for the composition
%\begin{center}
%\begin{tikzpicture}
%\matrix(m) [matrix of math nodes, row sep=2.6em, column sep=2.8em, text height=1.5ex, text depth=0.25ex]
%{H^1(\tilde{\Gamma}(n^2 q^2),\cl{F}) & H^1(\tilde{\Gamma}(n^2 q),\cl{F}) & H^1(\tilde{\Gamma}(n^2),\cl{F}). \\};
%\path[->,font=\scriptsize,>=angle 90]
%(m-1-1) edge node [above] {$\pi_{i\ast}$} (m-1-2)
%(m-1-2) edge node [above] {$\pi_{j\ast}$} (m-1-3);
%\end{tikzpicture}
%\end{center}
\end{lemma}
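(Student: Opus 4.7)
The strategy is directly parallel to Lemma~\ref{lemma:normrelations2}, now replacing the tame class $\kappa^{(1)}$ with $\kappa^{(1)}_{\hf gh}$ and invoking Lemma~\ref{lemma:lambdanormrelations1} in place of Lemma~\ref{lemma:normrelations1}. Unfolding the definition, I write
\[
\kappa_{nq,\hf gh}^{(2)}=\chi\varepsilon_K(nq)\kappa_\hf(nq)^{-1}(nq)^{r_2}(\id\otimes\id\otimes[nq]_N)(\id\otimes\pi_{1\ast}\otimes\pi_{2\ast})\kappa_{(nq)^2,\hf gh}^{(1)},
\]
and decompose $\pi_{ij\ast}=\varpi_{j\ast}\circ\varpi_{i\ast}$ through the intermediate level $\tilde Y(n^2q)$. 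Because the degeneracies $\pi_1,\pi_2\colon\tilde Y(n^2q^2)\to\tilde Y(1)$ factor in the pushforward direction through $\tilde Y(n^2q)$, the first-factor operator $\varpi_{i\ast}$ can be commuted past the second- and third-factor maps on disjoint factors of the triple product.

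Applying this factoring at the first descent $\tilde Y(n^2q^2)\to\tilde Y(n^2q)$, I rewrite
\[
(\varpi_{i\ast}\otimes\id\otimes\id)(\id\otimes\pi_{1\ast}\otimes\pi_{2\ast})\kappa^{(1)}_{(nq)^2,\hf gh}
=(\id\otimes\pi_{1\ast}\otimes\pi_{2\ast})(\varpi_{i\ast}\otimes\varpi_{1\ast}\otimes\varpi_{2\ast})\kappa^{(1)}_{(nq)^2,\hf gh},
\]
and the same identity for $(\varpi_{i\ast}\otimes\varpi_{1\ast}\otimes\varpi_{1\ast})$ in the remaining cases. The inner triple pushforwards are then computed by Lemma~\ref{lemma:lambdanormrelations1}, producing Hecke-operator actions on $\kappa^{(1)}_{n^2q,\hf gh}$. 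Iterating the same argument for the second descent $\tilde Y(n^2q)\to\tilde Y(n^2)$, a second application of Lemma~\ref{lemma:lambdanormrelations1} reduces everything to expressions of the form $(\text{Hecke operators on factors 2, 3})\cdot(\id\otimes\pi_{1\ast}\otimes\pi_{2\ast})\kappa^{(1)}_{n^2,\hf gh}$.

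The final step is reassembly. Using
\[
\chi\varepsilon_K(nq)\kappa_\hf(nq)^{-1}(nq)^{r_2}=\chi(q)\varepsilon_K(q)\kappa_\hf(q)^{-1}q^{r_2}\cdot\chi\varepsilon_K(n)\kappa_\hf(n)^{-1}n^{r_2},
\]
I recognize the second factor as the normalization appearing in $\kappa_{n,\hf gh}^{(2)}$, and the residual operators are pushed onto $\kappa_{n,\hf gh}^{(2)}$ after commuting past $(\id\otimes\pi_{1\ast}\otimes\pi_{2\ast})$ and $(\id\otimes\id\otimes[n]_N)$. The remaining translation from the double products of $T_q$ appearing after the two descents to the $\Lambda$-adic form $[q]_N^{-1}T_q^2$ (resp.\ $[q]_N\otimes[q]_N$) in the statement is performed by the standard identities in the Hecke algebra acting on the slope~$\leq 0$ parts, which on the $\hf$-factor incorporate the character $\kappa_\hf$ via $T_q'=\chi(q)\kappa_\hf(q)[q]_N^{-1}T_q$ (and its analogue on the weight-$r_2$ and $r_3$ components with the Dirichlet characters of $g$ and $h$).

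The main obstacle is bookkeeping. One has to verify that the half-integral powers $\kappa_\hf^{1/2}(q)$ produced by the two applications of Lemma~\ref{lemma:lambdanormrelations1} combine into the integral power $\kappa_\hf(q)^{-1}$ in each line, that the $\varepsilon_K(q)$ from the normalization combines correctly with the contribution from the quadratic character of $\theta_\psi$ to yield the single factor $\chi(q)$, and that the remnants of the Hecke-operator products collapse to precisely $\id\otimes\id\otimes[q]_N^{-1}T_q^2$, $\id\otimes T_q\otimes T_q$ minus a $T_q'\otimes[q]_N\otimes[q]_N$ correction, and $\id\otimes T_q^2\otimes[q]_N$ minus a $(q+1)\id\otimes[q]_N\otimes[q]_N$ correction, respectively. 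Apart from this, the argument is a mechanical transcription of the computation performed in Lemma~\ref{lemma:normrelations2}.
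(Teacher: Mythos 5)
Your proposal is correct and matches the paper's own proof, which is literally a one-line citation of Lemma~\ref{lemma:lambdanormrelations1} combined with the observation that the manipulations of Lemma~\ref{lemma:normrelations2} carry over \emph{mutatis mutandis}. You correctly identify the three moving parts: unfolding the definition of $\kappa^{(2)}$, factoring the pushforwards $\pi_{ij*}$ through the intermediate level $\tilde Y(n^2q)$ so that the norm relations of Lemma~\ref{lemma:lambdanormrelations1} apply on each descent, and reassembling the normalization factors. One small imprecision worth flagging: in your description of the factoring, the second and third factors should always carry $\varpi_{1\ast}$ and $\varpi_{2\ast}$ respectively (never $\varpi_{1\ast}\otimes\varpi_{1\ast}$), since $\pi_1\colon\tilde Y(n^2q^2)\to\tilde Y(1)$ factors through the level-lowering $\varpi_1$ while $\pi_2$ factors through $\varpi_2$; and the ``second application of Lemma~\ref{lemma:lambdanormrelations1}'' is not quite a clean reapplication -- before it can be used, the Hecke operators produced by the first descent must be commuted past the second-descent pushforwards via classical identities of the form $\varpi_{2\ast}T_q=T_q\varpi_{2\ast}-q^{r_3}\langle q\rangle\varpi_{1\ast}$, exactly as in the displayed chains of Lemma~\ref{lemma:normrelations2}. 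You gesture at this under ``standard identities in the Hecke algebra,'' so the substance is there; tightening that phrasing would make the argument self-contained.
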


\begin{proof}
This can be deduced from Lemma~\ref{lemma:lambdanormrelations1} by the same calculation as in Lemma~\ref{lemma:normrelations2}.
\end{proof}

Let $\Gamma(m)=\Gamma(1,Np(m))$ and write $Y(m)$ and $X(m)$ for the corresponding affine and projective modular curves. The pairing in equation~(\ref{eq:AD-cont-pairing}) 
%which we defined for $\tilde{\Gamma}(m)$, can be equally defined for $\Gamma(m)$ and 
yields a map
\[
H^1(\tilde{\Gamma}(m),\cl{A}_\univ')\rightarrow \Hom_{\Lambda}(H^1_c(\Gamma(m),\cl{D}_\univ'),\Lambda)(-1)\cong \Hom_{\Lambda}(e_{r_1}H^1_{\et,c}(Y_\infty(m)_{\overline{\bb{Q}}},\bb{Z}_p),\Lambda)(-1),
\]
where the isomorphism comes from equation~(\ref{eq:etc-group}). Let $\cl{I}_n$ be the maximal ideal in Hida's big ordinary Hecke algebra $\bb{T}(1,Np^\infty(n^2))'_{\text{ord}}$ corresponding to the Hida family $\hf^\ast$; by (\ref{eq:513}) this ideal corresponds to a non-Eisenstein maximal ideal in $\bb{T}(1,Np(n^2))'$, so there are isomorphisms
\[
H^1_{\et,c}(Y_\infty(n^2)_{\overline{\bb{Q}}},\bb{Z}_p)^{\text{ord}}_{\cl{I}_n}\cong H^1_{\et}(X_\infty(n^2)_{\overline{\bb{Q}}},\bb{Z}_p)^{\text{ord}}_{\cl{I}_n} \cong H^1_{\et}(Y_\infty(n^2)_{\overline{\bb{Q}}},\bb{Z}_p)^{\text{ord}}_{\cl{I}_n}.
\]
Hence, the pairings (\ref{eq:AD-cont-pairing}) and (\ref{eq:lambda-pairing}) together with the isomorphism (\ref{eq:etc-group}) yield a morphism
\[
\mathtt{s}_{\hf,n\ast}:H^1(\tilde{\Gamma}(n^2),\cl{A}_\univ')^{\text{ord}} \longrightarrow e_{r_1}H^1_{\et}(Y_\infty(n^2)_{\overline{\bb{Q}}},\bb{Z}_p)^{\text{ord}}_{\cl{I}_n}(\langle\epsilon_N^{-1};1\rangle')(-\bm{\kappa}).
\]
This map is $G_{\bb{Q}}$-equivariant and intertwines the covariant action of the operators $T_q'$, $[d]_N'$, $[a]_p'$ on the source with the contravariant action of the operators $T_q'$, $\langle d;1\rangle'$, $\langle 1;a\rangle'$ on the target.

%Similarly, combining the pairings in equations (\ref{eq:AD-pairing}) and (\ref{eq:DD-pairing}), we get maps
%$$
%\mathtt{s}_{r_i\ast}: H^1(\tilde{\Gamma}(1),\cl{A}_{r_i})^{\text{ord}} \longrightarrow H^1(\tilde{\Gamma}(1),D_{r_i}')^{\text{ord}}(-r_i)
%$$
%for $i=2,3$. These morphisms are Galois equivariant and intertwine the covariant action of the operators $T_q$, $[d]_N$, $[a]_p$ on the source with the contravariant action of the operators $T_q'$, $[d]_N'$, $[a]_p'$ on the target. Under the isomorphisms $H^1(\tilde{\Gamma}(1),A_{r_i})^{\text{ord}}\cong H^1_{\et}(\tilde{Y}(1)_{\overline{\bb{Q}}},\mathscr{S}_{r_i})^{\text{ord}}\otimes_{\bb{Z}_p} L$ and $H^1(\tilde{\Gamma}(1),D_{r_i}')^{\text{ord}}\cong H^1_{\et}(\tilde{Y}(1)_{\overline{\bb{Q}}},\mathscr{L}_{r_i})^{\text{ord}}\otimes_{\bb{Z}_p} L$ (see \cite[eq. (73)]{BSV} and the subsequent remark), the previous morphisms are induced by the maps $\mathtt{s}_{r_i}$ in \cite[eq. (44)]{BSV}.

Fix a level-$N$ test vector $\breve{\hf}$ for $\hf$ and let $\breve{\hf}^\ast=\breve{\hf}\otimes \chi^{-1}\varepsilon_K$. Fix also test vectors
\[
\breve{g}\in S_l(N,\chi_g)[g],\quad \breve{h}\in S_m(N,\chi_h)[h]
\]
and write $\breve{g}_\alpha$ and $\breve{h}_\alpha$ for the corresponding ordinary $p$-stabilizations.

Define maps
\[
\phi_{n,r}:\bb{T}(1,N_\psi p^r(n^2))_{\text{ord}}'\longrightarrow \cl{O}[R_{\overline{\frk{f}}\frk{p}^r,n}]
\]
attached to the Gr\"ossencharacter $\alpha\chi_K^{-1}\psi_0$ as in Lemma~\ref{lemma:hecketorcg} and let
\[
\phi_{n,\infty}:\bb{T}(1,N_\psi p^\infty(n^2))_{\text{ord}}'\longrightarrow \cl{O}[[R_{\overline{\frk{f}}\frk{p}^\infty,n}]] = \cl{O}[R_{\overline{\frk{f}},n}]\otimes_{\cl{O}} \cl{O}[[\Gamma_{\frk{p}}]].
\]
be the inverse limit $\varprojlim_r\phi_{n,r}$. The test vector $\breve{\hf}^\ast$ determines a degeneracy map
\[
H^1_{\et}(Y_\infty(n^2)_{\overline{\bb{Q}}},\bb{Z}_p(1))^{\text{ord}}\rightarrow H^1_{\et}(Y(1,N_\psi p^\infty (n^2))_{\overline{\bb{Q}}},\bb{Z}_p(1))^{\text{ord}}.
\]
Composing this degeneracy map with the natural quotient map we get a morphism
\[
\pi_{\hf^\ast}:e_{r_1}H^1_{\et}(Y_\infty(n^2)_{\overline{\bb{Q}}},\cl{O}(1))^{\text{ord}}_{\cl{I}_n}\rightarrow (\cl{O}[R_n]\otimes_{\cl{O}} \cl{O}[[\Gamma_\frk{p}]])\otimes_{\phi_{n,\infty}} H^1_{\et}(Y(1,N_\psi p^\infty (n^2))_{\overline{\bb{Q}}},\cl{O}(1))^{\text{ord}}.
\]

The test vectors $\breve{g}_\alpha$ and $\breve{h}_\alpha$ determine degeneracy maps
\begin{align*}
H^1_{\et}(\tilde{Y}(1)_{\overline{\Q}},\mathscr{L}_{r_2}(1)) \xrightarrow{\mu_p^\ast} H^1_{\et}(Y_1(Np)_{\overline{\Q}},\mathscr{L}_{r_2}(1)) &\rightarrow H^1_{\et}(Y_1(N_g)_{\overline{\Q}},\mathscr{L}_{r_2}(1)) \\
H^1_{\et}(\tilde{Y}(1)_{\overline{\Q}},\mathscr{L}_{r_3}(1)) \xrightarrow{\mu_p^\ast} H^1_{\et}(Y_1(Np)_{\overline{\Q}},\mathscr{L}_{r_3}(1)) &\rightarrow H^1_{\et}(Y_1(N_h)_{\overline{\Q}},\mathscr{L}_{r_3}(1)).
\end{align*}
Composing these maps with projection to the $g$-isotypic and $h$-isotypic quotient, respectively, we obtain
\begin{align*}
&\pi_g : e_{\text{ord}}'H^1(\tilde{\Gamma}(1),L_{r_2}(1))\otimes_{\bb{Z}_p}\cl{O}\longrightarrow T_g \\
&\pi_h : e_{\text{ord}}' H^1(\tilde{\Gamma}(1),L_{r_3}(1))\otimes_{\bb{Z}_p}\cl{O}\longrightarrow T_h.
\end{align*}
%Let $\pi_g:H^1(\tilde{\Gamma}(1),D_{r_2}'(1))^{\text{ord}}\rightarrow V_g(N)$ denote the composition of the isomorphism
%$$
%H^1(\tilde{\Gamma}(1),D_{r_2}'(1))^{\text{ord}}\cong H^1_{\et}(\tilde{Y}(1)_{\overline{\bb{Q}}},\mathscr{L}_r(1))^{\text{ord}}\otimes_{\bb{Z}_p} L
%$$
%defined as in \cite[eq. (65)]{BSV}
%with the pushforward along the degeneracy map $\pi_1: \tilde{Y}(1)\rightarrow Y(1,N(m))$ and the projection to the $g$-isotypic part for the action of the dual Hecke algebra. Define $\pi_h$ analogously.

For the ease of notation, we write
\[
H^1(\psi,\overline{\frk{f}},n)=(\cl{O}[R_{\overline{\frk{f}},n}]\otimes_{\cl{O}} \cl{O}[[\Gamma_\frk{p}]])\otimes_{\phi_{n,\infty}} H^1_{\et}(Y(1,N_\psi p^\infty(n^2))_{\overline{\bb{Q}}},\cl{O})^{\text{ord}}(\langle\epsilon_N^{-1};1\rangle')(\bm{\kappa}^{-1/2})
\]
and put  $H^1(\psi,n)=\cl{O}[R_n]\otimes_{\cl{O}[R_{\overline{\frk{f}},n}]}H^1(\psi,\overline{\frk{f}},n)$. Then we define the class
\begin{equation}\label{eq:lambda-3}
\kappa_{n,\hf gh}^{(3)}=(\pi_{\hf^\ast}\otimes\pi_g\otimes\pi_h)\circ(\mathtt{s}_{\hf\ast}\otimes\mathtt{s}_{r_2\ast}\otimes\mathtt{s}_{r_3\ast})\kappa_{n,\hf gh}^{(2)}\\
\end{equation}
lying in the group
\[
H^1\left(\bb{Q},H^1(\psi,n)\hat{\otimes}_\cl{O} (T_g \otimes_{\cl{O}} T_h)\otimes_{\bb{Z}_p}\bb{Q}_p(-1-(r_2+r_3)/2)\right).
\]

Let $\Gamma_{\text{ac}}$ be the Galois group of the anticyclotomic $\bb{Z}_p$-extension of $K$. We can identify this group with the anti-diagonal in $(1+p\bb{Z}_p)\times (1+p\bb{Z}_p)\cong \cl{O}_{K,\frk{p}}^{(1)}\times \cl{O}_{K,\overline{\frk{p}}}^{(1)}$ via the geometrically normalized Artin map. Let $\kappa_{\text{ac}}:\Gamma_{\text{ac}}\rightarrow \bb{Z}_p^\times$ be the character defined by mapping $((1+p)^{-1/2},(1+p)^{1/2})$ to $1+p$ and let $\bm{\kappa}_{\text{ac}}:\Gamma_{\text{ac}}\rightarrow \Lambda^\times$ be the character defined by mapping $((1+p)^{-1/2}, (1+p)^{1/2})$ to the group-like element $[1+p]$. We use the same notation for the corresponding characters of $G_\bb{Q}$. 
%We regard $H^1(\psi,n)$ as a $\Lambda_\cl{O}[R_{n}]$-module via the isomorphism $\Gamma_\frk{p}\cong \Gamma$ defined above.
There is a $G_\bb{Q}$-equivariant isomorphism of $\Lambda_\cl{O}[R_{n}]$-modules
\begin{equation}\label{eq:ind-infty}
H^1(\psi,n)\cong \Ind_{K[n]}^\bb{Q} \Lambda_{\cl{O}}(\psi_{\frk{P}}^{-1}\kappa_{\text{ac}}^{r_1/2}\bm{\kappa}_{\text{ac}}^{-1})(-r_1/2).
\end{equation}

Let
\[
T_{g,h}^\psi= T_g\otimes_{\cl{O}}T_h(\psi_\frk{P}^{-1})(-1-r),\quad V_{g,h}^\psi=T_{g,h}^\psi\otimes_{\bb{Z}_p}\bb{Q}_p.
\]
%and define $T_{g,h}^\psi(N)$ and $V_{g,h}^\psi(N)$ in the same way replacing $T_g$ and $T_h$ by $T_g(N)$ and $T_h(N)$, respectively.
In light of the isomorphism (\ref{eq:ind-infty}), using Shapiro's lemma
%and taking corestriction from $K(\overline{\frk{f}},n)$ to $K[n]$,
the classes $\kappa_{n,\hf gh}^{(3)}$ yield classes
\begin{equation}\label{eq:kapinfty}
\tilde{\kappa}_{\psi,g,h,n,\infty}\in H^1(K[n],\Lambda_{\cl{O}}(\bm{\kappa}_{\text{ac}}^{-1})\hat{\otimes}_{\cl{O}} T_{g,h}^\psi(\kappa_{\text{ac}}^{r_1/2}))\otimes_{\cl{O}}E
\end{equation}
%&\quad\cong \left(\varprojlim_r H^1(K[np^r],T_{g,h}^\psi(N))\right)\otimes_{\cl{O}}E
%=:H^1_{\text{Iw}}(K[np^\infty],V_{g,h}^\psi(N)).
%\end{align*}
%where $T_{g,h}^\psi(N)=T_g(N)\otimes_{\cl{O}} T_h(N)(\psi_{\frk{P}}^{-1}\kappa_{\text{ac}}^{r_1/2})(-1-r)$ and $V_{g,h}^\psi(N)=T_{g,h}^\psi(N)\otimes_{\cl{O}}E$.
for every squarefree integer $n$ coprime to $p$ and $N$.

\begin{proposition}\label{prop:lambdanormrelations3}
Let $n$ be as above and let $q$ be a rational prime coprime to $p$, $N$ and $n$. Then:
\begin{enumerate}
\item[{\rm (i)}] If $q$ splits in $K$ as $(q) = \frk{q}\overline{\frk{q}}$,
\begin{align*}
\cor_{K[nq]/K[n]}(\tilde{\kappa}_{\psi,g,h,nq,\infty})&=q^{l+m-4}\Bigg\{\chi_g(q)\chi_h(q)q\left(\frac{\kappa_{\ac}^{-(k-2)/2}\psi_{\frk{P}}(\Fr_\frk{q}^{-1})}{q^{k-1}}\Fr_\frk{q}^{-1}\right)^2 \\
&-\frac{a_q(g)a_q(h)}{q^{(l+m-4)/2}}\left(\frac{\kappa_{\ac}^{-(k-2)/2}\psi_{\frk{P}}(\Fr_\frk{q}^{-1})}{q^{k-1}}\Fr_\frk{q}^{-1}\right) \\
& +\frac{\chi_g(q)^{-1}a_q(g)^2}{q^{l-2}}+\frac{\chi_h(q)^{-1}a_q(h)^2}{q^{m-1}}-\frac{q^2+1}{q} \\
& -\frac{a_q(g)a_q(h)}{q^{(l+m-4)/2}}\left(\frac{\kappa_{\ac}^{-(k-2)/2}\psi_{\frk{P}}(\Fr_{\overline{\frk{q}}}^{-1})}{q^{k-1}}\Fr_{\overline{\frk{q}}}^{-1}\right) \\
&+\chi_g(q)\chi_h(q)q\left(\frac{\kappa_{\ac}^{-(k-2)/2}\psi_{\frk{P}}(\Fr_{\overline{\frk{q}}}^{-1})}{q^{k-1}}\Fr_{\overline{\frk{q}}}^{-1}\right)^2\Bigg\}\tilde{\kappa}_{\psi,g,h,n,\infty}.
\end{align*}
\item[{\rm (ii)}] If $q$ is inert in $K$,
\begin{align*}
\cor_{K[nq]/K[n]}(\tilde{\kappa}_{\psi,g,h,nq,\infty}) &= q^{l+m-4}\Bigg\{\frac{\chi_g(q)^{-1}a_q(g)^2}{q^{l-2}}+\frac{\chi_h(q)^{-1}a_q(h)^2}{q^{m-1}}-\frac{(q+1)^2}{q}\Bigg\}\tilde{\kappa}_{\psi,g,h,n,\infty}.
\end{align*}
\end{enumerate}

\end{proposition}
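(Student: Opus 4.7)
My plan is to follow the template of the proof of Proposition~\ref{prop:normrelations3}, but with the inputs from $\S\ref{subsec:llz}$ replaced by their $\Lambda$-adic analogues developed in Section~\ref{sec:wnr}. By the non-Eisenstein and $p$-distinguished hypothesis (\ref{eq:513}), the Hecke algebra homomorphisms $\phi_{n,r}$ attached to the Gr\"ossencharacter $\alpha\chi_K^{-1}\psi_0$ (as in Lemma~\ref{lemma:hecketorcg}) are compatible as $r$ varies and assemble into the map $\phi_{n,\infty}$. Passing to the inverse limit along the tower $\{Y(1,N_\psi p^r(n^2))\}_r$ then produces a $\Lambda$-adic version of Corollary~\ref{cor:LLZ}: the underlying isomorphism is exactly (\ref{eq:ind-infty}), and the construction delivers at the same time explicit $\Lambda$-adic norm maps $\cl{N}_{\frk{f},n}^{\frk{f},nq}$, given by the same formulas as in $\S\ref{subsec:llz}$ but with each factor $\psi(\frk{q})/q$ replaced by its avatar in $\cl{O}[[R_{\overline{\frk{f}}\frk{p}^\infty,n}]]$, which under (\ref{eq:ind-infty}) corresponds to $\kappa_\ac^{-(k-2)/2}\psi_\frk{P}(\Fr_\frk{q}^{-1})q^{-(k-1)}\Fr_\frk{q}^{-1}$.

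Next, Shapiro's lemma applied to (\ref{eq:kapinfty}) translates $\cor_{K[nq]/K[n]}$ into the map $\cl{N}_{\frk{f},n}^{\frk{f},nq}\otimes\id\otimes\id$ acting on the cohomology group housing the class $\kappa_{nq,\hf gh}^{(3)}$. In the split case $(q)=\frk{q}\overline{\frk{q}}$ this norm map is a linear combination of $\pi_{11\ast}$, $\pi_{21\ast}$ and $\pi_{22\ast}$; in the inert case only $\pi_{11\ast}$ and $\pi_{22\ast}$ appear. Plugging in the $\Lambda$-adic tame norm relations of Lemma~\ref{lemma:lambdanormrelations2} for each $(\pi_{ij\ast}\otimes\id\otimes\id)\kappa_{nq,\hf gh}^{(2)}$ and then applying the $g$- and $h$-isotypic projections $\pi_g$, $\pi_h$ of (\ref{eq:lambda-3}), under which $T_q'\mapsto a_q(g)$, $T_q\mapsto\chi_g(q)^{-1}a_q(g)q^{l-2}$ and $[q]_N\mapsto\chi_g(q)$ on $V_g(N)$ (with the analogous formulas on $V_h(N)$), produces six terms in the split case and two in the inert case.

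The remaining step is a routine simplification: after factoring out $q^{l+m-4}$ and using the identities $\chi_g(q)\chi_h(q)\chi(q)=1$ together with the $\Lambda$-adic incarnation of $\psi(\frk{q})\psi(\overline{\frk{q}})=\chi(q) q^{k-1}$, the six-term expression collapses into the right-hand side of (i). The inert case (ii) is shorter and of the same type. Formally this is the same expansion as in the proof of Proposition~\ref{prop:normrelations3}, the only difference being that the weights of $g$, $h$ and $\psi$ now enter essentially through the $\kappa_\hf^{1/2}$-shifts introduced in Lemma~\ref{lemma:lambdanormrelations1}.

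The main obstacle is the bookkeeping underlying the previous step. One must verify that, under the identification (\ref{eq:ind-infty}), the group-like element $[\frk{q}]$ acts on $\Ind_{K[n]}^\bb{Q}\Lambda_{\cl{O}}(\psi_\frk{P}^{-1}\kappa_\ac^{r_1/2}\bm{\kappa}_\ac^{-1})(-r_1/2)$ as precisely the expression $\kappa_\ac^{-(k-2)/2}\psi_\frk{P}(\Fr_\frk{q}^{-1})q^{-(k-1)}\Fr_\frk{q}^{-1}$ appearing in the statement. This uses the factorization $\psi=\alpha\psi_0^{k-1}$ from $\S\ref{subsec:Iw}$, the identification $\Gamma_\frk{p}\cong\Gamma$ via the geometric local Artin map, and the half-twist $\kappa_\ac^{r_1/2}$ built into (\ref{eq:ind-infty}). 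Once this translation has been made, the rest of the computation is a mechanical expansion modeled on the proof of Proposition~\ref{prop:normrelations3}.
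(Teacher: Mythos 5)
Your overall strategy matches the paper's: both proofs translate $\cor_{K[nq]/K[n]}$ through Shapiro's lemma and the isomorphism~(\ref{eq:ind-infty}) into a norm map that is a linear combination of $\pi_{11\ast}$, $\pi_{21\ast}$, $\pi_{22\ast}$, and both then expand this using the $\Lambda$-adic norm relations of Lemma~\ref{lemma:lambdanormrelations1}/\ref{lemma:lambdanormrelations2}. However, your proposal overlooks the one piece of non-trivial structure that the paper singles out explicitly: \emph{the map $\mathtt{s}_{\hf,n\ast}$ interchanges the degeneracy maps $\pi_1$ and $\pi_2$}. Since $\kappa_{n,\hf gh}^{(3)}$ is built from $\kappa_{n,\hf gh}^{(2)}$ by applying $\mathtt{s}_{\hf,n\ast}$ on the first tensor factor (followed by $\pi_{\hf^\ast}$), a map $\pi_{ij\ast}\otimes\id\otimes\id$ applied after $\mathtt{s}_{\hf,n\ast}$ does \emph{not} simply correspond to the same $\pi_{ij\ast}$ on $\kappa_{nq,\hf gh}^{(2)}$; the indices of the degeneracy maps on the first factor get swapped. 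If you ``plug in Lemma~\ref{lemma:lambdanormrelations2}'' without accounting for this, the resulting formula would be wrong in the first factor. This is precisely why the paper's proof opens with ``we just remark that the maps $\mathtt{s}_{\hf,n\ast}$ interchange the degeneracy maps $\pi_1$ and $\pi_2$.''

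A second, smaller concern is your description of the norm map. You say that $\cl{N}^{\frk{f},nq}_{\frk{f},n}$ is ``given by the same formulas as in \S\ref{subsec:llz} but with each factor $\psi(\frk{q})/q$ replaced by its avatar \ldots which under~(\ref{eq:ind-infty}) corresponds to $\kappa_\ac^{-(k-2)/2}\psi_\frk{P}(\Fr_\frk{q}^{-1})q^{-(k-1)}\Fr_\frk{q}^{-1}$.'' This conflates two different objects. The expression $\kappa_\ac^{-(k-2)/2}\psi_\frk{P}(\Fr_\frk{q}^{-1})q^{-(k-1)}\Fr_\frk{q}^{-1}$ is the factor that appears in the final statement after the full simplification; it is \emph{not} the coefficient of $\pi_{21\ast}$ in the norm map. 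The norm map actually has coefficient $\chi^{-1}(q)\,\omega^{(k-2)/2}(q)\,\kappa_{\ac}^{-(k-2)/2}\psi_{\mathfrak{P}}({\rm Fr}_{\frk{q}}^{-1})[\frk{q}]/q^{k/2}$ in front of $\pi_{21\ast}$, and (more tellingly) coefficient $\chi^{-1}(q)\omega^{k-2}(q)/q$ in front of $\pi_{22\ast}$ --- as opposed to $\chi(q)/q$ in~\S\ref{subsec:llz}. The inversion of $\chi$ and the Teichm\"uller factors arise because the Hecke algebra homomorphisms $\phi_{n,r}$ in~\S\ref{subsec:Iw} are attached to the Gr\"ossencharacter $\alpha\chi_K^{-1}\psi_0$ rather than to $\psi$ itself. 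Your outline acknowledges that ``the main obstacle is the bookkeeping,'' but it does not identify these two points --- the $\pi_1\leftrightarrow\pi_2$ swap, and the change of the associated Dirichlet character from $\chi$ to $\chi^{-1}\omega^{k-2}$ --- which are exactly what the paper's proof records and what the mechanical expansion hinges on. Once both are in place, the remaining computation does reduce to the expansion from the proof of Lemma~\ref{lemma:normrelations2}, as you say.
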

\begin{proof}
The proof of this proposition is similar to the proof of Proposition~\ref{prop:normrelations3}. We just remark that the maps $\mathtt{s}_{\hf,n\ast}$ interchange the degeneracy maps $\pi_1$ and $\pi_2$, and under the isomorphism
\begin{align*}
H^1(K[n],\Lambda_{\cl{O}}(\bm{\kappa}_{\text{ac}}^{-1})&\hat{\otimes}_{\cl{O}} T_{g,h}^\psi(\kappa_{\ac}^{(k-2)/2}))\otimes_{\cl{O}}E\\
&\cong H^1(\bb{Q},H^1(\psi,n)\hat{\otimes}_\cl{O} (T_g \otimes_{\cl{O}} T_h)\otimes_{\bb{Z}_p}\bb{Q}_p(-1-(r_2+r_3)/2))
\end{align*}
arising from (\ref{eq:ind-infty}), the corestriction ${\rm cor}_{K[nq]/K[n]}$ corresponds, in the case where $(q)=\frk{q}\overline{\frk{q}}$ splits in $K$, to the map
%\begin{center}
%\begin{tikzpicture}
%\matrix(m) [matrix of math nodes, row sep=2.6em, column sep=2.8em, text height=1.5ex, text depth=0.25ex]
%{H^1(\bb{Q},H^1(\psi,nq)\hat{\otimes}_\cl{O} (\tilde{V}_g \otimes_E \tilde{V}_h)(-1-(r_2+r_3)/2))
% \\
%H^1(\bb{Q},H^1(\psi,n)\hat{\otimes}_\cl{O} (\tilde{V}_g \otimes_E \tilde{V}_h)(-1-(r_2+r_3)/2)), \\};
%\path[->,font=\scriptsize,>=angle 90]
%(m-1-1) edge node [auto] {$\cl{N}_{\frk{f},n}^{\frk{f},nq}\otimes%\id\otimes\id$} (m-2-1);
%\end{tikzpicture}
%\end{center}
%where, if $q$ splits in $K$, the map $\cl{N}_{\overline{\frk{f}},n}^{\overline{\frk{f}},nq}$ is given by
\begin{align*}
\cl{N}_{n}^{nq} &=\pi_{11\ast}-\chi^{-1}(q)\omega^{(k-2)/2}(q)\left(\frac{\kappa_{\ac}^{-(k-2)/2}\psi_{\mathfrak{P}}({\rm Fr}_{\frk{q}}^{-1})[\frk{q}]}{q^{k/2}}+\frac{\kappa_{\ac}^{-(k-2)/2}\psi_{\mathfrak{P}}({\rm Fr}_{\overline{\frk{q}}}^{-1})[\overline{\frk{q}}]}{q^{k/2}}\right)\pi_{21\ast} \\
&+\frac{\chi^{-1}(q)\omega^{k-2}(q)}{q}\pi_{22\ast},
\end{align*}
and similarly in the case where $q$ is inert in $K$. Since the result can be deduced from Lemma~\ref{lemma:lambdanormrelations1} by virtually the same calculation as in the proof of Lemma~\ref{lemma:normrelations2}, we omit the details.
\end{proof}

\begin{definition}
For any $E$-valued $G_K$-representation $V$, put
\[
H^1_{\text{Iw}}(K[np^\infty],T):=\varprojlim_rH^1(K[np^r],T),\quad
H^1_{\text{Iw}}(K[np^\infty],V):=
H^1_{\text{Iw}}(K[np^\infty],T)\otimes_{\cl{O}}E,
\]
where $T\subset V$ is a Galois stable $\cl{O}$-lattice.
\end{definition}

By another application of Shapiro's lemma, the classes $\kapinftyn$ in (\ref{eq:kapinfty}) naturally live in $H^1_{\text{Iw}}(K[np^\infty],V_{g,h}^\psi(\kappa_{\text{ac}}^{(k-2)/2}))$.
%\[
%H^1_{\text{Iw}}(K[np^\infty],V_{g,h}^\psi(\kappa_{\text{ac}}^{(k-2)/2})(N)):=\left(\varprojlim_r H^1(K[np^r],T_{g,h}^\psi(\kappa_{\text{ac}}^{(k-2)/2})(N))\right)\otimes_{\cl{O}}E
%\]
We thus arrive at the following theorem, which is the main result of this section.
%, which shows in particular that the classes $\kapn$ of Theorem~\ref{principal:tame} extend along the anticyclotomic $\Z_p$-extension.

\begin{theorem}\label{principal:wild}
Suppose that:
\begin{itemize}
\item $l\geq m\geq 2$ have the same parity and $k\geq 2$ is even,
\item $p$ splits in $K$,
\item $p$ does not divide the class number of $K$.
%\item $\psi_\frk{P}$ is non-Eisenstein and $p$-distinguished.
\end{itemize}
Let $\mathcal{S}$ be the set of squarefree products of primes $q$ which split in $K$ and are coprime to $p$ and $N$. Assume that $H^1(K[np^s],T_{g,h}^{\psi})$ is torsion-free for every $n\in\mathcal{S}$ and for every $s\geq 0$. There exists a collection of classes
\[
\left\lbrace\kapinftyn\in H^1_{\rm Iw}(K[np^\infty],T_{g,h}^{\psi})\;\colon\; n\in\mathcal{S}\right\rbrace
\]
%where $T_{g,h}^\psi(N)$ is a fixed $\cO$-lattice in $V_{g,h}^\psi(N)$, %independent of $n$,
such that whenever $n, nq\in\mathcal{S}$ with $q$ a prime, we have
\begin{equation}\label{eq:norm-split2}
{\rm cor}_{K[nq]/K[n]}(\kapinftynq)=P_{\frk{q}}({\rm Fr}_{\frk{q}}^{-1})\,\kapinftyn,\nonumber
\end{equation}
where $\mathfrak{q}$ is any of the primes of $K$ above $q$, and $P_{\frk{q}}(X)=\det(1-{\rm Fr}_{\frk{q}}^{-1}X\vert (V_{g,h}^\psi)^\vee(1))$.
\end{theorem}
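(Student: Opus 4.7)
The strategy is to mirror the proof of Theorem~\ref{principal:tame} one level higher up, working with Proposition~\ref{prop:lambdanormrelations3}(i) in place of Proposition~\ref{prop:normrelations3}(i); since $\mathcal{S}$ consists only of squarefree products of split primes, part~(ii) is not required. First, I would verify integrality: by the same reasoning as in the opening paragraph of the proof of Theorem~\ref{principal:tame}, and using the $\Lambda$-adic analogue of \cite[Rmk.~3.3]{BSV}, the denominators of the $\Lambda$-adic classes $\tilde{\kappa}_{\psi,g,h,n,\infty}$ of (\ref{eq:kapinfty}) divide $(l-2)!(m-2)!$. After multiplying by a suitable power of $p$, combined with the torsion-freeness hypothesis on $H^1(K[np^s], T_{g,h}^\psi)$ for every $s\geq 0$ (which ensures that the inverse limit over $s$ is compatible with the integral lattice), I obtain well-defined classes $\kapinftyn \in H^1_{\rm Iw}(K[np^\infty], T_{g,h}^\psi(N))$; here the anticyclotomic twist $\kappa_{\ac}^{(k-2)/2}$ appearing in the target of (\ref{eq:kapinfty}) is absorbed via Shapiro's lemma and the identification (\ref{eq:ind-infty}) when passing from $\Lambda$-adic classes to Iwasawa cohomology.

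Next I would compute the Euler polynomial $P_\frk{q}(X) = \det(1 - \Fr_\frk{q}^{-1} X \mid (V_{g,h}^\psi)^\vee(1))$ explicitly, for a split prime $(q) = \frk{q}\overline{\frk{q}}$ coprime to $pN$: it is a degree-four polynomial in $X$ whose coefficients are symmetric functions of the Satake parameters of $g$ and $h$ at $q$ together with $\psi(\frk{q})$, with weight-shifts by $q^{(l+m-2)/2}$ analogous to those appearing in the proof of Theorem~\ref{principal:tame}. The central step is then to check that, as an endomorphism of $H^1_{\rm Iw}(K[np^\infty], T_{g,h}^\psi(N))$, the scalar $P_\frk{q}(\Fr_\frk{q}^{-1})$ agrees modulo $q-1$ with the coefficient of $\tilde{\kappa}_{\psi,g,h,n,\infty}$ on the right-hand side of Proposition~\ref{prop:lambdanormrelations3}(i). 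Once the right normalizations are in place, using that $\psi(\frk{q})\psi(\overline{\frk{q}}) = \chi(q) q^{k-1}$ and $\chi_g(q)\chi_h(q)\chi(q) = 1$, this is a purely algebraic identity between two degree-four polynomials in $\Fr_\frk{q}^{-1}$ and $\Fr_{\overline{\frk{q}}}^{-1}$, performed exactly as in the middle of the proof of Theorem~\ref{principal:tame}.

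Having established this congruence, I would conclude by invoking \cite[Lem.~9.6.1]{Rub}, just as in Theorem~\ref{principal:tame}, to upgrade the $\Lambda$-adic norm relation of Proposition~\ref{prop:lambdanormrelations3}(i) into the clean identity
\[
\cor_{K[nq]/K[n]}(\kapinftynq) = P_\frk{q}(\Fr_\frk{q}^{-1})\,\kapinftyn.
\]
The main obstacle I anticipate is the bookkeeping in the second step: one must make sure that the anticyclotomic character $\kappa_{\ac}^{(k-2)/2}$ appearing on the module side gets matched correctly with the factor $\kappa_{\ac}^{-(k-2)/2}\,\psi_\frk{P}(\Fr_\frk{q}^{-1})/q^{k-1}$ on the coefficient side of Proposition~\ref{prop:lambdanormrelations3}(i), so that after Shapiro's lemma the Frobenius eigenvalues on the untwisted $T_{g,h}^\psi(N)$ are precisely the eigenvalues controlling $P_\frk{q}$. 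Once this alignment is fixed, the remainder of the argument carries over formally from the tame case, and the passage to the anticyclotomic Iwasawa tower is essentially a mechanical lift of Theorem~\ref{principal:tame}.
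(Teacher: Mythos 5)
Your overall strategy is in the right ballpark — run the proof of Theorem~\ref{principal:tame} with Proposition~\ref{prop:lambdanormrelations3}(i) in place of Proposition~\ref{prop:normrelations3}(i), check a congruence mod~$q-1$, and invoke \cite[Lem.~9.6.1]{Rub} — and that first step is indeed what the paper does. However, there is a genuine gap in how you handle the anticyclotomic twist $\kappa_{\ac}^{(k-2)/2}$, and fixing it requires an ingredient you have not invoked.

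You assert that "the anticyclotomic twist $\kappa_{\ac}^{(k-2)/2}$ appearing in the target of (\ref{eq:kapinfty}) is absorbed via Shapiro's lemma and the identification (\ref{eq:ind-infty})." This is not correct: Shapiro's lemma converts the twist by the $\Lambda_{\cl{O}}$-valued character $\bm{\kappa}_{\ac}^{-1}$ into the Iwasawa tower $K[np^\infty]/K[n]$, but the scalar-valued twist $\kappa_{\ac}^{(k-2)/2}$ remains as a genuine twist of the Galois module. Indeed the paper records this explicitly: after Shapiro's lemma the classes $\kapinftyn$ live in $H^1_{\rm Iw}(K[np^\infty],V_{g,h}^\psi(\kappa_{\ac}^{(k-2)/2})(N))$, still with the twist attached. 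Accordingly, the scalar $\kappa_{\ac}^{-(k-2)/2}\psi_{\frk{P}}(\Fr_\frk{q}^{-1})/q^{k-1}$ appearing in Proposition~\ref{prop:lambdanormrelations3}(i) is the Frobenius eigenvalue on the \emph{twisted} representation, and running the tame-case congruence argument directly gives the norm relation with Euler factor $\det(1-\Fr_\frk{q}^{-1}X\mid(V_{g,h}^\psi(\kappa_{\ac}^{(k-2)/2}))^\vee(1))$, not $P_\frk{q}(\Fr_\frk{q}^{-1})=\det(1-\Fr_\frk{q}^{-1}X\mid(V_{g,h}^\psi)^\vee(1))$ as your argument claims. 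The two differ by genuine (non-unit) powers of $\kappa_{\ac}(\Fr_\frk{q}^{-1})\in 1+p\Z_p$, which need not vanish mod $q-1$.

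To close the gap you need one further step, which is exactly the second sentence of the paper's proof: after obtaining the Euler system for the twisted representation $V_{g,h}^\psi(\kappa_{\ac}^{(k-2)/2})$, apply the Euler-system twisting theorem \cite[Thm.~6.3.5]{Rub} with twist $\kappa_{\ac}^{-(k-2)/2}$ to obtain an Euler system for $V_{g,h}^\psi$ whose Euler factors are exactly $P_\frk{q}(\Fr_\frk{q}^{-1})$. This twisting result, not Shapiro's lemma, is what matches "the anticyclotomic character on the module side" with "the factor on the coefficient side," to use your phrasing; you correctly flag this matching as the delicate point, but the mechanism for it is Rubin's twisting lemma rather than bookkeeping in the congruence.
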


\begin{proof}
The same argument as in the proof of Theorem~\ref{principal:tame} (but using Proposition~\ref{prop:lambdanormrelations3}) yields a system of Iwasawa cohomology classes with the stated norm-compatibilities for the representation $V_{g,h}^\psi(\kappa_{\text{ac}}^{(k-2)/2})$. By the twisting result of \cite[Thm.~6.3.5]{Rub}, the theorem follows.
\end{proof}

We conclude this section by proving that
%a suitable multiple of
the classes $\kapinftyn$ land in the \emph{balanced Selmer group}
\[
\Sel_{\bal}(K[np^\infty],T_{g,h}^\psi):=\varprojlim_r\Sel_{\bal}(K[np^r],T_{g,h}^\psi);
\]
in the terminology introduced in $\S\ref{sec:ES}$ below, 
%and as already noted in the discussion surrounding (\ref{eq:bal-Sh}), 
this is the same as the Greenberg Selmer group ${\rm Sel}_{\rm Gr}(K[np^\infty],T_{g,h}^\psi)$ associated to the $G_{K_v}$-invariant subspaces $\mathcal{F}_v^+(V_{g,h}^\psi)\subset V_{g,h}^\psi$ in (\ref{eq:bal-Sh}) 
% local conditions defined by 
%\begin{equation}\label{eq:Gr-p}
%\mathcal{F}_{\frk{p}}^+(T_{g,h}^\psi)=\mathscr{F}^1T_{g,h}^\psi,
%\quad \mathcal{F}^+_{\overline{\frk{p}}}(T_{g,h}^\psi)=\mathscr{F}%^2T_{g,h}^\psi
%\cl{F}_\frk{p}^+(T_{g,h}^\psi)=(T_g^+\otimes T_h+T_g\otimes T_h^+)%(\psi_{\frk{P}}^{-1})(-1-r),\quad
%\cl{F}_{\overline{\frk{p}}}^+(T_{g,h}^\psi)=(T_g^+\otimes T_h^+)%(\psi_{\frk{P}}^{-1})(-1-r),
%\end{equation}
at the primes $v\mid p$.

\begin{proposition}\label{prop:in-Selinfty}
%Let $N_{\psi gh}=N_{K/\Q}(\frk{f}) N_g N_h$.
For all $n\in\mathcal{S}$, we have $\kapinftyn\in\Sel_{\bal}(K[np^\infty],T_{g,h}^\psi)$.
%\[
%\mathfrak{c}^{\rm tam}\cdot\kapinftyn\in\Sel_{\bal}(K[np^\infty],T_{g,h}^\psi(N))
%\]
%where $\mathfrak{c}^{\rm tam}$ is the product of the Tamagawa factors $\mathfrak{c}_v$ of $A_{g,h}^\psi$ over the primes $v\mid N_{\psi gh}^-$.
\end{proposition}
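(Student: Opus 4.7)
The strategy is to verify the balanced Selmer condition at each finite level of the tower and then pass to the inverse limit, using the definition
\[
\Sel_{\bal}(K[np^\infty],T_{g,h}^\psi(N))=\varprojlim_r\Sel_{\bal}(K[np^r],T_{g,h}^\psi(N)).
\]
Fix $n\in\mathcal{N}$, $r\geq 0$, and let $\kapn^{(r)}\in H^1(K[np^r],T_{g,h}^\psi(N))$ denote the image of $\kapinftyn$. It suffices to show that $\kapn^{(r)}$ satisfies the balanced local conditions for every $r$.

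For a finite prime $v$ of $K[np^r]$ with $v\nmid p$, the argument of Proposition~\ref{prop:in-Sel} applies verbatim: purity of $V_{g,h}^\psi(N)$ in weight $-1$ (by the results of Carayol \cite{carayol}), combined with local Tate duality and the local Euler characteristic formula, forces $H^1(K[np^r]_v,V_{g,h}^\psi(N))=0$, so the unramified Greenberg condition at $v$ holds automatically.

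For primes $v\mid p$, I would show that $\res_v(\kapn^{(r)})$ lies in the image of the natural map $H^1(K[np^r]_v,\cl{F}_v^+(T_{g,h}^\psi(N)))\to H^1(K[np^r]_v,T_{g,h}^\psi(N))$. Following the template of Proposition~\ref{prop:in-Sel}, the crucial input is that the class $\kappa_{n,\hf gh}^{(3)}$ in~(\ref{eq:lambda-3}) is constructed using the slope $\leq 0$ (anti-ordinary) projector $p_0$ in the $\hf$-variable, together with the $g$- and $h$-isotypic projections $\pi_g,\pi_h$, both of which carve out the ordinary direction in the local Galois representation at $p$. Concretely, one would argue level-by-level: at each classical arithmetic specialization of $\hf$ in the balanced weight range, the resulting class is (a scalar multiple of) a diagonal cycle class, which is geometric at $p$ by the results of \cite{NN16}, and hence lies in the Bloch--Kato Selmer group; by Lemma~\ref{lem:BK}, this coincides with the balanced local condition defined by $\cl{F}_v^+$. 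A $p$-adic density argument for such arithmetic points in $\Lambda_{\hf}\cong\Lambda_{\ac}$, combined with the $p$-adic interpolation of the subrepresentations $\cl{F}_v^+(T_{g,h}^\psi)$ through the CM Hida family $\hf=\hf_\psi$ (ultimately coming from the ordinary filtrations on $V_\hf$, $V_g$, $V_h$), then promotes the finite-level verification to the Iwasawa-theoretic statement.

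The main obstacle will be the final step: making the interpolation argument at primes above $p$ rigorous, especially matching the ``ordinary direction'' carved out by $p_0$, $\pi_g$, $\pi_h$ on the Hida-family side with the subrepresentation $\cl{F}_v^+$ of $T_{g,h}^\psi$ in the Shapiro isomorphism relating $H^1(\Q,V_{fgh})$ and $H^1(K,\Vrep)$, and checking compatibility with both the anticyclotomic specialization maps and the corestrictions along the tower.
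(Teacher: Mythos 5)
Your treatment of the primes $v\nmid p$ is exactly the paper's argument: purity in weight $-1$, local Tate duality, and the local Euler characteristic formula force $H^1(K[np^r]_v,V_{g,h}^\psi(N))=0$, so the unramified condition holds for free (the only subtlety, which the paper addresses, is that this gives $H^1(K[np^r]_v,T_{g,h}^\psi(N))=H^1(K[np^r]_v,T_{g,h}^\psi(N))_{\tors}=H^1_{\Gr}$, and then one must observe that this is compatible with passing to the inverse limit).

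At $v\mid p$, however, you take a genuinely different and considerably longer route than the paper. The paper disposes of the primes above $p$ in one sentence by citing \cite[Cor.~8.2]{BSV}, which already asserts that the $\Lambda$-adic diagonal class satisfies the balanced local condition; the paper's proof of Proposition~\ref{prop:in-Selinfty} is short precisely because it treats this as a black box. What you propose instead is to \emph{re-derive} that fact: specialize $\kapinftyn$ at balanced classical weights, identify the specializations (up to scalars) with finite-level diagonal cycle classes, invoke the geometricity result of \cite{NN16} at each such weight, compare with Lemma~\ref{lem:BK}, and then promote to the $\Lambda$-adic statement by density of arithmetic points. In outline this is close to how \cite{BSV} actually establish their Cor.~8.2, so the approach is mathematically sound — but the final density step you flag as the obstacle really is where all the work lies: to deduce from vanishing at a Zariski-dense set of arithmetic primes that the image of ${\rm res}_p(\kapinftyn)$ in $H^1(K[np^\infty]_v,(\mathbb{V}^\dag/\mathscr{F}^2\mathbb{V}^\dag))$ is zero, one needs a control theorem for the local $H^1$ along with a $\Lambda$-torsion-freeness (or "no pseudo-null submodule") input, none of which you supply. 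You also need to verify that the specialization of the $\Lambda$-adic class really does agree, up to the expected non-vanishing Euler factor, with the finite-level class of Section~\ref{record} after accounting for $p$-stabilization — this is the content of the explicit interpolation formulas in \cite[\S 8]{BSV}. In short: your $v\mid p$ argument is a plausible plan to re-prove \cite[Cor.~8.2]{BSV}, but it is far more work than the paper's direct citation and leaves the genuinely hard part (the interpolation/control argument) unjustified. Citing the $\Lambda$-adic balanced property from BSV, as the paper does, is both shorter and logically cleaner.
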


\begin{proof}
Let $v\nmid p$ be a finite prime of $K[np^\infty]$, and for every $r\geq 0$ denote also by $v$ the prime of $K[np^r]$ below $v$. As in the proof of Proposition~\ref{prop:in-Sel}, we have
\[
H^1(K[np^r]_v,V_{g,h}^\psi)=H^1_{\rm Gr}(K[np^r]_v,V_{g,h}^\psi)=0,
\]
and hence
\[
H^1(K[np^r]_v,T_{g,h}^\psi)=H^1(K[np^r]_v,T_{g,h}^\psi)_{\rm tors}=H^1_{\rm Gr}(K[np^r]_v,T_{g,h}^\psi),
\]
where the first equality follows from the local Euler characteristic formula. Hence the inclusion ${\rm res}_v(\kapinftyn)\in\varprojlim_rH^1_{\rm Gr}(K[np^r]_v,\Trep)$  follows. Since by \cite[Cor.~8.2]{BSV} it follows that the classes $\kapinftyn$ satisfy the balanced local condition at the primes above $p$, this concludes the proof.
\end{proof}

\part{Arithmetic applications}

\section{Iwasawa main conjectures}\label{2imc}

In this section we formulate Iwasawa main conjectures for triple products of modular forms. We give two formulations: one in terms of the triple product $p$-adic $L$-function (Conjecture~\ref{with}) and another in terms of diagonal cycle classes (Conjecture~\ref{without}). In Theorem~\ref{thm:equiv} we establish the equivalence of the two formulations.

\subsection{Triple product $p$-adic $L$-function}\label{subsec:Lp}

%Fix an algebraic closure $\bar{\mathbb Q}$ of $\mathbb Q$. For a number field $K$, let $G_K := \Gal(\bar{\mathbb Q}/K)$ denote its absolute Galois group. Fix also a prime $p$ and an embedding $\bar{\mathbb Q} \hookrightarrow \bar{\mathbb Q}_p$.

Fix a triple $(\hf,g,h)$ consisting of a primitive Hida family $\hf$ of tame level $N_{\hf}$ and character $\chi_{\hf}$ and two $p$-ordinary newforms $g, h$ of weights $l,m\geq 2$, levels $N_g,N_h$ prime-to-$p$, and nebentypus $\chi_g,\chi_h$. Assume that $\hf$ has coefficients in a ring $\Lambda_{\hf}$ as in \S\ref{subsubsec:Hida}. Assume that $\chi_{\hf}\chi_g\chi_h=\omega^{r_1}$ for some even integer $r_1$ and put
%\begin{equation}\label{self-dual}
%\chi_{\hf}\chi_g\chi_h=1, \tag{SD}
%\omega^{2a}\quad\textrm{for some $a\in\mathbb Z$.}
%\end{equation}
%and let
\[
N={\rm lcm}(N_{\hf},N_g,N_h).
\]

Let $\hg$ and $\hh$ be primitive Hida families with coefficients in $\Lambda_\hg$ and $\Lambda_\hh$ passing through $g$ and $h$, respectively. More precisely, there exist arithmetic points $y_0\in\cW_{\Lambda_{\hg}}(\overline{\bb{Q}}_p)$ and $z_0\in\cW_{\Lambda_{\hh}}(\overline{\bb{Q}}_p)$ such that $g_{y_0}$ and $h_{z_0}$ are the ordinary $p$-stabilizations of $g$ and $h$, respectively. The rings $\Lambda_\hg$ and $\Lambda_\hh$ need not be regular. However, for our purposes, we can consider the $\Lambda$-adic families, denoted again $\hg$ and $\hh$, that result from embedding $\Lambda_\hg$ and $\Lambda_\hh$ in the rings of functions of suitable wide open connected subsets $U_\hg$ and $U_\hh$ of $\cW(\overline{\bb{Q}}_p)=\mathrm{Spf}(\Lambda)(\overline{\bb{Q}}_p)$ defined over some finite extension $E$ of $\bb{Q}_p$ and containing the points $y_0$ and $z_0$, respectively. From now on, it is these rings of functions that we will denote by $\Lambda_\hg$ and $\Lambda_\hh$. These rings are now non-canonically isomorphic to $\cO[[T]]$, where $\cl{O}$ is the ring of integers of $E$; in particular, they are regular. Let $\bm{l}-l$ and $\bm{m}-m$ be generators in $\Lambda_\hg$ and $\Lambda_\hh$ of the prime ideals corresponding to the points $y_0$ and $z_0$, respectively.

We can and will assume that $\Lambda_\hf$ is a finite flat extension of $\Lambda_\cl{O}$ and we will only consider arithmetic points in $\cW_{\Lambda_\hf}(\overline{\bb{Q}}_p)$ lying in $\Hom_{\text{cont},\cl{O}}(\Lambda_\hf,\overline{\bb{Q}}_p)$

Recall that in $\S\ref{subsubsec:Lambda-pairing}$ we defined a character $\kappa_{r_1}:\bb{Z}_p^\times \rightarrow \Lambda^\times$  given by $u\mapsto \omega^{r_1}(u)[\langle u\rangle]$ and in $\S\ref{subsec:Iw}$ we fixed a square root $\kappa_{r_1}^{1/2}$ of this character given by $u\mapsto \omega^{r_1/2}(u)[\langle u\rangle^{1/2}]$. We let $\kappa_\hf$ and $\kappa_\hf^{1/2}$ be the composition of $\kappa_{r_1}$ and $\kappa_{r_1}^{1/2}$, respectively, with the embedding $\Lambda^\times\xhookrightarrow{} \Lambda_\hf^\times$. We also define a character $\kappa_{\hg \hh}:\bb{Z}_p^\times\rightarrow (\Lambda_{\hg}\hat{\otimes}_{\cl{O}} \Lambda_{\hh})^{\times}$ by
\[
\kappa_{\hg \hh}(u)=\omega(u)^{l+m-4}\langle u\rangle^{\bm{l}+{\mathbf m}-4}
\]
and choose a square-root of this character defined by
$\kappa_{\hg \hh}^{1/2}(u)=\omega(u)^{(l+m-4)/2}\langle u\rangle^{(\bm{l}+{\mathbf m}-4)/2}$. Let $\Lambda_{\hf \hg \hh}=\Lambda_\hf\hat{\otimes}_{\cl{O}}\Lambda_\hg \hat{\otimes}_{\cl{O}} \Lambda_\hh$ and consider the $\Lambda_{\hf \hg \hh}[G_\bb{Q}]$-module
\begin{equation}\label{eq:big-SD}
\mathbb V_{\hf\hg\hh}^\dag:=\mathbb V_{\hf}\hat\otimes_{\mathcal{O}}\mathbb V_{\hg}\hat\otimes_{\mathcal{O}}\mathbb V_{\hh}(\Xi_{\hf\hg\hh}),\quad\;\textrm{where\; $\Xi_{\hf\hg\hh}=\epsilon_{\rm cyc}^{-1} \kappa_{\hf}^{-1/2} \kappa_{\hg \hh}^{-1/2}$}
\end{equation}
and $\mathbb{V}_{\hf}$, $\mathbb{V}_{\hg}$ and $\mathbb{V}_{\hh}$ are the big Galois representations attached to $\hf$, $\hg$ and $\hh$, respectively. Then $\mathbb{V}_{\hf\hg\hh}^\dag$ is a self-dual twist of the tensor product of these representations. Consider also the $\Lambda_\hf[G_\bb{Q}]$-module
\begin{equation*}
\mathbb V_{\hf gh}^\dag:=\mathbb V_{\hf}\otimes_{\mathcal{O}} T_g \otimes_{\mathcal{O}} T_h(\Xi_{\hf gh}),\quad\;\textrm{where\; $\Xi_{\hf gh}=\epsilon_{\rm cyc}^{(2-l-m)/2} \kappa_{\hf}^{-1/2}$.}
\end{equation*}

Given test vectors $(\breve{\hf},\breve{g},\breve{h})$ for $(\hf,g,h)$ of level $N$, as explained in \cite{harris-tilouine} and \cite[\S{4.2}]{DR1}, a generalization of Hida's $p$-adic Rankin--Selberg convolution  produces an element $\mathscr{L}_p(\breve{\hf},\breve{g},\breve{h})$ in the fraction field of $\Lambda_{\hf}$ whose specializations to arithmetic points $x\in \cW_{\Lambda_\hf}(\overline{\Q}_p)$ of even weight $k\geq l+m$ 
%with $k\equiv r+2\pmod{2(p-1)}$ 
recover (a square-root of) the central critical values of the triple product $L$-function 
$L(\mathbb V_{\hf_xgh}^\dag,s)$ for the specialization of $\mathbb V_{\hf gh}^\dag$ at $x$ by virtue of Harris--Kudla's proof of Jacquet's conjecture, \cite{harris-kudla}. A recent result by Hsieh \cite{Hs} constructs test vectors $(\underline{\breve{\hf}},\underline{\breve{g}},\underline{\breve{h}})$ for which a precise interpolation property for the resulting $\mathscr{L}_p(\underline{\breve{\hf}},\underline{\breve{g}},\underline{\breve{h}})$ is proved. To recall the result in the form that will be used here, for any arithmetic point $x\in \cW_{\Lambda_\hf}(\overline{\Q}_p)$ as above, we set
\[
\hf_k:=\hf_x,\quad\alpha_{k}:=a_p(\hf_k),\quad\beta_{k}:=\chi_{\hf}(p)p^{k-1}\alpha_{k}^{-1},
\]
let $\alpha_g, \beta_g$ be the roots of the Hecke polynomial of $g$ at $p$ with ${\rm ord}_p(\alpha_g)=0$, and define $\alpha_h, \beta_h$ similarly. %Let $\rho_{\hf}$ be the Galois representation associated to $\hf$.
As recalled in [\emph{op.\,cit.}, \S{1.4}], when the residual Galois representation $\bar{\rho}_{\hf}$ associated to $\hf$ is absolutely irreducible and $p$-distinguished, the local ring $\Lambda_{\hf}$ is known to be Gorenstein and by a result of Hida's the congruence module of $\hf$ is isomorphic to $\Lambda_{\hf}/(\xi)$ for some nonzero $\xi\in\Lambda_{\hf}$. We call $(\xi)$ the \emph{congruence ideal} of $\hf$. Finally, denote by 
%$\varepsilon_\ell(\hf_k^\sharp,g,h)=
$\varepsilon_\ell(\mathbb V_{\hf_kgh}^\dag)\in\{\pm{1}\}$ the epsilon factor of the Weil--Deligne representation attached to the restriction of $\mathbb V_{\hf_kgh}^\dag$ to $G_{\Q_\ell}$.

%As it is recalled in \cite{DR3} following \cite{Hs}, there exists a {\it square-root} $p$-adic $L$-function \[ \Lp^f(\hf, \hg, \hh): \mathcal W_{{\bf fgh}} \rightarrow \mathbb C_p, \] characterized by an interpolation property relating its values at classical points $(x,y,z) \in \mathcal W_{{\bf fgh}}^f$ to the square root of the central critical value of Garrett's triple-product complex $L$-function $L(\hf_x,\hg_y,\hh_z,s)$ associated to the triple of classical eigenforms $(\hf_x,\hg_y,\hh_z)$. For the following proposition, let $\alpha_{\hf_x}$ and $\beta_{\hf_x}$ be the roots of the $p$-th Hecke polynomial of $\hf_x$, ordered in such a way that $\ord_p(\alpha_{\hf_x}) \leq \ord_p(\beta_{\hf_x})$.

\begin{theorem}\label{thm:hsieh}
In addition to $\chi_{\hf}\chi_{g}\chi_{h}=\omega^{r_1}$, assume that:
\begin{itemize}
\item[{\rm (a)}] $\bar{\rho}_{\hf}$ is absolutely irreducible and $p$-distinguished, %and that the triple $(\hf,g,h)$ satisfies:
%\item[{\rm (i)}] $\chi_{\hf}\chi_{g}\chi_{h}=1$.
\item[{\rm (b)}] for some arithmetic point $x\in\cW_{\Lambda_\hf}(\overline{\Q}_p)$, we have $\varepsilon_\ell(\mathbb V_{\hf_kgh}^\dag)=+1$ for all primes $\ell\mid N$,
\item[{\rm (c)}] ${\rm gcd}(N_{\hf},N_g,N_h)$ is squarefree.
\end{itemize}
Let $\xi$ be a generator of the congruence ideal of $\hf$. There exist test vectors $(\underline{\breve{\hf}},\underline{\breve{g}},\underline{\breve{h}})$ for $(\hf,g,h)$ of level $N$, and an element
\[
\mathscr{L}_p^\xi(\underline{\breve{\hf}},\underline{\breve{g}},\underline{\breve{h}})\in\Lambda_{\hf}
\]
such that for all arithmetic points $x\in \cW_{\Lambda_\hf}(\overline{\Q}_p)$ of even weight $k\geq l+m$ with $k\equiv r_1+2\pmod{2(p-1)}$ 
%$k+l+m\equiv 4\pmod{2(p-1)}$
we have
\[
\biggl(\frac{\mathscr{L}_p^\xi(\underline{\breve{\hf}},\underline{\breve{g}},\underline{\breve{h}})(x)}{\xi_x}\biggr)^2=\frac{\Gamma(k,l,m)}{2^{\alpha(k,l,m)}}\cdot\frac{\mathcal{E}(\hf_k,g,h)^2}{\mathcal{E}_0(\hf_k)^2\cdot\mathcal{E}_1(\hf_k)^2}\cdot\prod_{\ell\mid N}\tau_\ell\cdot\frac{L(\mathbb V_{\hf_kgh}^\dag,0)}{\pi^{2k}\cdot\langle\hf_k^\sharp, \hf_k^\sharp\rangle^2},
\]
where:
\begin{itemize}
    %\item $c=(k+l+m-2)/2$,
	\item $\Gamma(k,l,m)=(c-1)!\cdot(c-m)!\cdot(c-l)!\cdot(c+1-l-m)!$, with $c=(k+l+m-2)/2$,
	%\item $\bff_x^\circ$ ,$\bfg_y^\circ$, $\bfh_z^\circ$ are the newforms corresponding to $\bff_x$, $\bfg_y$, $\bfh_z$, respectively,
	\item $\alpha(k,l,m)\in\Lambda_{\hf}$ is a linear form in the variables $k$, $l$, $m$,
	\item $\mathcal{E}(\hf_k,g,h)=(1-\frac{\beta_k\alpha_{g}\alpha_{h}}{p^{c}})(1-\frac{\beta_k\beta_{g}\alpha_{h}}{p^{c}})(1-\frac{\beta_k\alpha_{g}\beta_{h}}{p^{c}})(1-\frac{\beta_k\beta_{g}\beta_{h}}{p^{c}})$,
\item
$\mathcal{E}_0(\hf_k)=(1-\frac{\beta_k}{\alpha_k})$,
$\mathcal{E}_1(\hf_k)=(1-\frac{\beta_k}{p\alpha_k})$,
\item $\tau_\ell$ is an explicit nonzero rational number independent of $k$,
\item $\hf_k^\sharp$ is the newform associated to the $p$-stabilized newform $\hf_k$,
\end{itemize}
and
%$\Omega_{\hf_k}\in\C^\times$ is the ``canonical period'' in \cite[(1.3)]{Hs} multiplied by $\xi_k$.
$\lVert\hf_k^\sharp\rVert^2$ is the Petersson norm of $\hf_k^\sharp$.
% on $\Gamma_0(N_{\hf})$.
\end{theorem}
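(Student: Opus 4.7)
The plan is to follow the strategy developed by Hsieh in \cite{Hs}, which builds on the earlier work of Harris--Tilouine and Darmon--Rotger constructing triple product $p$-adic $L$-functions via Hida's $\Lambda$-adic Petersson pairing. The starting point is Garrett's integral representation: up to archimedean and local factors, the central value $L(\hf_k^\sharp\otimes g\otimes h,c)$ is the square of a global trilinear period of the form
\[
I(\hf_k^\sharp,g,h) = \bigl\langle \hf_k^\sharp,\,\delta^{c-l-m+1}(g\cdot E_k)\cdot h\bigr\rangle_{\Gamma_0(N)},
\]
where $E_k$ is an explicit Eisenstein series of weight $k-l-m+2$ and $\delta^{\cdot}$ is an iterated Maass--Shimura raising operator. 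This reduces the problem to (i) $p$-adic interpolation of this period as $k$ varies, and (ii) the explicit computation of local zeta integrals at all places via Ichino's formula.

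First I would choose explicit test vectors $(\underline{\breve{\hf}},\underline{\breve{g}},\underline{\breve{h}})$ of level $N$. At primes $\ell\mid N$ these are dictated by Prasad's local trichotomy, which under hypothesis (b) guarantees local vectors with nonzero trilinear period; hypothesis (c) makes the construction canonical enough for $p$-adic interpolation. At $p$, the test vectors are taken to be the ordinary $p$-stabilizations of $g$ and $h$, together with a carefully chosen component at level $p$ for $\hf$. Next, one shows that after applying Hida's ordinary projector the nearly holomorphic form $\delta^{\cdot}(g\cdot E_k)\cdot\underline{\breve{h}}$ interpolates to a $\Lambda$-adic family as $k$ varies, using Hida--Panchishkin style control of $\Lambda$-adic nearly holomorphic Eisenstein families. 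Pairing this $\Lambda$-adic family against $\underline{\breve{\hf}}$ via Hida's $\Lambda$-adic Petersson pairing---whose normalization involves dividing by the generator $\xi$ of the congruence ideal of $\hf$, explaining the appearance of $\xi$ in the interpolation formula and requiring hypothesis (a) to ensure that this ideal is principal---produces the desired element $\mathscr{L}_p^\xi(\underline{\breve{\hf}},\underline{\breve{g}},\underline{\breve{h}})\in\Lambda_{\hf}$.

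The heart of the argument is the explicit local computation via Ichino's formula, which expresses $I(\hf_k^\sharp,g,h)^2$ as a product of local integrals times $L(\hf_k^\sharp\otimes g\otimes h,c)/\lVert\hf_k^\sharp\rVert^2$. At the archimedean place, the Maass--Shimura operator yields the factor $\Gamma(k,l,m)/\pi^{2(k-2)}$ together with the power of two encoded by $\alpha(k,l,m)$. At $p$, comparing the ordinary stabilizations to the new vectors and tracking the effect of the $\Lambda$-adic Petersson pairing produces the modified Euler factor $\mathcal{E}(\hf_k,g,h)/\mathcal{E}_0(\hf_k)\mathcal{E}_1(\hf_k)$: the denominator reflects Hida's canonical period for $\hf$, while the numerator is the local $p$-adic zeta integral of Ichino type. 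At bad primes $\ell\mid N$, the local integrals evaluate to rationals $\tau_\ell$ independent of $k$, since the local representations at $\ell$ do not deform in the Hida family. The main obstacle is precisely this last local computation at ramified primes: Prasad's dichotomy gives existence of nonzero local test vectors but not explicit evaluations, so a delicate case-by-case analysis is required depending on the local types of $\hf_k,g,h$ at each $\ell\mid N$, and hypothesis (c) is used to exclude the most degenerate triple-ramification configurations. Finally, the square appearing on the left-hand side of the interpolation formula reflects the fact that Ichino's formula computes $I^2$ rather than $I$, so that $\mathscr{L}_p^\xi$ is morally a $p$-adic square root of the family of central $L$-values, with the square root chosen coherently by fixing the test vectors.
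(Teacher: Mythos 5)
Your proposal is mathematically on target as an account of how the triple product $p$-adic $L$-function is constructed, but it is not the route the paper takes. The paper's proof of this theorem is a two-line citation: one lets $\hg,\hh$ be the primitive Hida families through the ordinary $p$-stabilizations of $g,h$, and then the statement follows by specializing the three-variable $p$-adic $L$-function of Hsieh's Theorem~A (attached to $(\hf,\hg,\hh)$ and the fixed congruence ideal generator $\xi$) down to the two fixed weights $l,m$. What you have written is instead a condensed sketch of the \emph{internal} proof of Hsieh's theorem -- Garrett--Ichino integral representation, local test vectors from Prasad's dichotomy, $\Lambda$-adic nearly-holomorphic Eisenstein families, ordinary projection, $\Lambda$-adic Petersson pairing normalized via the congruence module. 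That content is essentially correct as a survey of \cite{Hs}, and it correctly identifies the role of each hypothesis (a) Gorenstein/congruence-ideal normalization, (b) existence of nonzero local test vectors, (c) avoidance of degenerate ramification at shared primes. But nothing in this paper requires re-proving Hsieh's theorem: the work done here is merely to observe that fixing two of the three weight variables produces the stated one-variable interpolation, and you never make that specialization step explicit. Two small imprecisions in your sketch worth flagging: Ichino's formula naturally computes $|I|^2$ divided by the product of all three Petersson norms (the $g,h$ norms are absorbed into the constants $\tau_\ell$ and archimedean factors, not just $\lVert \hf_k^\sharp\rVert^2$); and the division by $\xi$ is not literally built into Hida's $\Lambda$-adic Petersson pairing, but arises from trivializing the rank-one module attached to the Gorenstein local ring, which is where hypothesis (a) actually enters.
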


\begin{proof}
Letting $\hg, \hh$ be the primitive Hida families of tame level $N_g, N_h$ passing through the ordinary $p$-stabilizations of $g, h$, this follows by specializing the three-variable $p$-adic $L$-function in \cite[Thm.~A]{Hs} attached to %the triple of test vectors
$({\hf},{\hg},{\hh})$
%constructed in \emph{op.\,cit.}
and the congruence ideal generator $\xi$.
\end{proof}

\begin{definition}\label{def:optimal-triple}
For the test vectors $(\underline{\breve{\hf}},\underline{\breve{g}},\underline{\breve{h}})$ of level $N$ provided by Theorem~\ref{thm:hsieh}, we set
\begin{equation}\label{optimal-triple}
L_p(\hf,g,h):=\mathscr{L}_p^\xi(\underline{\breve{\hf}},\underline{\breve{g}},\underline{\breve{h}})^2,\nonumber
\end{equation}
where $\xi$ is any fixed generator of the congruence ideal of $\hf$.
\end{definition}

%Recall that we will work with fixed specializations of $(\hg,\hh)$, which yields a $p$-adic $L$-function whose unique variable is the weight of the Hida family $\hf$.

Note that $L_p(\hf,g,h)$ depends on the choice of $\xi$, but the principal ideal in $\Lambda_{\hf}$ it generates is of course independent of that choice.

\subsection{Reciprocity law for diagonal cycles} \label{subsec:ERL}

Keep the notations in the previous subsection and without loss of generality assume that $l\geq m$ (reordering $g$ and $h$ if necessary).
%We fix level-$N$ test vectors $(\breve{\hf},\breve{\hg},\breve{\hh})$ for $(\hf,\hg,\hh)$.

%{\color{blue} Falta mirar be $\mathbb T$ vs $\mathbb V$ a tot arreu, i uniformitzar notacio.}
Assume that the Galois representations attached to $\hf$, $g$ and $h$ are all residually irreducible and $p$-distinguished. Let $\boldsymbol{\phi}\in\{\hf,\hg,\hh\}$. As a $G_{\mathbb{Q}_p}$-representation, $\mathbb V_{\boldsymbol{\phi}}$ admits a filtration
\begin{equation}\label{primfil}
0 \rightarrow \mathbb V_{\boldsymbol{\phi}}^+ \rightarrow \mathbb V_{\boldsymbol{\phi}} \rightarrow \mathbb V_{\boldsymbol{\phi}}^- \rightarrow 0
\end{equation}
with each $\mathbb V_{\boldsymbol{\phi}}^\pm$ free of rank one over $\Lambda_{\boldsymbol{\phi}}$, and with the $G_{\mathbb{Q}_p}$-action on $\mathbb V_{\boldsymbol{\phi}}^-$ given by the unramified character sending ${\rm Fr}_p\mapsto a_p(\boldsymbol{\phi})$.
%
%and if $\psi_{\hf}$ is the unramified character sending a Frobenius element $\Fr_p$ to ${\bf a}_p({\bf f})$, then
%\begin{equation}\label{vaps}
%\mathbb T_{\hf}^+ \simeq \Lambda_{\hf}(\psi_{\hf}^{-1} \chi \varepsilon_{\cyc}^{-1} \underline{\varepsilon}_{\hf}), \qquad \mathbb T_{\hf}^- \simeq \Lambda_{\hf}(\psi_{\hf}),
%\end{equation}
%where $\chi: (\mathbb Z/Np \mathbb Z)^{\times} \rightarrow \mathbb %C_p^{\times}$ the tame character of the Hida family ${\bf f}$.
%
%We can consider the same filtrations for single modular forms $g$ and $h$. As a piece of notation, the tensor product $T_x \otimes \mathbb Q_p$ is written $V_x$. Let $\mathbb T_{\hf}$, $T_g$ and $T_h$ be the Galois representations attached to $\hf$, $g$ and $h$. We introduce the $\Lambda_{\hf}$-module given by \[ \mathbb T_{\hf gh}^{\dag} := \mathbb T_{\hf} \otimes T_g \otimes \mathbb T_h(\underline{\varepsilon}_{\hf}^{-1/2}) \Big(\frac{-\ell-m+2}{2}\Big). \]
%
This induces an obvious three-step filtration
\[
0\subset\mathscr{F}^3\mathbb V_{\hf\hg\hh}^{\dag} \subset\mathscr{F}^2\mathbb V_{\hf\hg\hh}^\dag\subset\mathscr{F}^1\mathbb V_{\hf\hg\hh}^\dag \subset \mathbb V_{\hf\hg\hh}^\dag
\]
by $G_{\mathbb Q_p}$-stable $\Lambda_{\hf\hg\hh}$-submodules of ranks 1, 4, and 7, respectively, given by
\begin{equation}
\begin{split}\label{eq:+}
\mathscr{F}^1\mathbb V_{\hf\hg\hh}^\dag &= (\mathbb V_{\hf}\hat\otimes_{\mathcal{O}}\mathbb V_{\hg} \hat\otimes_{\mathcal{O}}\mathbb V_{\hh}^+ +\mathbb V_{\hf} \hat\otimes_{\mathcal{O}} \mathbb V_{\hg}^+ \hat\otimes_{\mathcal{O}} \mathbb V_{\hh} +\mathbb V_{\hf}^+ \hat\otimes_{\mathcal{O}} \mathbb V_{\hg} \hat\otimes_{\mathcal{O}} \mathbb V_{\hh})(\Xi_{\hf\hg\hh}),\\
\mathscr{F}^2\mathbb V_{\hf\hg\hh}^\dag &= (\mathbb V_{\hf}\hat\otimes_{\mathcal{O}}\mathbb V_{\hg}^+ \hat\otimes_{\mathcal{O}}\mathbb V_{\hh}^+ +\mathbb V_{\hf}^+ \hat\otimes_{\mathcal{O}} \mathbb V_{\hg} \hat\otimes_{\mathcal{O}} \mathbb V_{\hh}^+ +\mathbb V_{\hf}^+ \hat\otimes_{\mathcal{O}} \mathbb V_{\hg}^+ \hat\otimes_{\mathcal{O}} \mathbb V_{\hh})(\Xi_{\hf\hg\hh}),\\
\mathscr{F}^3\mathbb V_{\hf\hg\hh}^{\dag}&=\mathbb V_{\hf}^+\hat\otimes_{\mathcal{O}}\mathbb V_{\hg}^+ \hat\otimes_{\mathcal{O}}\mathbb V_{\hh}^+(\Xi_{\hf\hg\hh}).
\end{split}
\end{equation}
The middle term $\mathscr{F}^2\mathbb V_{\hf\hg\hh}^\dag$ will play a special role in the following, and we note that
\begin{equation}\label{eq:2-3}
\mathscr{F}^2\mathbb V_{\hf\hg\hh}^\dag/\mathscr{F}^3\mathbb V_{\hf\hg\hh}^{\dag}\cong\mathbb V_\hf^{\hg\hh}\oplus\mathbb V_\hg^{\hf\hh}\oplus\mathbb V_\hh^{\hf\hg},
\end{equation}
where $\mathbb V_\hf^{\hg\hh}:=(\mathbb V_{\hf}^-\hat\otimes_{\mathcal{O}}\mathbb V_{\hg}^+\hat\otimes_{\mathcal{O}}\mathbb V_{\hh}^+)(\Xi_{\hf\hg\hh})$ and similarly for the other two direct summands. We similarly denote the induced subquotients on the specializations of $\mathbb V_{\hf\hg\hh}^\dag$ (that is, $\mathscr{F}^i\mathbb V_{\hf gh}^\dagger, \mathbb V_{\hf}^{gh}$, etc.).
%and denote by $\mathbb V_{\hf\hg\hh}^\dag(N), \mathscr{F}^i\mathbb V_{\hf\hg\hh}^\dag(N)$, etc. the natural extension of all these definitions constructed in level $N$.

%Let $\kappa(\hf,\hg,\hh)\in H^1(\mathbb Q,\mathbb V_{\hf\hg\hh}^\dagger)$ be the three-variable cohomology class constructed in \cite[\S{3.2.3}]{DR3} and \cite[\S{4.7.1}]{BSV}, and denote by
Consider the class $\kappa_{1,\hf gh}^{(3)}$ defined in (\ref{eq:lambda-3}) for the choice of level-$N$ test vectors $(\underline{\breve{\hf}},\underline{\breve{g}},\underline{\breve{h}})$ given by Theorem~\ref{thm:hsieh} and let $\kappa(\hf,g,h)\in H^1(\mathbb Q, \mathbb V_{\hf gh}^{\dag})$ be the image of this class via the morphism obained from the augmentation map $\cl{O}[R_1]\rightarrow \cl{O}$. By \cite[Cor.~8.2]{BSV}, the image of $\kappa(\hf,g,h)$ under the restriction map at $p$ %$\res_p: H^1(\mathbb Q, \mathbb V_{\hf gh}^{\dag}) \rightarrow H^1(\mathbb Q_p, \mathbb V_{\hf gh}^{\dag})$
is contained in
\begin{equation}\label{eq:bal}
H^1_{\bal}(\mathbb Q_p,\mathbb V_{\hf gh}^\dag):={\rm im}\bigl(H^1(\Q_p,\mathscr{F}^2\mathbb V_{\hf gh}^\dag)\rightarrow H^1(\Q_p,\mathbb V_{\hf gh}^\dag)\bigr).\nonumber
\end{equation}
It is easily seen that this map %in (\ref{eq:bal})
is an injection, so we may and will view ${\rm res}_p(\kappa(\hf,g,h))$ as a class in $H^1(\Q_p,\mathscr{F}^2\mathbb V_{\hf gh}^\dag)$. %Given  a triple $(\breve{\hf},\breve{g},\breve{h})$ of level-$N$ test vectors,
Let
\[
{\rm pr}_{(\hf,g,h)}:\mathscr{F}^2\mathbb V_{\hf gh}^\dag\longrightarrow\mathbb V_{\hf}^{gh}
\]
be the map induced by the projection onto the first direct summand in (\ref{eq:2-3}). 
%composed with the projection $\mathbb V_{\hf}^{gh}(N)\rightarrow\mathbb V_{\hf}^{gh}$ corresponding to $(\breve{\hf},\breve{g},\breve{h})$.
The ``reciprocity law'' from \cite{BSV,DR3} recalled in Theorem~\ref{rec-law} below relates the image of ${\rm res}_p(\kappa(\hf,g,h))$ under the natural projection
\[
{\rm pr}_{(\hf,g,h)*}:H^1_{\bal}(\mathbb Q_p,\mathbb V_{\hf gh}^\dag)\longrightarrow H^1(\mathbb Q_p,\mathbb V_{\hf}^{gh})
\]
to the triple product $p$-adic $L$-function of $\S\ref{subsec:Lp}$. 
%
%Throughout the following, assume that $\bar{\rho}_{\hf}$ is absolutely irreducible and $p$-distinguished, and let
%
Recall that $\xi\in\Lambda_{\hf}$ denotes a generator of the congruence ideal of $\hf$.

\begin{propo}\label{perrin}
There is an injective $\Lambda_{\hf}$-module homomorphism with pseudo-null cokernel
\[
\mathfrak{Log}^\xi
%=\mathfrak{Log}^\xi(\breve{\hf},\breve{g},\breve{h})
:H^1(\mathbb Q_p,\mathbb{V}_{\hf}^{gh})\longrightarrow\Lambda_{\hf}
\]
characterized by the following interpolation property: for all $\mathfrak{Z}\in H^1(\mathbb Q_p,\mathbb{V}_{\hf}^{gh})$ and all classical points $x\in \cW_{\Lambda_\hf}(\overline{\Q}_p)$ of weight $k\geq l+m$ with $k\equiv r_1+2\pmod{2(p-1)}$ we have
\begin{align*}
\frac{\mathfrak{Log}^\xi(\mathfrak{Z})(x)}{\xi_x}&=(p-1)\alpha_{k}\biggl(1-\frac{\beta_{k} \alpha_g \alpha_h}{p^{c}}\biggr)\biggl(1-\frac{\alpha_{k} \beta_g \beta_h}{p^{c}}\biggr)^{-1}\\
&\quad\times\begin{cases}
\frac{(-1)^{c-k}}{(c-k)!}\cdot\left\langle{\rm log}_p(\mathfrak{Z}_k),\eta_{\hf_k}\otimes\omega_{\hg_l}\otimes\omega_{\hh_m}\right\rangle_{\rm dR}, &\textrm{if $l-m<k<l+m$,}\\[0.5em]
(k-c-1)!\cdot\left\langle{\rm exp}_p^*(\mathfrak{Z}_{k}),\eta_{\hf_k}\otimes\omega_{\hg_l}\otimes\omega_{\hh_m}\right\rangle_{\rm dR},&\textrm{if $k\geq l+m$,}
\end{cases}
\end{align*}
where $c=(k+l+m-2)/2$.
\end{propo}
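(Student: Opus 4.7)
The plan is to realize $\mathfrak{Log}^\xi$ as a weight-variable Perrin--Riou big logarithm specialized to the local representation $\mathbb{V}_\hf^{gh}$, following the framework in \cite[\S8]{BSV} and \cite[\S7]{DR3}. First I would analyze the local structure of $\mathbb{V}_\hf^{gh}$ at $p$. Since $\hf$ is ordinary, $\mathbb{V}_\hf^-$ is a rank-one unramified $\Lambda_\hf$-module with geometric Frobenius acting by $a_p(\hf)$. A direct computation with (\ref{eq:+}) shows that $\mathbb{V}_\hf^{gh}$ is isomorphic as a $\Lambda_\hf[G_{\mathbb{Q}_p}]$-module to $\mathbb{V}_\hf^-\otimes_{\mathcal{O}}\mathcal{O}(\eta)$ for an explicit locally analytic character $\eta\colon G_{\mathbb{Q}_p}\to\Lambda_\hf^\times$, built out of the characters on $\mathbb{V}_g^+$, $\mathbb{V}_h^+$ and $\Xi_{\hf gh}$, whose Hodge--Tate--Sen weight at a classical point $k\in U$ equals $\tfrac{l+m-k}{2}$.

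I would then invoke the Perrin--Riou/Ochiai big logarithm attached to the unramified $\Lambda_\hf$-module $\mathbb{V}_\hf^-$---the same ingredient underlying the construction of the $p$-adic $L$-function of Theorem~\ref{thm:hsieh}. This produces a $\Lambda_\hf$-linear map, injective with pseudo-null cokernel, from $H^1(\mathbb{Q}_p,\mathbb{V}_\hf^-\otimes\mathcal{O}(\eta))$ to $\mathbb{D}_{\rm cris}(V^-_{\hf_{k_0}})\otimes_{\mathcal{O}}\Lambda_\hf$, whose specializations at classical weights recover explicit multiples of the Bloch--Kato logarithm (when the Hodge--Tate weight is positive) or of the dual exponential (when it is non-positive). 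Composing with the pairing against the triple $\eta_{\breve{\hf}_k}\otimes\omega_{\breve{\hg}_l}\otimes\omega_{\breve{\hh}_m}$ of de Rham classes, and normalizing by the congruence ideal generator $\xi$, yields the sought map $\mathfrak{Log}^\xi$. The interpolation formula would then be verified weight by weight: the dichotomy between the ranges $l-m<k<l+m$ and $k\geq l+m$ reflects precisely the sign of $\tfrac{l+m-k}{2}$---positive in the first range, where $\log_p$ intervenes and the factorial $\tfrac{(-1)^{c-k}}{(c-k)!}$ enters as the standard correction in Perrin--Riou's recipe, and non-positive in the second, where $\exp^*_p$ applies with factorial $(k-c-1)!$.

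The main obstacle is matching the Euler factor $(p-1)\alpha_k(1-\beta_k\alpha_g\alpha_h/p^c)(1-\alpha_k\beta_g\beta_h/p^c)^{-1}$ with the standard local factor appearing in Perrin--Riou's formula. The numerator $(1-\beta_k\alpha_g\alpha_h/p^c)$ comes from the characteristic polynomial of crystalline Frobenius on $V_{\hf_k}^{gh}$, while the denominator $(1-\alpha_k\beta_g\beta_h/p^c)$ is the inverse of the complementary Frobenius eigenvalue acting on the sub $\mathscr{F}^3V_{\hf_k gh}^\dag$; both have to be tracked carefully through the identifications (\ref{eq:2-3}) and the normalization of the test vectors of $\S\ref{subsec:Lp}$. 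Once the matching is established, injectivity with pseudo-null cokernel of $\mathfrak{Log}^\xi$ follows from the corresponding property for the Perrin--Riou map on $\mathbb{V}_\hf^-$, combined with the torsion-freeness of $H^1(\mathbb{Q}_p,\mathbb{V}_\hf^{gh})$ under our non-Eisenstein, $p$-distinguished hypotheses.
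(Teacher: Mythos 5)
Your approach captures the same core idea as the paper's proof: recognize that the local module in question is, up to a character twist, a family of unramified rank-one $G_{\mathbb{Q}_p}$-modules, and then invoke the Kings--Loeffler--Zerbes/Perrin--Riou big logarithm (the four-variable regulator of \cite[Thm.~8.2.3]{KLZ}) to produce a $\Lambda_{\hf}$-linear map with the stated interpolation. The routing is slightly different: the paper first works with the three-variable module $\mathbb{V}_{\hf}^{\hg\hh}$, takes the regulator $\mathscr{L}_{\hf}$ from \cite[Prop.~7.3]{BSV}, and then \emph{specializes} via the map $\vartheta_{gh}$ (reduction modulo $(\bm{l}-l,\bm{m}-m)$), which requires showing that $\vartheta_{gh*}\colon H^1(\mathbb{Q}_p,\mathbb{V}_\hf^{\hg\hh})\otimes_{\Lambda_{\hf\hg\hh}}\Lambda_\hf\to H^1(\mathbb{Q}_p,\mathbb{V}_\hf^{gh})$ is an isomorphism (surjectivity via local Tate duality and Ramanujan--Petersson). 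You instead build the map directly on the one-variable family $\mathbb{V}_\hf^{gh}\cong\mathbb{V}_\hf^-\otimes\mathcal{O}(\eta)$; this avoids having to prove anything about $\vartheta_{gh*}$, but your decomposition is effectively the same as the paper's $\mathbb{V}_\hf^{\hg\hh}=\mathbb{U}_\hf^{\hg\hh}(\epsilon_{\rm cyc}\kappa_\hf^{-1/2}\kappa_{\hg\hh}^{1/2})$ after specialization, and your reading of the Hodge--Tate weight $(l+m-k)/2$ is what controls the $\log_p$ versus $\exp_p^*$ dichotomy, just as in the paper.

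One point is imprecise, though. You justify injectivity and pseudo-nullity of the cokernel by appealing to ``torsion-freeness of $H^1(\mathbb{Q}_p,\mathbb{V}_\hf^{gh})$ under our non-Eisenstein, $p$-distinguished hypotheses.'' That is not the criterion that the big-logarithm machinery actually uses, and the non-Eisenstein/$p$-distinguished conditions on $\psi$ are not what is relevant here. What \cite[Rmk.~8.2.4]{KLZ} requires is the vanishing of $H^0(\mathbb{Q}_p,\mathbb{U}_\hf^{gh})$, where $\mathbb{U}_\hf^{gh}$ is the unramified piece on which $\mathrm{Fr}_p$ acts by $\chi_f^{-1}(p)a_p(\hf)a_p(g)^{-1}a_p(h)^{-1}$. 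This vanishing is a consequence of the Ramanujan--Petersson (Weil) bounds on the crystalline Frobenius eigenvalues at the classical specializations -- the same input the paper uses to show surjectivity of $\vartheta_{gh*}$. You should replace the appeal to torsion-freeness of $H^1$ and to the non-Eisenstein/$p$-distinguished hypotheses by this $H^0$-vanishing criterion, verified via the Weil bound on $\alpha_k/(\alpha_g\alpha_h)$.
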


\begin{proof}
The construction of $\mathfrak{Log}^\xi$ will follow by specializing the three-variable $p$-adic regulator constructed in \cite[\S{7.1}]{BSV} (building on a generalization of the construction in \cite{LZ2} given by  Kings--Loeffler--Zerbes \cite{KLZ}).

%As before, let $\hg$ and $\hh$ be the Hida families passing through the ordinary $p$-stabilizations  $g_\alpha$ and $h_\alpha$ of $g$ and $h$, respectively, and
Let $\vartheta_{gh}:\Lambda_{\hf \hg \hh}\rightarrow\Lambda_{\hf}$ be the map given by reduction modulo $(\bm{l}-l,\bm{m}-m)$. This induces  isomorphisms
\[
\mathbb V_{\hf\hg\hh}^\dag\otimes_{\Lambda_{\hf\hg\hh}}\Lambda_\hf\cong\mathbb V_{\hf gh}^\dag,\quad\mathbb V_{\hf}^{\hg\hh}\otimes_{\Lambda_{\hf\hg\hh}}\Lambda_\hf\cong\mathbb V_{\hf}^{gh},
\]
and a natural map
\begin{equation}\label{sp-gh}
\vartheta_{gh*}:H^1(\mathbb Q_p,\mathbb V_{\hf}^{\hg\hh})\otimes_{\Lambda_{\hf\hg\hh}}\Lambda_\hf\longrightarrow H^1(\mathbb Q_p,\mathbb V_{\hf}^{gh}).\nonumber
\end{equation}
This map is clearly injective, and its surjectivity can be shown easily by an application of local Tate duality and the Ramanujan--Petersson conjecture (\emph{cf.} proof of \cite[(154)]{BSV}). Letting
\begin{equation}\label{3-var}
\mathscr{L}_{\hf}^\xi:H^1(\mathbb Q_p,\mathbb V_{\hf}^{\hg\hh})\longrightarrow\Lambda_{\hf\hg\hh}\nonumber
\end{equation}
be the $p$-adic regulator $\mathscr{L}_{\hf}$ defined as in \cite[Prop.~7.3]{BSV} %(where $\mathbb{V}_{\hf}^{\hg\hh}$ is denoted $V(\hf,\hg,\hh)_f$) attached to the test vectors $(\breve{\hf},\breve{\hg},\breve{\hh})$
and multiplied by $\xi$, the map defined by the composition
\[
\mathfrak{Log}^\xi:H^1(\mathbb Q_p,\mathbb V_{\hf}^{gh})\xrightarrow{\vartheta_{gh*}^{-1}} H^1(\mathbb Q_p,\mathbb V_{\hf}^{\hg\hh})\otimes_{\Lambda_{\hf\hg\hh}}\Lambda_\hf\xrightarrow{\mathscr{L}^\xi_{\hf}\otimes{\rm id}}\Lambda_\hf
\]
satisfies the interpolation properties in the statement of the proposition.

It remains to see that $\mathfrak{Log}^\xi$ is injective with pseudo-null cokernel. By definition, we have
\[
\mathbb V_{\hf}^{\hg\hh}=\mathbb U_{\hf}^{\hg\hh}(\epsilon_{\rm cyc}\kappa_{\hf}^{-1/2}\kappa_{\hg\hh}^{1/2}),
\]
where $\mathbb U_{\hf}^{\hg\hh}$ is an unramified $G_{\mathbb Q_p}$-module  %(denoted $M(\hf,\hg,\hh)_f$ in \cite[\S{7.1}]{BSV})
on which an arithmetic Frobenius ${\rm Fr}_p$ acts as multiplication by $\chi_f^{-1}(p)a_p(\hf)a_p(\hg)^{-1}a_p(\hh)^{-1}$, and $\mathscr{L}_{\hf}$
is obtained by specializing the four-variable $p$-adic regulator map in \cite[Thm.~8.2.3]{KLZ} for the module $\mathbb U_{\hf}^{\hg\hh}$, paired against the differential $\eta_{\hf}\otimes \omega_{\hg}\otimes\omega_{\hh}$.
%in [\emph{op.\,cit.}, Prop.~10.1.1]).
In light of \cite[Rmk.~8.2.4]{KLZ}, the fact that $\mathfrak{Log}^\xi$ has the above properties can therefore be deduced from the vanishing of $H^0(\mathbb Q_p,\mathbb U_{\hf}^{gh})$, where $\mathbb{U}_{\hf}^{gh}$ is the image of $\mathbb U_{\hf}^{\hg\hh}$ under $\vartheta_{gh}$.
%Since the Frobenius eigenvalues on the specialization of $\mathbb U_{\hf}^{gh}$ to weight $k\geq 2$ are Weil numbers of weight $k+1-l-m$, which is never zero under our hypotheses, the result follows.
\end{proof}

%Let $\mathscr{L}_p^{f}(\hf,g,h)\in{\rm Frac}(\Lambda_{\hf})$ be the specialization of the square-root triple product $p$-adic $L$-function of $\S\ref{subsec:Lp}$.

\begin{theorem}[Reciprocity law]\label{rec-law}
We have the following equality
\[
\mathfrak{Log}^\xi({\rm res}_p(\kappa(\hf,g,h))) = \mathscr{L}_p^\xi(\underline{\breve{\hf}},\underline{\breve{g}},\underline{\breve{h}}).
\]
%where $\mathfrak{Log}^\xi=\mathfrak{Log}^\xi(\breve{\hf},\breve{g},\breve{h})$.
\end{theorem}

\begin{proof}
This is the specialization of the three-variable reciprocity law of Theorem~A in \cite{BSV} to $(\hf,g,h)$ (see also \cite[Thm.~10]{DR3}).
\end{proof}

%\begin{definition}\label{def:kap-test}
%For the level-$N$ test vectors $(\underline{\breve{\hf}},\underline{\breve{g}},\underline{\breve{h}})$ in Theorem~\ref{thm:hsieh}, we let
%\[
%\kappa(\hf,g,h)\in H^1(\Q,\mathbb V_{\hf gh}^\dag)
%\]
%be the image of $\kappa_{\hf gh}$ under the corresponding projection $H^1(\Q,\mathbb V_{\hf gh}^\dag(N))\rightarrow H^1(\Q,\mathbb V_{\hf gh}^\dag)$.
%\end{definition}
%
%Letting $p_{\hf}:\mathscr{F}^2\mathbb V_{\hf gh}^\dag/\mathscr{F}^3\mathbb V_{\hf gh}^{\dag}\rightarrow\mathbb V_\hf^{gh}$ be the projection onto the first direct summand in (\ref{eq:2-3}), Theorem~\ref{rec-law} thus reads
%\[
%\mathfrak{Log}^\xi(p_{\hf*}({\rm res}_p(\kappa(\hf,g,h)))=\mathscr{L}_p^\xi(\breve{\hf},\breve{g},\breve{h}).
%\]

\subsection{Selmer groups and formulation of the main conjectures}\label{subsec:IMC}

Let $(\hf,g,h)$ be as in the preceding subsection. Throughout the rest of this section, we assume that hypotheses (a)--(c) in Theorem~\ref{thm:hsieh} hold, so the $p$-adic $L$-function $L_p(\hf,g,h)$ in Definition~\ref{def:optimal-triple} is available.
%and let $\mathbb V_{\hf gh}^\dag$ be the specialization of the ``big'' self-dual representation  $\mathbb V_{\hf\hg\hh}^\dag$ in (\ref{eq:big-SD}), so that
%\[
%\mathbb V_{\hf gh}^\dag\cong\mathbb V_{\hf}(\kappa_{\hf}^{-1/2})\hat\otimes_{\mathcal O}T_g\otimes_{\mathcal O}T_h(1-(l+m)/2),
%\]
%where $T_g$ and $T_h$ are $G_{\mathbb Q}$-stable $\mathcal O$-lattices in the Galois representations associated to $g$ and $h$.

Recall the $G_{\mathbb Q_p}$-stable rank-four $\Lambda_{\hf\hg\hh}$-submodule $\mathscr{F}^2\mathbb V_{\hf\hg\hh}^\dag\subset \mathbb V_{\hf\hg\hh}^{\dag}$ in $(\ref{eq:+})$, and set
\[
\mathbb V_{\hf\hg\hh}^f:=\mathbb V_{\hf}^+\hat\otimes_{\mathcal{O}}\mathbb V_{\hg}\hat\otimes_{\mathcal O}\mathbb V_{\hh}(\Xi_{\hf\hg\hh}).
\]
As before, we let $\mathscr{F}^2\mathbb V_{\hf gh}$ and $\mathbb V_{\hf gh}^f$ denote the corresponding specializations. %to $(\hf,g,h)$.

Fix a finite set $\Sigma$ of places of $\mathbb Q$ containing $\infty$ and the primes dividing $Np$, and let $\mathbb Q^\Sigma$ be the maximal extension of $\mathbb Q$ unramified outside $\Sigma$.

\begin{definition}\label{def:Sel}
For $\mathcal{L}\in\{{\rm bal},\mathcal{F}\}$ define the Selmer group $\Sel_{\mathcal{L}}(\mathbb V_{\hf gh}^{\dag})$ by
\[
\Sel_{\mathcal{L}}(\mathbb V_{\hf gh}^{\dag}) = \ker \bigg(
H^1(\mathbb Q^\Sigma/\mathbb Q, \mathbb V_{\hf gh}^{\dag}) \longrightarrow
%\prod_{v \in \Sigma\smallsetminus\{p,\infty\}} H^1(\mathbb Q_v,\mathbb %V_{\hf gh}^{\dag})\times
\frac{H^1(\mathbb Q_p,\mathbb V_{\hf gh}^\dag)}{H^1_{\mathcal{L}}(\mathbb Q_p,\mathbb V_{\hf gh}^\dag)}\bigg),
\]
where
%$\mathbb Q_v^{\rm nr}$ denotes the maximal unramified extension of $\mathbb Q_v$ and
\[
H_{\mathcal L}^1(\mathbb Q_p, \mathbb V_{\hf gh}^{\dag}) =
\begin{cases}
\ker \bigl(H^1(\mathbb Q_p, \mathbb V_{\hf gh}^{\dag}) \longrightarrow H^1(\mathbb Q_p, \mathbb V_{\hf gh}^{\dag}/\mathscr{F}^2\mathbb V_{\hf gh}^{\dag}) \bigr) &\textrm{if $\mathcal{L}={\rm bal}$},\\[0.5em]
\ker \bigl(H^1(\mathbb Q_p, \mathbb V_{\hf gh}^{\dag}) \longrightarrow H^1(\mathbb Q_p^{}, \mathbb V_{\hf gh}^{\dag}/ \mathbb V_{\hf gh}^{f}) \bigr) &\textrm{if $\mathcal{L}=\mathcal{F}$}.
\end{cases}
\]
We call $\Sel_{\rm bal}(\mathbb V_{\hf gh}^{\dag})$ (resp. $\Sel_{\mathcal F}(\mathbb V_{\hf gh}^{\dag})$)  the \emph{balanced} (resp. \emph{$f$-unbalanced}) Selmer group.
\end{definition}

\begin{remark}\label{rem:Gr}
The pairs
\[
\bigl(\mathscr{F}^2\mathbb V_{\hf gh}^\dag,\{k\in\Z_{\geq 2}\;\colon\; l-m<k<l+m\}\bigr),\quad\bigl(\mathbb V_{\hf gh}^f,\{k\in\Z_{}\;\colon\;k\geq l+m\}\bigr)
\]
satisfy the \emph{Panchishkin condition} in \cite{Gr94}. Thus ${\rm Sel}_{\bal}(\mathbb V_{\hf gh}^\dag)$ and ${\rm Sel}_{\mathcal F}(\mathbb V_{\hf gh}^\dag)$ may be viewed as instances of Greenberg's Selmer groups attached to different ranges of critical specializations of $\mathbb V_{\hf gh}^\dag$.
\end{remark}

Let 
\[
\mathbb A_{\hf gh}^\dag={\rm Hom}_{\mathbb Z_p}(\mathbb V_{\hf gh}^\dag,\mu_{p^\infty}).
\] 
Then for $\mathcal{L}\in\{\bal,\mathcal{F}\}$, we define the Selmer groups ${\rm Sel}_{\mathcal L}(\mathbb A_{\hf gh}^\dag)$ as above,
%in Definition~\ref{def:Sel},
taking $H^1_{\mathcal L}(\mathbb Q_p,\mathbb A_{\hf gh}^\dag)$ to be the orthogonal complement of $H^1_{\mathcal L}(\mathbb Q_p,\mathbb V_{\hf gh}^\dag)$ under the local Tate duality
\[
H^1(\mathbb Q_p,\mathbb V_{\hf gh}^\dag)\times H^1(\mathbb Q_p,\mathbb A_{\hf gh}^\dag)\longrightarrow\bb{Q}_p/\bb{Z}_p,
\]
and set
\[
X_{\mathcal L}(\mathbb A_{\hf gh}^{\dag}):=\Hom_{\text{cont}}(\Sel_{\mathcal L}(\mathbb A_{\hf gh}^{\dag}), \mathbb Q_p/\mathbb Z_p).
\]

%Naturally, the Selmer groups for $\mathbb{V}_{\hf gh}^\dag(N)$ and $\mathbb A_{\hf gh}^\dag(N)={\rm Hom}_{\mathbb Z_p}(\mathbb V_{\hf gh}^\dag(N),\mu_{p^\infty})$ are defined in the same way.

In light of Remark~\ref{rem:Gr}, the next conjecture may be viewed as an instance of the Iwasawa--Greenberg main conjectures \cite{Gr94}. In the two formulations below, we also assume conditions (b) and (c) from Theorem~\ref{thm:hsieh}, so that the $p$-adic $L$-function $L_p(\hf,g,h)$ in (\ref{optimal-triple}) is defined.

\begin{conj}[IMC ``with $p$-adic $L$-functions'']\label{with}
The modules $\Sel_{\mathcal F}(\mathbb V_{\hf gh}^{\dag})$ and $X_{\mathcal F}(\mathbb A_{\hf gh}^{\dag})$ are both $\Lambda_{\hf}$-torsion, and
\[
\Char_{\Lambda_{\hf}}(X_{\mathcal F}(\mathbb A_{\hf gh}^{\dag})) = (L_p(\hf,g,h))
\]
in $\Lambda_{\hf}\otimes_{\mathbb Z_p}\mathbb Q_p$.
\end{conj}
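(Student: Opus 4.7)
The strategy I would pursue is to leverage the diagonal-cycle Iwasawa class $\kapinfty$ from Theorem~\ref{principal:wild} as a bridge between the analytic side (the $p$-adic $L$-function $L_p(\hf,g,h)$) and the arithmetic side (the Selmer module $X_{\mathcal F}(\mathbb A_{\hf gh}^{\dag})$). Rather than attacking Conjecture~\ref{with} directly, I would first reformulate it as an equivalent Iwasawa main conjecture ``without $p$-adic $L$-functions'' (Conjecture~\ref{without}), which asserts that both $H^1_{\Iw}(K_\infty,T_{g,h}^\psi)/\Lambda_{\hf}\cdot\kapinfty$ and $X_{\bal}(\mathbb A_{\hf gh}^{\dag})$ are $\Lambda_{\hf}$-torsion with equal characteristic ideal. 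The equivalence rests on two ingredients: first, a Greenberg-style Poitou--Tate exact sequence comparing $X_{\bal}$ and $X_{\mathcal F}$ through the local cohomology of the rank-one quotient $\mathbb V_{\hf}^{gh}$ at $p$, so that the two characteristic ideals differ precisely by the characteristic ideal of the cokernel of the regulator $\mathfrak{Log}^\xi$ acting on a suitable local class; and second, the reciprocity law (Theorem~\ref{rec-law}), which identifies $\mathfrak{Log}^\xi\bigl(p_{\hf*}({\rm res}_p(\kappa(\hf,g,h)))\bigr)$ with $\mathscr L_p^\xi(\underline{\breve{\hf}},\underline{\breve{g}},\underline{\breve{h}})$, i.e.\ with a square root of $L_p(\hf,g,h)$. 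Combined with the pseudo-nullity of $\coker(\mathfrak{Log}^\xi)$ from Proposition~\ref{perrin}, one gets a clean passage between the two formulations in $\Lambda_{\hf}\otimes_{\Z_p}\Q_p$.

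Second, I would attack the $\kapinfty$-formulation via the Euler system machinery. The classes $\{\kapinftyn\}_{n\in\mathcal S}$ satisfy the tame norm compatibility of Theorem~\ref{principal:wild} and, by Proposition~\ref{prop:in-Selinfty}, lie in $\Sel_{\bal}(K[np^\infty],T_{g,h}^\psi(N))$; under the running hypotheses (big image, non-Eisenstein and $p$-distinguished $\psi$, $p$ split in $K$ and coprime to $h_K$), together with the torsion-freeness input from the remark following Theorem~\ref{principal:tame}, this is exactly the package of data required by the forthcoming results of Jetchev--Nekov\'a\v{r}--Skinner \cite{JNS}. Provided $\kapinfty$ is not $\Lambda_{\ac}$-torsion, their machinery yields that $X_{\bal}(\mathbb A_{\hf gh}^{\dag})$ is $\Lambda_{\ac}$-torsion and that its characteristic ideal divides $\Char_{\Lambda_{\ac}}\bigl(H^1_{\Iw}(K_\infty,T_{g,h}^\psi)/\Lambda_{\hf}\cdot\kapinfty\bigr)$ up to explicit Tamagawa-type factors. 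Transporting this divisibility across the equivalence gives one containment in Conjecture~\ref{with}, which is the content of Theorem~C.

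The main obstacle, and the reason the result as stated in Theorem~C is only one-sided and conditional, is twofold. First, one needs the non-triviality of $\kapinfty$ itself to trigger the Euler system machinery; as noted in the remark after Theorem~C, this would require an anticyclotomic analogue of the Cornut--Vatsal theorem for diagonal cycles, which in turn depends on a $p$-adic Gross--Zagier-type formula of the shape being developed by Hsieh--Yamana. Second, the opposite divisibility $\Char \supset (L_p(\hf,g,h))$ appears to require an input of an entirely different nature---either Eisenstein congruences for $\hf$ in the spirit of Skinner--Urban, or the construction of a second (e.g.\ cyclotomic or congruence-theoretic) Euler system whose reciprocity law produces the $p$-adic $L$-function as a divisor on the correct side. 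In the anticyclotomic Rankin--Selberg setting, these tools are not currently available, so the realistic target of the method developed in the paper is precisely the conditional one-sided divisibility of Theorem~C, with the full Conjecture~\ref{with} remaining open pending non-triviality and a converse-divisibility input.
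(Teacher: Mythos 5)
Your proposal correctly recognizes that the statement is a conjecture and therefore cannot be proved outright; what you lay out is the paper's actual route to the partial result Theorem~C, and you have the architecture right: pass to the formulation ``without $p$-adic $L$-functions'' via the Poitou--Tate comparison and the reciprocity law (Theorem~\ref{thm:equiv}, Proposition~\ref{perrin}, Theorem~\ref{rec-law}), then invoke the JNS Euler system machinery (Theorem~\ref{segon-res}) applied to the $\Lambda$-adic classes of Theorem~\ref{principal:wild}, with the running hypotheses verified through Proposition~\ref{prop:suff}. You also correctly flag the two genuine obstructions: the unproved non-triviality of $\kapinfty$, and the missing converse divisibility.

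One slip worth correcting: in your final paragraph you label $\Char_{\Lambda_{\hf}}(X_{\mathcal F}(\mathbb A_{\hf gh}^{\dag})) \supset (L_p(\hf,g,h))$ as ``the opposite divisibility'' requiring Skinner--Urban-type input. In fact that containment is precisely the one established (conditionally) in Theorem~C: Euler systems bound the Selmer module from above, which in ideal-theoretic terms is $\Char(X_{\mathcal F}) \supset (L_p)$, i.e.\ $\Char(X_{\mathcal F})$ divides $L_p$. The missing half of the conjecture is the reverse inclusion $\Char_{\Lambda_{\hf}}(X_{\mathcal F}(\mathbb A_{\hf gh}^{\dag})) \subset (L_p(\hf,g,h))$ --- a lower bound on the Selmer group, equivalently $L_p$ divides the characteristic ideal --- and it is \emph{that} direction which would call for Eisenstein congruences or a Rankin--Selberg analogue of the Skinner--Urban method. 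A second Euler system would not, in general, supply this: Kolyvagin-type arguments always produce upper bounds on Selmer groups. Also minor: Theorem~\ref{segon-res}(b) gives $\Char(X_{\Gr})_{\tors}\supset\Char(\Sel_{\Gr}/\Lambda_{\ac}\kappa_\infty)^2$ with a square, which you dropped; this does not affect the strategy but is needed to match the square $\mathscr L_p(\hf,g,h)^2$ appearing in Theorem~C.
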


\begin{remark}
An integral formulation of the equality of ideals in Conjecture~\ref{with} would in general involve certain Tamagawa factors, accounting for the fact that the construction of $L_p(\hf,g,h)$ uses Hida's congruence number, while by definition the classes in the Selmer group $X_{\cF}(\mathbb A_{\hf gh}^\dag)$ are trivial at the places $v\in\Sigma\smallsetminus\{p,\infty\}$, rather than just unramified (\emph{cf.} \cite{PW}).
\end{remark}

Under the local root number hypothesis (b) in Theorem~\ref{thm:hsieh}, for all arithmetic point $x\in\cW_{\Lambda_\hf}(\overline{\bb{Q}}_p)$ of even weight $k\geq 2$ with $l-m<k<l-m$ the sign in the functional equation for $L(\mathbb V_{\hf_x gh}^\dag,s)$ is $-1$, so that the central value $L(\mathbb V_{\hf_x gh}^\dag,0)$ vanishes. Therefore, in the spirit of Perrin-Riou's main conjecture \cite[Conj.~B]{PR} in the setting of Heegner points, a natural formulation of the Iwasawa main conjecture for ${\rm Sel}_{\bal}(\mathbb V_{\hf gh}^\dag)$ takes the following form.

Note that it follows from \cite[Cor.~8.2]{BSV} that $\kappa(\hf,g,h)$ lands in $\Sel_{\bal}(\mathbb V_{\hf gh}^\dag)$.
%under the isomorphism (\ref{eq:ind-infty}) the class $\kappa_{\hf,g,h}$ corresponds to
%\[
%\kapinftyone\in H^1(K[n],\Lambda_{\cl{O}}(\bm{\kappa}_{\text{ac}}^{-1})\hat{\otimes}_{\cl{O}}T_{g,h}^\psi(N))\cong H^1_{\rm Iw}(K[p^\infty],T_{g,h}^\psi),
%\]
%and therefore it follows from  Proposition~\ref{prop:in-Selinfty} that the class $\kappa(\hf,g,h)$ lands in $\Sel_{\bal}(\mathbb V_{\hf gh}^\dag)$.

\begin{conj}[IMC ``without $p$-adic $L$-functions'']\label{without}
Suppose $\kappa(\hf,g,h)\in\Sel_{\bal}(\Q,\mathbb V_{\hf gh}^\dag)$  is not $\Lambda_{\hf}$-torsion. Then the modules $\Sel_{\bal}(\mathbb V_{\hf gh}^{\dag})$ and $X_{\bal}(\mathbb A_{\hf gh}^{\dag})$ have both rank one, and
\[
\Char_{\Lambda_{\hf}}(X_{\bal}(\mathbb A_{\hf gh}^{\dag})_{\tors})=
\Char_{\Lambda_{\hf}} \left(\frac{\Sel_{\bal}(\mathbb V_{\hf gh}^{\dag})}{\Lambda_{\hf} \cdot\kappa(\hf,g,h)} \right)^2
\]
in $\Lambda_{\hf}\otimes_{\mathbb Z_p}\mathbb Q_p$, where the subscript ${\rm tors}$ denotes the $\Lambda_{\hf}$-torsion submodule.
\end{conj}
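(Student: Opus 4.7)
The plan is to reduce Conjecture~\ref{without} to Conjecture~\ref{with} via the equivalence alluded to in the introduction (Theorem~\ref{thm:equiv}), and then obtain one divisibility from the Euler system constructed in Section~\ref{sec:wnr}.

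First I would compare the two Selmer groups via Poitou--Tate duality, noting that the local conditions at $p$ differ precisely by passing from $\mathscr{F}^2 \mathbb V_{\hf gh}^\dag$ to $\mathbb V_{\hf gh}^f$. From the explicit description (\ref{eq:+}), the relevant quotient is a copy of $\mathbb V_{\hf}^{gh}$. This should yield an exact sequence
\[
0 \to \frac{\Sel_{\bal}(\mathbb V_{\hf gh}^\dag)}{\Lambda_{\hf}\cdot\kappa(\hf,g,h)} \to \frac{\Sel_{\mathcal F}(\mathbb V_{\hf gh}^\dag)}{\Lambda_{\hf}\cdot ?} \to \frac{H^1(\Q_p, \mathbb V_\hf^{gh})}{\Lambda_{\hf}\cdot p_{\hf*}\mathrm{res}_p(\kappa(\hf,g,h))} \to X_{\mathcal F}(\mathbb A_{\hf gh}^\dag) \to X_{\bal}(\mathbb A_{\hf gh}^\dag) \to 0,
\]
(together with the analogous sequence on the Pontryagin dual side), which would give a clean relation between $\Char_{\Lambda_{\hf}} X_{\bal}$ and $\Char_{\Lambda_{\hf}} X_{\mathcal F}$ up to the characteristic ideal of the third quotient. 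A small technical point will be to verify the global-to-local surjectivity needed to get the exact sequence in this clean form; under the non-Eisenstein and $p$-distinguished hypotheses, this should follow from the absolute irreducibility of the residual representations together with local vanishing statements at the bad primes as in Proposition~\ref{prop:in-Sel}.

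Next I would apply the Perrin-Riou logarithm $\mathfrak{Log}^\xi$ of Proposition~\ref{perrin} to $p_{\hf*}\mathrm{res}_p(\kappa(\hf,g,h)) \in H^1(\Q_p,\mathbb V_\hf^{gh})$. Since $\mathfrak{Log}^\xi$ is injective with pseudo-null cokernel, the characteristic ideal of $H^1(\Q_p,\mathbb V_\hf^{gh})/\Lambda_{\hf}\cdot p_{\hf*}\mathrm{res}_p(\kappa(\hf,g,h))$ is generated (up to pseudo-null factors) by $\mathfrak{Log}^\xi(p_{\hf*}\mathrm{res}_p(\kappa(\hf,g,h)))$. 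By the reciprocity law of Theorem~\ref{rec-law}, this is precisely $\mathscr{L}_p^\xi(\underline{\breve{\hf}},\underline{\breve{g}},\underline{\breve{h}})$, and its square is $L_p(\hf,g,h)$ by Definition~\ref{def:optimal-triple}. Combining with the sequence above, under the assumption that $\kappa(\hf,g,h)$ is not $\Lambda_{\hf}$-torsion, one obtains the equivalence of Conjectures~\ref{with} and \ref{without} up to Iwasawa invariants concentrated on pseudo-null submodules (and equality of the $\Lambda_{\hf}$-ranks of $\Sel_{\bal}$ and $X_{\bal}$).

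Having established the equivalence, one direction of Conjecture~\ref{without} follows by applying the Euler system of Theorem~\ref{principal:wild} through the JNS machinery to bound $X_{\bal}(\mathbb A_{\hf gh}^\dag)_{\tors}$ by $\Sel_{\bal}(\mathbb V_{\hf gh}^\dag)/\Lambda_{\hf}\cdot \kappa(\hf,g,h)$, which is exactly the divisibility labeled Theorem~C in the introduction. The \emph{hard part} is the reverse divisibility: unlike the situation for Heegner points over $\Q_p$ where one can appeal to Skinner--Urban-type Eisenstein congruences on a unitary group or to Wan-type results, no such machinery is yet available for the triple-product setting considered here. Any attempt at the opposite inclusion would therefore need either an Eisenstein congruence input for a suitable automorphic group attached to $\mathrm{GU}(2,2)$ (or a related form), or a ``two-variable'' deformation argument deriving the lower bound from a known anticyclotomic main conjecture on the CM side (e.g., for the Rankin--Selberg $L$-function $L(\theta_\psi \otimes g\otimes h,s)$ viewed through the $\theta_\psi$ variable). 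Absent such input, the conjecture as stated is only reachable up to one divisibility.
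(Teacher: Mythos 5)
This is a \emph{conjecture}, and the paper does not purport to prove it: what the paper establishes is (i)~the equivalence of Conjecture~\ref{without} with Conjecture~\ref{with} (Theorem~\ref{thm:equiv}), and (ii)~one divisibility, assuming $\kappa(\hf,g,h)$ is not $\Lambda_{\hf}$-torsion (Theorem~C, proved in \S\ref{sec:ABC}). You correctly identify this state of affairs, correctly invoke the Perrin-Riou regulator $\mathfrak{Log}^\xi$ and the reciprocity law of Theorem~\ref{rec-law} to convert between the two, and correctly attribute the one available divisibility to the Euler system of Theorem~\ref{principal:wild} via the JNS machinery. Your closing diagnosis of why the reverse divisibility is out of reach is also reasonable and matches the paper's silence on that point.

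The concrete gap is in the Poitou--Tate step. Your proposed five-term sequence tries to pass directly from $\Sel_{\bal}$ to $\Sel_{\mathcal F}$ and from $X_{\mathcal F}$ to $X_{\bal}$, but no single Poitou--Tate sequence does this: the local conditions $\mathscr{F}^2\mathbb V_{\hf gh}^\dag$ and $\mathbb V_{\hf gh}^f$ are both rank~$4$ and \emph{neither contains the other}. The paper instead introduces the intermediate modules $\mathbb V_{\hf gh}^{f\cap +}$ (rank~$3$) and $\mathbb V_{\hf gh}^{f\cup+}$ (rank~$5$) and runs \emph{two} dualities (PT-1) and (PT-2); it then uses Lemma~\ref{rangs}(3), a Greenberg--Wiles core-rank computation, to show $\rk X_{\mathcal F\cup+} = 1 + \rk X_{\mathcal F\cap+}$ with equal torsion characteristic ideals. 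The ``$?$'' in your middle term is a symptom of this: in fact, under the hypotheses $\Sel_{\mathcal F}$ is $\Lambda_{\hf}$-torsion and hence vanishes (it sits inside the torsion-free $H^1(\Q^\Sigma/\Q,\mathbb V_{\hf gh}^\dag)$, which is torsion-free by \cite[\S{1.3.3}]{PR-book} since $\bar\rho_{\hf}$ is absolutely irreducible), so there is no class to mod out by there. Similarly, you cannot conclude the rank-$1$ statement or the equality ${\rm rk}\,\Sel_{\bal} = {\rm rk}\,X_{\bal}$ ``for free'' from an exact sequence: these require the specialization-at-height-one-primes argument of Lemma~\ref{rangs}(1)--(2) together with the core-rank comparison. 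Filling in these steps would bring your argument into line with the paper's.
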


\begin{remark}
Working under different hypotheses on the local signs  ensuring that $L(\mathbb V_{\hf_kgh}^\dag,s)$ has sign $+1$ (rather than $-1$) for weights $k\geq 2$ with $l-m<k<l-m$, the Iwasawa main conjecture would relate the characteristic ideal of $X_{\bal}(\mathbb A_{\hf gh}^{\dag})$
%(which should then be a torsion $\Lambda_{\hf}$-module)
to the balanced triple product $p$-adic $L$-function constructed in \cite[Thm.~B]{Hs} (see also \cite{GS}), rather than diagonal classes. In this setting, the $f$-unbalanced Selmer group $\Sel_{\mathcal F}(\mathbb V_{\hf gh}^{\dag})$ should have $\Lambda_{\hf}$-rank one, but the expected non-torsion Selmer class seems to not have been constructed yet.
\end{remark}

\subsection{Equivalence of the formulations}\label{subsec:equiv-IMC}

In this subsection we show that the two formulations of the Iwasawa main conjecture in the previous subsection are essentially equivalent, focusing on the case where $\hf$ is a CM Hida family\footnote{This case will suffice for our applications in this paper, and makes some of the arguments simpler, but we expect the equivalence to hold in general.} as in $\S\ref{subsec:Iw}$.
%(Theorem~\ref{thm:equiv}).
Similar equivalences between IMC ``with'' and ``without'' $p$-adic $L$-functions appear in \cite[\S{17}]{Kato}, \cite[\S{11}]{KLZ} and, in a setting more germane to ours, \cite{wan} and \cite[Appendix]{Cas}.

The following intermediate Selmer groups will allow us to bridge between ${\rm Sel}_{\bal}(\mathbb V_{\hf gh}^\dag)$ and ${\rm Sel}_{\mathcal F}(\mathbb V_{\hf gh}^\dag)$ in the comparison. Set
\[
\mathbb V_{\hf gh}^{f \cap +} = \mathbb V_{\hf gh}^f \cap  \mathscr{F}^2\mathbb V_{\hf gh}^\dag, \quad \mathbb V_{\hf gh}^{f \cup +} = \mathbb V_{\hf gh}^f + \mathscr{F}^2\mathbb V_{\hf gh}^\dag,
\]
which are $G_{\mathbb Q_p}$-stable $\Lambda_\hf$-submodules of $\mathbb V_{\hf gh}^\dag$ of ranks $3$ and $5$, respectively.  Define $\Sel_{\mathcal L}(\mathbb V_{\hf gh}^{\dag})$ for $\mathcal{L}\in\{\mathcal F\cap +,\mathcal{F}\cup +\}$ by the same recipe as in Definition~\ref{def:Sel}, with
\[
H_{\mathcal L}^1(\mathbb Q_p, \mathbb V_{\hf gh}^{\dag}) =
\begin{cases}
\ker \bigl(H^1(\mathbb Q_p, \mathbb V_{\hf gh}^{\dag}) \rightarrow H^1(\mathbb Q_p, \mathbb V_{\hf gh}^{\dag}/ \mathbb V_{\hf gh}^{f\cap+}) \bigr) &\textrm{if $\mathcal{L}=\mathcal{F}\cap +$},\\[0.5em]
\ker \bigl(H^1(\mathbb Q_p, \mathbb V_{\hf gh}^{\dag}) \rightarrow H^1(\mathbb Q_p^{}, \mathbb V_{\hf gh}^{\dag}/ \mathbb V_{\hf gh}^{f\cup +}) \bigr) &\textrm{if $\mathcal{L}=\mathcal{F}\cup +$}.
\end{cases}
\]
We define the Selmer groups $\Sel_{\mathcal F\cap +}(\mathbb A_{\hf gh}^{\dag})$ and $\Sel_{\mathcal F\cup +}(\mathbb A_{\hf gh}^{\dag})$ taking $H_{\mathcal F\cap +}^1(\mathbb Q_p, \mathbb A_{\hf gh}^{\dag})$ and $H_{\mathcal F\cup +}^1(\mathbb Q_p, \mathbb A_{\hf gh}^{\dag})$ to be the orthogonal complements of $H_{\mathcal F\cup +}^1(\mathbb Q_p, \mathbb V_{\hf gh}^{\dag})$ and $H_{\mathcal F\cap +}^1(\mathbb Q_p, \mathbb V_{\hf gh}^{\dag})$, respectively. As in the preceding section, we also define the corresponding $X_{\mathcal F\cap +}(\mathbb A_{\hf gh}^{\dag})$ and $X_{\mathcal F\cup +}(\mathbb A_{\hf gh}^{\dag})$.

Throughout this subsection, we keep the setting from $\S\ref{subsec:Iw}$. In particular, $\hf\in\Lambda_{\hf}[[q]]$ is the CM Hida family in $(\ref{eq:CM-F})$ associated with the Hecke character $\psi$ of conductor $\frk{f}$. In addition, we assume  conditions (b) and (c) from Theorem~\ref{thm:hsieh}, so the $p$-adic $L$-function $L_p(\hf,g,h)\in\Lambda_{\hf}$ is defined, and let $\kappa(\hf,g,h)\in H^1(\Q,\mathbb V_{\hf gh}^\dag)$ be as above.

%As in $\S\ref{subsec:Iw}$ we identify $\Lambda_{\hf}$ with the anticyclotomic Iwasawa algebra $\Lambda$.
For every height one prime $\frk{Q}$ of $\Lambda_{\hf}$ away from $p$, let $S_{\frk{Q}}$ be the integral closure of $\Lambda_{\hf}/\frk{Q}$
%; this is a discrete valuation ring with maximal ideal $\mathfrak{m}_{\frk{Q}}=(\pi_{\frk{Q}})$,
and let $\Phi_{\frk{Q}}$ be the fraction field of $S_\mathfrak Q$. Let $\mathbb V_{\hf gh,\frk{Q}}^\dag$ be the extension of scalars of $\mathbb V_{\hf gh}^{\dag}/\mathfrak Q\mathbb V_{\hf gh}^\dag$ to $S_{\mathfrak Q}$,
and let $\mathbb A^\dag_{\hf gh,\mathfrak Q}=\Hom(\mathbb V^\dag_{\hf gh,\mathfrak Q},\mu_{p^\infty})$.
%be the Pontryagin dual of $\mathbb V^\dag_{\hf gh,\mathfrak Q}$.
Following \cite{MR}, define
\begin{equation}\label{eq:propagate}
H^1_{\underline{\rm bal}}(\Q_v,\mathbb V_{\hf gh,\mathfrak Q}
^{\dag}):=
\begin{cases}
\ker\bigl(H^1(\Q_v,\mathbb V_{\hf gh,\mathfrak Q}^{\dag})\rightarrow H^1(\Q_v^{\rm nr},\mathbb V_{\hf gh,\mathfrak Q}^{\dag}\otimes\Phi_{\frk{Q}})\bigr),&\textrm{if $v\nmid p$,}\\[0.5em]
\ker\bigl(H^1(\Q_v,\mathbb V_{\hf gh,\mathfrak Q}^{\dag})\rightarrow H^1(\Q_v,(\mathbb V_{\hf gh,\mathfrak Q}^{\dag}/\mathscr{F}^2\mathbb V_{\hf gh,\mathfrak Q}^\dag)\otimes\Phi_{\frk{Q}})\bigr),&\textrm{if $v\mid p$,}\end{cases}
\end{equation}
and let $H^1_{\underline{\bal}}(\Q,\mathbb V_{\hf gh,\mathfrak Q}^{\dag})$ be the associated Selmer group. Taking $H^1_{\underline{\bal}}(\Q_v,\mathbb A_{\hf gh,\mathfrak Q}^{\dag})$ to be the orthogonal complement of $H^1_{\underline{\bal}}(\Q_v,\mathbb V_{\hf gh,\mathfrak Q}^{\dag})$ under local Tate duality, we define the Selmer group $H^1_{\underline{\bal}}(\Q,\mathbb A_{\hf gh,\mathfrak Q}^{\dag})$ similarly.

Define $\bb{V}_{g,h}^\psi=\Lambda_{\cl{O}}(\bm{\kappa}_{\text{ac}}^{-1})\hat{\otimes}_{\cl{O}} T_{g,h}^\psi(\kappa_{\ac}^{r_1/2})$ and let $\bb{A}_{g,h}^\psi=\Hom((\bb{V}_{g,h}^\psi)^c,\mu_{p^\infty})$, where $(\bb{V}_{g,h}^\psi)^c$ denotes $\bb{V}_{g,h}^\psi$ with the $G_K$-action twisted by complex conjugation.
%We regard $\bb{A}_{g,h}^\psi$ as a $\Lambda_{\cl{O}}$-module with the $\Lambda_{\cl{O}}$-action defined by $(\lambda a)(v)=a(\lambda^\iota v)$ for $\lambda\in\Lambda_{\cl{O}}$, $v\in (\bb{V}_{g,h}^\psi)^c$ and $a\in \bb{A}_{g,h}^\psi$, where $\iota$ denotes the involution of $\Lambda_{\cl{O}}$ sending a group-like element $[\gamma]$ to $[\gamma^{-1}]$.
Note that $\mathbb V_{\hf gh}^{\dag}=\text{Ind}_{K}^\bb{Q}\bb{V}_{g,h}^\psi$, so we can define Selmer conditions for $\bb{V}_{g,h}^\psi$ using Shapiro's lemma and for $\bb{A}_{g,h}^\psi$ by duality. Define $\bb{A}_{g,h,\frk{Q}}^\psi=\Hom((\bb{V}_{g,h,\frk{Q}}^\psi)^c,\mu_{p^\infty})$. We have natural maps
%Using that the $G_K$-representation $T_{g,h}^\psi$ is conjugate self-dual (for the second map), we obtain natural maps
\begin{equation}\label{eq:spQ}
\mathbb V_{g,h}^\psi/\frk{Q}\mathbb V_{g,h}^\psi\rightarrow\mathbb V_{g,h,\frk{Q}}^\psi,\quad
\mathbb A_{g,h,\frk{Q}}^\psi\rightarrow\mathbb A_{g,h}^\psi[\frk{Q}]
\end{equation}
preserving both the $G_K$ and the $\Lambda$-modules structure in the same  way as in \cite[p.~1461]{How}. Note that in the quotient $\mathbb V_{g,h}^\psi/\frk{Q}\mathbb V_{g,h}^\psi$ and in the submodule $\mathbb A_{g,h}^\psi[\frk{Q}]$ we can define Selmer conditions by propagating the balanced conditions for $\mathbb V_{g,h}^\psi$ and $\mathbb A_{g,h}^\psi$, respectively, and we denote these conditions in the same way.

\begin{lemma}\label{lem:spQ}
For every height one prime $\frk{Q}\subset\Lambda_{\hf}$ as above and every place $v$ of $K$, the maps $(\ref{eq:spQ})$ induce natural maps
\begin{align*}
H^1_{\bal}(K_v,\mathbb V_{gh}^{\psi}/\mathfrak{Q}\mathbb V_{gh}^\psi)&\longrightarrow H^1_{\underline{\bal}}(K_v,\mathbb V_{gh,\mathfrak Q}^{\psi}),\\
H^1_{\underline{\bal}}(K_v,\mathbb A_{gh,\mathfrak Q}^{\psi})&\longrightarrow H^1_{\bal}(K_v,\mathbb A_{gh}^{\psi}[\mathfrak{Q}])
\end{align*}
with finite kernel and cokernel, of order bounded by constants depending only on $[S_{\frk{Q}}:\Lambda_{\hf}/\frk{Q}]$.
\end{lemma}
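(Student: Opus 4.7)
My plan is to reduce both statements to a single snake-lemma comparison for the scalar extension $\Lambda_{\hf}/\frk{Q}\hookrightarrow S_{\frk{Q}}$, and then to verify, separately at primes dividing and not dividing $p$, that the balanced local conditions match up to bounded error.

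First I would observe that, since $\frk{Q}$ is a height one prime of the regular two-dimensional local ring $\Lambda_{\hf}$ not containing $p$, the quotient $\Lambda_{\hf}/\frk{Q}$ is a $\bb{Z}_p$-order in a finite extension of $\bb{Q}_p$ whose normalization is $S_{\frk{Q}}$, and the cokernel of $\Lambda_{\hf}/\frk{Q}\hookrightarrow S_{\frk{Q}}$ is a finite abelian group annihilated by $d_{\frk{Q}}:=[S_{\frk{Q}}:\Lambda_{\hf}/\frk{Q}]$. As $\bb{V}_{g,h}^\psi$ is free over $\Lambda_{\cO}\cong\Lambda_{\hf}$, tensoring produces a short exact sequence of $G_K$-modules
$$
0\longrightarrow \bb{V}_{g,h}^\psi/\frk{Q}\bb{V}_{g,h}^\psi\longrightarrow \bb{V}_{g,h,\frk{Q}}^\psi\longrightarrow C\longrightarrow 0
$$
with $d_{\frk{Q}}\cdot C=0$, and, by Pontryagin duality, a dual sequence
$$
0\longrightarrow C^\vee\longrightarrow \bb{A}_{g,h,\frk{Q}}^\psi\longrightarrow \bb{A}_{g,h}^\psi[\frk{Q}]\longrightarrow 0.
$$
Taking $G_{K_v}$-cohomology of each and noting that $H^i(K_v,C)$ and $H^i(K_v,C^\vee)$ are finite and annihilated by $d_{\frk{Q}}$, we obtain natural comparison maps on unrestricted $H^1$ with kernel and cokernel of order bounded by a fixed power of $d_{\frk{Q}}$.

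Next I would verify that these comparison maps carry balanced subspaces to balanced subspaces. For $v\nmid p$, both the propagated condition $H^1_{\bal}(K_v,\bb{V}_{g,h}^\psi/\frk{Q})$ and the Mazur--Rubin-type condition $H^1_{\underline{\bal}}(K_v,\bb{V}_{g,h,\frk{Q}}^\psi)$ of (\ref{eq:propagate}) impose unramifiedness (the latter after extending scalars to $\Phi_{\frk{Q}}$), and applying the snake lemma at the level of $I_{K_v}$-invariants identifies the two subspaces up to a finite group of order dividing a fixed power of $d_{\frk{Q}}$. For $v\mid p$, the same argument applied to the sub-filtration $\mathscr{F}^+\bb{V}_{g,h}^\psi\subset\bb{V}_{g,h}^\psi$ of (\ref{eq:Gr-p})---which is free over $\Lambda_{\cO}$ and thus yields a parallel short exact sequence under reduction modulo $\frk{Q}$---together with the definition of the balanced condition as the image from $H^1(K_v,\mathscr{F}^+\bb{V}_{g,h}^\psi)$, gives the desired bounded identification. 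The corresponding statements on the $\bb{A}$-side follow by Pontryagin-dualizing the diagram and invoking local Tate duality to identify the dual of the balanced local condition with the balanced local condition of the dual module.

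The step I expect to be most delicate is the $v\mid p$ case of the last verification: one needs to ensure that the image in $H^1(K_v,\bb{V}_{g,h,\frk{Q}}^\psi)$ of the reduction modulo $\frk{Q}$ of $H^1(K_v,\mathscr{F}^+\bb{V}_{g,h}^\psi)$ agrees, up to a bounded finite group, with the kernel of the projection to $H^1(K_v,(\bb{V}_{g,h,\frk{Q}}^\psi/\mathscr{F}^+\bb{V}_{g,h,\frk{Q}}^\psi)\otimes\Phi_{\frk{Q}})$. This amounts to controlling $H^0(K_v,\bb{V}_{g,h,\frk{Q}}^\psi/\mathscr{F}^+\bb{V}_{g,h,\frk{Q}}^\psi)$ and the analogous $H^2$, both of which are finite and uniformly bounded in $\frk{Q}$ thanks to the $p$-distinguished hypothesis on $\psi$ and the ordinarity of $g$ and $h$: these ensure that the generic Frobenius eigenvalues on the relevant quotient are not $1$, so the vanishing passes from the generic fibre to all but finitely many $\frk{Q}$, with a uniform bound on the torsion that appears.
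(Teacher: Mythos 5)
Your overall strategy matches the paper's: reduce both maps to a snake-lemma comparison along the finite index inclusion $\Lambda_{\hf}/\frk{Q}\hookrightarrow S_\frk{Q}$, handle $v\nmid p$ by comparing unramified conditions (as in Mazur--Rubin), and handle $v\mid p$ by comparing the ordinary filtrations and reducing to finiteness of cohomology of the graded pieces (as in Howard's Lemma~2.2.7). The paper does exactly this, citing \cite[Lem.~5.3.13]{MR} away from $p$ and \cite[Lem.~2.2.7]{How} at $p$.

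However, your treatment of the $v\mid p$ case—which you yourself flag as the delicate step—has a genuine gap. You assert that the needed finiteness of $H^0$ of the quotient pieces follows from ``the $p$-distinguished hypothesis on $\psi$ and the ordinarity of $g$ and $h$,'' and you argue it only ``generically,'' concluding that vanishing holds for ``all but finitely many $\frk{Q}$.'' Neither part is right as stated. The $p$-distinguished hypothesis is a mod-$p$ statement about the residual character; it is not what controls the $H^0$ of the graded pieces over the local $\bb{Z}_p$-extension. What is actually used is the Ramanujan--Petersson bound: the Frobenius eigenvalues $\alpha_g\alpha_h\psi(\frk{p})/p^{k-1}$ and $\beta_g\alpha_h\psi(\overline{\frk{p}})$ are shown to differ from $1$ because the Weil bounds on the Hecke eigenvalues (and $p\nmid N$) force them to have the wrong archimedean absolute value. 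This gives finiteness of $H^0(K_{\infty,\frk{p}},\cdot)$ and $H^0(K_{\infty,\overline{\frk{p}}},\cdot)$ \emph{before} any specialization to a height-one prime $\frk{Q}$, which is exactly what yields a bound depending only on $[S_\frk{Q}:\Lambda_\hf/\frk{Q}]$ uniformly in $\frk{Q}$. A ``generic'' argument over the $\frk{Q}$'s, by contrast, would not give the stated uniform bound for \emph{every} height-one prime, and so would not suffice for the applications of this lemma (e.g.\ the rank comparison in Lemma~\ref{rangs}).

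One smaller imprecision: you write the local condition at $v\mid p$ as coming from a single $\mathscr{F}^+\bb{V}_{g,h}^\psi$, but after Shapiro the balanced condition decomposes as a direct sum over the two primes $\frk{p},\overline{\frk{p}}$ of $K$ above $p$, with \emph{different} filtration steps $\mathcal{F}_\frk{p}^+=\mathscr{F}^1$ and $\mathcal{F}_{\overline{\frk{p}}}^+=\mathscr{F}^2$ at the two primes, as in~(\ref{eq:Gr-p}); the $H^0$-finiteness check must be done separately for each prime and each piece of the corresponding quotient.
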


\begin{proof}
%Since the primes $v$ of $K$ dividing $N_{K/\Q}(\frk{f})N_gN_h$ are finitely decomposed in $K_\infty$ under our hypotheses, the same argument as in \cite[Lem.~5.3.13]{MR} applies for these $v$.
For the primes $v\nmid p$, the same argument as in the proof of \cite[Lem.~5.3.13]{MR} applies, so it remains to consider the case $v\mid p$. Put
\begin{align*}
\mathcal{F}_{\frk{p}}^+(T_{g,h}^\psi)&=(T_g^+\otimes T_h+T_g\otimes T_h^+)(\psi_{\frk{P}}^{-1})(-1-r),\\
\mathcal{F}_{\overline{\frk{p}}}^+(T_{g,h}^\psi)&=(T_g^+\otimes T_h^+)(\psi_{\frk{P}}^{-1})(-1-r).
\end{align*}
Under the isomorphism $H^1(\Q,\mathbb V_{\hf gh}^\dag)\cong H^1(K,\Lambda_{\cl{O}}(\bm{\kappa}_{\text{ac}}^{-1})\hat{\otimes}_{\cl{O}}T_{g,h}^\psi(\kappa_{\ac}^{r_1/2}))$ coming from Shapiro's lemma, the balanced local condition  $H^1_{\rm bal}(\Q_p,\mathbb{V}_{\hf gh}^\dag)$ corresponds to
\[
H^1(K_\frk{p},\Lambda_{\cl{O}}(\bm{\kappa}_{\text{ac}}^{-1})\hat{\otimes}_{\cl{O}}\mathcal{F}_{\frk{p}}^+(T_{g,h}^\psi)(\kappa_{\ac}^{r_1/2}))
\oplus H^1(K_{\overline{\frk{p}}},\Lambda_{\cl{O}}(\bm{\kappa}_{\text{ac}}^{-1})\hat{\otimes}_{\cl{O}}\mathcal{F}_{\overline{\frk{p}}}^+(T_{g,h}^\psi)(\kappa_{\ac}^{r_1/2})).
\]
Let $A_g^-=T_g^-\otimes \bb{Q}_p/\bb{Z}_p$, and define $A_g^+$, $A_h^-$ and  $A_h^+$ similarly. Arguing as in the proof of \cite[Lem.~2.2.7]{How}, we reduce to showing that the groups
\[
H^0(K_{\infty,\frk{p}},(A_g^-\otimes A_h^-)(\psi_{\frk{P}}^{-1}\kappa_{\ac}^{r_1/2})(-1-r)),\quad
H^0(K_{\infty,\overline{\frk{p}}},(A_g^+\otimes A_h^-)(\psi_{\frk{P}}^{-1}\kappa_{\ac}^{r_1/2})(-1-r))
\]
are both finite, which follows from the fact that $\alpha_g\alpha_h\psi(\frk{p})/p^{k-1}\neq 1$ and $\beta_g\alpha_h\psi(\overline{\frk{p}})\neq 1$, and this is a consequence of the Ramanujan--Petersson conjecture since we are assuming that $p\nmid N$. Note that the other pieces in the quotient decomposition can be treated similarly. This yields the required bounds on the kernel and cokernel of the first map in the statement of the lemma, and the result for the second map follows as well by local duality.
\end{proof}

Let $\Sigma_\Lambda$ be the set of height one primes of $\Lambda_{\hf}$ consisting of $p$ and those for which either $H^2(\Q^\Sigma/\Q,\mathbb V_{\hf gh}^\dag)[\frk{Q}]$ is infinite or $H^2(\Q_p,\mathbb V_{\hf gh}^\dag)[\frk{Q}]$ is infinite. Since $H^2(\Q^\Sigma/\Q,\mathbb V_{\hf gh}^\dag)$ and $H^2(\Q_p,\mathbb V_{\hf gh}^\dag)$ are both finitely generated $\Lambda$-modules, the set $\Sigma_\Lambda$ is finite.
%and as in the proof of \cite[Prop.~5.3.14]{MR}, Lemma~\ref{lem:spQ} yields the following.

\begin{proposition}\label{prop:sp-Q}
For every height one prime $\frk{Q}\not\in\Sigma_\Lambda$, the maps (\ref{eq:spQ}) induce natural maps
\begin{align*}
\Sel_{\bal}(\mathbb V_{\hf gh}^{\dag})/\frk{Q}\Sel_{\bal}(\mathbb V_{\hf gh}^{\dag})&\longrightarrow \Sel_{\underline{\bal}}(\mathbb V_{\hf gh,\frk{Q}}^{\dag}),\\
\Sel_{\underline{\bal}}(\mathbb A_{\hf gh,\frk{Q}}^{\dag})&\longrightarrow \Sel_{\bal}(\mathbb A_{\hf gh}^{\dag})[\frk{Q}]
\end{align*}
with finite kernel and cokernel bounded by a constant depending only on $[S_{\frk{Q}}:\Lambda_{\hf}/\frk{Q}]$.
\end{proposition}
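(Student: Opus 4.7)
The plan is a standard Mazur-style control argument, combining Lemma~\ref{lem:spQ} at the local places with the long exact sequence in Galois cohomology. Since $\Lambda_{\hf}\cong\cO[[T]]$ is a two-dimensional regular local ring, the height one prime $\frk{Q}$ is principal, say $\frk{Q}=(\varpi)$. Multiplication by $\varpi$ yields a short exact sequence of $\Lambda_{\hf}[G_{\Q}]$-modules
\[
0\longrightarrow \mathbb V_{\hf gh}^\dag \xrightarrow{\varpi} \mathbb V_{\hf gh}^\dag \longrightarrow \mathbb V_{\hf gh}^\dag/\frk{Q}\mathbb V_{\hf gh}^\dag \longrightarrow 0.
\]
Passing to continuous Galois cohomology globally and composing with the base-change map from $\Lambda_{\hf}/\frk{Q}$ to $S_{\frk{Q}}$ produces a natural map $H^1(\Q^\Sigma/\Q,\mathbb V^\dag_{\hf gh})/\frk{Q}\to H^1(\Q^\Sigma/\Q,\mathbb V^\dag_{\hf gh,\frk{Q}})$, whose kernel and cokernel are controlled by $H^0(\Q^\Sigma/\Q,\mathbb V^\dag_{\hf gh}/\frk Q\mathbb V^\dag_{\hf gh})$, by $H^2(\Q^\Sigma/\Q,\mathbb V^\dag_{\hf gh})[\frk Q]$ (finite by the hypothesis $\frk{Q}\notin\Sigma_\Lambda$), and by $[S_{\frk{Q}}:\Lambda_{\hf}/\frk{Q}]$. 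An analogous map on local cohomology exists at each $v\in\Sigma$; at $v=p$ the same finiteness holds by $\frk{Q}\notin\Sigma_\Lambda$, while for $v\neq p,\infty$ the only local finiteness needed is precisely what is supplied by Lemma~\ref{lem:spQ}.

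Next, I would apply the snake lemma to the commutative diagram whose exact rows are the defining four-term sequences for $\Sel_{\bal}(\mathbb V^\dag_{\hf gh})/\frk Q$ and for $\Sel_{\underline{\bal}}(\mathbb V^\dag_{\hf gh,\frk Q})$, with vertical arrows induced by the construction above. The middle vertical arrow is controlled as in the previous paragraph, and the rightmost vertical arrow, acting on the sum of local quotients $H^1(\Q_v,\mathbb V^\dag_{\hf gh})/H^1_{\bal}(\Q_v,\mathbb V^\dag_{\hf gh})$, is controlled at every $v\in\Sigma$ by Lemma~\ref{lem:spQ}, which bounds both the kernel and the cokernel of the transition on $H^1_{\bal}$ (and hence on the quotient). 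A diagram chase then produces the first map in the proposition with the claimed finite kernel and cokernel.

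For the second map, the same strategy applies to the dual short exact sequence
\[
0\longrightarrow\mathbb A_{\hf gh}^\dag[\frk{Q}]\longrightarrow\mathbb A_{\hf gh}^\dag\xrightarrow{\varpi}\mathbb A_{\hf gh}^\dag\longrightarrow 0,
\]
using the second assertion of Lemma~\ref{lem:spQ} at the local places; alternatively it can be deduced from the first map by Pontryagin duality, since the balanced local conditions on $\mathbb V^\dag_{\hf gh}$ and $\mathbb A^\dag_{\hf gh}$ are orthogonal complements under local Tate duality. The main obstacle is the routine but careful bookkeeping required to verify that the induced maps on Selmer groups actually exist (i.e.\ that Selmer classes go to Selmer classes under the transition maps) and that each of the finite error terms is uniformly bounded by $[S_{\frk{Q}}:\Lambda_{\hf}/\frk{Q}]$; no new idea beyond Lemma~\ref{lem:spQ} is needed, and the argument closely follows the pattern of well-known control theorems such as those of \cite{How} and \cite{MR}.
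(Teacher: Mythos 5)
Your proposal is correct and follows essentially the same route as the paper, which disposes of the statement by citing Lemma~\ref{lem:spQ} together with the control arguments of \cite[Prop.~5.3.14]{MR} and \cite[Lem.~2.2.8]{How}; your snake-lemma diagram chase, with global error terms controlled by $H^2[\frk{Q}]$ and the base-change index $[S_{\frk{Q}}:\Lambda_{\hf}/\frk{Q}]$ and local error terms supplied by Lemma~\ref{lem:spQ}, is precisely what those references carry out. The only imprecision is the passing mention of $H^0(\Q^\Sigma/\Q,\mathbb V^\dag_{\hf gh}/\frk Q\mathbb V^\dag_{\hf gh})$ as controlling the kernel of the global transition map: that map $H^1(\Q^\Sigma/\Q,\mathbb V^\dag_{\hf gh})/\frk{Q}\hookrightarrow H^1(\Q^\Sigma/\Q,\mathbb V^\dag_{\hf gh}/\frk{Q}\mathbb V^\dag_{\hf gh})$ is automatically injective with cokernel $H^2(\Q^\Sigma/\Q,\mathbb V^\dag_{\hf gh})[\frk{Q}]$, so the $H^0$ term plays no role, but this is harmless and does not affect the argument.
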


\begin{proof}
This follows from Lemma~\ref{lem:spQ} as in the proof of \cite[Prop.~5.3.14]{MR} (see also \cite[Lem.~2.2.8]{How} and \cite[Lem.~3.2.10]{How2}).
\end{proof}

For every height one prime $\frk{Q}\subset\Lambda_{\hf}$ as above, let $\mathfrak{m}_{\frk{Q}}=(\pi_{\frk{Q}})$ be the maximal ideal of $S_{\frk{Q}}$.

\begin{lemma}\label{rangs}
%Assume that $\Lambda_{\hf}$ is regular and that $H^0(G_\Sigma,\mathbb A_{\hf gh}^\dag)=H^0(G_\Sigma,\mathbb V_{\hf gh}^\dag)=0$.
Assume that there is a perfect pairing $T_{g,h}^\psi\times T_{g,h}^\psi\rightarrow \cl{O}(1)$ such that $\langle x^\sigma,y^{c\sigma c}\rangle=\langle x,y\rangle^\sigma$ for all $x,y\in T_{g,h}^\psi$ and for all $\sigma\in G_K$, where $c$ stands for complex conjugation. The following hold:
\begin{enumerate}
\item ${\rm rank}_{\Lambda_{\hf}}\,\Sel_{\bal}(\mathbb V_{\hf gh}^{\dag})={\rm rank}_{\Lambda_{\hf}}\,X_{\bal}(\mathbb A_{\hf gh}^{\dag})$.
\item ${\rm rank}_{\Lambda_{\hf}}\,\Sel_{\mathcal F}(\mathbb V_{\hf gh}^{\dag})={\rm rank}_{\Lambda_{\hf}}\,X_{\mathcal F}(\mathbb A_{\hf gh}^{\dag})$.
\item ${\rm rank}_{\Lambda_{\hf}}\,X_{\mathcal F \cup +}(\mathbb A_{\hf gh}^{\dag}) = 1+{\rm rank}_{\Lambda_{\hf}}\,X_{\mathcal F \cap +}(\mathbb A_{\hf gh}^{\dag})$, and
\[
\Char_{\Lambda_{\hf}}(X_{\mathcal F \cup +}(\mathbb A_{\hf gh}^{\dag})_{\tors}) = \Char_{\Lambda_{\hf}}(X_{\mathcal F \cap +}(\mathbb A_{\hf gh}^{\dag})_{\rm tors}),
\]
in $\Lambda_{\hf}\otimes_{\Z_p}\Q_p$.
\end{enumerate}
\end{lemma}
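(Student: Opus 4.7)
\emph{Proof plan.}
The argument will proceed via Poitou--Tate global duality in the $\Lambda_\hf$-adic setting, combined with an analysis of how the local conditions at $p$ interact with the self-dual pairing on $\mathbb V_{\hf gh}^\dag$ induced (by induction from $K$ to $\Q$) from the hypothetical perfect pairing on $T_{g,h}^\psi$. The first step is to verify three duality properties: \emph{(i)} $\mathscr F^2 \mathbb V_{\hf gh}^\dag$ is self-orthogonal under the local Tate pairing at $p$, as the filtration $\mathscr F^\bullet$ satisfies $(\mathscr F^i)^\perp = \mathscr F^{4-i}$; \emph{(ii)} $\mathbb V_{\hf gh}^f$ is self-orthogonal, because the self-duality of $\mathbb V_\hf$ pairs $\mathbb V_\hf^+$ with $\mathbb V_\hf^-$; \emph{(iii)} $\mathbb V_{\hf gh}^{f\cap+}$ and $\mathbb V_{\hf gh}^{f\cup+}$ are orthogonal complements of each other. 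At $v\neq p$ the unramified condition is tautologically self-dual.

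Parts (1) and (2) will then follow from the $\Lambda_\hf$-adic Euler--Poincar\'e characteristic formula for Greenberg Selmer groups: for any self-dual Selmer structure $\cL=\cL^\perp$ on a self-dual $\Lambda_\hf$-module one has
\[
\rank_{\Lambda_\hf} \Sel_\cL(\mathbb V_{\hf gh}^\dag) \;=\; \rank_{\Lambda_\hf} X_\cL(\mathbb A_{\hf gh}^\dag).
\]
The local ``defect'' terms at $v\nmid p$ vanish by purity of weight (each classical specialization is pure of weight $-1$, so the generic unramified $H^1$ is trivial), while the local defect at $p$ vanishes by the self-orthogonality of the local condition.  Applied with $\cL=\bal$ and $\cL=\cF$, this gives the two rank equalities.

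For part (3), fact \emph{(iii)} together with the same Poitou--Tate machinery gives the cross equalities $\rank_{\Lambda_\hf}\Sel_{\cF\cap+}(\mathbb V)=\rank_{\Lambda_\hf}X_{\cF\cup+}(\mathbb A)$ and $\rank_{\Lambda_\hf}\Sel_{\cF\cup+}(\mathbb V)=\rank_{\Lambda_\hf}X_{\cF\cap+}(\mathbb A)$. The rank shift will be extracted from the Poitou--Tate nine-term sequence whose relevant portion reads
\[
0 \to \Sel_{\cF\cap+}(\mathbb V) \to \Sel_{\cF\cup+}(\mathbb V) \to \frac{H^1_{\cF\cup+}(\Q_p,\mathbb V)}{H^1_{\cF\cap+}(\Q_p,\mathbb V)} \to X_{\cF\cap+}(\mathbb A) \to X_{\cF\cup+}(\mathbb A)\to 0,
\]
combined with an analysis of the local quotient via the short exact sequence $0\to \mathbb V_{\hf gh}^{f\cap+}\to \mathbb V_{\hf gh}^{f\cup+}\to W\oplus W^\vee(1)\to 0$, where $W$ has $\Lambda_\hf$-rank $1$ (explicitly $\mathbb V_\hf^+\otimes \mathbb V_g^-\otimes \mathbb V_h^-$ appropriately twisted by $\Xi_{\hf gh}$). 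Local Tate duality pairs $H^1(\Q_p,W)$ with $H^1(\Q_p,W^\vee(1))$, each of generic $\Lambda_\hf$-rank one, but only one of them lies in the ordinary direction and propagates as a free rank-$1$ summand to the global Selmer group, while the other is absorbed into the kernel of the natural map to $X_{\cF\cup+}$. This yields a rank shift of exactly $1$, proving the rank equality. The claimed equality of characteristic ideals of $(X_{\cF\cup+})_{\tors}$ and $(X_{\cF\cap+})_{\tors}$ will then follow from a snake-lemma chase on the displayed sequence: once the rank-$1$ free summand is removed from both $X_{\cF\cup+}$ and the cokernel of the local map, the remaining pieces are pseudo-isomorphic, forcing the torsion characteristic ideals to agree in $\Lambda_\hf\otimes_{\Z_p}\Q_p$.

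The main obstacle is the precise identification of the local rank shift as $\delta=1$ (rather than $0$ or $2$). This requires a careful analysis of the asymmetry between $W$ and $W^\vee(1)$ induced by the Panchishkin structure: $\mathbb V_\hf^+$ is ramified at $p$ while $\mathbb V_g^-$ and $\mathbb V_h^-$ are unramified, so the two rank-$1$ summands contribute asymmetrically to the image in global $H^1$. This step is analogous in spirit and in technique to Howard's rank-one calculation for the anticyclotomic Heegner-point Selmer group, and constitutes the arithmetic heart of the lemma.
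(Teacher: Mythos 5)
Your proposal takes a genuinely different route from the paper's. The paper does not work with a $\Lambda_{\hf}$-adic Euler--Poincar\'e formula directly: it reduces everything to specializations at height-one primes $\mathfrak Q\subset\Lambda_{\hf}$ away from a finite set $\Sigma_\Lambda$, using the control maps of Lemma~\ref{lem:spQ} and Proposition~\ref{prop:sp-Q}. Parts (1) and (2) then come from the observation (via the assumed perfect pairing, which gives $\mathbb A^\dag_{\hf gh,\mathfrak Q}\cong\mathbb V^\dag_{\hf gh,\mathfrak Q}\otimes\mathbb Q_p/\mathbb Z_p$) that $\Sel_{\underline{\bal}}(\mathbb V^\dag_{\hf gh,\mathfrak Q})$ is the $\pi_{\mathfrak Q}$-adic Tate module of $\Sel_{\underline{\bal}}(\mathbb A^\dag_{\hf gh,\mathfrak Q})$, invoking \cite[Lem.~1.3.3]{How}. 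Part (3) is proved by applying \cite[Thm.~4.1.13]{MR} at each $\mathfrak Q$, computing the integer $r$ there via the Greenberg--Wiles corank count ${\rm corank}\,H^1(\mathbb Q_p,\mathbb A^{f\cup+}_{\hf gh,\mathfrak Q})-{\rm corank}\,H^0(\mathbb R,\mathbb A^\dag_{\hf gh,\mathfrak Q})=5-4=1$, and then reassembling over $\Lambda_{\hf}$ as in \cite[Lem.~1.2.6]{AH}.

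Two genuine gaps. First, for parts (1) and (2): the claimed ``$\Lambda_{\hf}$-adic Euler--Poincar\'e characteristic formula'' equating $\rank_{\Lambda_{\hf}}\Sel_{\cL}(\mathbb V^\dag_{\hf gh})$ with $\rank_{\Lambda_{\hf}}X_{\cL}(\mathbb A^\dag_{\hf gh})$ is not a citable result, and self-orthogonality of $\cL$ alone does not yield it. Euler--Poincar\'e/Greenberg--Wiles compares coranks of two Selmer structures on the \emph{discrete} module; passing between $\Sel_{\cL}(\mathbb V^\dag)$ and $\Sel_{\cL}(\mathbb A^\dag)$ requires a Tate-module/control argument---precisely the step the paper does via $\mathfrak Q$-specialization and Howard's lemma---which your plan omits. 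Second, for part (3): your argument that ``only one of $W$, $W^\vee(1)$ lies in the ordinary direction and propagates as a free rank-one summand'' is not a rigorous identification of the local rank shift. Both pieces are rank one and the pairing between $H^1(\Q_p,W)$ and $H^1(\Q_p,W^\vee(1))$ is symmetric in the two factors; the actual source of the ``$1$'' is the discrepancy ${\rm corank}\,H^1(\mathbb Q_p,\mathbb A^{f\cup+})-{\rm corank}\,H^0(\mathbb R,\mathbb A^\dag)=5-4$, an aggregate local-versus-archimedean corank count, not an asymmetry between $W$ and $W^\vee(1)$. Without the Mazur--Rubin core-rank machinery (or an equivalent bookkeeping), the claimed rank shift $\delta=1$ and the equality of torsion characteristic ideals are asserted rather than proved.

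Your duality facts (i)--(iii) about $\mathscr F^2$, $\mathbb V^f_{\hf gh}$, $\mathbb V^{f\cap+}_{\hf gh}$, and $\mathbb V^{f\cup+}_{\hf gh}$ are correct and are implicitly used in the paper; the decomposition of $\mathbb V^{f\cup+}_{\hf gh}/\mathbb V^{f\cap+}_{\hf gh}$ as $W\oplus W^\vee(1)$ with $W=\mathbb V_\hf^+\otimes\mathbb V_g^-\otimes\mathbb V_h^-(\Xi)$ is likewise correct. Keeping these and grafting on the specialization-plus-Mazur--Rubin framework would repair the argument; as written, the two steps above are incomplete.
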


\begin{proof}
For part (1), it suffices to show that for all height one primes $\mathfrak{Q}\subset\Lambda_{\hf}$ with $\frk{Q}\not\in\Sigma_\Lambda$, the modules $\Sel_{\bal}(\mathbb V_{\hf gh}^\dag)/\mathfrak{Q}\Sel_{\bal}(\mathbb V_{\hf gh}^\dag)$ and $\Sel_{\bal}(\mathbb{A}_{\hf gh}^\dag)[\mathfrak{Q}]$ have the same rank over $\Lambda_{\hf}/\frk{Q}$. Since $\Sel_{\underline{\bal}}(\mathbb V_{\hf gh,\mathfrak Q}^{\dag})$ is the $\pi_{\mathfrak Q}$-adic Tate module of $\Sel_{\underline{\bal}}(\mathbb A_{\hf gh,\mathfrak Q}^{\dag})$ (indeed, this is a consequence of \cite[Lem.~1.3.3]{How} since $\mathbb A_{\hf gh,\mathfrak Q}^{\dag}\cong \mathbb V_{\hf gh,\mathfrak Q}^{\dag}\otimes \bb{Q}_p/\bb{Z}_p$), the result thus follows from Proposition~\ref{prop:sp-Q}.

For part (2), under the isomorphism $H^1(\Q,\mathbb V_{\hf gh}^\dag)\cong H^1(K,\Lambda_{\cl{O}}(\bm{\kappa}_{\text{ac}}^{-1})\hat{\otimes}_{\cl{O}}T_{g,h}^\psi(\kappa_{\ac}^{r_1/2}))$ the $f$-unbalanced local condition $H^1_{\mathcal F}(\Q_p,\mathbb V_{\hf gh}^\dag)$ corresponds to
\[
H^1(K_\frk{p},\Lambda_{\cl{O}}(\bm{\kappa}_{\text{ac}}^{-1})\hat{\otimes}_{\cl{O}}T_{g,h}^\psi)\oplus\{0\}
\]
and hence an analogue of Lemma~\ref{lem:spQ} for the $f$-unbalanced Selmer groups follows from the finiteness of $H^0(K_{\infty,\overline{\frk{p}}},A_g\otimes A_h(\psi_{\frk{P}}^{-1}\kappa_{\ac}^{r_1/2})(-1-r))$. By the same reason as above, this yields the equality of ranks in part (2).

Finally, for the proof of part (3) we can argue similarly as in \cite[Thm.~1.2.2]{AH}. Keeping with the above notations, let $\Sel_{\underline{\mathcal F\cup+}}(\mathbb A_{\hf gh,\mathfrak{Q}}^\dag)$ and $\Sel_{\underline{\mathcal F\cap+}}(\mathbb A_{\hf gh,\mathfrak{Q}}^\dag)$ be the Selmer groups defined by the obvious analogues of (\ref{eq:propagate}), so from another application of the argument in Lemma~\ref{lem:spQ} we obtain natural maps
\begin{align*}
\Sel_{\underline{\mathcal F\cup +}}(\mathbb A_{\hf gh,\mathfrak Q}^{\dag})&\longrightarrow \Sel_{\mathcal F\cup +}(\mathbb A_{\hf gh}^{\dag})[\mathfrak{Q}],\\
\Sel_{\underline{\mathcal F\cap +}}(\mathbb A_{\hf gh,\mathfrak Q}^{\dag})&\longrightarrow \Sel_{\mathcal F\cap +}(\mathbb A_{\hf gh}^{\dag})[\mathfrak{Q}]
\end{align*}
with finite kernel and cokernel bounded by a constant depending only on $[S_{\frk{Q}}:\Lambda_{\hf}/\frk{Q}]$. Since the local condition $\mathcal{F}\cap+$ is the orthogonal complement of $\mathcal{F}\cup +$ under the local Tate pairing at $p$ induced by the self-duality of $\mathbb V_{\hf gh}^\dagger$, from \cite[Thm.~4.1.13]{MR} we obtain
\begin{equation}\label{corerank}
\Sel_{\underline{\mathcal F\cup +}}(\mathbb A_{\hf gh,\mathfrak Q}^{\dag})[\pi_\mathfrak Q^i]\cong(\Phi_\mathfrak{Q}/S_{\mathfrak{Q}})^r[\pi_\mathfrak Q^i]\oplus \Sel_{\underline{\mathcal F\cap +}}(\mathbb A_{\hf gh,\mathfrak Q}^{\dag})[\pi_\mathfrak Q^i]
\end{equation}
for all $i$, where $r$ is given (by the Greenberg--Wiles formula in \cite[Prop.~2.3.5]{MR}) by
\[
{\rm corank}_{S_\mathfrak Q}H^1(\mathbb{Q}_p,\mathbb{A}_{\hf gh,\mathfrak{Q}}^{f\cup +})-{\rm corank}_{S_\mathfrak Q}H^0(\mathbb R,\mathbb{A}_{\hf gh,\mathfrak{Q}}^\dag),
\]
so $r=5-4=1$. The proof of part (3) now follows from  (\ref{corerank}) as in \cite[Lem.~1.2.6]{AH}.
\end{proof}

\begin{remark}
The existence of the pairing in the previous lemma is not too restrictive. In particular, this holds automatically if $g$ and $h$ are non-Eisenstein.
\end{remark}

We are now ready to establish that both formulations of the Iwasawa main conjecture are equivalent.

%{\color{blue} Possiblement canvii algunes de les notacions despres. Posar tambe primitive; sino els ranks es multipliquen.}

\begin{theorem}\label{thm:equiv}
Keep the assumptions of the previous lemma and suppose $\kappa(\hf,g,h)$ is not $\Lambda_{\hf}$-torsion. Then the following are equivalent:
\begin{enumerate}
\item[(1)] ${\rm rank}_{\Lambda_{\hf}}\,\Sel_{\bal}(\mathbb V_{\hf gh}^\dag)={\rm rank}_{\Lambda_{\hf}}\,X_{\bal}(\mathbb A_{\hf gh}^\dag)=1$;
\item[(2)] ${\rm rank}_{\Lambda_{\hf}}\,\Sel_{\mathcal F}(\mathbb V_{\hf gh}^\dag)={\rm rank}_{\Lambda_{\hf}}\,X_{\mathcal F}(\mathbb A_{\hf gh}^\dag)=0$,
\end{enumerate}
and, in that case, we have $\Sel_{\bal}(\mathbb V_{\hf gh}^\dag)=\Sel_{\mathcal  F\cup +}(\mathbb V_{\hf gh}^\dag)$ and
\[
\Char_{\Lambda_{\hf}}(X_{\mathcal F}(\mathbb A_{\hf gh}^{\dag}))
\cdot\Char_{\Lambda_{\hf}}\left(\frac{\Sel_{\bal}(\mathbb V_{\hf gh}^{\dag})}{\Lambda_{\hf} \cdot\kappa(\hf,g,h)}\right)^2 = \Char_{\Lambda_{\hf}}(X_{\bal}(\mathbb A_{\hf gh}^{\dag})_{\tors})\cdot (L_p({\hf},{g},{h}))
\]
in $\Lambda_{\hf}\otimes_{\Z_p}\Q_p$. In particular, Conjecture~\ref{with} and Conjecture~\ref{without} are equivalent.
\end{theorem}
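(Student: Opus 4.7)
The strategy is to relate the four Selmer groups attached to the local conditions $\mathcal{F}\cap+$, $\bal$, $\mathcal F$, and $\mathcal{F}\cup+$ through a web of Poitou--Tate exact sequences whose local contributions at $p$ are controlled by the two rank-one cohomology groups $H^1(\mathbb Q_p,\mathbb V_\hf^{gh})$ and $H^1(\mathbb Q_p,\mathbb U_\hf^{gh})$, where $\mathbb U_\hf^{gh} := \mathbb V_{\hf gh}^{f\cup+}/\mathscr F^2\mathbb V_{\hf gh}^\dag$ is the local Tate dual of $\mathbb V_\hf^{gh}$, and then to convert the resulting rank identities into characteristic ideal identities via the reciprocity law of Theorem~\ref{rec-law}.

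First, for each pair of conditions $\mathcal L' \subseteq \mathcal L$ in $\{\mathcal{F}\cap+,\bal,\mathcal F,\mathcal{F}\cup+\}$, Poitou--Tate global duality yields a five-term exact sequence of $\Lambda_\hf$-modules
\[
0 \to \Sel_{\mathcal L'}(\mathbb V_{\hf gh}^\dag) \to \Sel_\mathcal L(\mathbb V_{\hf gh}^\dag) \to H^1_\mathcal L(\mathbb Q_p,\mathbb V_{\hf gh}^\dag)/H^1_{\mathcal L'}(\mathbb Q_p,\mathbb V_{\hf gh}^\dag) \to X_{\mathcal L'}(\mathbb A_{\hf gh}^\dag) \to X_\mathcal L(\mathbb A_{\hf gh}^\dag) \to 0,
\]
with no local contributions away from $p$ since all four conditions agree there. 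A computation with the short exact sequences among $\mathbb V_{\hf gh}^{f\cap+} \subseteq \mathscr F^2\mathbb V_{\hf gh}^\dag \subseteq \mathbb V_{\hf gh}^{f\cup+}$ and $\mathbb V_{\hf gh}^f$ identifies each local cokernel, modulo pseudo-null error, with either $H^1(\mathbb Q_p,\mathbb V_\hf^{gh})$ or $H^1(\mathbb Q_p,\mathbb U_\hf^{gh})$; both are $\Lambda_\hf$-modules of rank one, as witnessed by the injective map $\mathfrak{Log}^\xi$ of Proposition~\ref{perrin}.

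Taking alternating sums of ranks and combining with Lemma~\ref{rangs}(1)--(3) pins down all four Selmer ranks simultaneously and yields the equivalence (1) $\iff$ (2). By Theorem~\ref{rec-law}, the class $\kappa(\hf,g,h) \in \Sel_\bal(\mathbb V_{\hf gh}^\dag)$ maps under the composition $\mathfrak{Log}^\xi \circ p_{\hf*} \circ \res_p$ to the nonzero element $\mathscr{L}_p^\xi$; hence under the non-torsion hypothesis on $\kappa(\hf,g,h)$, the map $\Sel_\bal \to H^1(\mathbb Q_p,\mathbb V_\hf^{gh})$ has non-torsion image. A diagram chase combining this with the rank identities then forces the induced map $\Sel_{\mathcal{F}\cup+}(\mathbb V_{\hf gh}^\dag) \to H^1_{\mathcal{F}\cup+}(\mathbb Q_p,\mathbb V_{\hf gh}^\dag)/H^1_{\bal}(\mathbb Q_p,\mathbb V_{\hf gh}^\dag)$ to vanish, yielding the equality $\Sel_\bal(\mathbb V_{\hf gh}^\dag)=\Sel_{\mathcal{F}\cup+}(\mathbb V_{\hf gh}^\dag)$ asserted in the theorem.

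To derive the characteristic ideal identity, I would pass to characteristic ideals in the Poitou--Tate sequences for the pairs $(\mathcal{F}\cap+,\bal)$ and $(\bal,\mathcal{F}\cup+)$. Using Theorem~\ref{rec-law} to compute
\[
\Char_{\Lambda_\hf}\bigl(H^1(\mathbb Q_p,\mathbb V_\hf^{gh})/\Lambda_\hf\cdot p_{\hf*}\res_p(\kappa(\hf,g,h))\bigr) \doteq (\mathscr{L}_p^\xi)\cdot\Char_{\Lambda_\hf}\bigl(\Sel_\bal/\Lambda_\hf\cdot\kappa(\hf,g,h)\bigr)^{-1}
\]
in $\Lambda_\hf\otimes_{\mathbb Z_p}\mathbb Q_p$, and combining the two resulting identities via the symmetry $\Char_{\Lambda_\hf}(X_{\mathcal{F}\cap+,\tors}) = \Char_{\Lambda_\hf}(X_{\mathcal{F}\cup+,\tors})$ of Lemma~\ref{rangs}(3), I obtain the squared factor $\Char_{\Lambda_\hf}(\Sel_\bal/\Lambda_\hf\cdot\kappa(\hf,g,h))^2$ on the left and $(L_p(\hf,g,h)) = ((\mathscr{L}_p^\xi)^2)$ on the right, as claimed. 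The main obstacle is a careful bookkeeping of the pseudo-null error terms (arising from the non-isomorphism of the local inflation maps $H^1(\mathbb Q_p,\mathbb V_{\hf gh}^{f\cap+})\to H^1(\mathbb Q_p,\mathbb V_{\hf gh}^\dag)$, from the pseudo-null cokernel of $\mathfrak{Log}^\xi$, and from the specialization arguments underlying Lemma~\ref{rangs}), which become invisible only after inverting $p$—consistent with the claimed identity holding in $\Lambda_\hf\otimes_{\mathbb Z_p}\mathbb Q_p$.
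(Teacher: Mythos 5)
Your first two paragraphs reproduce the paper's argument in essence: Poitou--Tate for the pair $(\mathcal{F}\cap+,\bal)$ (which is the paper's (PT-1)), the rank bookkeeping using Lemma~\ref{rangs}, the non-torsion image of $\res_p$ from the reciprocity law, and the torsion-freeness argument giving $\Sel_{\bal}(\mathbb V_{\hf gh}^\dag)=\Sel_{\mathcal{F}\cup+}(\mathbb V_{\hf gh}^\dag)$.

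The gap is in the third paragraph, in your choice of the second Poitou--Tate sequence. You propose working with the pair $(\bal,\mathcal{F}\cup+)$, but this is the wrong pair for two reasons. First, its dual $A$-side involves $X_{\bal}$ and $X_{\mathcal{F}\cap+}$ but \emph{not} $X_{\mathcal F}$; since the identity you are after computes $\Char_{\Lambda_{\hf}}(X_{\mathcal F}(\mathbb A_{\hf gh}^\dag))$, you need a sequence in which $X_{\mathcal F}$ actually appears. Second, the local middle term for $(\bal,\mathcal{F}\cup+)$ is $H^1\bigl(\Q_p, \mathbb V_{\hf gh}^{f\cup+}/\mathscr{F}^2\mathbb V_{\hf gh}^\dag\bigr)$ (what you call $H^1(\Q_p,\mathbb U_{\hf}^{gh})$), which is \emph{not} controlled by the regulator $\mathfrak{Log}^\xi$ of Proposition~\ref{perrin}---only $H^1(\Q_p,\mathbb V_{\hf}^{gh})$ is. The correct second pair is $(\mathcal F,\mathcal{F}\cup+)$ (the paper's (PT-2)), whose local middle term is again $H^1(\Q_p,\mathbb V_{\hf}^{gh})$ and whose $A$-side contains $X_{\mathcal F}\to X_{\mathcal{F}\cap+}$. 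The crucial point---which your choice misses---is that after $\Sel_{\bal}=\Sel_{\mathcal{F}\cup+}$ is established, the maps $\res_p$ in (PT-1) and (PT-2) are \emph{literally the same map}, hence have the same cokernel; it is this shared $\coker(\res_p)$ appearing in both short exact sequences, combined with $\Char(X_{\mathcal{F}\cap+})=\Char(X_{\mathcal{F}\cup+,\tors})$ from Lemma~\ref{rangs}(3), that produces the factor $\Char(\coker(\res_p))^2$ and hence $(\mathscr{L}_p^\xi)^2=L_p(\hf,g,h)$.

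There is also a smaller slip in the displayed formula: the reciprocity law together with the pseudo-null-cokernel property of $\mathfrak{Log}^\xi$ gives $\Char_{\Lambda_{\hf}}\bigl(H^1(\Q_p,\mathbb V_{\hf}^{gh})/\Lambda_{\hf}\cdot p_{\hf*}\res_p(\kappa(\hf,g,h))\bigr)=(\mathscr{L}_p^\xi)$ directly; the extra factor $\Char(\Sel_{\bal}/\Lambda_{\hf}\kappa)^{-1}$ should not be there. The module you want to describe as $(\mathscr{L}_p^\xi)\cdot\Char(\Sel_{\bal}/\Lambda_{\hf}\kappa)^{-1}$ is $\coker(\res_p)$, via the short exact sequence
\[
0\to\Sel_{\bal}/\Lambda_{\hf}\kappa\to H^1(\Q_p,\mathbb V_{\hf}^{gh})/\Lambda_{\hf}\cdot p_{\hf*}\res_p(\kappa)\to\coker(\res_p)\to 0.
\]
Once you replace $(\bal,\mathcal{F}\cup+)$ by $(\mathcal F,\mathcal{F}\cup+)$ and track $\coker(\res_p)$ correctly, the argument closes.
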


\begin{proof}
%Since $\mathbb V^{+}_{\hf gh}/\mathbb V^{f\cap +}_{\bf gh}\simeq\mathbb V_{\hf}^{gh}\simeq\mathbb V_{\hf gh}^{f\cup +}/\mathbb V_{\hf gh}^f$, from
The Poitou--Tate global duality gives rise to the exact sequence
\begin{equation}\label{PT-1}
\begin{split}
0 \longrightarrow \Sel_{\mathcal F\cap +}(\mathbb V_{\hf gh}^{\dag})\longrightarrow \Sel_{\bal}(\mathbb V_{\hf gh}^{\dag}) &\overset{{{\rm res}_p}}\longrightarrow H^1(\mathbb Q_p, \mathbb V_{\hf}^{gh})\\ &\longrightarrow X_{\mathcal F \cup +}(\mathbb A_{\hf gh}^{\dag}) \longrightarrow X_{\bal}(\mathbb A_{\hf gh}^{\dag})\longrightarrow 0.
\end{split}
\end{equation}

Assume that $\Sel_{\bal}(\mathbb V_{\hf gh}^{\dag})$ and $X_{\bal}(\mathbb A_{\hf gh}^{\dag})$ have both $\Lambda_{\hf}$-rank one.  Since $H^1(\mathbb Q_p, \mathbb V_{\hf}^{gh})$ has $\Lambda_{\hf}$-rank one, from (\ref{PT-1}) and Theorem~\ref{rec-law} we see that $\Sel_{\mathcal F\cap +}(\mathbb V_{\hf gh}^{\dag})$ is $\Lambda_\hf$-torsion and $X_{\mathcal F \cup +}(\mathbb A_{\hf gh}^{\dag})$ has $\Lambda_\hf$-rank one. By Lemma~\ref{rangs}(3), it follows that $X_{\mathcal F \cap +}(\mathbb A_{\hf gh}^{\dag})$ is $\Lambda_{\hf}$-torsion, and from the exact sequence
\begin{equation}\label{PT-2}
\begin{split}
0 \longrightarrow \Sel_{\mathcal F}(\mathbb V_{\hf gh}^{\dag})\longrightarrow \Sel_{\mathcal F\cup +}(\mathbb V_{\hf gh}^{\dag}) &\overset{{{\rm res}_p}}\longrightarrow H^1(\mathbb Q_p, \mathbb V_{\hf}^{gh})\\
&\longrightarrow X_{\mathcal F}(\mathbb A_{\hf gh}^{\dag}) \longrightarrow X_{\mathcal F\cap +}(\mathbb A_{\hf gh}^{\dag})  \longrightarrow 0
\end{split}
\end{equation}
we get that $X_{\mathcal F}(\mathbb A_{\hf gh}^{\dag})$ and $\Sel_{\mathcal F}(\mathbb V_{\hf gh}^{\dag})$ are both $\Lambda_{\hf}$-torsion by Lemma~\ref{rangs}(2). This proves the implication ${\rm (1)}\Rightarrow{\rm (2)}$ in the statement of the theorem, and the converse is shown similarly. Moreover, from (\ref{PT-2}) we see that ${\rm rank}_{\Lambda_\hf}\Sel_{\mathcal F\cup +}(\mathbb V_{\hf gh}^{\dag})=1$, and hence the quotient $\Sel_{\mathcal F\cup +}(\mathbb V_{\hf gh}^{\dag})/\Sel_{\rm bal}(\mathbb V_{\hf gh}^{\dag})$ is a torsion $\Lambda_{\hf}$-module injecting into $H^1(\mathbb Q_p,\mathbb V^{f\cup+}_{\hf gh}/\mathscr{F}^2\mathbb V_{\hf gh}^\dag)$; since this is $\Lambda_\hf$-torsion free by Proposition~\ref{perrin}, it follows that
\begin{equation}\label{eq:f+}
\Sel_{\bal}(\mathbb V_{\hf gh}^\dag)=\Sel_{\mathcal  F\cup +}(\mathbb V_{\hf gh}^\dag).
\end{equation}

Now suppose that either (1) or (2) in the statement of theorem holds. Since $\bar{\rho}_{\hf}$ is absolutely irreducible by our hypotheses, the module $H^1(\Q^\Sigma/\Q,\mathbb V_{\hf gh}^\dag)$ is $\Lambda_{\hf}$-torsion free by \cite[\S{1.3.3}]{PR-book}. Being $\Lambda_{\hf}$-torsion, it follows that the module $\Sel_{\mathcal F\cap +}(\mathbb V_{\hf gh}^\dag)$ vanishes, and therefore from (\ref{PT-1}) we deduce the exact sequence
\begin{equation}\label{PT-2a}
0 \longrightarrow \frac{\Sel_{\bal}(\mathbb V_{\hf gh}^\dag)}{\Lambda_{\hf} \cdot\kappa(\hf,g,h)} \longrightarrow \frac{H^1(\mathbb Q_p, \mathbb V_{\hf}^{gh})}{\Lambda_{\hf} \cdot p_{\hf*}({\rm res}_p(\kappa(\hf,g,h)))} \longrightarrow {\rm coker}({\rm res}_p) \longrightarrow 0.
\end{equation}
Together with Theorem~\ref{rec-law} it follows that
\begin{equation}\label{PT-2b}
\Char_{\Lambda_{\hf}} \left(\frac{\Sel_{\bal}(\mathbb V_{\hf gh}^{\dag})}{\Lambda_{\hf}\cdot\kappa(\hf,g,h)} \right)\cdot \Char_{\Lambda_{\hf}}({\rm coker}({\rm res}_p))=(\mathscr{L}_p^\xi(\underline{\breve{\hf}},\underline{\breve{g}},\underline{\breve{h}})).
\end{equation}
On the other hand, in light of (\ref{eq:f+}), from (\ref{PT-1}) and (\ref{PT-2}) we deduce exact sequences
\begin{align*}
0\longrightarrow {\rm coker}({\rm res}_p)&\longrightarrow X_{\mathcal F\cup +}(\mathbb A_{\hf gh}^{\dag}) \longrightarrow X_{\bal}(\mathbb A_{\hf gh}^{\dag})  \longrightarrow 0,\\
0\longrightarrow {\rm coker}({\rm res}_p)&\longrightarrow X_{\mathcal F}(\mathbb A_{\hf gh}^{\dag}) \longrightarrow X_{\mathcal F\cap +}(\mathbb A_{\hf gh}^{\dag})  \longrightarrow 0.
\end{align*}
Taking characteristic ideals, these imply
\begin{equation}\label{eq:char}
\begin{aligned}
\Char_{\Lambda_{\hf}}(X_{\mathcal F}(\mathbb A_{\hf gh}^{\dag})&=\Char_{\Lambda_{\hf}}(X_{\mathcal F\cap +}(\mathbb A_{\hf gh}^{\dag}))\cdot\Char_{\Lambda_\hf}({\rm coker}({\rm res}_p))\\
&=\Char_{\Lambda_\hf}(X_{\mathcal F\cup +}(\mathbb A_{\hf gh}^{\dag})_{\rm tors})\cdot\Char_{\Lambda_\hf}({\rm coker}({\rm res}_p))\\
&=\Char_{\Lambda_\hf}(X_{\bal}(\mathbb A_{\hf gh}^{\dag})_{\rm tors})\cdot\Char_{\Lambda_\hf}({\rm coker}({\rm res}_p))^2,
\end{aligned}
\end{equation}
using Lemma~\ref{rangs}(3) for the second equality. Multiplying (\ref{eq:char}) by the square of a generator of the characteristic ideal of $\Sel_{\bal}(\mathbb V_{\hf gh}^{\dag})/\Lambda_{\hf} \cdot\kappa(\hf,g,h)$ and using (\ref{PT-2b}), the result follows.
\end{proof}

\section{Anticyclotomic Euler systems}\label{sec:applications}

In this section we highlight results from the recent work of Jetchev--Nekov\'{a}\v{r}--Skinner \cite{JNS}, where a general theory of Euler systems germane to \cite{Rub} is developed in the anticyclotomic setting. 
%Then in \S\ref{subsec:verify} we give sufficient conditions under which the technical hypotheses in their results are satisfied.

\subsection{The general theory}\label{sec:ES}

Let $K$ be an imaginary quadratic field and let $p$ be an odd prime. If $\frk{n}$ is an integral prime ideal of $K$, we denote by $K(\frk{n})^\circ$ the ray class field of conductor $\frk{n}$; as in the previous sections,
%More generally, if $\frk{n}=\frk{q}_1\cdots\frk{q}_t$ is a squarefree product of prime ideals of $K$, we write
%\[
%K(\frk{n})^\circ=K(\frk{q}_1)^\circ\cdots K(\frk{q}_t)^\circ.
%\]
we write $K(\frk{n})$ for the maximal $p$-subextension in $K(\frk{n})^\circ$. For any positive integer $n$, we denote by $K[n]$ the maximal $p$-subextension in the ring class field of $K$ of conductor $n$.
%We denote by $K_\infty^\circ$ the maximal anticyclotomic extension of $K$ unramified outside $p$ and
We denote by $K_\infty$ the anticyclotomic $\bb{Z}_p$-extension of $K$.

Let $E$ be a finite extension of $\mathbb Q_p$ with ring of integers $\mathcal O$ and maximal ideal $\mathfrak m$. Let $T$ be a free $\cl{O}$-module of finite rank endowed with a continuous $G_K$-action  unramified outside a finite set of primes, and let $V=T\otimes_{\cl{O}}E$. Assume that there exists a non-degenerate symmetric $\mathcal O$-bilinear pairing
\[
\langle \, , \, \rangle \, : \, T \times T \longrightarrow \mathcal O(1)
\]
such that $\langle x^\sigma,y^{c\sigma c}\rangle=\langle x,y\rangle^\sigma$ for all $x,y\in T$ and $\sigma\in G_K$, where $c$ is complex conjugation. Thus $V^c\simeq V^{\vee}(1)$, where $V^c$ denotes the representation $V$ with the $G_K$-action twisted by $c$, and, if the above pairing is perfect, we also have $T^c\simeq T^\vee(1)$. We also define the $G_K$-module $A=V/T$.

If $L$ is a finite extension of $K$ and $v$ is a finite place of $L$, we write $\overline{v}=v^c$. Then, the pairing above induces a local pairing
\[
H^1(L_v,V)\times H^1(L_{\overline{v}},V)\longrightarrow E,
\]
and similarly replacing $V$ by $T$ and $E$ by $\cl{O}$. The pair of compatible maps $G_{L_{v}}\rightarrow G_{L_{\overline{v}}}$ and $V\rightarrow V^c$ defined by $\sigma\mapsto c\sigma c$ and $w\mapsto w$, respectively, induces an isomorphism $H^1(L_{\overline{v}},V)\cong H^1(L_v,V^c)\cong H^1(L_v, V^\vee(1))$ whereby the above local pairing is just the natural cup-product pairing.

For the results we shall discuss, we consider two different types of ``big image'' hypotheses, (HW) for the weaker ones, and (HS) for the stronger ones.

%More precisely, (HW) will refer to the following assumptions:
\begin{hyp}[HW]\hfill
\begin{enumerate}
\item[{\rm (1)}] $V$ is absolutely irreducible as a $G_K$-representation.
\item[{\rm (2)}] There exists an element $\sigma_0 \in \Gal(\bar K/K(1)^\circ K(\mu_{p^{\infty}}))$ such that the $E$-dimension of $V/(\sigma_0-1)V$ is one.
%Alternatively, the $\mathcal O$-rank of $T/(\sigma_0-1)T$ is one.
\end{enumerate}
\end{hyp}

%Similarly, (HS) refers to the following set of assumptions:
\begin{hyp}[HS]\hfill
\begin{enumerate}
\item[{\rm (1')}] The residual representation $\bar T = T/\mathfrak m T$ is absolutely irreducible.
\item[{\rm (2')}] There exists an element $\sigma_0 \in \Gal(\bar K/K(p^\infty)^\circ)$ such that $T/(\sigma_0-1)T \simeq \mathcal O$ is a free $\mathcal O$-module of rank one.
\item[{\rm (3')}] There exists an element $\tau_0 \in G_K$ such that $\tau_0-1$ acts on $T$ as multiplication by a unit $a_{\tau_0}\in\mathcal{O}^\times$ with $a_{\tau_0}-1\in\mathcal O^{\times}$.
\item[{\rm (4')}] The above pairing $T \times T \longrightarrow \mathcal O(1)$ is perfect.
\end{enumerate}
\end{hyp}

For each prime $\frk{p}$ of $K$ above $p$, choose a $G_{K_\frk{p}}$-stable $\cl{O}$-submodule $\cl{F}_\frk{p}^+(T)$ of $T$, and let $\cl{F}_\frk{p}^-(T)=T/\cl{F}_\frk{p}^+(T)$. We also define $\cl{F}_\frk{p}^+(V)=\cl{F}_\frk{p}^+(T)\otimes_{\cl{O}}E\subseteq V$ and $\cl{F}_\frk{p}^-(V)=V/\cl{F}_\frk{p}^+(V)$. %\footnote{\color{blue} potser cal demanar que el coresponent subespai de $H^1(K_{\frk{p}},T)$ sigui self-dual?}.
Let $L$ be a finite extension of $K$. For each place $v$ of $L$, we define a local condition
$$
H^1_{\Gr}(L_v,V)=\begin{cases} \ker\left(H^1(L_v,V)\rightarrow H^1(L_v^{\text{nr}},V)\right) & \text{if } v\nmid p, \\[0.5em]
\ker\left(H^1(L_v,V)\rightarrow H^1(L_v,\cl{F}_\frk{p}^-(V))\right) & \text{if } v\mid \frk{p}\text{ for some }\frk{p}\mid p. \end{cases}
$$
We define the \emph{Greenberg Selmer group}
\[ 
\Sel_{\Gr}(L,V)=\ker \Big(
H^1(L, V) \rightarrow \prod_{v} H^1(L_v,V)/H^1_{\Gr}(L_v,V)\Big),
\]
where the product is over all finite places of $L$.
%If $\frk{p}$ is a prime of $K$ above $p$ and $\overline{\frk{p}}=\frk{p}^c$, we will assume that the local conditions $H^1_{\Gr}(K_\frk{p},V)$ and $H^1_{\Gr}(K_{\overline{\frk{p}}},V)$ are orthogonal complements under the local pairing
%\[
%H^1(K_\frk{p},V) \times H^1(K_{\overline{\frk{p}}},V) \longrightarrow  E
%\]
%induced by the pairing $T \times T \rightarrow \mathcal O(1)$.

We also define local conditions for $T$ and $A$ by propagation of the local conditions for $V$, i.e., for each place $v$ of $L$, we define
\begin{itemize}
\item $H^1_{\Gr}(L_v,T)$ as the preimage of $H^1_{\Gr}(L_v,V)$ by the map $H^1(L_v,T)\rightarrow H^1(L_v,V)$, and
\item $H^1_{\Gr}(L_v,A)$ as the image of $H^1_{\Gr}(L_v,V)$ by the map $H^1(L_v,V)\rightarrow H^1(L_v,A)$,
\end{itemize}
and use these to define the Selmer groups $\Sel_{\Gr}(L,T)$ and $\Sel_{\Gr}(L,A)$ as above. Finally, for each positive integer $n$, we also put
\begin{align*}
\Sel_{\Gr}(K[np^\infty],T)&=\varprojlim_r \Sel_{\Gr}(K[np^r],T)\quad{\rm and}\quad
\Sel_{\Gr}(K[np^\infty],A)=\varinjlim_r \Sel_{\Gr}(K[np^r],A),
\end{align*}
where the limits are with respect to the corestriction and restriction maps, respectively, and we define
\[
X_{\Gr}(K[np^\infty],A)=\Hom_{\mathrm{cont}}(\Sel_{\Gr}(K[np^\infty],A),\bb{Q}_p/\bb{Z}_p).
\]

Let $\mathcal{N}$ be an ideal of $K$ divisible by $p$ and all the primes at which $T$ is ramified, and let $\cl{S}$ be the set of all squarefree products of primes of $\bb{Q}$ which \emph{split} in $K$ and are coprime to $\cl{N}$. 

\begin{definition}
A \textit{``split'' anticyclotomic Euler system} for $(T,\{\cl{F}_\frk{p}^+(T)\}_{\frk{p}\mid p},\cl{N})$ is a collection of classes
$$
\bm{\kappa}=\left\{\kappa_n\in \Sel_{\Gr}(K[n],T)\;\colon\; n\in \cl{S}\right\}
$$
such that, whenever $q$ is a rational prime and $n,nq\in\cl{S}$,
\begin{equation}\label{eq:norm}
\cor_{K[nq]/K[n]}(\kappa_{nq})= P_\frk{q}(\Fr_\frk{q}^{-1})\,\kappa_n,
\end{equation}
where $\frk{q}$ is any of the primes of $K$ above $q$ and
$P_\frk{q}(X)=\det(1-\Fr_\frk{q}^{-1}X\vert T^\vee(1))$.

Similarly, a \textit{``split'' $\Lambda$-adic anticyclotomic Euler system} for $(T,\{\cl{F}_\frk{p}^+(T)\}_{\frk{p}\mid p},\cl{N})$ is a collection of classes
$$
\bm{\kappa}_\infty=\left\{\kappa_{n,\infty}\in \Sel_{\Gr}(K[np^\infty],T)\;\colon\; n\in \cl{S}\right\}
$$
satisfying the previous norm relations. In this case, the classes
\[
\kappa_n={\rm pr}_{K[n]}(\kappa_{n,\infty})\in \Sel_{\Gr}(K[n],T)
\]
%obtained by corestriction of the classes $\kappa_{n,\infty}$
form an anticyclotomic Euler system in the previous sense, and we say that the Euler system $\bm{\kappa}=\{\kappa_n\}_n$ extends along the anticyclotomic $\bb{Z}_p$-extension.
\end{definition}

A ($\Lambda$-adic) anticyclotomic Euler system for $(T,\{\cl{F}_\frk{p}^+(T)\}_{\frk{p}\mid p})$ is just a ($\Lambda$-adic) anticyclotomic Euler system for $(T,\{\cl{F}_\frk{p}^+(T)\}_{\frk{p}\mid p},\cl{N})$ for some $\cl{N}$ as above. We shall usually drop $\{\cl{F}_\frk{p}^+(T)\}_{\frk{p}\mid p}$ if there is no risk of confusion.

%\begin{remark}
%For the applications, it suffices to have a system of classes satisfying the norm-relations (\ref{eq:norm}) with $P_{\frk{q}}({\rm Fr}_{\frk{q}}^{-1})$ replaced by $Q_{\frk{q}}({\rm Fr}_{\frk{q}}^{-1})$ for any polynomial $Q_{\frk{q}}(X)\in\mathcal{O}[X]$ such that $Q_{\frk{q}}(X)\equiv P_{\frk{q}}(X)\pmod{q-1}$ (see \cite[Lem.~9.6.1]{Rub}).
%\end{remark}

If $\bm{\kappa}$ is an anticyclotomic Euler system for $T$, we define
\[ \kappa_0 := \cor_{K[1]/K}(\kappa_1) \in \Sel_{\Gr}(K,T). \]
If it extends along the anticyclotomic $\bb{Z}_p$-extension, we similarly define
\[ \kappa_\infty := \cor_{K[1]/K}(\kappa_{1,\infty}) \in \Sel_{\Gr}(K_\infty,T), \]
where $\bm{\kappa}_\infty=\{\kappa_{n,\infty}\}$ is the $\Lambda$-adic anticyclotomic Euler system extending $\bm{\kappa}$.

When we have an Euler system as above, we will be interested in ensuring that the following orthogonality hypothesis holds.

\begin{hyp}[HO]
For all $n\in\cl{S}$ and for all places $v$ of $K[n]$ above $p$, the local conditions $H^1_{\Gr}(K[n]_v,V)$ and $H^1_{\Gr}(K[n]_{\overline{v}},V)$ are orthogonal complements under the local pairing
\[
H^1(K[n]_v,V) \times H^1(K[n]_{\overline{v}},V) \longrightarrow  E.
\]
\end{hyp}

\begin{remark}
The condition in hypothesis~(HO) holds automatically for all places away from $p$, by \cite[Prop. 1.4.2]{Rub}. Observe also that if (HO) holds, then for all $n\in\cl{S}$ and for all places $v$ of $K[n]$ the local conditions $H^1_{\Gr}(K[n]_v,T)$ and $H^1_{\Gr}(K[n]_{\overline{v}},T)$ are also orthogonal complements under the local pairing
\[
H^1(K[n]_v,T) \times H^1(K[n]_{\overline{v}},T) \longrightarrow  \cl{O},
\]
as follows easily from the definitions using \cite[Prop. B.2.4]{Rub} and the commutative diagram
\[
\xymatrix{
H^1(K[n]_v,T) \times  H^1(K[n]_{\overline{v}},T)\ar[d]\ar[r]&\cl{O}\ar[d]\\
H^1(K[n]_v,V) \times  H^1(K[n]_{\overline{v}},V)\ar[r]&E.
}
\]
%\begin{center}
%\begin{tikzpicture}
%\matrix(m) [matrix of math nodes, row sep=2.6em, column sep=2.8em, text height=1.5ex, text depth=0.25ex]
%{H^1(K[n]_v,T) \times  H^1(K[n]_{\overline{v}},T) & \cl{O} \\
% H^1(K[n]_v,V) \times  H^1(K[n]_{\overline{v}},V) & E.\\};
%\path[->,font=\scriptsize,>=angle 90]
%(m-1-1) edge node [auto] {} (m-1-2)
%(m-2-1) edge node [auto] {} (m-2-2)
%(m-1-1) edge node [auto] {} (m-2-1)
%(m-1-2) edge node [auto] {} (m-2-2);
%\end{tikzpicture}
%\end{center}
\end{remark}

We assume in the rest of this subsection that hypothesis (HO) holds for our choice of local conditions at $p$.

%When we have an Euler system as above and hypotheses (HS) are satisfied, it is desirable to have the following strong orthogonality assumption on the local conditions:
%\begin{hyp}[SO]
%For all $n\in\cl{S}$ and for all (finite) places $v$ of $K[n]$, the local conditions $H^1_{\Gr}(K[n]_v,T)$ and $H^1_{\Gr}(K[n]_{\overline{v}},T)$ are exact orthogonal complements under the local pairing
%\[
%H^1(K[n]_v,T)\times H^1(K[n]_{\overline{v}},T)\rightarrow \cl{O}.
%\]
%\end{hyp}

\begin{theorem}[\cite{JNS}]\label{primer-res}
Assume that $p$ splits in $K$ and that Hypothesis~{\rm (HW)} is satisfied, and let $\bm{\kappa}=\{\kappa_n\}_n$ be an anticyclotomic Euler system for $T$ which extends along the anticyclotomic $\Z_p$-extension.  If $\kappa_0 \neq 0$, then the Selmer group $\Sel_{\Gr}(K,T)$ has $\cl{O}$-rank one.
\end{theorem}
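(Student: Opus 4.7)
The plan is to establish the theorem by following the Kolyvagin-type machinery in the anticyclotomic setting, which bounds $\Sel_{\Gr}(K,T)$ from above in terms of the Euler system classes, combined with the observation that $\kappa_0\neq 0$ automatically produces a non-torsion element (here one uses that $V$ is absolutely irreducible from Hypothesis (HW)(1), which together with (HO) guarantees that $H^1(K,V)$ has no small $G_K$-fixed perturbations and $\Sel_{\Gr}(K,T)_{\rm tors}$ is controlled). The delicate point, absent in the cyclotomic setting, is that the argument produces a rank one—not rank zero—upper bound, reflecting the fact that anticyclotomic Euler systems behave like Heegner point systems.

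First I would set up the Kolyvagin primes: using Hypothesis (HW)(2), which provides $\sigma_0 \in \Gal(\bar K/K(1)^\circ K(\mu_{p^\infty}))$ with $V/(\sigma_0-1)V$ one-dimensional, Chebotarev density theorem produces a large supply of rational primes $q$ splitting in $K$ and coprime to $\cl N$ such that $\mathrm{Fr}_{\mathfrak q}$ acts as $\sigma_0$ on $T/p^M T$ for both primes $\mathfrak q$ above $q$; for such $q$, the polynomial $P_{\mathfrak q}(\mathrm{Fr}_{\mathfrak q}^{-1})$ acts as multiplication by an element congruent to $q+1$ modulo $p^M$, hence is divisible by $p^M$ under further restriction that $q\equiv 1\pmod{p^M}$. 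For squarefree products $n=q_1\cdots q_s$ of such primes, combine the classes $\kappa_n\in \Sel_{\Gr}(K[n],T)$ with Kolyvagin's derivative operators $D_n = \prod_i D_{q_i}$ built from the Galois action of $\mathrm{Gal}(K[n]/K[1])$ to produce derivative classes $\kappa_n^{(M)} \in H^1(K,T/p^M T)$; norm-compatibility (\ref{eq:norm}) at Kolyvagin primes makes these derivatives Galois-invariant modulo $p^M$.

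Next I would analyze the local behavior of $\kappa_n^{(M)}$. Away from $n$ and the primes of bad reduction these classes lie in the Greenberg Selmer conditions (inherited from $\kappa_n$ lying in $\Sel_{\Gr}(K[n],T)$, using (HO) and the norm-to-$K$ descent); at a prime $\mathfrak q\mid n$ the local class has a controllable non-finite component whose singular part pairs non-trivially, under the local Tate pairing, with the finite part of any class in the dual Selmer group localized at $\mathfrak q$. Applying global duality (Poitou–Tate) together with Chebotarev to find, for any putative collection of independent classes in $\Sel_{\Gr}(K,T)/(\cO\kappa_0 + \Sel_{\Gr}(K,T)_{\rm tors})$, a Kolyvagin prime $\mathfrak q$ whose localization distinguishes them, yields the upper bound $\mathrm{rank}_{\cO}\Sel_{\Gr}(K,T)\leq 1$.

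The main obstacle is ensuring that the derivative argument produces a rank one bound without collapsing to rank zero; this is where the splitting of $p$ in $K$ and the hypothesis that $\bm{\kappa}$ extends along the anticyclotomic $\mathbb Z_p$-extension become essential. The extension $\bm{\kappa}_\infty$ guarantees universal-norm compatibility in the $p$-direction and, together with the orthogonality hypothesis (HO) at $p$, ensures that the singular localization map at Kolyvagin primes lands in a self-orthogonal subspace whose Pontryagin dual has the right dimension; the parity of local Tate pairings at the two split places above $p$ then forces the bound to be one rather than zero, consistent with the rank inequality $\mathrm{rank}_{\cO}\Sel_{\Gr}(K,T)\geq 1$ coming from $\kappa_0\neq 0$. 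Matching the two bounds then gives the desired equality.
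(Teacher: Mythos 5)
The paper does not contain its own proof of this statement: it is a black-box citation to the forthcoming work of Jetchev--Nekov\'a\v{r}--Skinner \cite{JNS}, so there is no proof in the paper for your proposal to be compared against. That said, your outline is broadly consistent with the Kolyvagin-system-for-Heegner-points template (Kolyvagin, Howard, Mazur--Rubin): Chebotarev via (HW)(2) to manufacture Kolyvagin primes, derivative classes over $K[n]$, localization at Kolyvagin primes played off against Poitou--Tate, and a lower bound from $\kappa_0 \neq 0$.

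Where your sketch goes wrong is in the explanation of \emph{why} the upper bound is one rather than zero, and the attendant role of $p$-splitting and the $\Lambda$-adic extension. The correct structural reason the core rank is one is the conjugate self-duality of the anticyclotomic Selmer structure: the pairing $T \times T \to \cO(1)$ with the $c$-twist, together with hypothesis (HO) that $H^1_{\Gr}(K[n]_v, V)$ and $H^1_{\Gr}(K[n]_{\bar v}, V)$ are exact annihilators, makes the global Selmer structure self-dual in the Mazur--Rubin sense, and the Greenberg--Wiles formula then gives core rank one; the Heegner-point-type Kolyvagin argument bounds $\Sel_{\Gr}(K,T)$ relative to the line $\cO\kappa_0$. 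This is not a ``parity of local Tate pairings at the two split places above $p$'' phenomenon as you phrase it. Moreover, Remark~\ref{rem:gamma} in the paper says explicitly that the hypotheses ``$p$ splits in $K$'' and ``the Euler system extends along the anticyclotomic $\Z_p$-extension'' can be \emph{replaced} by the existence of $\gamma \in G_K$ fixing $K(1)^\circ(\mu_{p^\infty},(\cO_K^\times)^{1/p^\infty})$ with $\gamma-1$ invertible on $V$. That is a Chebotarev-style control condition (forcing vanishing of relevant $H^0$'s and injectivity of restriction maps in the Kolyvagin argument), not a duality or parity constraint at $p$; when $p$ splits and the system extends up the anticyclotomic tower, such a $\gamma$ becomes available. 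So your final paragraph gestures at a genuine obstacle but misattributes the mechanism on both counts.
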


\begin{remark}\label{rem:gamma}
One can replace the assumptions that $p$ splits in $K$ and the Euler system extends along the anticyclotomic $\bb{Z}_p$-extension by the assumption that there exists an element $\gamma \in G_K$ fixing the extension $K(1)^\circ (\mu_{p^\infty},(\cl{O}_K^\times)^{1/p^\infty})$ and such that $\gamma-1$ acts invertibly on $V$.
\end{remark}

Under the stronger Hypothesis~(HS), granted the non-triviality of a $\Lambda$-adic anticyclotomic Euler system, the results of \cite{JNS} yield a divisibility towards a corresponding Iwasawa main conjecture.  %For the following result, let $A=V/T$, and let $X_{\Gr}(K_\infty,A)$ stand for the Pontrjagin dual of the Selmer group $\Sel_{\Gr}(K_\infty,A)$.

\begin{theorem}[\cite{JNS}]\label{segon-res}
Assume that $p$ splits in $K$ and that Hypothesis~{\rm (HS)} is satisfied, and let $\bm{\kappa}$ be a $\Lambda$-adic anticyclotomic Euler system for $T$.
\begin{enumerate}
\item[{\rm (a)}] If $\kappa_0\neq 0$, then $\Sel_{\Gr}(K,A)$ has $\mathcal{O}$-corank one, $\Sel_{\Gr}(K,T)$ has $\cl{O}$-rank one, and
\[
{\rm length}_{\mathcal{O}}(\Sel_{\Gr}(K,A)_{/{\rm div}})\leq 2\;{\rm length}_{\mathcal O}\left(\frac{\Sel_{\Gr}(K,T)}{\mathcal O \cdot \kappa_0}\right),
\]
where $(-)_{/{\rm div}}$ denotes the quotient of $(-)$ by its  maximal divisible submodule.
\item[{\rm (b)}] If $\kappa_\infty$ is not $\Lambda_{\ac}$-torsion, then $X_{\Gr}(K_\infty,A)$ and $\Sel_{\Gr}(K_\infty,T)$ have both $\Lambda_{\ac}$-rank one, and
\[
\Char_{\Lambda_{\ac}}(X_{\Gr}(K_\infty,A)_{\tors})\supset\Char_{\Lambda_{\ac}} \left( \frac{\Sel_{\Gr}(K_\infty,T)}{\Lambda_{\ac} \cdot \kappa_\infty} \right)^2,
\]
where $(-)_{\rm tors}$ denotes the maximal $\Lambda_{\ac}$-torsion submodule of $(-)$.
\end{enumerate}
\end{theorem}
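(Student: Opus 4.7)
The result is a direct application of the Kolyvagin-style machinery developed in \cite{JNS} for ``split'' anticyclotomic Euler systems, and the proof in \cite{JNS} is presumably quite involved. Here is the strategy I would follow. The central idea is to pass from the Euler system $\bm{\kappa}$ (resp.\ $\bm{\kappa}_\infty$) to a family of \emph{Kolyvagin derivative classes} whose local behavior can be used, through global duality, to bound the Pontryagin dual of the Selmer group.

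For part (a), first I would use the tame norm relation \eqref{eq:norm} together with a choice of generators of the cyclic groups $\Gal(K[nq]/K[n])$ to build derivative classes $D_n\kappa_n \in H^1(K, T/\mathfrak{m}^kT)$ for suitable $n \in \mathcal{S}$, following Kolyvagin's original construction adapted to the ring class tower. Because every prime dividing such $n$ is split in $K$ and coprime to $\mathcal{N}$, the Euler factors $P_\mathfrak{q}(\Fr_\mathfrak{q}^{-1})$ decompose naturally and the derivatives acquire ``transverse'' local conditions at the primes dividing $n$, while retaining the Greenberg condition at primes above $p$ (by definition of the Euler system) and being unramified elsewhere. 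Here hypothesis (HO) is used to make the Greenberg conditions at primes above $p$ self-dual under local Tate duality, which is crucial for the Poitou--Tate duality argument.

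Second, using Chebotarev's theorem together with the strong big image hypothesis (HS)---in particular the existence of $\tau_0$ acting as a scalar unit and the absolute irreducibility of $\bar{T}$---I would produce, for any prescribed $s \in \Sel_{\Gr}(K,A)$, a Kolyvagin prime $q$ split in $K$ whose localization $\mathrm{loc}_q(s)$ is detected by $D_q\kappa_q$. Iterating this step in the standard way would yield the corank-one statement for $\Sel_{\Gr}(K,A)$ and the rank-one statement for $\Sel_{\Gr}(K,T)$, along with the length bound in terms of $\kappa_0$. The factor of $2$ in the bound reflects that each split rational prime contributes a \emph{pair} of primes of $K$ interchanged by complex conjugation, so the ``Kolyvagin step'' for each $q \in \mathcal{S}$ cuts the dual Selmer group twice.

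For part (b), the same machinery must be carried out $\Lambda_{\ac}$-adically. The $\Lambda$-adic system $\bm{\kappa}_\infty$ produces $\Lambda_{\ac}$-adic derivative classes, and the non-torsion hypothesis on $\kappa_\infty$ ensures that these derivatives remain non-trivial modulo every height one prime of $\Lambda_{\ac}$. Combined with a $\Lambda_{\ac}$-adic version of global duality, this gives the stated divisibility of characteristic ideals. I expect the main obstacle throughout to be the technical verification that all local conditions, especially at the primes above $p$, interpolate correctly along the anticyclotomic tower and behave well under specialization; this is where (HS)(4') (perfectness of the pairing) and (HO) are indispensable, guaranteeing that the Greenberg Selmer groups fit into a self-dual framework over $\Lambda_{\ac}$ and that the Kolyvagin bounds descend to characteristic ideal divisibilities rather than mere pseudo-null control.
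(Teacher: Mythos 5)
The paper gives no proof of this theorem at all; it is cited verbatim from the forthcoming work of Jetchev--Nekov\'a\v{r}--Skinner \cite{JNS}, so there is no in-paper argument against which to compare your sketch. That said, your description is a plausible high-level account of the Kolyvagin/Euler-system strategy one expects \cite{JNS} to carry out: derivative classes with transverse local conditions at Kolyvagin primes, Chebotarev together with the big-image hypotheses (HS) to detect arbitrary Selmer elements, and Poitou--Tate duality (made usable by the self-duality forced by (HO)) to translate the resulting bounds into the stated corank/rank statements and characteristic-ideal divisibilities.

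One point to flag: your explanation of the factor of $2$ --- that each split rational prime $q$ gives a pair of primes of $K$ so that ``the Kolyvagin step cuts the dual Selmer group twice'' --- is not the standard mechanism. In the anticyclotomic setting (cf.\ Perrin-Riou's and Howard's work on Heegner points, which \cite{JNS} generalizes), the squaring in the characteristic-ideal bound arises from the self-dual structure of the Selmer module and the non-degeneracy of a Flach/Cassels--Tate-type pairing on its torsion part, which forces the elementary divisors to occur in pairs; it is not a consequence of counting the two conjugate primes of $K$ above a split $q$. That self-dual pairing is precisely what hypotheses (HS)(4') and (HO) are there to guarantee, so your reliance on those hypotheses is in the right place even if the heuristic you attach to the factor of $2$ is off.
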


\subsection{Big image results}\label{subsec:verify}

We now give conditions under which the hypotheses in the general results of $\S\ref{sec:ES}$ are verified in our setting. To that end, we shall build on \cite{Loe}.

As before, let $K/\Q$ be an imaginary quadratic field of discriminant $-D$, let $(g,h)$ be a pair of newforms of weights $(l, m)$ of the same parity, levels $(N_g,N_h)$ and characters $(\chi_g,\chi_h)$, and let $\psi$ be a Gr\"ossencharacter of $K$ of infinity type $(1-k,0)$ for some positive even integer $k$ and of conductor $\frk{f}$. We denote by $\chi$ the unique Dirichlet character modulo $N_{K/\mathbb Q}(\mathfrak f)$ such that $\psi((n))=n^{k-1} \chi(n)$ for integers $n$ coprime to $N_{K/\mathbb Q}(\mathfrak f)$, and we assume that $\chi\varepsilon_K\chi_g\chi_h=1$.

We now make the further assumptions that:
\begin{itemize}
%\item{} $g$ and $h$ are non-Eisenstein,
\item{} neither $g$ nor $h$ are of CM type,
\item{} $g$ is not Galois-conjugate to a twist of $h$.
\end{itemize}

As in \cite[\S3.1]{Loe}, we define the open subgroups $H_g$ and $H_h$ of $G_\bb{Q}$, the quaternion algebras $B_g$ and $B_h$, and the algebraic groups $G_g$ and $G_h$, and put
\[
B=B_g\times B_h,\quad G=G_g\times_{\bb{G}_m}G_h.
\]
We define $H$ to be the intersection of $H_g$, $H_h$ and $G_{K(\frk{f})^\circ}$. (Note that in \emph{loc.\,cit.} $H$ is defined to be the intersection of $H_g$ and $H_h$, so our $H$ might be a finite index subgroup of his $H$, but this will not affect the results that follow.) 
%Note that this last definition is slightly different from the one in \emph{loc.\,cit.}, where $H$ is defined to be the intersection of $H_g$ and $H_h$, so our group $H$ might be a finite index subgroup of the latter.
We have an adelic representation $\tilde{\rho}_{g,h}:H\rightarrow G(\hat{\bb{Q}})$, and representations
$$
\tilde{\rho}_{g,h,p}: H\longrightarrow G(\bb{Q}_p)
$$
for every rational prime $p$, and, by \cite[Thm. 3.2.2]{Loe}, $\tilde{\rho}_{g,h,p}(H)=G(\bb{Z}_p)$ for all but finitely many $p$.

\begin{remark}
Note that the representations studied in \cite{Loe} are the dual to the ones studied in this paper,
%, the representation $\rho_g$ would be the dual of the one which we denote by $\rho_g$, and the same for $\rho_h$.
but as pointed out in \cite[Rmk.~2.1.2]{Loe}, this difference is unimportant when considering the image.
\end{remark}

Let $L$ be a finite extension of $K$ containing the Fourier coefficients of $g$ and $h$ and the image of $\psi$. Let $\frk{P}$ be a prime of $L$ above some rational prime $p$ and let $E=L_\frk{P}$.

\begin{definition}\label{def:good}
We say that the prime $\frk{P}$ is \textit{good} if the following conditions hold:
\begin{itemize}
\item $p\geq 7$;
\item $p$ is unramified in $B$;
\item $p$ is coprime to $\frk{f}$, $N_g$ and $N_h$;
\item $\tilde{\rho}_{g,h,p}(H)=G(\bb{Z}_p)$;
\item $E=\bb{Q}_p$.
\end{itemize}
\end{definition}

\begin{remark}
Observe that all but the last condition exclude only finitely many primes. The last condition could be somewhat relaxed in some cases, and will be used largely for simplicity. Note also that the above set of conditions holds for a set of primes of positive density.
\end{remark}

From now on, we assume that both $g$ and $h$ are ordinary, non-Eisenstein, and distinguished with respect to $\mathfrak{P}$.

\begin{lemma}\label{lemma:big-image}
Assume that there is at least one prime which divides $D$ but not $N_g$ and one prime which divides $D$ but not $N_h$. Then, if $\frk{P}$ is a good prime,
$$
(\rho_{g,\frk{P}}\times\rho_{h,\frk{P}})(H\cap G_{K(p^\infty)^\circ})=\mathrm{SL}_2(\bb{Z}_p)\times \mathrm{SL}_2(\bb{Z}_p).
$$
\end{lemma}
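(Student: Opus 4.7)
The plan is to exploit the fact that $\mathrm{SL}_2(\bb{Z}_p)\times \mathrm{SL}_2(\bb{Z}_p)$ coincides with the topological commutator subgroup of $G(\bb{Z}_p)$, and to compare this with the image of $N := H\cap G_{K_\infty^\circ(\mu_{p^\infty})}$. Since $K_\infty^\circ(\mu_{p^\infty})/K$ is abelian (as a compositum of the two abelian extensions $K_\infty^\circ/K$ and $K(\mu_{p^\infty})/K$) and $H\subset G_{K(\frk{f})^\circ}\subset G_K$, the subgroup $N$ is normal in $H$ with $H/N$ embedding into $\Gal(K_\infty^\circ(\mu_{p^\infty})/K)$, hence abelian.

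First I would verify that $[G(\bb{Z}_p),G(\bb{Z}_p)]^{\rm top} = \mathrm{SL}_2(\bb{Z}_p)\times \mathrm{SL}_2(\bb{Z}_p)$. Since $p\geq 7$ by the good-prime hypothesis, $\mathrm{PSL}_2(\bb{F}_p)$ is a nonabelian simple group, so $\mathrm{SL}_2(\bb{F}_p)$ is perfect; combined with the fact that the principal congruence subgroups of $\mathrm{SL}_2(\bb{Z}_p)$ are uniform pro-$p$ groups with perfect Lie algebra $\mathfrak{sl}_2$, this implies that $\mathrm{SL}_2(\bb{Z}_p)$ is topologically perfect. Taking commutators of pairs $(g_1,1),(g_1',1)\in G(\bb{Z}_p)$ with $g_1,g_1'\in \mathrm{SL}_2(\bb{Z}_p)$ yields the closure $\mathrm{SL}_2(\bb{Z}_p)\times\{1\}$, and symmetrically $\{1\}\times \mathrm{SL}_2(\bb{Z}_p)$, inside the commutator subgroup; the opposite inclusion is clear since commutators have trivial reduced norm. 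Now, by the good-prime hypothesis $\tilde{\rho}_{g,h,p}(H) = G(\bb{Z}_p)$, so $\tilde{\rho}_{g,h,p}(N)$ is a closed normal subgroup of $G(\bb{Z}_p)$ with abelian quotient, and therefore contains $\mathrm{SL}_2(\bb{Z}_p)\times \mathrm{SL}_2(\bb{Z}_p)$.

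The remaining and most delicate step, which I expect to be the main obstacle, is the reverse containment $\tilde{\rho}_{g,h,p}(N)\subseteq \mathrm{SL}_2(\bb{Z}_p)\times \mathrm{SL}_2(\bb{Z}_p)$---equivalently, the vanishing of $\det\rho_{g,\frk{P}}$ and $\det\rho_{h,\frk{P}}$ on $N$. The cyclotomic factors $\epsilon_{\cyc}^{l-1}$ and $\epsilon_{\cyc}^{m-1}$ of these determinants die on $G_{\bb{Q}(\mu_{p^\infty})}\supset N$. The finite-order nebentypus characters $\chi_g, \chi_h$ have conductor coprime to $p$; handling them requires using the specific choice of $H_g, H_h$ in \cite{Loe}, which are engineered to absorb these Dirichlet characters, together with the central-character relation $\chi\varepsilon_K\chi_g\chi_h = 1$ and the inclusion $H\subset G_{K(\frk{f})^\circ}$. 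The hypothesis that some rational prime divides $D_K$ but not $N_g$ (resp. $N_h$) is essential here: it prevents $g$ (resp. $h$) from acquiring CM by $K$, which would otherwise cause $\rho_{g,\frk{P}}|_{G_K}$ (resp. $\rho_{h,\frk{P}}|_{G_K}$) to become reducible and shrink the image of $\tilde{\rho}_{g,h,p}$ on $G_K$ well below $G(\bb{Z}_p)$, invalidating the surjectivity used in the preceding paragraph.
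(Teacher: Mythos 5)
Your plan is structurally different from the paper's: you exploit the abelianness of $K_\infty^\circ(\mu_{p^\infty})/K$ together with topological perfectness of $\mathrm{SL}_2(\bb{Z}_p)$ to get the containment $\supseteq$ in one stroke. The paper instead first establishes $(\rho_{g,\frk{P}}\times\rho_{h,\frk{P}})(H\cap G_{\bb{Q}(\mu_{p^\infty})})=\mathrm{SL}_2(\bb{Z}_p)\times\mathrm{SL}_2(\bb{Z}_p)$ from the good-prime conditions, then uses the hypothesis on $D_K$ to show $K_\infty\cap\bb{Q}(\rho_g)=K_\infty\cap\bb{Q}(\rho_h)=\bb{Q}$ and hence that restricting further to $H\cap G_{K_\infty(\mu_{p^\infty})}$ loses nothing, and finally passes to $H\cap G_{K_\infty^\circ(\mu_{p^\infty})}$ via the observation that no proper closed subgroup of $\mathrm{SL}_2(\bb{Z}_p)\times\mathrm{SL}_2(\bb{Z}_p)$ has index dividing $p-1$. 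Your commutator argument for $\supseteq$ is fine and is genuinely shorter on that side; it also avoids the linear-disjointness input entirely.

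However, there is a real gap: you never prove the containment $\subseteq$, you only flag it as ``the remaining and most delicate step'' and gesture at ``using the specific choice of $H_g, H_h$.'' Since you have already shown the image contains $\mathrm{SL}_2(\bb{Z}_p)\times\mathrm{SL}_2(\bb{Z}_p)$, the image equals $(\det\times\det)^{-1}(D)$ for some subgroup $D$, and the point is exactly to show $D$ is trivial --- equivalently, that $\chi_g$ and $\chi_h$ vanish on $N$. This needs to be carried out, not deferred. Moreover, your explanation of where the $D_K$ hypothesis enters is off the mark: $g$ and $h$ are non-CM by a standing assumption, so the hypothesis is not there to prevent CM; in the paper's argument it ensures $K\cap\bb{Q}(\rho_g)=\bb{Q}$ (and likewise for $h$), which is what makes the descent from $\bb{Q}(\mu_{p^\infty})$ to $K_\infty(\mu_{p^\infty})$ innocuous. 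Under your route the $D_K$ hypothesis does not appear in the $\supseteq$ direction at all, and your claim that dropping it would ``invalidate the surjectivity'' is circular, since that surjectivity is itself part of the good-prime hypothesis. You should either complete the $\subseteq$ step explicitly, or switch to the paper's disjointness-plus-index argument, where the role of each hypothesis is transparent.
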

\begin{proof}
Let $\bb{Q}(\rho_g)$ and $\bb{Q}(\rho_h)$ be the Galois extensions of $\bb{Q}$ cut out by the representations $\rho_g$ and $\rho_h$ attached to $g$ and $h$, respectively. These extensions are unramified outside $pN_g$ and $pN_h$, respectively. Therefore, the condition on $D$ implies that $K\cap \bb{Q}(\rho_g)=\bb{Q}$ and $K\cap \bb{Q}(\rho_h)=\bb{Q}$. Moreover, since any Galois extension of $\bb{Q}$ contained in $K_\infty$ must itself contain $K$, we also have $K_\infty\cap\bb{Q}(\rho_g)=\bb{Q}$ and $K_\infty\cap\bb{Q}(\rho_h)=\bb{Q}$.

The conditions on $\frk{P}$ imply that
$$
(\rho_{g,\frk{P}}\times\rho_{h,\frk{P}})(H\cap G_{\bb{Q}(\mu_{p^\infty})})=\mathrm{SL}_2(\bb{Z}_p)\times \mathrm{SL}_2(\bb{Z}_p),
$$
and, from the remarks in the previous paragraph, it follows that
$$
(\rho_{g,\frk{P}}\times\rho_{h,\frk{P}})(H\cap G_{K_\infty(\mu_{p^\infty})})=\mathrm{SL}_2(\bb{Z}_p)\times \mathrm{SL}_2(\bb{Z}_p).
$$
Finally, since $H\cap G_{K(p^\infty)^\circ}$ is a normal subgroup of $H\cap G_{K_\infty(\mu_{p^\infty})}$ of index dividing $p-1$ and there are no such subgroups in $\mathrm{SL}_2(\bb{Z}_p)\times \mathrm{SL}_2(\bb{Z}_p)$, the lemma follows.
\end{proof}

Now we are able to give conditions under which the results of \cite{JNS} can be applied to our setting, i.e., to the representation $T_{g,h}^\psi$ defined above.

\begin{proposition}\label{prop:suff}
Assume that there is at least one prime which divides $D$ but not $N_g$ and one prime which divides $D$ but not $N_h$. Let $\frk{P}$ be a good prime. Suppose that there exists $\sigma\in G_{K(p^\infty)^\circ}$ such that $\psi_{\frk{P}}(\sigma)\neq \psi_{\frk{P}}^c(\sigma)$ modulo $p$. Then, hypotheses (HS) hold for $T_{g,h}^\psi$.
\end{proposition}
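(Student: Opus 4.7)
The plan is to verify each of the four conditions (1')--(4') comprising Hypothesis~(HS) for $T = T_{g,h}^\psi$. The two principal inputs are Lemma~\ref{lemma:big-image}, which provides the ``big image'' property $(\rho_{g,\frk{P}}\times\rho_{h,\frk{P}})(H\cap G_{K_\infty^\circ(\mu_{p^\infty})}) = \SL_2(\Z_p)\times\SL_2(\Z_p)$, and the hypothesis that $\bar\psi_\frk{P}(u)\not\equiv\bar\psi_\frk{P}^c(u)\pmod\frk{P}$ for some $u\in\Gal(K(\frk{f})^\circ K_\infty^\circ(\mu_{p^\infty})/K_\infty^\circ(\mu_{p^\infty}))$, which controls the residual behavior of the twisting character.

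\textbf{The pairing (4') and residual irreducibility (1').}
Poincar\'e duality on the modular curves of levels $N_g$ and $N_h$ produces perfect symmetric $G_\Q$-equivariant pairings of $T_g$ and $T_h$ into appropriate Tate twists involving $\chi_g^{-1}$ and $\chi_h^{-1}$. Tensoring these, using the identity $\chi\,\varepsilon_K\,\chi_g\,\chi_h = 1$ together with the standard self-duality relation for $\psi_\frk{P}$ under which complex conjugation swaps $\psi_\frk{P}$ with $\psi_\frk{P}^c$, one assembles a perfect $G_K$-equivariant pairing $T_{g,h}^\psi\times T_{g,h}^\psi\to\cO(1)$ with the required conjugation equivariance $\langle x^\sigma,y^{c\sigma c}\rangle=\langle x,y\rangle^\sigma$; the Tate twist by $1-c=-1-r$ balances the exponents. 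For condition (1'), Lemma~\ref{lemma:big-image} shows $G_K$ acts on $\bar T_g\otimes_\cO\bar T_h$ through an image containing $\SL_2(\F_p)\times\SL_2(\F_p)$ in its standard $4$-dimensional tensor representation, which is absolutely irreducible; twisting by the character $\bar\psi_\frk{P}^{-1}\bar\varepsilon_{\rm cyc}^{-1-r}$ preserves absolute irreducibility.

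\textbf{Condition (3').}
We seek $\tau_0\in G_K$ acting on $V = V_{g,h}^\psi$ as multiplication by a unit $\xi\in\cO^\times$ with $\bar\xi\neq 1$. Take $\tau_0$ in the (open) kernel of $\rho_{g,\frk{P}}\times\rho_{h,\frk{P}}$; then $\tau_0$ acts on $V_{g,h}^\psi$ through the scalar $\psi_\frk{P}^{-1}(\tau_0)\,\varepsilon_{\rm cyc}^{-1-r}(\tau_0)$, and the hypothesis on $u$ combined with the infinite image of $\varepsilon_{\rm cyc}$ on $G_K$ ensures one can arrange $\bar\xi\neq 1$.

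\textbf{Main obstacle: condition (2').}
The hardest step is producing $\sigma_0\in G_{K(p^\infty)^\circ}$ with $T/(\sigma_0-1)T$ free of rank one over $\cO$. Since $p\nmid h_K$ (a good-prime hypothesis carried throughout the paper), $K_\infty^\circ(\mu_{p^\infty})$ and $K(p^\infty)^\circ$ differ by a finite prime-to-$p$ extension, so Lemma~\ref{lemma:big-image} still yields surjectivity of $\rho_{g,\frk{P}}\times\rho_{h,\frk{P}}$ onto $\SL_2(\Z_p)\times\SL_2(\Z_p)$ when restricted to $H\cap G_{K(p^\infty)^\circ}$. I would pick $\sigma_0$ mapping to a pair $(A,B)$ of regular semisimple elements with eigenvalues $(a,a^{-1})$ and $(b,b^{-1})$ in $\Z_p^\times$; then $\sigma_0$ acts on $V_{g,h}^\psi$ with the four eigenvalues $\{\mu ab,\,\mu ab^{-1},\,\mu a^{-1}b,\,\mu a^{-1}b^{-1}\}$ where $\mu=\psi_\frk{P}^{-1}(\sigma_0)\,\varepsilon_{\rm cyc}^{-1-r}(\sigma_0)$, so that condition (2') becomes equivalent to exactly one of these eigenvalues being $\equiv 1$ and the other three being non-congruent to $1$ modulo $\frk{P}$.

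The delicate point is simultaneously prescribing $(A,B)$ and the residual value $\bar\mu$ for the same element $\sigma_0$. I would first pick $\sigma_0'$ in the kernel of $\rho_{g,\frk{P}}\times\rho_{h,\frk{P}}$ realizing a chosen residual value $\bar\mu$; the hypothesis on $u$ supplies enough distinct residual values of $\bar\psi_\frk{P}$, and combining with the $\bar\varepsilon_{\rm cyc}$-freedom produces any desired $\bar\mu\in\F_p^\times$. Then multiply $\sigma_0'$ by $\sigma_0''\in H\cap G_{K(\frk{f}p^\infty)^\circ}$ realizing the prescribed $(A,B)$ and acting trivially through $\psi_\frk{P}\,\varepsilon_{\rm cyc}^{1+r}$ (since both these characters factor through $\Gal(K(\frk{f}p^\infty)^\circ/K)$). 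An elementary count over $\F_p$, feasible since $p\geq 7$ provides enough room to avoid finitely many bad cases, shows that for any given $\bar\mu\in\F_p^\times$ there exist $(a,b)\in(\F_p^\times)^2$ such that exactly one of $\mu\,a^{\pm 1}b^{\pm 1}$ reduces to $1$ modulo $\frk{P}$ while the other three reduce to elements of $\F_p^\times\setminus\{1\}$, completing the verification of (HS).
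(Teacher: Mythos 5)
Your verifications of (1') and (4') are essentially the same as the paper's (the paper is terser for (4'), simply invoking that $g,h$ are non-Eisenstein, and for (1') it observes that $\psi_\frk{P}$ and $\varepsilon_{\rm cyc}$ are trivial on $H\cap G_{K_\infty^\circ(\mu_{p^\infty})}$ and then uses Lemma~\ref{lemma:big-image}). However, your arguments for (2') and (3') contain genuine gaps, and in both cases the difficulty stems from working inside $\ker(\rho_{g,\frk{P}}\times\rho_{h,\frk{P}})$.

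For (3'), you take $\tau_0$ in the kernel of $\rho_{g,\frk{P}}\times\rho_{h,\frk{P}}$ (which, incidentally, is closed but not open, since the image is infinite) and appeal to the ``infinite image of $\varepsilon_{\rm cyc}$'' to arrange $\bar\xi\neq 1$. This fails: the determinant of $\rho_{g,\frk{P}}$ is a product of $\chi_g$ and a power $\varepsilon_{\rm cyc}^{l-1}$, so $\varepsilon_{\rm cyc}^{l-1}$ is \emph{trivial} on $\ker\rho_{g,\frk{P}}$, and hence $\varepsilon_{\rm cyc}$ takes only root-of-unity values on the kernel; similarly for $h$. The purported freedom in $\varepsilon_{\rm cyc}^{-1-r}(\tau_0)$ simply is not there, and you give no argument that $\psi_\frk{P}$ takes a value $\not\equiv 1 \pmod{\frk{P}}$ on the kernel either. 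The paper's approach is completely different and sidesteps this: by Lemma~\ref{lemma:big-image}, one may pick $\tau_0\in H\cap G_{K_\infty^\circ(\mu_{p^\infty})}$ with $(\rho_{g,\frk{P}}\times\rho_{h,\frk{P}})(\tau_0)=(-I,I)$. Since $H\subset G_{K(\frk{f})^\circ}$ and $\tau_0$ fixes $K_\infty^\circ(\mu_{p^\infty})$, both $\psi_\frk{P}$ and $\varepsilon_{\rm cyc}$ are trivial at $\tau_0$, so $\tau_0$ acts on $T_{g,h}^\psi$ as $-1$, and $\tau_0-1$ acts as $-2\in\cO^\times$ (using $p\geq 5$).

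For (2'), your factorization $\sigma_0=\sigma_0'\sigma_0''$ inherits the same defect: you assert that $\sigma_0'\in\ker(\rho_{g,\frk{P}}\times\rho_{h,\frk{P}})$ can realize an arbitrary prescribed $\bar\mu$, but by the determinant constraint above, $\varepsilon_{\rm cyc}^{-1-r}$ is essentially pinned down on the kernel, and the range of $\bar\psi_\frk{P}$ there is left unexamined. The paper avoids the decomposition entirely by invoking \cite[Cor.\ 2.2.3]{Loe}, which produces elements of $G_{K_\infty^\circ(\mu_{p^\infty})}$ whose images under $\rho_{g,\frk{P}}\times\rho_{h,\frk{P}}$ run over the torus $\{(\operatorname{diag}(x,x^{-1}\chi_g(u')),\operatorname{diag}(y,y^{-1}\chi_h(u')))\}$ \emph{and} which restrict to the given $u$ (after lifting $u$ to $u'$ over $\mu_{N_gN_h}$). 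The four eigenvalues of such a $\sigma_0$ on $T_{g,h}^\psi$, after setting $y=x^{-1}\psi_\frk{P}(u)$, are $1$, $x^{-2}\chi_g(u')$, $x^2\chi_h(u')\psi_\frk{P}(u)^{-2}$ and $\psi_\frk{P}^c(u)\psi_\frk{P}(u)^{-1}$; the hypothesis on $u$ makes the fourth $\neq 1\pmod p$, and since $p\geq 7$ one may choose $x$ to avoid the finitely many bad values for the middle two. Your ``elementary count over $\F_p$'' aims at the same conclusion, but without the input of Loeffler's corollary you have no way to guarantee that the needed $(A,B)$ and the needed residual $\psi_\frk{P}$-value occur on the \emph{same} element.

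Finally, a smaller point: condition (2') asks that $T/(\sigma_0-1)T$ be \emph{free} of rank one over $\cO$, which for a regular semisimple $\sigma_0$ requires the eigenvalues to lie in $\cO^\times$ and a choice of $\cO$-stable eigenbasis; your phrasing in terms of ``eigenvalues congruent to $1$'' is correct in spirit but implicitly assumes diagonalizability over $\cO$, which is automatic for the paper's explicitly diagonal choice but needs a word in yours.
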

\begin{proof}
Since $\psi_{\frk{P}}$
%and $\kappa_{\ac}$
is trivial when restricted to $H\cap G_{K(p^\infty)^\circ}$, condition (1') follows easily from the previous lemma.

To prove condition (2'), we closely follow the proof of \cite[Prop. 4.2.1]{Loe}. Write $\chi_g(\sigma)$ and $\chi_h(\sigma)$ for the images of $\sigma$ by $\chi_g$ and $\chi_h$ via the natural identifications $\Gal(\bb{Q}(\mu_{N_g})/\bb{Q})\cong (\bb{Z}/N_g\bb{Z})^\times$ and $\Gal(\bb{Q}(\mu_{N_h})/\bb{Q})\cong (\bb{Z}/N_h\bb{Z})^\times$. Then, by the previous lemma, the image of $\sigma H\cap G_{K(p^\infty)^\circ}$ under $\rho_{g,\frk{P}}\times\rho_{h,\frk{P}}$ contains all the elements of the form
$$
\left(\begin{pmatrix} x & 0 \\ 0 & x^{-1}\chi_g(\sigma)\end{pmatrix},\begin{pmatrix} y & 0 \\ 0 & y^{-1}\chi_h(\sigma)\end{pmatrix}\right),\quad x,y\in \bb{Z}_p^\times.
$$
Now choose $x\in\bb{Z}_p^\times$ such that $x^{-2}\chi_g(\sigma)\neq 1 \pmod p$ and $x^2\chi_h(\sigma)\psi_{\frk{P}}(\sigma)^{-2}\neq 1 \pmod p$, which is possible since $p\geq 7$, and let $y=x^{-1}\psi_{\frk{P}}(\sigma)$. Choose $\sigma_0\in \sigma H\cap G_{K(p^\infty)^\circ}$ whose image under $\rho_{g,\frk{P}}\times\rho_{h,\frk{P}}$ is given by the element above, with the choices of $x$ and $y$ which we have just specified. Then, the eigenvalues of $\sigma_0$ acting on $T_{g,h}^\psi$ are 1, $x^{-2}\chi_g(\sigma)$, $x^2\chi_h(\sigma)\psi_{\frk{P}}(\sigma)^{-2}$ and $\psi_{\frk{P}}^c(\sigma)\psi_{\frk{P}}(\sigma)^{-1}$, which proves condition (2').

To check condition (3'), we can argue as in \cite[Rmk. 11.1.3]{KLZ}. By the previous lemma, we can find an element $\tau_0\in H\cap G_{K(p^\infty)^\circ}$ such that
$$
(\rho_{g,\frk{P}}\times\rho_{h,\frk{P}})(\tau_0)=\left(\begin{pmatrix} -1 & 0 \\ 0 & -1\end{pmatrix},\begin{pmatrix} 1 & 0 \\ 0 & 1\end{pmatrix}\right),
$$
so $\tau_0$ acts on $T_{g,h}^\psi$ as multiplication by $-1$.

Finally, condition (4') follows from the assumption that $g$ and $h$ are non-Eisenstein and $p$-distinguished.
\end{proof}

%\begin{remark}
%The existence of $\sigma$ as in the statement of the previous proposition holds whenever $2k\neq 2$ modulo $p-1$ and whenever $\psi^2\neq(\psi^c)^2$ modulo $p$.
%\end{remark}

\begin{remark}\label{rem:HW}
If we are just interested in ensuring that hypotheses (HW) hold for $T_{g,h}^\psi$, we can relax some of the assumptions above. For example,
we do not need to require $g$ and $h$ to be non-Eisenstein, and
we can require that there exist $\sigma\in G_{K(1)^\circ(\mu_{p^\infty})}$ such that $\psi_{\frk{P}}(\sigma)\neq \psi_{\frk{P}}^c(\sigma)$, without requiring this inequality to hold modulo $p$.
\end{remark}

\section{Proof of Theorems \ref{thmB}, \ref{thmC}, and \ref{thmD}}\label{sec:ABC}

%\subsection{Bloch--Kato Selmer groups}

Let the setting be as in the Introduction. In particular, $g\in S_l(N_g,\chi_g)$ and $h\in S_m(N_h,\chi_h)$ are newforms of weights $l \geq m \geq 2$ of the same parity, $K/\Q$ is an imaginary quadratic field of discriminant $-D<0$,
%in which $(p)=\fp\bar{\fp}$ splits, and assume that $p$ does not divide the class number $h_K$.
 $\psi$ is a Gr\"ossencharacter for $K$ of infinity type $(1-k,0)$ for some even integer $k\geq 2$, and we consider the $G_K$-representation
\[
V_{g,h}^\psi=V_g\otimes_EV_h(\psi_{\mathfrak{P}}^{-1})(1-c),
\]
%where $\kappa_{\ac}:\Gamma_{\ac}\rightarrow\mathbb Z_p^\times$ is as in $\S\ref{subsec:Iw}$ and
where $c=(k+l+m-2)/2$.

\begin{lemma}\label{lem:BK}
The Bloch--Kato Selmer group of $V_{g,h}^{\psi}$ is given by
\[
\Sel(K,V_{g,h}^{\psi})\cong
\begin{cases}
\Sel_{\bal}(K,V_{g,h}^{\psi}) &\textrm{if $l-m<k<l+m$,}\\[0.2em]
\Sel_{\mathcal F}(K,V_{g,h}^{\psi}) &\textrm{if $k\geq l+m$.}
\end{cases}
\]
\end{lemma}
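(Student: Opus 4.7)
My plan is to verify the equality of local conditions place by place, the only non-trivial case being the primes above $p$.

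\textbf{Places $v\nmid p$.} At such a place, the Bloch--Kato finite local condition is the unramified one, which matches the Greenberg local condition used in defining $\Sel_{\bal}$ and $\Sel_{\mathcal F}$. Since $V_{g,h}^\psi$ is pure of weight $-1$ (by the proof of Proposition~\ref{prop:in-Sel}), $H^0(K_v, V_{g,h}^\psi) = 0$ and, in fact, $H^1(K_v,V_{g,h}^\psi)=H^1_{\rm unr}(K_v,V_{g,h}^\psi)=0$, so both conditions trivially agree.

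\textbf{Places $v\mid p$.} This is the main content. Since $p$ splits in $K$ as $\frk{p}\overline{\frk{p}}$ and $g,h$ are ordinary at $p$, the restriction of $V_{g,h}^\psi$ to each decomposition group $G_{K_v}$ inherits an ordinary filtration from those of $V_g$, $V_h$ and $\psi_{\frk{P}}^{-1}$. The plan is to identify, at each of $\frk{p}$ and $\overline{\frk{p}}$, the sub-$G_{K_v}$-representation whose induced cohomology equals $H^1_f(K_v,V_{g,h}^\psi)$, and check that it matches either $\mathcal F_v^+(V_{g,h}^\psi)$ (in the balanced case) or its analogue for $\mathcal F$ (in the unbalanced case). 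The standard criterion I would use is: if $0\to V^+\to V\to V^-\to 0$ is a filtration of a de Rham $G_{\Q_p}$-representation with $V^+$ having all positive Hodge--Tate weights and $V^-$ having all non-positive Hodge--Tate weights, and neither $V^+$ nor $V^-$ has a non-zero $G_{\Q_p}$-invariant subquotient of Hodge--Tate weight $0$, then $H^1_f(\Q_p,V)$ equals the image of $H^1(\Q_p,V^+)\to H^1(\Q_p,V)$ (see e.g. Flach's appendix in \cite{BK} or Nekov\'a\v{r}'s treatment in \cite{nekovar-ht}).

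\textbf{Weight computation.} The Hodge--Tate weights of $V_g$ at $p$ are $\{0, l-1\}$ with $V_g^+$ the weight-$(l-1)$ piece; similarly for $V_h$. The character $\psi_{\frk{P}}^{-1}$ at $\frk{p}$ has Hodge--Tate weight $k-1$, and at $\overline{\frk{p}}$ has Hodge--Tate weight $0$. Twisting by $(1-c)$ shifts all weights by $c-1=(k+l+m-4)/2$. Tabulating the four Hodge--Tate weights of $V_{g,h}^\psi|_{G_{K_\frk{p}}}$ and $V_{g,h}^\psi|_{G_{K_{\overline{\frk{p}}}}}$, the key inequalities $l-m<k<l+m$ (resp.\ $k\geq l+m$) translate exactly into the statement that $\mathscr F^2 V_{g,h}^\psi$ at $\frk{p}$ (resp.\ the $\mathcal F$-submodule $V^f_{g,h}$) consists of the positive-weight summands while the quotient consists of the non-positive ones. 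Thus the criterion applies on each side of $p$, and the Bloch--Kato condition equals the specified Greenberg condition.

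\textbf{Main obstacle.} The main obstacle is the bookkeeping of Hodge--Tate weights of the various tensor summands at each of $\frk{p}$ and $\overline{\frk{p}}$, and the verification that the range $l-m<k<l+m$ precisely corresponds to the balanced Panchishkin submodule while $k\geq l+m$ corresponds to the $f$-unbalanced one; a small additional check is that no summand has Hodge--Tate weight $0$ contributing unwanted classes, which follows from the strict inequalities in both regimes combined with purity (so $D_{\rm cris}(V_{g,h}^\psi)^{\varphi=1}=0$).
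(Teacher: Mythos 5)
Your approach is essentially the paper's own: both reduce to tabulating Hodge--Tate weights of the ordinary filtration and invoking the standard Panchishkin-type criterion (Nekov\'a\v{r}, Flach) for equating the Bloch--Kato finite local condition with the Greenberg/ordinary one. The only cosmetic difference is that the paper first applies Shapiro's lemma to work over $\mathbb Q$ with $V_{fgh}$ and the single filtration step $\mathscr F^2 V_{fgh}$ (resp.\ $V_{fgh}^f$), whereas you work directly over $K$ at each of $\frk p$ and $\overline{\frk p}$; these are equivalent via the Shapiro compatibility of local conditions recorded in $\S$2.3, and the rest of your plan (purity away from $p$, weight bookkeeping, ruling out unwanted weight-$0$ classes via Ramanujan) matches the paper's "one immediately checks" step.
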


\begin{proof}
Note that by Shapiro's lemma $H^1(K,V_{g,h}^\psi)\cong H^1(\mathbb Q, V_{fgh})$, where $f=\theta_\psi$ is the theta series of $\psi$ and $V_{fgh}$ is the specialization of the big Galois representation $\mathbb V_{\hf\hg\hh}^\dag$ in (\ref{eq:big-SD}) to weights $(k,l,m)$.
%of course, here $(\hf,\hg,\hh)$ are the Hida families through the ordinary $p$-stabilization of $(f,g,h)$).
One immediately checks that the Hodge--Tate weights of the $G_{\mathbb{Q}_p}$-subrepresentation $\mathscr{F}^2V_{fgh}\subset V_{fgh}$ (resp. $V_{fgh}^f\subset V_{fgh}$) are all $<0$ (with the $p$-adic cyclotomic character $\epsilon_{\rm cyc}$ having Hodge--Tate weight $-1$) if and only if $l-m<k<l+m$ (resp. $k\geq l+m$). The result follows.
\end{proof}

%\begin{remark}
%An analogous argument shows that whenever $k \leq l-m$ the Bloch--Kato Selmer group of $V_{g,h}^{\psi}$ is given by $\Sel_{\mathcal G}(K,V_{g,h}^{\psi})$. The results we would obtain in this case are analogous to those corresponding to the region $k \geq l+m$.
%\end{remark}

Here we collect a set of hypotheses for our later reference. For any nonzero $m\in\Z$, ${\rm prime}(m)$ denotes the set of primes that divide $m$, and ${\rm prime}^c(m)$ its complement.

\begin{hypotheses}\label{hyp:h1-h9}
\hfill
\begin{itemize}
\item[{\rm (h1)}] $g$ and $h$ are ordinary at $p$, non-Eisenstein, and $p$-distinguished.
\item[{\rm (h2)}] $p$ splits in $K$,
\item[{\rm (h3)}] $p$ does not divide the class number of $K$,
\item[{\rm (h4)}] $\psi_\frk{P}\vert_{G_{K(p^\infty)^\circ}}\neq \psi_\frk{P}^c\vert_{G_{K(p^\infty)^\circ}}$ modulo $p$,
\item[{\rm (h5)}] neither $g$ nor $h$ are of CM type,
\item[{\rm (h6)}] $g$ is not Galois-conjugate to a twist of $h$.
%\end{itemize}
%and in addition that:
%\begin{itemize}
\item[{\rm (h7)}] ${\rm prime}(D)\cap{\rm prime}^c(N_g)\neq\emptyset$ and ${\rm prime}(D)\cap{\rm prime}^c(N_h)\neq\emptyset$,
\item[{\rm (h8)}] $\mathfrak{P}$ is a good prime in the sense of Definition~\ref{def:good}.
%\item[{\rm (h8)}] $\psi_{\mathfrak{P}}\neq\psi_{\mathfrak{P}}^c$.
\end{itemize}
\end{hypotheses}

%Here, for any nonzero $m\in\Z$, ${\rm prime}(m)$ denotes the set of primes that divide $m$, and ${\rm prime}^c(m)$ its complement. %Also, recall that the notion of ``good prime'' was introduced in Definition~\ref{def:good}.

%\begin{remark}
%Note that in the results below the need for hypotheses (h1)--(h3) arises from our construction of the anticyclotomic Euler system in Theorem~\ref{principal:wild}, while (h4)--(h8) are used to verify the hypotheses in the results from the Euler system machinery developed in \cite{JNS}, as done in \S\ref{subsec:verify}.
%\end{remark}

\subsection{Proof of Theorem~\ref{thmB}}

Let $\kapinftyone\in H^1_{\rm Iw}(K[p^\infty],T_{g,h}^\psi)$ be the Iwasawa cohomology class of conductor $n=1$ from Theorem~\ref{principal:wild}, and set
\begin{equation}\label{eq:base-class}
\kap=\kapone\in H^1(K,T_{g,h}^\psi),
\end{equation}
where $\kapone={\rm pr}_{K}(\kapinftyone)$.

If $l-m<k<l+m$, the next result recovers Theorem~\ref{thmB} in the Introduction. Note however, that the result does not require these inequalities to hold.

\begin{theorem}\label{thm:A}
Assume hypotheses (h1)--(h8). Then the following implication holds:
\[
\kap\neq 0\quad\Longrightarrow\quad{\rm dim}_E\,\Sel_{\bal}(K,\Vrep)=1.
\]
In particular, if $l-m<k<l+m$ and $\kap\neq 0$ then the Bloch--Kato Selmer group $\Sel(K,\Vrep)$ is one-dimensional.
\end{theorem}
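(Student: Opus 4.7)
The plan is to deduce Theorem~\ref{thm:A} as an application of the axiomatic Kolyvagin-style machinery of Jetchev--Nekov\'a\v{r}--Skinner recalled in Theorem~\ref{primer-res}. By Theorem~\ref{principal:wild}, the hypotheses (h1)--(h4) produce a collection of Iwasawa cohomology classes $\{\kapinftyn\}_{n\in\mathcal{S}}$ with the anticyclotomic norm compatibility $(\ref{eq:norm-split2})$, and by Proposition~\ref{prop:in-Selinfty} these land in the balanced Selmer group $\Sel_{\rm bal}(K[np^\infty],T_{g,h}^\psi(N))$. Specializing to $n=1$ and projecting to $K[1]$ yields the class $\kapone\in\Sel_{\rm bal}(K[1],T_{g,h}^\psi(N))$ whose corestriction is $\kap$ as in $(\ref{eq:base-class})$; the family $\{\kappa_{\psi,g,h,n}={\rm pr}_{K[n]}(\kapinftyn)\}_{n\in\mathcal{S}}$ is then a $\Lambda$-adic split anticyclotomic Euler system for $T_{g,h}^\psi$ in the sense of $\S\ref{sec:ES}$, with Greenberg local conditions at $p$ given by the filtration $(\ref{eq:Gr-p})$.

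Next I would verify that the general theorems of $\S\ref{sec:ES}$ apply. The big image hypothesis (HS) (which is stronger than the required (HW)) follows from hypotheses (h5)--(h8) by Proposition~\ref{prop:suff}; note that the $p$-distinguished assumption (h4) guarantees the existence of an element $u$ as in its statement, using the decomposition $\psi = \alpha\psi_0^{k-1}$ together with the $p$-distinguished condition $(\ref{eq:513})$. The orthogonality condition (HO) for the local conditions at primes of $K[n]$ above $p$ reduces, using the self-duality pairing $T_{g,h}^\psi\times T_{g,h}^\psi\to\cl{O}(1)$ coming from the Poincar\'e dualities on $V_g$ and $V_h$ and the conjugate self-duality of $\psi_{\frk P}$, to checking that $\cF_\frk{p}^+(T_{g,h}^\psi)$ and $\cF_{\overline{\frk{p}}}^+(T_{g,h}^\psi)$ are exact annihilators of each other under the induced local pairing at $p$; but this is immediate from the definitions, since $\mathscr{F}^2T_{g,h}^\psi$ and $\mathscr{F}^1T_{g,h}^\psi$ are mutual annihilators under the self-duality of $T_{fgh}$ (each being of rank complementary to the other).

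With hypotheses (HS) and (HO) in place, and since (h2) ensures $p$ splits in $K$ and the Euler system extends along the anticyclotomic $\mathbb Z_p$-extension by construction, Theorem~\ref{primer-res} yields the implication
\[
\kappa_0:=\cor_{K[1]/K}(\kapone)\neq 0\quad\Longrightarrow\quad\rank_{\mathcal O}\Sel_{\Gr}(K,T_{g,h}^\psi)=1.
\]
Tensoring with $E$ and noting that $\kappa_0=\kap$ and that $\Sel_{\Gr}(K,T_{g,h}^\psi)\otimes_{\mathcal O}E=\Sel_{\bal}(K,V_{g,h}^\psi)$ (by propagation of local conditions), we obtain the first conclusion. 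The final assertion of the theorem then follows from Lemma~\ref{lem:BK}, which identifies $\Sel(K,V_{g,h}^\psi)$ with $\Sel_{\bal}(K,V_{g,h}^\psi)$ precisely in the range $l-m<k<l+m$ (this being the range in which the Hodge--Tate weights of $\mathscr{F}^2V_{fgh}$ are all strictly negative, matching the Bloch--Kato local condition at $p$).

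The main technical point in the argument is the orthogonality (HO) together with the verification that Proposition~\ref{prop:suff} really produces hypothesis (HS) rather than only the weaker (HW); both hinge on the careful choice of $u$ and $\sigma_0$ in the proof of Proposition~\ref{prop:suff}. Once these are in hand, the deduction from the general Euler system machinery is formal.
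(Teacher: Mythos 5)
Your proof is correct and follows the paper's argument exactly: Proposition~\ref{prop:in-Selinfty} and Theorem~\ref{principal:wild} produce the $\Lambda$-adic anticyclotomic Euler system of Selmer classes, Proposition~\ref{prop:suff} supplies the big-image hypothesis, and Theorem~\ref{primer-res} then gives the rank-one conclusion, with Lemma~\ref{lem:BK} handling the final assertion. Your explicit verification of the orthogonality hypothesis (HO) is a useful elaboration that the paper leaves implicit (it is assumed globally at the end of $\S\ref{sec:ES}$). One small caveat: your claim that the $p$-distinguished assumption (h4) supplies the element $u$ required in Proposition~\ref{prop:suff} is asserted rather than verified---the condition $(\ref{eq:513})$ constrains $\alpha\psi_0$ on local units at $\frk{p}$, while the hypothesis of Proposition~\ref{prop:suff} concerns the comparison of $\psi_{\frk{P}}$ and $\psi_{\frk{P}}^c$ modulo $p$ on $\Gal(K(\frk{f})^\circ K_\infty^\circ(\mu_{p^\infty})/K_\infty^\circ(\mu_{p^\infty}))$, so the implication needs an argument; the paper's proof of this theorem also passes over this point silently, so you are not worse off, but it would be worth spelling out.
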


\begin{proof}
By Proposition~\ref{prop:in-Selinfty}, the classes $\kapn:={\rm pr}_{K[n]}(\kapinftyn)$ land in $\Sel_{\bal}(K[n],T_{g,h}^\psi)$, and by Theorem~\ref{principal:wild} they form an anticyclotomic Euler system for $V_{g,h}^\psi$. Therefore, the result follows from Theorem~\ref{primer-res} and Proposition~\ref{prop:suff}.
\end{proof}

\begin{remark}
If $k=2$ and $l=m\geq 2$, working with the classes $\kapn$ from Theorem~\ref{principal:tame}, rather than those from Theorem~\ref{principal:wild} as above, hypotheses (h2)-(h3) in Theorem~\ref{thm:A} can be replaced by the assumption that there exists an element $\gamma \in G_K$ satisfying the conditions in Remark~\ref{rem:gamma}. Further, (h1) and (h4) can be relaxed as discussed in Remark~\ref{rem:HW}.
%(In any case, note that in general condition (h3) is introduced largely for simplicity.)
\end{remark}

%In the case where $k=2$ and $l=m$ we get a slightly stronger results.

%\begin{remark}
%As already noted, in the case $k=l=m=2$, by the Gross--Zagier formula for diagonal cycles by Yuan--Zhang--Zhang \cite{YZZ}, the non-vanishing of $\kap$ is expected to be equivalent to the non-vanishing of
%$L'(\pi_K\otimes\pi_K'\otimes\psi,1/2)$
%$L'(\theta_\psi\otimes g\otimes h,s)$ at the center $s=2$.
%Thus, together with Lemma~\ref{lem:BK}, Theorem~\ref{thm:A} yields a result towards the Bloch--Kato conjecture in analytic rank one.
%\end{remark}

\subsection{Proof of Theorem~\ref{thmC}}

Recall that $\theta_\psi\in S_k(N_\psi,\chi\varepsilon_K)$ is the theta series attached to $\psi$, and put $N=\lcm(N_\psi,N_g,N_h)$.

The next theorem, establishing cases of the Bloch--Kato conjecture for $V_{g,h}^\psi$ %twisted Rankin--Selberg convolutions
in analytic rank zero, recovers Theorem~\ref{thmC} in the Introduction.

\begin{theorem}\label{thm:thmB}
Assume hypotheses (h1)--(h8), and in addition that:
\begin{itemize}
\item $\varepsilon_\ell(\theta_\psi,g,h)=+1$ for all primes $\ell\mid N$,
\item ${\rm gcd}(N_\psi,N_g,N_h)$ is squarefree.
\end{itemize}
If $k\geq l+m$ then the following implication holds:
\[
L(\Vrep,0)\neq 0\quad\Longrightarrow\quad \Sel(K,\Vrep)=0.
\]
\end{theorem}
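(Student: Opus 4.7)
The plan is to deduce Theorem~B from Theorem~C by specializing the resulting anticyclotomic Iwasawa main conjecture divisibility at the weight-$k$ point of the CM Hida family $\hf=\hf_\psi$. The first task is to verify that the class $\kappa_{\psi,g,h,\infty}$ from Theorem~\ref{principal:wild} is not $\Lambda_{\ac}$-torsion, which is the hypothesis of Theorem~C. Combining the explicit reciprocity law (Theorem~\ref{rec-law}), identifying the image of $\mathrm{res}_p(\kappa(\hf,g,h))$ under the big logarithm $\mathfrak{Log}^\xi$ with the square-root $p$-adic $L$-function $\mathscr{L}_p^\xi(\underline{\breve{\hf}},\underline{\breve{g}},\underline{\breve{h}})$, with the interpolation formula of Theorem~\ref{thm:hsieh}, I would argue that the weight-$k$ specialization of this $p$-adic $L$-function is a nonzero multiple of $L(\Vrep,0)$: the $\Gamma$-factors $\Gamma(k,l,m)$ and the Petersson norm are clearly nonzero, and the $p$-adic Euler factors $\mathcal E(\hf_k,g,h)$, $\mathcal E_0(\hf_k)$, $\mathcal E_1(\hf_k)$ are nonzero under our ordinary, $p$-distinguished, and non-Eisenstein hypotheses (the CM setup makes these factors explicit in terms of Hecke values of $\psi$ at primes above $p$). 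Since $L(\Vrep,0)\neq 0$ by assumption, $\mathscr{L}_p^\xi$ is a nonzero element of $\Lambda_{\hf}$, and via the reciprocity law this forces $\kappa_{\psi,g,h,\infty}$ to be non-torsion.

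Having verified its hypothesis, Theorem~C then applies and yields the divisibility
\[
\Char_{\Lambda_{\ac}}(X_{\mathcal F}(\mathbb A_{\hf gh}^{\dag})) \supset (\mathscr{L}_p(\hf,g,h)^2)
\]
in $\Lambda_{\ac}\otimes_{\mathbb Z_p}\mathbb Q_p$; in particular, $X_{\mathcal F}(\mathbb A_{\hf gh}^{\dag})$ is $\Lambda_{\ac}$-torsion. The next step is to specialize at the augmentation $\mathrm{ev}_k:\Lambda_{\hf}\to\mathcal O$ which under $\Lambda_{\hf}\cong\Lambda_{\ac}$ corresponds to the trivial character of $\Gamma_{\ac}$ and picks out the weight-$k$ point of $\hf$. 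Since $\mathrm{ev}_k(\mathscr{L}_p(\hf,g,h))\neq 0$ by the previous paragraph, a generator of the characteristic ideal of $X_{\mathcal F}(\mathbb A_{\hf gh}^{\dag})$ does not lie in $\ker(\mathrm{ev}_k)$, and hence $X_{\mathcal F}(\mathbb A_{\hf gh}^{\dag})/\ker(\mathrm{ev}_k)X_{\mathcal F}(\mathbb A_{\hf gh}^{\dag})$ is a finite $\mathcal O$-module.

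Finally, a control theorem modeled on Lemma~\ref{lem:spQ} and Proposition~\ref{prop:sp-Q}, whose required finiteness of local $H^0$'s follows from the Ramanujan--Petersson bound combined with our non-Eisenstein, $p$-distinguished, and big-image hypotheses, would identify this specialization with $X_{\mathcal F}(K,A_{g,h}^\psi)$ up to finite error. Thus $\Sel_{\mathcal F}(K,A_{g,h}^\psi)$ is finite, which forces $\Sel_{\mathcal F}(K,\Vrep)=0$; by Lemma~\ref{lem:BK}, since $k\geq l+m$, this coincides with $\Sel(K,\Vrep)$, completing the proof. The principal difficulty I anticipate is twofold: verifying that the $p$-adic Euler factor $\mathcal E(\hf_k,g,h)$, a product of four terms of the form $1-\beta_k\alpha_g^{\pm}\alpha_h^{\pm}/p^c$, is nonzero at the central point (requiring the explicit Satake parameters of $\theta_\psi$ at $p$ to be combined with the assumption that $g,h$ are not of CM type), and establishing the control theorem in the final step along the anticyclotomic direction.
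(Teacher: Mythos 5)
Your proposal is correct in outline but routes through considerably heavier machinery than the paper's own argument. The paper stays entirely at the weight-$k$ specialization: it uses the reciprocity law (Theorem~\ref{rec-law}) together with Proposition~\ref{perrin} and Hsieh's interpolation formula --- the same inputs you invoke --- to deduce not merely that $\kap\neq 0$ but the stronger statement that $\mathrm{res}_p(\kap)$ is nonzero in the one-dimensional space $H^1(\Q_p,V_f^{gh})$; it then applies Theorem~A to get $\dim_E\,\Sel_{\bal}(K,\Vrep)=1$, and finishes with a short Poitou--Tate computation at weight $k$ (the weight-$k$ shadow of the global duality sequences used in the proof of Theorem~\ref{thm:equiv}) to force $\Sel_{\mathcal F\cap +}=0$, then $\Sel_{\mathcal F\cup +}=\Sel_{\bal}$, and finally $\Sel_{\mathcal F}(\Q,V_{fgh})=0$. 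Your route via Theorem~C and anticyclotomic descent is a genuinely different, and strictly more expensive, path to the same statement: you invoke the full $\Lambda_{\ac}$-adic divisibility where only a weight-$k$ rank bound is needed, and you then have to come back down by a control theorem.

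Of the two points you flag as difficulties, only one is real. The nonvanishing of $\mathcal{E}(\hf_k,g,h)$ is not delicate: since $p\nmid N$, Ramanujan--Petersson gives $\bigl|\beta_k\alpha_g^{\pm 1}\alpha_h^{\pm 1}/p^c\bigr|=p^{-1/2}<1$, so each of the four factors $1-\beta_k\alpha_g^{\pm}\alpha_h^{\pm}/p^c$ is nonzero, with no appeal to the CM structure of $\theta_\psi$ nor to the non-CM hypothesis on $g,h$; the paper itself invokes this observation implicitly in the proof of Proposition~\ref{perrin}. The genuine gap is the control theorem: you need to identify $X_{\mathcal F}(\mathbb A_{\hf gh}^\dag)/\ker(\mathrm{ev}_k)X_{\mathcal F}(\mathbb A_{\hf gh}^\dag)$ with $X_{\mathcal F}(K,A_{g,h}^\psi)$ up to finite error, but Proposition~\ref{prop:sp-Q} is stated only for the \emph{balanced} Selmer condition and only for height-one primes $\frk{Q}\notin\Sigma_\Lambda$; the $f$-unbalanced analogue is merely sketched inside the proof of Lemma~\ref{rangs}(2), and you would in addition have to verify $\ker(\mathrm{ev}_k)\notin\Sigma_\Lambda$. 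None of these obstacles seems insurmountable, but the paper's approach avoids them entirely by working at weight $k$ from the outset: it replaces the control theorem and the characteristic-ideal specialization with a two-line dimension count in explicit Poitou--Tate exact sequences, and it replaces the appeal to Theorem~\ref{segon-res} (requiring hypothesis (HS)) with the lighter Theorem~\ref{primer-res}.
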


\begin{proof}
We continue to denote by $\kap$ the image of the class in (\ref{eq:base-class}) under the isomorphism
\[
H^1(K,V_{g,h}^{\psi})\cong H^1(\mathbb Q,V_{fgh})
\]
coming from Shapiro's lemma. If $k\geq l+m$, the central value $L(\Vrep,0)$ is in the range of interpolation of the triple product $p$-adic $L$-function of Theorem~\ref{thm:hsieh}, and so by Proposition~\ref{perrin} and Theorem~\ref{rec-law} its non-vanishing implies that the image of $\kap$ under the natural map
\[
{\rm res}_p:\Sel_{\bal}(\mathbb Q,V_{fgh})\longrightarrow H^1(\mathbb Q_p,V_f^{gh})
\]
is nonzero. In particular, $\kap\neq 0$, and therefore by Theorem~\ref{thm:A} the balanced Selmer group $\Sel_{\bal}(K,V_{g,h}^{\psi})=\Sel_{\bal}(\mathbb Q,V_{fgh})$ is one-dimensional.
%(Note that this is \emph{not} the Bloch--Kato Selmer group in the range $r\geq (\ell+m)/2$); rather, it is an analogue of the ``critical'' Selmer groups studied in \cite[\S{7.5}]{LLZ}.)

From the exact sequence
\begin{align*}
0\longrightarrow\Sel_{\mathcal F\cap +}(\mathbb Q,V_{fgh})\longrightarrow\Sel_{\bal}(\mathbb Q,V_{fgh})&\overset{{\rm res}_p}\longrightarrow H^1(\mathbb Q_p,V_f^{gh})\\
&\longrightarrow\Sel_{\mathcal F\cup +}(\mathbb Q,V_{fgh})^\vee\longrightarrow\Sel_{\bal}(\mathbb Q,V_{fgh})^\vee\longrightarrow 0
\end{align*}
coming from global duality (adopting notations similar to those in Theorem~\ref{thm:equiv}), we thus see that $\Sel_{\mathcal F\cap +}(\mathbb Q,V_{fgh})=0$ and that $\Sel_{\mathcal F\cup +}(\mathbb Q,V_{fgh})=\Sel_{\bal}(\mathbb Q,V_{fgh})$.
%In particular, $\kappa_{g,h}^{\psi\xi}$ lands in $\Sel_{\mathcal F\cup +}(\mathbb Q,V_{fgh})$, and
Together with the exact sequence
\[
\Sel_{\mathcal F\cup +}(\mathbb Q,V_{fgh})\xrightarrow{{\rm res}_p}H^1(\mathbb Q_p,V_f^{gh})\longrightarrow\Sel_{\mathcal F}(\mathbb Q,V_{fgh})^\vee\longrightarrow\Sel_{\mathcal F\cap +}(\mathbb Q,V_{fgh})^\vee\longrightarrow 0,
\]
it follows that $\Sel_{\mathcal F}(\mathbb Q,V_{fgh})=0$, and combined with Lemma~\ref{lem:BK} this concludes the proof.
\end{proof}

Refining the proof of Theorem~\ref{thm:thmB}, we can further bound the size of the Bloch--Kato Selmer group for the discrete module $A_{g,h}^\psi=\Vrep/\Trep$ in terms of $L$-values. For the statement, let $\hf$ be the Hida family associated to $\psi$ as in $\S\ref{subsec:Iw}$, so that $\hf_k$ is the ordinary $p$-stabilization of $\theta_\psi$, and, keeping with the notations in Theorem~\ref{thm:hsieh}, put $\alpha_k=\psi(\overline{\frk{p}})$ and $\beta_k=\psi(\frk{p})$. Let also $\varepsilon_\ell(\theta_\psi,g,h)=\varepsilon_\ell(V_{f g h})$ denote the epsilon factor associated to $V_{fgh}\vert_{G_{\Q_\ell}}$, where $f=\theta_\psi$.

\begin{theorem}\label{thm:size-BK}
Assume hypotheses (h1)--(h8), and in addition that:
\begin{itemize}
\item $\varepsilon_\ell(\theta_\psi,g,h)=+1$ for all primes $\ell\mid N$,
\item ${\rm gcd}(N_\psi,N_g,N_h)$ is squarefree,
\item $H^1(\bb{Q}_p, T_{f}^{gh})$ is torsion-free,
\item $H^1_{\cl{L}}(\bb{Q}_p,T_{fgh})$ is torsion-free for $\cl{L}\in\{\bal,\cl{F},\cl{F}\cap +,\cl{F}\cup +\}$.
\end{itemize}
If $k\geq l+m$ and $L(\Vrep,0)\neq 0$ then the $\mathcal{O}$-module $\Sel_{\cF}(K,\Arep)$ is finite and
\[
{\rm length}_{\cO}(\Sel_{\cF}(K,\Arep))\leq 2\,v_\mathfrak{P}\left(\frac{(l-2)!(m-2)!}{(k-c-1)!}\cdot\frac{\mathcal{E}_1(\hf_k)}{\mathcal{E}(\hf_k,g,h)}\cdot \mathscr{L}_p^\xi(\underline{\breve{\hf}},\underline{\breve{g}},\underline{\breve{h}})(k)\right),
\]
where $\mathcal{E}_1(\hf_k)=\bigl(1-\frac{\beta_k}{p\alpha_k}\bigr)$, $\mathcal{E}(\hf_k,g,h)=\bigl(1-\frac{\alpha_k\alpha_{g}\alpha_{h}}{p^{c}}\bigr)\bigl(1-\frac{\beta_k\beta_{g}\alpha_{h}}{p^{c}}\bigr)\bigl(1-\frac{\beta_k\alpha_{g}\beta_{h}}{p^{c}}\bigr)\bigl(1-\frac{\beta_k\beta_{g}\beta_{h}}{p^{c}}\bigr)$, and $c=(k+l+m-2)/2$.
\end{theorem}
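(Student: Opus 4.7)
The plan is to extract a quantitative bound from the Euler system divisibility of Theorem~\ref{segon-res}(a), combined with the reciprocity law of Theorem~\ref{rec-law} specialized at weight $k$, in a manner that runs parallel to the derivation of the $\Lambda$-adic identity of Theorem~\ref{thm:equiv}. I would apply the Euler system machinery to the class $\kap$ of Theorem~\ref{thm:A}, arising as the base specialization of the Iwasawa class $\kapinftyone$ from Theorem~\ref{principal:wild}: its membership in $\Sel_\bal(K, \Trep)$ is Proposition~\ref{prop:in-Selinfty}, and hypothesis (HS) for $\Trep$ is verified by Proposition~\ref{prop:suff} under (h1)--(h8). Theorem~\ref{segon-res}(a) then yields
\[
\length_\cO(\Sel_\bal(K, \Arep)_{/\mathrm{div}}) \leq 2\, \length_\cO(\Sel_\bal(K, \Trep)/\cO \cdot \kap).
\]

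Next I would convert this into a bound on $\length_\cO(\Sel_\cF(K, \Arep))$ via Poitou--Tate global duality. The argument in the proof of Theorem~\ref{thm:thmB} already establishes, under $L(\Vrep, 0) \neq 0$ and $k \geq l+m$, the vanishings $\Sel_{\cF\cap +}(K, \Vrep) = \Sel_\cF(K, \Vrep) = 0$, the one-dimensionality of $\Sel_\bal(K, \Vrep) = E\cdot\kap$, and the fact that $\res_p : \Sel_\bal(K, \Vrep) \xrightarrow{\sim} H^1(\Q_p, V_f^{gh})$ is an isomorphism of one-dimensional $E$-vector spaces. Specializing to weight $k$ the Poitou--Tate exact sequences used in the proof of Theorem~\ref{thm:equiv} and working with $\cO$-lattices, one obtains an identity of the shape
\[
\length_\cO(\Sel_\cF(K, \Arep)) = \length_\cO(\Sel_\bal(K, \Arep)_{/\mathrm{div}}) + 2\, \length_\cO \!\left( \frac{H^1(\Q_p, \mathcal{T}_f^{gh})}{\cO \cdot p_{\hf *}(\res_p(\kap))} \right),
\]
where $\mathcal{T}_f^{gh} \subset V_f^{gh}$ is a Galois-stable $\cO$-lattice; the factor $2$ reflects the self-duality underlying the setup $\chi\varepsilon_K\chi_g\chi_h=1$.

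The reciprocity law (Theorem~\ref{rec-law}) specialized at weight $k$, together with the interpolation formula of Proposition~\ref{perrin} for $k \geq l+m$, expresses $\mathscr{L}_p^\xi(\underline{\breve{\hf}}, \underline{\breve{g}}, \underline{\breve{h}})(k)$ as a product of explicit Euler factors, the factorial $(k-c-1)!$, and the de Rham pairing $\langle \exp_p^*(p_{\hf *}(\res_p(\kap))), \eta_{\underline{\breve{\hf}}_k} \otimes \omega_{\underline{\breve{g}}} \otimes \omega_{\underline{\breve{h}}}\rangle_{\rm dR}$. Viewing the Bloch--Kato dual exponential as a map of $\cO$-lattices, this identifies the length of the local quotient above with the $\frk{P}$-adic valuation of $\mathscr{L}_p^\xi(\underline{\breve{\hf}}, \underline{\breve{g}}, \underline{\breve{h}})(k)$, up to correction factors which, after using $\alpha_k\beta_k = \chi_\hf(p)\, p^{k-1}$ and manipulating the epsilon factors, rearrange to $\mathcal{E}_1(\hf_k)/\mathcal{E}(\hf_k, g, h)$. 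The extra factorial $(l+m-4)!$ in the final bound absorbs the potential $p$-power denominators of the integral diagonal cycle class $\kap$, which by \cite[Rmk.~3.3]{BSV} divide $(l-2)!(m-2)!$ and hence $(l+m-4)!$.

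Combining the Euler system bound on $\Sel_\bal(K, \Arep)_{/\mathrm{div}}$ with the Poitou--Tate identity and the explicit evaluation of the local quotient yields the stated inequality. The main obstacle will be the integral Poitou--Tate step: to establish a clean equality at the level of $\cO$-lengths one must track local integral structures at $p$ carefully and verify that Tamagawa-type contributions at primes $v\nmid p$ are absorbed under our running hypotheses on big image and the squarefree gcd condition; failing a full equality, a suitable one-sided divisibility (possibly at the cost of additional $p$-unit factors that are already dominated by $(l+m-4)!$) would still suffice for the stated bound.
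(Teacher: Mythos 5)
Your proposal follows essentially the same route as the paper: (1) the quantitative Euler system bound of Theorem~\ref{segon-res}(a) (which you cite correctly; the paper's text points to Theorem~\ref{primer-res} but uses the inequality from part (a) of Theorem~\ref{segon-res}) gives $\length_\cO(\Sel_\bal(K,\Arep)_{/\mathrm{div}})\leq 2\length_\cO(\Sel_\bal(K,\Trep)/\cO\kap)$; (2) a weight-$k$ specialization of the Poitou--Tate chase from Theorem~\ref{thm:equiv} converts this to the $f$-unbalanced Selmer group, producing the length of the local quotient as the extra term with multiplicity two; (3) the reciprocity law evaluates that length $\frk{P}$-adically via $\mathscr{L}_p^\xi(\underline{\breve{\hf}},\underline{\breve{g}},\underline{\breve{h}})(k)$. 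The one point worth flagging is provenance of the normalization: you attribute the $(l+m-4)!$ to the potential denominators of the diagonal cycle class via \cite[Rmk.~3.3]{BSV}, whereas the paper extracts the entire factor $(l+m-4)!\cdot\mathcal{E}_0\mathcal{E}_1/\mathcal{E}$ directly from the explicit computation in \cite[\S{8.5}]{BSV} (the identity following their (185)), which also accounts for the paper's use of $\log_p$ rather than the $\exp_p^*$ you get from naively reading Proposition~\ref{perrin} at $k\geq l+m$. That discrepancy affects only the bookkeeping of the constant, not the structure of the argument, but a careful write-up should derive the $\cO$-linear isomorphism $H^1(\Q_p,T_f^{gh})\to\cO$ and its effect on $\kap$ from that explicit computation rather than from the interpolation formula alone.
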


\begin{proof}
As in the proof of Theorem~\ref{thm:thmB}, if $k\geq l+m$ and  $L(\Vrep,0)\neq 0$ then the class $\kap$ is nonzero. Since by Theorem~\ref{principal:wild} this is the bottom class of an anticyclotomic Euler system for $V_{gh}^\psi$, from Theorem~\ref{segon-res} and
%(the verification of its hypotheses in)
Proposition~\ref{prop:suff} we deduce that $\Sel_{\bal}(K,\Arep)$ has $\mathcal{O}$-corank one, with
\begin{equation}\label{eq:bound-bal}
{\rm length}_{\mathcal{O}}(\Sel_{\bal}(K,\Arep)_{/{\rm div}})\leq 2\;{\rm length}_{\mathcal O}\biggl(\frac{\Sel_{\bal}(K,T_{g,h}^\psi)}{\mathcal O \cdot\kap}\biggr).
\end{equation}
By the exact sequence (\ref{PT-1}) specialized to weight $k$,  %(noting that Theorem~\ref{rec-law} and the non-vanishing of $L(V_{g,h}^\psi,0)$ implies that the corresponding map ${\rm res}_p$ is nonzero),
it follows that $\Sel_{\cF\cup +}(K,A_{g,h}^\psi)$ has also $\mathcal{O}$-corank one. Thus both $\Sel_{\bal}(K,T_{g,h}^\psi)\subset\Sel_{\cF\cup+}(K,T_{g,h}^\psi)$ have $\cO$-rank one, and therefore
\begin{equation}\label{eq:bal-+}
\Sel_{\bal}(K,T_{g,h}^\psi)=\Sel_{\cF\cup+}(K,T_{g,h}^\psi),
\end{equation}
since their quotient is $\cO$-torsion free. Moreover, letting $\pi\in\mathcal{O}$ be a uniformizer, as in the proof of Lemma~\ref{rangs} we find that
\[
\Sel_{\cF\cup+}(K,A_{g,h}^\psi)[\pi^i]\cong E/\cO[\pi^i]\oplus\Sel_{\cF\cap+}(K,A_{g,h}^\psi)[\pi^i]
\]
for all $i$, and hence ${\rm length}_{\cO}({\rm Sel}_{\cF\cup+}(K,A_{g,h}^\psi)_{/{\rm div}})={\rm length}_{\cO}({\rm Sel}_{\cF\cap +}(K,A_{g,h}^\psi))$.

%Now we have
%\begin{align*}
%{\rm length}_{\cO}({\rm Sel}_{\cF}(K,A_{g,h}^\psi)) &\leq {\rm length}_{\cO}({\rm Sel}_{\cF\cup+}(K,A_{g,h}^\psi)_{/{\rm div}}) \\
%&={\rm length}_{\cO}({\rm Sel}_{\cF\cap+}(K,A_{g,h}^\psi))\leq {\rm length}_{\cO}({\rm Sel}_{\bal}(K,A_{g,h}^\psi)_{/{\rm div}}).
%\end{align*}
The finiteness of $\Sel_{\cF}(K,A_{g,h}^\psi)$ with the stated bound on its $\cO$-length thus follows from (\ref{eq:bound-bal}) by the same argument as in the proof of Theorem~\ref{thm:equiv}, %(\emph{cf.} \cite[Thm.~7.6.4]{LLZ}),
noting that by %Proposition~\ref{perrin},
Theorem~\ref{rec-law} and the same calculation as in \cite[\S{8.5}]{BSV} (see esp. the equality following [\emph{op.\,cit.}, (189)]) the map
\[
\xi_k\cdot\left\langle{\rm exp}^*_p(-),\eta_{\breve{f}}\otimes\omega_{\breve{g}}\otimes\omega_{\breve{h}}\right\rangle
\]
where $f=\theta_\psi$ and $\xi_k$ is the weight $k$ specialization of the congruence ideal generator $\xi\in\Lambda_{\hf}$, gives an isomorphism $H^1(\Q_p,T_{f}^{gh})\rightarrow\cO$ taking $\kap$ to
\[
\frac{(l-2)!\cdot(m-2)!}{(k-c-1)!}\cdot\frac{\mathcal{E}_0(\hf_k)\cdot\mathcal{E}_1(\hf_k)}{\mathcal{E}(\hf_k,g,h)}\cdot \mathscr{L}_p^\xi(\underline{\breve{\hf}},\underline{\breve{g}},\underline{\breve{h}})(k),
\]
where $\mathcal{E}_0(\hf_k)=\bigl(1-\frac{\beta_k}{\alpha_k}\bigr)$ is a $p$-adic unit.

More precisely, under the freeness assumption in the statement, the weight $k$ specializations of (\ref{PT-1}) and (\ref{PT-2}) yield the exact sequences
\begin{equation}\label{eq:pt1}
0\longrightarrow{\rm coker}({\rm res}_p)\longrightarrow\Sel_{\cF\cup+}(K,A_{g,h}^\psi)^\vee\longrightarrow\Sel_{\bal}(K,A_{g,h}^\psi)^\vee\longrightarrow 0,
\end{equation}
\begin{equation}\label{eq:pt2}
0\longrightarrow{\rm coker}({\rm res}_p)\longrightarrow\Sel_{\cF}(K,A_{g,h}^\psi)^\vee\longrightarrow\Sel_{\cF\cap +}(K,A_{g,h}^\psi)^\vee\nonumber\longrightarrow 0,
\end{equation}
where the two terms ${\rm coker}({\rm res}_p)$ are equal in light of (\ref{eq:bal-+}). Thus we find
\begin{align*}
{\rm lt}_{\cO}(\Sel_{\cF}(K,A_{g,h}^\psi))={\rm lt}_{\cO}(\Sel_{\cF}(K,A_{g,h}^\psi)^\vee)&={\rm lt}_{\cO}(\Sel_{\cF\cap+}(K,A_{g,h}^\psi)^\vee)+{\rm lt}_{\cO}({\rm coker}({\rm res}_p))\\
&={\rm lt}_{\cO}((\Sel_{\cF\cup+}(K,A_{g,h}^\psi)_{/{\rm div}})^\vee)+{\rm lt}_{\cO}({\rm coker}({\rm res}_p))\\
&={\rm lt}_{\cO}((\Sel_{\bal}(K,A_{g,h}^\psi)_{/{\rm div}})^\vee)+2\,{\rm lt}_{\cO}({\rm coker}({\rm res}_p))\\
&={\rm lt}_{\cO}(\Sel_{\bal}(K,A_{g,h}^\psi)_{/{\rm div}})+2\,{\rm lt}_{\cO}({\rm coker}({\rm res}_p)),
\end{align*}
where the third equality follows from (\ref{eq:pt1}) and Lemma~\ref{lem:lts} below, concluding the proof.
\end{proof}

\begin{lemma}\label{lem:lts}
Let $0\rightarrow A\xrightarrow{j} B\rightarrow C\rightarrow 0$ be an exact sequence of finitely generated $\cO$-modules, and assume that $A$ is finite. Then $B_{\rm tors}/j(A)\cong C_{\rm tors}$.

In particular, if $B', C'$ are cofinitely generated $\cO$-modules and we have an exact sequence $0\rightarrow A\xrightarrow{j} (B')^\vee\rightarrow (C')^\vee\rightarrow 0$ with $A$ finite, then
\[
(B'_{/{\rm div}})^\vee/j(A)\cong (C'_{/{\rm div}})^\vee,
\]
and so ${\rm lt}_{\cO}((B'_{/{\rm div}})^\vee)={\rm lt}_{\cO}(A)+{\rm lt}_{\cO}((C'_{/{\rm div}})^\vee)$.
\end{lemma}

\begin{proof}
Writing $B\cong\cO^r\oplus B_{\rm tors}$, $C\cong\cO^s\oplus C_{\rm tors}$ we have, by the finiteness of $A$, $r=s$ and $j(A)\subset B_{\rm tors}$, so
\[
\cO^r\oplus C_{\rm tors}\cong C\cong B/j(A)\cong\cO^r\oplus(B_{\rm tors}/j(A)),
\]
which implies the result.
\end{proof}

\begin{remark}
The condition that $H^1(\bb{Q}_p,T_{f}^{gh})$ is torsion-free is equivalent to the vanishing of $H^0(\bb{Q}_p, A_{f}^{gh})$, which is satisfied if $k+2\neq l+m$ modulo $2(p-1)$ or if $\chi_f(p)\alpha_g\alpha_h/\alpha_k\neq 1$ modulo $p$. Similarly, the last condition in the statement of Theorem~\ref{thm:size-BK} can be recast in terms of the vanishing of the corresponding $0$-th cohomology groups.
\end{remark}

\begin{remark}
By Theorem~\ref{thm:hsieh}, the non-vanishing of $L(\Vrep,0)$ implies that $\mathscr{L}_p^\xi(\underline{\breve{\hf}},\underline{\breve{g}},\underline{\breve{h}})(x)\neq 0$, so the upper bound provided by Theorem~\ref{thm:size-BK} is non-trivial. Moreover, by the interpolation formula in Theorem~\ref{thm:hsieh}, this upper bound can be expressed in terms of the central $L$-value $L(\Vrep,0)$, thus giving a result towards the Tamagawa number conjecture of \cite{BK}.
\end{remark}

\subsection{Proof of Theorem~\ref{thmD}}

As before, let $\hf$ be the Hida family attached to $\psi$ as in $\S\ref{subsec:Iw}$. Let $\kapinftyone$ be the $\Lambda$-adic class of conductor $n=1$ constructed in Theorem~\ref{principal:wild}, and set
\[
\kapinfty:=\kapinftyone\in H^1_{\rm Iw}(K_\infty,T_{g,h}^\psi).
\]
As noted before the proof of Proposition~\ref{prop:in-Selinfty}, under the Shapiro isomorphism
\[
H^1(\Q,\mathbb V_{\hf gh}^\dag)\cong H^1(K,\Lambda_{\cl{O}}(\bm{\kappa}_{\text{ac}}^{-1})\hat{\otimes}_{\cl{O}}\Trep)\cong H^1_{\rm Iw}(K_\infty,\Trep),
\]
the balanced Selmer group $\Sel_{\rm bal}(\Q,\mathbb V_{\hf gh}^\dag)$ of $\S\ref{subsec:IMC}$ is identified with the Greenberg Selmer group $\Sel_{\rm Gr}(K_\infty,\Trep)$ of $\S\ref{sec:ES}$ attached to $G_{K_v}$-invariant subspaces $\mathcal{F}_v^+(V_{g,h}^\psi)\subset V_{g,h}^\psi$ in (\ref{eq:bal-Sh}) at the primes $v\mid p$. Moreover, under this isomorphism, the class $\kappa(\hf,g,h)$ in $\S\ref{subsec:ERL}$
%in Definition~\ref{def:kap-test}
corresponds to the class $\kapinfty$.
% under a projection $H^1_{\rm Iw}(K_\infty,T_{g,h}^\psi)\rightarrow H^1_{\rm Iw}(K_\infty,T_{g,h}^\psi)$.

The next result, establishing one of the divisibilities predicted by the Iwasawa main conjectures from $\S\ref{subsec:IMC}$, recovers Theorem~\ref{thmD} in the Introduction.

\begin{theorem}
Assume hypotheses (h1)--(h8), and in addition that:
\begin{itemize}
\item $\varepsilon_\ell(\theta_\psi,g,h)=+1$ for all primes $\ell\mid N$,
\item ${\rm gcd}(N_\psi,N_g,N_h)$ is squarefree.
\end{itemize}
If $\kappa(\hf,g,h)$ is not $\Lambda_{\hf}$-torsion, then the following hold:
\begin{enumerate}
\item[{\rm (a)}] The modules $\Sel_{\bal}(\mathbb V_{\hf gh}^{\dag})$ and $X_{\bal}(\mathbb A_{\hf gh}^{\dag})$ have both $\Lambda_{\hf}$-rank one, and
    \[
    \Char_{\Lambda_{\hf}}(X_{\bal}(\mathbb A_{\hf gh}^{\dag})_{\tors}) \supset
    \Char_{\Lambda_{\hf}}\left(\frac{\Sel_{\bal}(\mathbb V_{\hf gh}^{\dag})}{\Lambda_{\hf} \cdot\kappa(\hf,g,h)} \right)^2.
    \]
    %in $\Lambda_{\hf}\otimes_{\mathbb Z_p}\mathbb Q_p$.
\item[{\rm (b)}] The modules $\Sel_{\mathcal F}(\mathbb V_{\hf gh}^{\dag})$ and $X_{\mathcal F}(\mathbb V_{\hf gh}^{\dag})$ are both $\Lambda_{\hf}$-torsion, and
    \[
    \Char_{\Lambda_{\hf}}(X_{\mathcal F}(\mathbb A_{\hf gh}^{\dag})) \supset (L_p(\hf,g,h))
    \]
in $\Lambda_{\hf}\otimes_{\mathbb Z_p}\mathbb Q_p$.
\end{enumerate}
\end{theorem}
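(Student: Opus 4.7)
The plan is to deduce both parts of the theorem as a consequence of two main inputs already assembled in the paper: the $\Lambda$-adic Euler system machinery of Jetchev--Nekov\'a\v{r}--Skinner recalled in Theorem~\ref{segon-res}(b), and the equivalence between the two formulations of the Iwasawa main conjecture proved in Theorem~\ref{thm:equiv}. Part (a) is obtained directly from the Euler system divisibility applied to our class $\kapinfty$, and part (b) follows from part (a) via the equivalence theorem.

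To carry out the first step, I would transport the setup from the Selmer groups $\Sel_{\bal}(\mathbb V_{\hf gh}^\dag)$, $X_{\bal}(\mathbb A_{\hf gh}^\dag)$ over $\Q$ to the anticyclotomic Iwasawa-theoretic Selmer groups $\Sel_{\Gr}(K_\infty,\Trep)$, $X_{\Gr}(K_\infty,\Arep)$ over $K$ by Shapiro's lemma, identifying the local conditions at the primes above $p$ with $\cF_{\frk p}^+(T_{g,h}^\psi)=\mathscr F^1T_{g,h}^\psi$ and $\cF_{\overline{\frk p}}^+(T_{g,h}^\psi)=\mathscr F^2T_{g,h}^\psi$ as in (\ref{eq:Gr-p}). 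Under this dictionary, the non-torsion hypothesis on $\kappa(\hf,g,h)$ corresponds exactly to the non-torsion hypothesis on $\kapinfty$ required by Theorem~\ref{segon-res}(b); the latter class is the bottom class of a split $\Lambda$-adic anticyclotomic Euler system by Theorem~\ref{principal:wild}, and lies in the correct Greenberg Selmer group by Proposition~\ref{prop:in-Selinfty}.

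The next step is to verify that the standing hypotheses of Theorem~\ref{segon-res}(b) are satisfied. Hypothesis (HS) on the image of the Galois representation follows from Proposition~\ref{prop:suff}, whose assumptions are covered by (h5)--(h8); here I would use that $\psi_{\mathfrak P}$ being non-Eisenstein and $p$-distinguished provides the element $u$ distinguishing $\psi_{\mathfrak P}$ from $\psi_{\mathfrak P}^c$ modulo $\mathfrak P$. The orthogonality condition (HO) must also be verified: for $v\nmid p$ it is automatic, and for $v\mid p$ it reduces to checking that $\cF_{\frk p}^+$ and $\cF_{\overline{\frk p}}^+$ are exact annihilators of each other under the natural self-duality $\Trep\otimes\Trep^c\to\mathcal O(1)$, which follows from the explicit form of $\mathscr F^1$ and $\mathscr F^2$ and the identity $\dim\mathscr F^1+\dim\mathscr F^2=4=\dim T_{g,h}^\psi$. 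Once (HS) and (HO) are in place, Theorem~\ref{segon-res}(b) yields part~(a) verbatim, after translating back to the $\Lambda_{\hf}$-module notation via the isomorphism $\Lambda_{\hf}\cong \Lambda_{\ac}$ that holds under (h1)--(h4).

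For part (b), I would invoke Theorem~\ref{thm:equiv}. The existence of the $G_K$-equivariant self-duality pairing required in Lemma~\ref{rangs} is granted by the non-Eisenstein hypothesis on $g,h$, which is built into (h5); and the non-torsion hypothesis on $\kappa(\hf,g,h)$ is precisely the same one driving part (a). Combining the identity
\[
\Char_{\Lambda_{\hf}}(X_{\mathcal F}(\mathbb A_{\hf gh}^{\dag}))\cdot \Char_{\Lambda_{\hf}}\bigg(\frac{\Sel_{\bal}(\mathbb V_{\hf gh}^{\dag})}{\Lambda_{\hf}\cdot\kappa(\hf,g,h)}\bigg)^2=\Char_{\Lambda_{\hf}}(X_{\bal}(\mathbb A_{\hf gh}^{\dag})_{\tors})\cdot (L_p(\hf,g,h))
\]
of Theorem~\ref{thm:equiv} with the divisibility of part (a) then immediately gives $(L_p(\hf,g,h))\subset \Char_{\Lambda_{\hf}}(X_{\mathcal F}(\mathbb A_{\hf gh}^\dag))$ in $\Lambda_{\hf}\otimes_{\Z_p}\Q_p$, as claimed. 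I expect the main obstacle to be the careful verification of (HS) and (HO) in the Iwasawa-theoretic formulation --- in particular, matching the local conditions under Shapiro's lemma and ensuring that the ``big image'' hypothesis survives restriction to $G_{K_\infty^\circ(\mu_{p^\infty})}$; the remaining bookkeeping to pass from part (a) to part (b) is then mechanical given Theorem~\ref{thm:equiv}.
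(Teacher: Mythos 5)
Your proposal is correct and follows essentially the same route as the paper's own (very terse) proof: part (a) is obtained by applying the $\Lambda$-adic Euler system divisibility of Theorem~\ref{segon-res}(b) to the class $\kapinfty$ of Theorem~\ref{principal:wild}, with hypothesis (HS) supplied by Proposition~\ref{prop:suff} and the balanced Selmer membership by Proposition~\ref{prop:in-Selinfty}; part (b) then follows from part (a) via Theorem~\ref{thm:equiv}. Your extra remarks on verifying (HO) via the exact-annihilator property of $\mathscr{F}^1$ and $\mathscr{F}^2$, and on matching local conditions across Shapiro's lemma, correctly flesh out details the paper leaves implicit.
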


\begin{proof}
The non-triviality assumption on $\kappa(\hf,g,h)$ implies that $\kapinfty$ is not $\Lambda_{\hf}$-torsion.
Since by Theorem~\ref{principal:wild} the class $\kapinfty$ is the bottom class of a $\Lambda$-adic Euler system for $V_{g,h}^\psi$, part (a) follows from Theorem~\ref{segon-res} and Proposition~\ref{prop:suff}. By Theorem~\ref{thm:equiv}, part (b) of the theorem follows from part (a), so this concludes the proof.
\end{proof}

\bibliographystyle{amsalpha}
\bibliography{Schoen-refs}

\end{document}